\tikzstyle{mydash}=[dash pattern={on 2pt off 1pt}]
\newtheorem{theorem}{Theorem}[section]
\newtheorem{lemma}[theorem]{Lemma}
\newtheorem{prop}[theorem]{Proposition}
\newtheorem{cor}[theorem]{Corollary}
\theoremstyle{definition}
\newtheorem{definition}[theorem]{Definition}
\theoremstyle{remark}
\newtheorem{remark}[theorem]{Remark}
\numberwithin{equation}{section}
\newcommand{\twist}{\begin{tikzpicture}[baseline=-0.5ex, scale=0.15,semithick]
\draw plot[domain=20:340, variable=\t, smooth] ({sin(\t)},{sin(2*\t)/2.2});
\end{tikzpicture}}
\newcommand{\K}{{\mathcal K}}
\newcommand{\Z}{{\mathbb Z}}
\newcommand{\Q}{{\mathbb Q}}
\newcommand{\Y}{{\mathbb Y}}
\newcommand{\D}{\mathfrak{D}}
\newcommand{\cc}{\mathfrak{c}}
\renewcommand{\ss}{\mathfrak{s}}
\renewcommand{\S}{\mathfrak{S}}
\newcommand{\I}{\mathcal{I}}
\newcommand{\M}{\mathcal{M}}
\renewcommand{\sl}{\operatorname{sl}}
\newcommand{\rank}{\operatorname{rank}}
\renewcommand{\Im}{\operatorname{Im}}
\newcommand{\id}{\operatorname{id}}
\newcommand{\Id}{\operatorname{Id}}
\newcommand{\Ker}{\operatorname{Ker}}
\newcommand{\Coker}{\operatorname{Coker}}
\newcommand{\Aut}{\operatorname{Aut}}
\newcommand{\inc}{\operatorname{inc}}
\newcommand{\Hom}{\operatorname{Hom}}
\newcommand{\Spin}{\operatorname{Spin}}
\newcommand{\rev}{\operatorname{rev}}
\newcommand{\sq}{\operatorname{sq}}
\newcommand{\tor}{\operatorname{tor}}
\newcommand{\Gr}{\operatorname{Gr}}
\newcommand{\Lk}{\operatorname{Lk}}
\newcommand{\Sp}{\mathrm{Sp}}
\newcommand{\AS}{\mathrm{AS}}
\newcommand{\IHX}{\mathrm{IHX}}
\newcommand{\A}{\mathcal{A}}
\newcommand{\C}{\mathcal{C}}
\renewcommand{\H}{\mathcal{H}}
\newcommand{\F}{\mathcal{F}}
\newcommand{\LCob}{\mathcal{LC}\mathit{ob}}
\newcommand{\tsA}{{}^{\mathit{ts}}\!\mathcal{A}}
\newcommand{\Ztilde}{\widetilde{Z}}
\newcommand{\ideg}{\operatorname{i-deg}}
\newcommand{\raisegraph}[3]{\raisebox{#1}{\includegraphics[height=#2]{#3}}}
\newcommand{\strutgraph}[2]{\hspace{-0.2em}\raisebox{-0.7em}{
\begin{overpic}[height=2em]{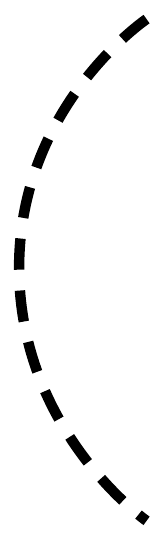}\scriptsize
 \put (35,80){$#1$}
 \put (35,0){$#2$}
\end{overpic}}\hspace{1em}}
\newcommand{\dstrutgraph}[2]{\raisebox{-0.7em}{
\begin{overpic}[height=2em]{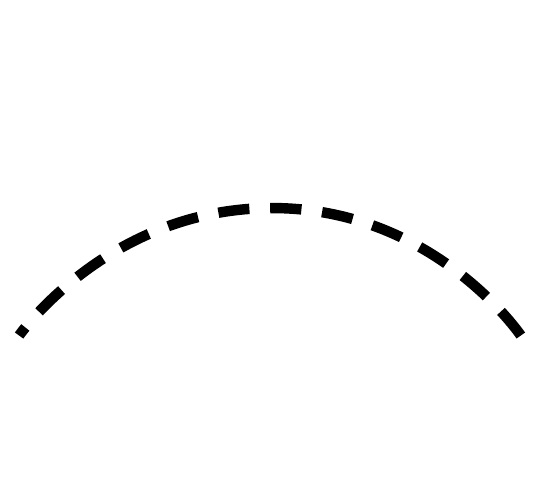}\scriptsize
 \put (-10,-5){$#1$}
 \put (85,-5){$#2$}
\end{overpic}}\hspace{0.5em}}
\newcommand{\uYgraph}[3]{\hspace{0.2em}\raisebox{-0.7em}{
\begin{overpic}[height=2em]{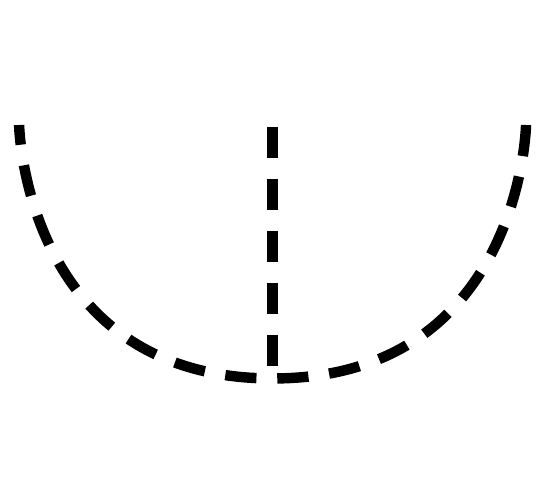}\scriptsize
 \put (-10,80){$#1$}
 \put (40,80){$#2$}
 \put (90,80){$#3$}
\end{overpic}}}
\newcommand{\uHgraph}[4]{\raisebox{-0.9em}{
\begin{overpic}[height=2em]{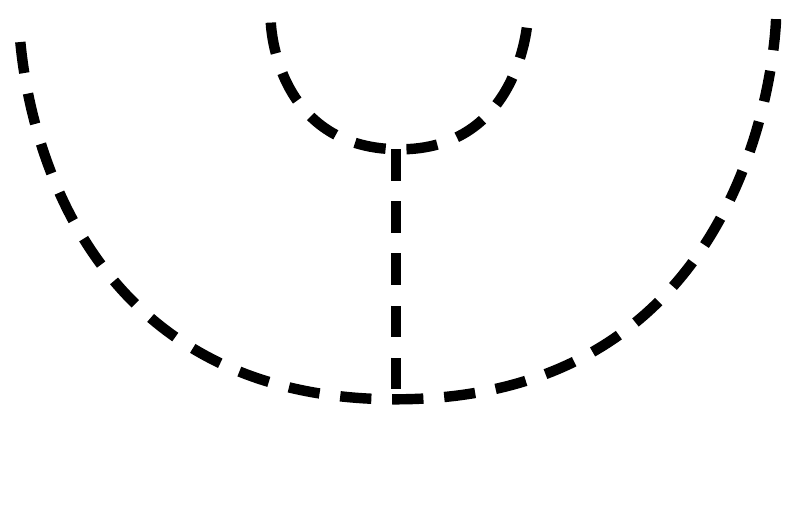}\scriptsize
 \put (-5,70){$#1$}
 \put (25,70){$#2$}
 \put (60,70){$#3$}
 \put (90,70){$#4$}
\end{overpic}}}
\newcommand{\dYgraph}[3]{\raisebox{-0.7em}{
\begin{overpic}[height=2em]{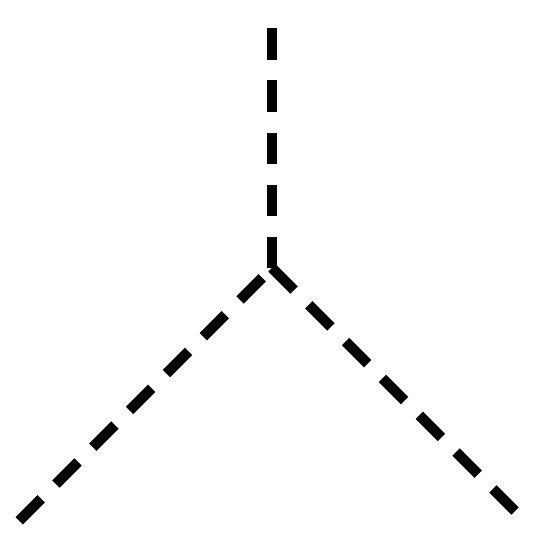}\scriptsize
 \put (60,80){$#1$}
 \put (0,-30){$#2$}
 \put (90,-30){$#3$}
\end{overpic}}}
\newcommand{\ddYgraph}[3]{\hspace{0.2em}\raisebox{-0.7em}{
\begin{overpic}[height=2em]{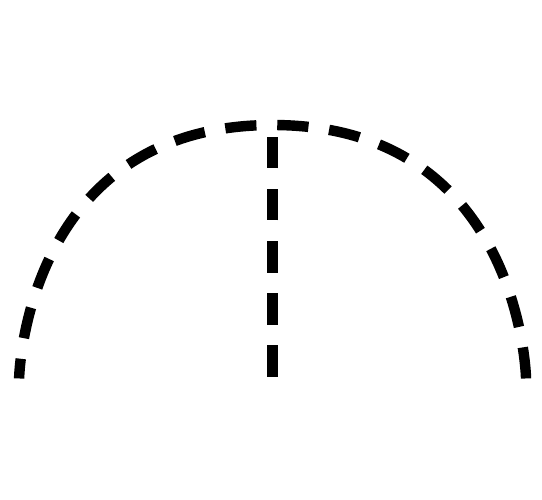}\scriptsize
 \put (-10,-10){$#1$}
 \put (40,-10){$#2$}
 \put (90,-10){$#3$}
\end{overpic}}}
\begin{document}

\title[Abelian quotients of the $Y$-filtration via the LMO functor]{Abelian quotients of the $Y$-filtration on the homology cylinders via the LMO functor}

\author{Yuta Nozaki}
\address{Organization for the Strategic Coordination of Research and Intellectual Properties, Meiji University \\
4-21-1 Nakano, Nakano-ku, Tokyo, 164-8525 \\
Japan
\newline
Current address:
Graduate School of Advanced Science and Engineering, Hiroshima University \\
1-3-1 Kagamiyama, Higashi-Hiroshima City, Hiroshima, 739-8526 \\
Japan}
\email{nozakiy@hiroshima-u.ac.jp}

\author{Masatoshi Sato}
\address{
Department of Mathematics,
Tokyo Denki University,
5 Senjuasahi-cho, Adachi-ku, Tokyo 120-8551, Japan}
\email{msato@mail.dendai.ac.jp}

\author{Masaaki Suzuki}
\address{Department of Frontier Media Science, Meiji University \\
4-21-1 Nakano, Nakano-ku, Tokyo, 164-8525 \\
Japan}
\email{macky@fms.meiji.ac.jp}

\subjclass[2010]{Primary 57M27, Secondary 57M25.}
\keywords{Torelli group, Johnson kernel, homology cylinder, LMO functor, clasper, Jacobi diagram, Johnson homomorphism, Sato-Levine invariant.}

\maketitle

\begin{abstract}
We construct a series of homomorphisms from the $Y$-filtration on the monoid of homology cylinders to torsion modules via the mod $\Z$ reduction of the LMO functor.
The restriction of our homomorphism to the lower central series of the Torelli group does not factor through Morita's refinement of the Johnson homomorphism.
We use it to show that the abelianization of the Johnson kernel of a closed surface has torsion elements.
We also determine the third graded quotient $Y_3\I\C_{g,1}/Y_4$ of the $Y$-filtration.
\end{abstract}

\setcounter{tocdepth}{1}
\tableofcontents

\section{Introduction}
\label{section:Intro}
Let $\M = \M_{g,1}$ denote the mapping class group of an oriented surface $\Sigma_{g,1}$ of genus $g$ with one boundary component,
and let $H=H_1(\Sigma_{g,1};\Z)$.
Choosing a symplectic basis of $H$, the group of automorphisms of $H$ preserving the intersection form is identified with $\Sp(2g;\Z)$.
The \emph{Torelli group} $\I = \I_{g,1}$ is the kernel of the homomorphism
$\M\to \Sp(2g;\Z)$ derived from the action on $H$.
Let $\I(n)$ be the $n$-th subgroup of its lower central series, that is, $\I(1)=\I$ and $\I(n+1)=[\I(n),\I]$.
When $g$ is sufficiently large,
the $\Sp(2g;\Z)$-module $\I/\I(2)$ is determined in \cite[Theorem~3]{Joh85III},
and the $\Sp(2g;\Q)$-module $(\I(n)/\I(n+1))\otimes\Q$ is determined in \cite[Proposition~6.3]{Mor99} for $n=3$, in \cite[Theorem~1.2]{MSS20} for $n=4,5,6$.
Especially, for $n=1,3,4,5,6$,
the $n$-th Johnson homomorphism $\tau_n$ induces an injective homomorphism
\[
(\I(n)/\I(n+1))\otimes \Q\to (H\otimes L_{n+1})\otimes \Q,
\]
where $L_n$ is the degree $n$ part of the free Lie algebra generated by $H$.
When $n=2$, $(\I(2)/\I(3))\otimes \Q$ is determined in \cite[Theorem~10.1]{Hai97},
and it is shown that the kernel of the homomorphism
$\tau_2\colon (\I(2)/\I(3))\otimes \Q\to (H\otimes L_3)\otimes \Q$ is of rank $1$, which is detected by the Casson invariant as explained in \cite{Mor89}.
Moreover, a presentation of the associated graded Lie algebra $\bigoplus_{n=1}^\infty\I(n)/\I(n+1)\otimes\Q$ is given in \cite[Theorem~11.1]{Hai97}.
See \cite{Mor99} and \cite{MSS20},
for details of the results on $\I(n)/\I(n+1)\otimes\Q$ mentioned above.

In this paper, we study the torsion subgroup of the graded quotient $\I(n)/\I(n+1)$.
Our key tool is the LMO functor which is regarded as an invariant of 3-dimensional cobordisms.
In particular, we focus on a cobordism $(M,m)$,
where $M$ is homologically the product $\Sigma_{g,1}\times [-1,1]$,
and $m$ is a homeomorphism from $\partial(\Sigma_{g,1}\times[-1,1])$ to $\partial M$.
Such a cobordism is called a \emph{homology cylinder} over $\Sigma_{g,1}$,
and the set $\I\C = \I\C_{g,1}$ of homology cylinders over $\Sigma_{g,1}$ is a monoid
with the multiplication $(M,m)\circ(N,n)$ defined by stacking $N$ on $M$.

Here $\I\C$ is related with $\I$ via the monoid homomorphism $\cc\colon \I\to \I\C$ defined by 
\[
[f]\mapsto (\Sigma_{g,1}\times[-1,1],
\id_{\Sigma_{g,1}}\times\{-1\}\cup\id_{\partial\Sigma_{g,1} \times [-1,1]}\cup f\times\{1\}).
\]
Since this map is known to be injective, one can study $\I$ via $\I\C$.
It is natural to consider a descending series of submonoids of $\I\C$ related to the lower central series of $\I$.
Goussarov and Habiro introduced the $Y_n$-equivalence relation among cobordisms and the filtration $Y_n\I\C$ satisfying $\cc(\I(n)) \subset Y_n\I\C$.
Moreover, $\cc$ induces a homomorphism $\Gr\cc\colon\I(n)/\I(n+1) \to Y_n\I\C/Y_{n+1}$ between abelian groups.

We next review the LMO functor,
which is a main tool to construct our abelian quotients.
Kontsevich introduced an invariant of knots in $S^3$ which takes values in the series of Jacobi diagrams, where a Jacobi diagram is a uni-trivalent graph with additional information.
The Kontsevich invariant unifies finite type invariants of knots,
and was extended to an invariant of tangles.

Le, Murakami and Ohtsuki constructed the LMO invariant of closed 3-manifolds which is universal among $\Q$-valued finite type invariants of rational homology 3-spheres (see \cite{Oht02} for instance).
Using the Kontsevich invariant, Cheptea, Habiro and Massuyeau~\cite{CHM08} extended the LMO invariant to the case of 3-manifolds with boundary.
More precisely,
they introduced the LMO functor $\Ztilde$ from a certain category of cobordisms between surfaces with connected boundary to a category of series of Jacobi diagrams.

The LMO functor $\Ztilde$ is related to the Johnson homomorphisms $\tau_n$ on homology cylinders.
In fact, for $M=(M,m) \in \I\C$,
if the leading term of the 0-loop (namely, tree) part of the $Y$-part $\Ztilde^Y(M)$ has degree $n$, then it coincides with $-\tau_n(M)$.
Note that the sign of $\tau_n$ is different from \cite{CHM08} (compare $\rho_n$ in Section~\ref{section:satolevine} with \cite[(8-3)]{CHM08}).
Here we briefly mention Jacobi diagrams, where each univalent vertex is colored by an element of the set $\{1^{\pm},2^{\pm},\ldots,g^{\pm}\}$ of $2g$ elements.
A connected Jacobi diagram without trivalent vertices is called a \emph{strut}.
Let $\A_n^c$ (resp. $\A_n^Y$) denote the $\Z$-module generated by connected Jacobi diagrams (resp. by Jacobi diagrams without struts) with $n$ trivalent vertices modulo some relations (see Section~\ref{sec:jacobi}).

Let us see a relation between the $Y$-part $\Ztilde^Y$ of the LMO functor (see Section~\ref{sec:LMO}) and $Y$-filtration.
By \cite[Theorem~7.11]{CHM08}, $\Ztilde^Y_{<n}(M)$ is the empty diagram $\emptyset$ for $M \in Y_n\I\C$, and $\Ztilde^Y_{n}$ induces an isomorphism $(Y_n\I\C/Y_{n+1})\otimes\Q \to \A^Y_n\otimes\Q$.
It is natural to study for $M \in Y_n\I\C$ the next term $\Ztilde^Y_{n+1}(M)$.
In fact, we consider ``$\Ztilde^Y_{n+1} \bmod \Z$'', which is invariant under the $Y_{n+1}$-equivalence.

For $n\ge1$, $\F_n\I\C$ is a filtration on the monoid ring $\Z\I\C$ (see Section~\ref{sec:F-filtration}).
We define homomorphisms
$$
\overline{Z}_{n+1}\colon \F_{n}\I\C/\F_{n+1}\I\C \to \A_{n+1}^Y\otimes \Q/\Z \text{\ and\ } \bar{z}_{n+1}\colon Y_{n}\I\C/Y_{n+1} \to \A_{n+1}^c\otimes \Q/\Z
$$
to be the mod $\Z$ reductions of the degree $(n+1)$-st parts of $\Ztilde^Y(M)$ and $\log \Ztilde^Y(M)$, respectively.
These homomorphisms are independent of the choice of an associator used to define the LMO functor (see Remark~\ref{rem:Associator}).
On the other hand, they depend on the choice of a basis of $\pi_1(\Sigma_{g,1})$.

In order to compute $\overline{Z}_{n+1}$ and $\bar{z}_{n+1}$,
we introduce a homomorphism $\delta$ which is explicitly written as an operation on Jacobi diagrams in Section~\ref{sec:delta}.
More precisely, we give formulas of our homomorphisms for 3-manifolds obtained by surgery maps $\S\colon \A^Y_n \to \F_n\I\C/\F_{n+1}\I\C$ and $\ss\colon \A^c_n \to Y_n\I\C/Y_{n+1}$ (see Section~\ref{sec:surgery}).
The following theorem is proved in Section~\ref{sec:Z_and_z}.

\begin{theorem}
\label{thm:main}
The following diagrams commute:
\begin{center}
$
\begin{CD}
\A_n^Y@>\S>>\F_n\I\C/\F_{n+1}\I\C\\
@V\delta+\Y VV@VV\overline{Z}_{n+1}V\\
\A_{n+1}^Y\otimes\Z/2\Z@>\id\otimes\frac{1}{2}>> \A_{n+1}^Y\otimes\Q/\Z,
\end{CD}
$
\quad
$
\begin{CD}
\A_n^c@>\ss>>Y_n\I\C/Y_{n+1}\\
@V\delta VV@VV\bar{z}_{n+1}V\\
\A_{n+1}^c\otimes\Z/2\Z@>\id\otimes\frac{1}{2}>> \A_{n+1}^c\otimes\Q/\Z.
\end{CD}
$
\end{center}
Here, $\Y\colon \A_{n}^Y \to \A_{n+1}^Y$ is defined by $\mathbb{Y}(J) = \sum_{Y\subset J} J\sqcup Y$, where $Y$ runs over every connected component of $J$ with one trivalent vertex.
\end{theorem}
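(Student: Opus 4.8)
The plan is to evaluate the degree-$(n+1)$ part of $\Ztilde^Y$ on the image of the surgery maps and to reduce the result modulo $\Z$. Fix $J\in\A_n^Y$ (resp.\ $J\in\A_n^c$) and realize $\S(J)$ (resp.\ $\ss(J)$) as an explicit element of $\Z\I\C$ obtained by surgery along a graph clasper $G_J$ modeled on $J$. By the defining property of the surgery map through $\Ztilde^Y_n$ (and \cite[Theorem~7.11]{CHM08}), when $\Ztilde^Y$ is extended linearly to the monoid ring its value on a representative of $\S(J)$ starts in degree $n$ with leading term $J$ itself, so the task is to isolate the next term.

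First I would use the functoriality and monoidal structure of $\Ztilde$ to decompose $G_J$ into elementary $Y$-pieces whose individual LMO values are known, and then assemble $\Ztilde^Y(\S(J))$ by composing these pieces in the target category of Jacobi diagrams. Expanding this composition one degree beyond the leading term, exactly two kinds of contributions survive: an internal contraction coming from the next order of the Kontsevich integral (equivalently, from pairing leaves of neighboring $Y$-pieces), and the creation of a single new trivalent vertex as a separate component glued to $J$. I would then check that the first contribution reproduces the operation $\delta$ of Section~\ref{sec:delta}, and that the second is precisely $\Y(J)=\sum_{Y}J\sqcup Y$.

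All coefficients produced in this expansion lie in $\tfrac12\Z$; the genuinely integral ones vanish after reduction modulo $\Z$, while the half-integral part is exactly $(\id\otimes\tfrac12)$ applied to a class in $\A_{n+1}^Y\otimes\Z/2\Z$ (resp.\ $\A_{n+1}^c\otimes\Z/2\Z$). This accounts both for the factor $\tfrac12$ and for the $\Z/2\Z$-coefficients in the bottom row. The difference between the two squares is that $\bar z_{n+1}$ is the reduction of $\log\Ztilde^Y$ and therefore detects only the connected part: since $\Y(J)$ is disconnected, it is annihilated by the logarithm, leaving only $\delta$; whereas $\overline Z_{n+1}$, the reduction of $\Ztilde^Y$ itself, retains the disconnected term and hence the summand $\Y$.

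The main obstacle will be the explicit degree-$(n+1)$ expansion: one must track how the associator and the quadratic part of the Kontsevich integral propagate across the gluings of the clasper, verify term by term that the resulting combinatorial operation agrees with $\delta$, and confirm that every integer-coefficient Jacobi diagram cancels modulo $\Z$. The associator-independence recorded in Remark~\ref{rem:Associator} is what ultimately guarantees that, after this reduction, only the universal half-integer data survives.
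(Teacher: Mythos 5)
Your proposal follows essentially the same route as the paper's proof: realize $\S(J)$ by a clasper whose surgery decomposes, via functoriality and the monoidal structure, into elementary bottom-top tangles ($\Delta_t^m$, $\Delta_b^m$, $c$, $\psi^{\pm1}$, $Y$) with known low-degree LMO values, expand the composite one degree past the leading term, match the half-integral contributions with $\delta'(J)+\delta''(J)$ and the doubled-tripod term with $\Y(J)$, and deduce the second square from the first by passing to the connected part of the logarithm (with the $-\tfrac12\Ztilde_1^Y\sqcup\Ztilde_1^Y$ correction absorbing $\Y(J)=J\sqcup J$ when $n=1$). The one step you defer --- the term-by-term identification --- is exactly where the paper's work lies (its computation of $(\log\Ztilde(\Delta_t^m))_{\le1}$ and $(\log\Ztilde(\Delta_b^m))_{\le1}$, whose degree-one tripod terms produce the second part of $\delta'$ and all of $\delta''$, while the $H$-shaped degree-two terms of $\Ztilde(Y)$ produce the first part of $\delta'$), and your description of the $\Y$-contribution as a ``new trivalent vertex glued to $J$'' should rather be ``a disjoint duplicate of a one-vertex component of $J$, arising from the quadratic term of the exponential of the tripods.''
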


We have two applications of the homomorphisms
$\bar{z}_{n+1}\colon Y_n\I\C/Y_{n+1}\to\A_{n+1}^c\otimes\Q/\Z$ and
$\delta\colon \A_n^Y\to\A_{n+1}^Y\otimes \Z/2\Z$
which are proved in Section~\ref{section:applications}.
The first one is with respect to abelian quotients of subgroups of the Torelli group.
Denote by $\pi_1\Sigma_{g,1}(n)$ the $n$-th subgroup of the lower central series of $\pi_1\Sigma_{g,1}$.
A filtration of the mapping class group whose $n$-th term $J_n\M = \Ker(\M\to \Aut(\pi_1\Sigma_{g,1}/\pi_1\Sigma_{g,1}(n+1)))$ is called the \emph{Johnson filtration}.
Since the Johnson filtration is central, $J_n\M$ contains $\I(n)$.
Recall that the $n$-th Johnson homomorphism 
\[
\tau_n\colon J_n\M\to \Hom(H, \pi_1\Sigma_{g,1}(n+1)/\pi_1\Sigma_{g,1}(n+2))
\]
is defined by $\tau_n(\varphi)([\gamma])=\varphi(\gamma)\gamma^{-1}$.
\begin{theorem}
\label{thm:restriction-torelli}
Let $1\le n\le g-2$.
Each of the $0$-loop part and $1$-loop part of $\bar{z}_{2n} \colon Y_{2n-1}\I\C/Y_{2n}\to \A_{2n}^c\otimes\Q/\Z$ restricted to
\[
\Im(\I(2n-1)/\I(2n)\to Y_{2n-1}\I\C/Y_{2n}) \cap \tor(Y_{2n-1}\I\C/Y_{2n})
\]
is a non-trivial homomorphism.
Especially, neither the $0$-loop part nor $1$-loop part of $\bar{z}_{2n}\circ \Gr\cc\colon \I(2n-1)/\I(2n)\to\A_{2n}^c\otimes\Q/\Z$ factors through the Johnson homomorphism $\tau_{2n-1}$.
\end{theorem}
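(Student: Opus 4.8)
The plan is to reduce both assertions to a single combinatorial input coming from Theorem~\ref{thm:main}, and to deduce the non-factoring ("Especially") clause formally from the first assertion. The key preliminary observation is that the entire difficulty lies in the first assertion. Indeed, the target $\Hom(H,\pi_1\Sigma_{g,1}(2n)/\pi_1\Sigma_{g,1}(2n+1))$ of $\tau_{2n-1}$ is free abelian, since $\pi_1\Sigma_{g,1}$ is free and the graded quotients of its lower central series are free abelian. Because the tree part of $\Ztilde^Y$ equals $-\tau_{2n-1}$ and $\Ztilde^Y$ is a $Y$-equivalence invariant, the homomorphism $\tau_{2n-1}$ on $\I(2n-1)/\I(2n)$ factors through $\Gr\cc$, and hence vanishes on $\Im(\I(2n-1)/\I(2n)\to Y_{2n-1}\I\C/Y_{2n})\cap\tor(Y_{2n-1}\I\C/Y_{2n})$. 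Consequently, were $\bar{z}_{2n}\circ\Gr\cc$ equal to $\phi\circ\tau_{2n-1}$ for some $\phi$, it would vanish on every torsion class arising from $\I(2n-1)$, contradicting the first assertion.

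For the first assertion I would exhibit, for each loop degree $\ell\in\{0,1\}$, a mapping class $\varphi_\ell\in\I(2n-1)$ whose associated graded class satisfies $\Gr\cc(\varphi_\ell)=\ss(J_\ell)$ for an explicit connected Jacobi diagram $J_\ell\in\A_{2n-1}^c$ enjoying two properties: $J_\ell$ is $2$-torsion in $\A_{2n-1}^c$, and the $\ell$-loop part of $\delta(J_\ell)\in\A_{2n}^c\otimes\Z/2\Z$ is non-zero. The natural candidates are a caterpillar tree for $J_0$ and a wheel-type one-loop diagram for $J_1$, each carrying a colouring by $\{1^{\pm},\dots,g^{\pm}\}$ that is symmetric under an orientation-reversing symmetry of the underlying graph acting by an odd sign; the antisymmetry relation then forces $2J_\ell=0$, and the genus hypothesis $n\le g-2$ supplies enough independent colours to embed such a graph as a clasper. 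Since $\ss$ is a homomorphism and $J_\ell$ is $2$-torsion, $\ss(J_\ell)$ is $2$-torsion in $Y_{2n-1}\I\C/Y_{2n}$, so $\Gr\cc(\varphi_\ell)$ lies in the intersection appearing in the statement.

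Granting this construction, the evaluation of $\bar{z}_{2n}$ is then immediate from the second commutative square of Theorem~\ref{thm:main}: one has
\[
\bar{z}_{2n}\bigl(\Gr\cc(\varphi_\ell)\bigr)=\bar{z}_{2n}(\ss(J_\ell))=\Bigl(\id\otimes\tfrac12\Bigr)\delta(J_\ell)\in\A_{2n}^c\otimes\Q/\Z ,
\]
and by the choice of $J_\ell$ the $\ell$-loop part of the right-hand side equals the non-zero element $\tfrac12\,[\delta(J_\ell)]_{\ell\text{-loop}}$. This shows that the $0$-loop and $1$-loop parts of $\bar{z}_{2n}$ are non-trivial on the prescribed subgroup, proving the first assertion; in particular each $\varphi_\ell$ satisfies $\tau_{2n-1}(\varphi_\ell)=0$ while $\bar{z}_{2n}(\Gr\cc(\varphi_\ell))\neq0$, which by the first paragraph yields the non-factoring statement.

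The main obstacle will be the geometric realization in the second paragraph: producing genuine elements of $\I(2n-1)$, rather than arbitrary homology cylinders, whose $Y_{2n-1}$-classes are the prescribed $\ss(J_\ell)$. This is most delicate for $\ell=1$, since the one-loop $2$-torsion class $\ss(J_1)$ has no counterpart over $\Q$ (torsion dies under $\Ztilde^Y_{2n-1}\otimes\Q$), so it must be produced integrally by a carefully chosen product of commutators of bounding-pair maps whose $Y_{2n-1}$-class is extracted through clasper calculus. The remaining technical point is the explicit evaluation of $\delta$ on $J_0$ and $J_1$ via the diagrammatic formula of Section~\ref{sec:delta}, confirming that the two loop-degree parts survive and are non-zero modulo $2$.
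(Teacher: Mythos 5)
Your overall architecture matches the paper's: the candidate classes are exactly the symmetric caterpillar tree $T(a_{n+1},\dots,a_2,a_1,a_2,\dots,a_{n+1})$ and the symmetric wheel $O(a_n,\dots,a_2,a_1,a_2,\dots,a_n)$, both killed by $2$ via the AS relation, evaluated through Theorem~\ref{thm:main}; and the ``Especially'' clause does follow formally from freeness of the target of $\tau_{2n-1}$ exactly as you say. However, the step you defer as ``the main obstacle'' --- showing that $\ss(J_0)$ and $\ss(J_1)$ actually lie in $\Im(\I(2n-1)/\I(2n)\to Y_{2n-1}\I\C/Y_{2n})$ --- is the real content of the theorem, and your proposal contains no argument for it. The paper does not produce mapping classes or products of bounding-pair maps directly; instead it uses that $\ss\colon\A^c\to\Gr^Y\I\C$ is a Lie algebra homomorphism (with the $\star$-commutator bracket) and that the Lie subalgebra of $\Gr^Y\I\C$ generated by $\I\C/Y_2$ equals $\Im(\Gr\cc)$ (via $\I/\I(2)\cong\I\C/Y_2$), reducing everything to expressing $J_0$ and $J_1$ as iterated brackets of elements of $\A_1^c$ (Lemmas~\ref{lem:commutator} and \ref{lem:oneloopproduct}). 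The one-loop case is genuinely delicate: the wheel is obtained as the sum of two explicit brackets $[T(\cdot),T(\cdot)]$ in which the unwanted tree summands cancel in pairs, and this is also where the hypothesis $n\le g-2$ is consumed (one needs an auxiliary label $a_{n+2}$ beyond the $n+1$ labels appearing in the diagram). Without some such identity your $\ell=1$ case is unproved, and ``a carefully chosen product of commutators of bounding-pair maps extracted through clasper calculus'' is not a substitute for it.

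A secondary gap: non-vanishing of $\tfrac12\delta(J_\ell)$ in $\A_{2n}^c\otimes\Q/\Z$ does not follow from non-vanishing of $\delta(J_\ell)$ in $\A_{2n}^c\otimes\Z/2\Z$, since $\id\otimes\tfrac12\colon\A_{2n}^c\otimes\Z/2\Z\to\A_{2n}^c\otimes\Q/\Z$ need not be injective. The paper closes this by identifying the $0$-loop value with $\nu([a_1,[a_2,[\cdots,[a_{n-1},a_n]\cdots]]])$ and invoking injectivity of $\nu$ (Proposition~\ref{prop:highersatolevine}), and by observing that the $1$-loop value $\tfrac12 O(a_n,\dots,a_1,a_1,\dots,a_n)$ is $\tfrac12$ times a basis element of the \emph{free} module $\A_{2n,1}^c$ (Proposition~\ref{prop:oneloopbasis}). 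You should add these identifications rather than stopping at ``non-zero modulo $2$.''
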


By Theorem~\ref{thm:restriction-torelli},
the induced homomorphism on $\I(2n-1)/\I(2n)$ by $\tau_{2n-1}$ is not injective.
Since $\Ker\tau_n=J_{n+1}\M$, we have the following.
When $n=1$, it is a corollary of \cite[Theorem~3]{Joh85III}.
\begin{cor}
Let $1\le n\le g-2$.
The natural homomorphism
$\I(2n-1)/\I(2n)\to J_{2n-1}\M/J_{2n}\M$ induced by the inclusion is not injective.
\end{cor}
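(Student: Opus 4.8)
The plan is to deduce the non-injectivity directly from Theorem~\ref{thm:restriction-torelli} by exhibiting a nonzero class in $\I(2n-1)/\I(2n)$ that is annihilated by $\tau_{2n-1}$, and hence dies in $J_{2n-1}\M/J_{2n}\M$.

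First I would set up the comparison with the Johnson homomorphism. Since the Johnson filtration is central we have $\I(n)\subset J_n\M$, so in particular $\I(2n)\subset J_{2n}\M=\Ker\tau_{2n-1}$, and therefore $\tau_{2n-1}$ descends to a homomorphism $\bar\tau_{2n-1}\colon \I(2n-1)/\I(2n)\to \Hom(H,\pi_1\Sigma_{g,1}(2n)/\pi_1\Sigma_{g,1}(2n+1))$. Because $\Ker\tau_{2n-1}=J_{2n}\M$, the homomorphism $J_{2n-1}\M/J_{2n}\M\to \Hom(H,\pi_1\Sigma_{g,1}(2n)/\pi_1\Sigma_{g,1}(2n+1))$ induced by $\tau_{2n-1}$ is injective, and $\bar\tau_{2n-1}$ factors as the composite of the natural map $\I(2n-1)/\I(2n)\to J_{2n-1}\M/J_{2n}\M$ with this injection. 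Consequently the natural map is injective if and only if $\bar\tau_{2n-1}$ is, so it suffices to produce a nonzero element of $\Ker\bar\tau_{2n-1}$.

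The crucial observation is that $\bar\tau_{2n-1}$ factors through $\Gr\cc$ into a torsion-free group. Indeed, by the relation between the $0$-loop part of $\Ztilde^Y$ and the Johnson homomorphism recalled in the introduction, $\bar\tau_{2n-1}$ equals $-1$ times the $0$-loop part of $\Ztilde^Y_{2n-1}\circ\Gr\cc$, and $\Ztilde^Y_{2n-1}$ factors through $Y_{2n-1}\I\C/Y_{2n}$. As the target $\Hom(H,\pi_1\Sigma_{g,1}(2n)/\pi_1\Sigma_{g,1}(2n+1))$ is free abelian, every class in $\Im(\Gr\cc)\cap\tor(Y_{2n-1}\I\C/Y_{2n})$ lies in $\Ker\bar\tau_{2n-1}$. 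Now Theorem~\ref{thm:restriction-torelli} supplies exactly such a class on which more is true: the $0$-loop (or $1$-loop) part of $\bar z_{2n}$ is non-trivial there, so there is $x\in\I(2n-1)/\I(2n)$ with $\Gr\cc(x)\in\tor(Y_{2n-1}\I\C/Y_{2n})$ and $\bar z_{2n}(\Gr\cc(x))\ne 0$. The non-vanishing forces $\Gr\cc(x)\ne0$, hence $x\ne0$, while torsion-freeness of the target gives $\bar\tau_{2n-1}(x)=0$. Thus $x$ is a nonzero class of $\I(2n-1)/\I(2n)$ whose image in $J_{2n-1}\M/J_{2n}\M$ vanishes, proving non-injectivity uniformly for $1\le n\le g-2$.

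Granting Theorem~\ref{thm:restriction-torelli}, the argument is a short diagram chase; the only point requiring care is the factorization of $\bar\tau_{2n-1}$ through $\Gr\cc$ with torsion-free target, which is precisely what guarantees that a $Y_{2n}$-torsion class in the image of $\cc$ is invisible to $\tau_{2n-1}$ and detectable only through the finer mod $\Z$ invariant $\bar z_{2n}$. For $n=1$ the conclusion recovers the consequence of \cite[Theorem~3]{Joh85III}.
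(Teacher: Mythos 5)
Your proof is correct and follows essentially the same route as the paper: the paper deduces the corollary in two lines from Theorem~\ref{thm:restriction-torelli} (the torsion class in $\Im(\Gr\cc)$ detected by $\bar z_{2n}$ is killed by $\tau_{2n-1}$ since its target is torsion-free, so $\tau_{2n-1}$ is non-injective on $\I(2n-1)/\I(2n)$, and $\Ker\tau_{2n-1}=J_{2n}\M$ finishes the argument). You have merely written out the factorization and the torsion-freeness point that the paper leaves implicit.
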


Morita's refinement 
\[
\tilde{\tau}_n\colon J_n\M \to H_3(\pi_1\Sigma_{g,1}/\pi_1\Sigma_{g,1}(n+1))
\]
of the $n$-th Johnson homomorphism is introduced in \cite{Mor93}.
There are several equivalent homomorphisms such as Heap's homomorphism in \cite{Hea06} and Massuyeau's infinitesimal analogue in \cite[Section~4]{Mas12} (see also \cite[Section~3.3]{HaMa12}).
As we show in Theorem~\ref{thm:Z-homo},
$\bar{z}_{2n}\colon Y_{2n-1}\I\C\to \A_{2n}^c\otimes\Q/\Z$ extends to a homomorphism on $Y_n\I\C$.
\begin{cor}\label{cor:kerneljohnson}
For $1\le n\le g-2$,
each of the $0$-loop part and $1$-loop part of the homomorphism $\bar{z}_{2n}\colon Y_{n}\I\C \to \A_{2n}^c\otimes\Q/\Z$ restricted to $\I(n)$ is non-trivial,
and the restriction does not factor through Morita's refinement $\tilde{\tau}_n$.
\end{cor}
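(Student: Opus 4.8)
The plan is to deduce both assertions from Theorem~\ref{thm:restriction-torelli} together with the extension furnished by Theorem~\ref{thm:Z-homo}, exploiting the elementary inclusion $\I(2n-1)\subseteq\I(n)$ (valid since $2n-1\ge n$) and the fact that Morita's refinement $\tilde{\tau}_n$ records only degree-$n$ information. First I would set up the extension: by Theorem~\ref{thm:Z-homo} the homomorphism $\bar{z}_{2n}$ is defined on all of $Y_n\I\C$ and restricts, on $Y_{2n-1}\I\C\subseteq Y_n\I\C$, to the mod-$Y_{2n}$ homomorphism $Y_{2n-1}\I\C/Y_{2n}\to\A_{2n}^c\otimes\Q/\Z$ of Theorem~\ref{thm:restriction-torelli}. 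Since $\cc(\I(n))\subseteq Y_n\I\C$ and $\I(2n-1)\subseteq\I(n)$, I can evaluate $\bar{z}_{2n}\circ\cc$ on elements of $\I(2n-1)$, viewed inside $\I(n)$, and compare the value with the one computed in $Y_{2n-1}\I\C/Y_{2n}$.

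For the non-triviality statement, Theorem~\ref{thm:restriction-torelli} asserts that the $0$-loop and $1$-loop parts of $\bar{z}_{2n}$ are non-trivial homomorphisms already on the torsion part of $\Im(\I(2n-1)/\I(2n)\to Y_{2n-1}\I\C/Y_{2n})$. Choosing, for each of the two loop-parts, a mapping class $x\in\I(2n-1)$ with torsion image on which that part is non-zero, and regarding $x$ as an element of $\I(n)$ through the inclusion, I obtain $\bar{z}_{2n}(\cc(x))\ne 0$. Hence each of the two parts of $\bar{z}_{2n}|_{\I(n)}$ is non-trivial, which is the first claim.

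It then remains to prove that the restriction does not factor through $\tilde{\tau}_n$; since $\tilde{\tau}_n$ is a homomorphism, it suffices to exhibit $x\in\I(n)$ with $\tilde{\tau}_n(x)=0$ but $\bar{z}_{2n}(\cc(x))\ne 0$. I take the same $x\in\I(2n-1)$ as above. The key point is that $\tilde{\tau}_n$ factors through $J_n\M/J_{n+1}\M$, equivalently $\tilde{\tau}_n|_{J_{n+1}\M}=0$: under the identification of $\tilde{\tau}_n$ with the degree-$n$ part of the tree-reduction of $\log\Ztilde^Y$ (Massuyeau's infinitesimal analogue, cited as equivalent to Morita's and Heap's homomorphisms), $\tilde{\tau}_n(\varphi)$ depends only on the degree-$n$ tree term of $\Ztilde^Y(\cc(\varphi))$ and vanishes when the leading tree term lies in degree $>n$. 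For $n\ge 2$ this applies directly, since $x\in\I(2n-1)\subseteq J_{2n-1}\M\subseteq J_{n+1}\M$ and all terms of $\log\Ztilde^Y(\cc(x))$ have degree $\ge 2n-1>n$; for $n=1$ the torsion image of $x$ in $Y_1\I\C/Y_2$ forces the degree-$1$ tree term to vanish (the degree-$1$ target being torsion-free), so $\tau_1(x)=0$ and $x\in\K=J_2\M=\Ker\tilde{\tau}_1$. In either case $\tilde{\tau}_n(x)=0$ while $\bar{z}_{2n}(\cc(x))\ne 0$, giving the desired contradiction for both the $0$-loop and the $1$-loop parts.

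The main obstacle I expect is justifying cleanly that the precise version of $\tilde{\tau}_n$ used here factors through $J_n\M/J_{n+1}\M$, and that the several equivalent homomorphisms (Morita's, Heap's, Massuyeau's) all share this vanishing property. This is exactly where I would invoke the identification of $\tilde{\tau}_n$ with the leading tree part of $\log\Ztilde^Y$, so that the degree count $2n-1>n$ for $n\ge 2$, and the torsion-freeness of the degree-$1$ target for $n=1$, do the work; everything else is a formal consequence of Theorems~\ref{thm:restriction-torelli} and~\ref{thm:Z-homo} together with the inclusion $\I(2n-1)\subseteq\I(n)$.
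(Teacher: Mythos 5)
Your first assertion (the non-triviality of the $0$-loop and $1$-loop parts of $\bar{z}_{2n}$ on $\I(n)$) is argued correctly and exactly as in the paper: Theorem~\ref{thm:Z-homo} extends $\bar{z}_{2n}$ to $Y_n\I\C/Y_{2n}$, and the torsion classes produced in Theorem~\ref{thm:restriction-torelli} lie in $\Im(\I(2n-1)\to Y_{2n-1}\I\C/Y_{2n})\subseteq\Im(\I(n)\to Y_n\I\C/Y_{2n})$ and have non-zero image under each loop part.

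Your argument for the second assertion, however, has a genuine gap. You claim that $\tilde{\tau}_n$ factors through $J_n\M/J_{n+1}\M$, i.e.\ that $\tilde{\tau}_n|_{J_{n+1}\M}=0$, and you justify this by identifying $\tilde{\tau}_n$ with the degree-$n$ tree term of $\log\Ztilde^Y$. Both points are wrong: the degree-$n$ tree term is (up to sign) the Johnson homomorphism $\tau_n$ itself, whereas Morita's refinement takes values in $H_3(\pi_1\Sigma_{g,1}/\pi_1\Sigma_{g,1}(n+1))$ and is a \emph{strict} refinement of $\tau_n$ precisely because it does not vanish on $J_{n+1}\M$ --- for instance, $\tilde{\tau}_2$ restricted to deeper terms of the Johnson filtration carries the information used together with the Casson invariant in Theorem~\ref{thm:JohnsonKernel} and in \cite{MSS20}, which would be vacuous if $\Ker\tilde{\tau}_2\supseteq J_3\M$. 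Consequently, for $n\ge2$ the degree count $2n-1>n$ does not yield $\tilde{\tau}_n(x)=0$ for your chosen $x\in\I(2n-1)$. The paper's argument sidesteps this entirely: $\tilde{\tau}_n$ extends to a homomorphism on $Y_n\I\C$ by \cite{Mas12}, its target is a free $\Z$-module, and the classes $\ss(T(a_{n+1},\dots,a_1,\dots,a_{n+1}))$ and $\ss(O(a_n,\dots,a_1,\dots,a_n))$ are of order $2$; a homomorphism into a torsion-free group annihilates torsion, so these classes lie in $\Ker\tilde{\tau}_n$ while $\bar{z}_{2n}$ is non-zero on them. Since you already have the correct $2$-torsion elements in hand, replacing your degree-vanishing claim by this torsion-freeness observation repairs the proof; as written, the key step would fail.
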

Let $\I\H = \I\H_{g,1}$ be the homology cobordism group of homology cylinders over $\Sigma_{g,1}$. 
We see that the $0$-loop part of $\bar{z}_{2n}$ factors through $Y_{2n-1}\I\H/Y_{2n}$ in Section~\ref{section:satolevine}
and that the 1-loop part does not in Section~\ref{section:proof-mainthm},
where $Y_n\I\H$ is the image of the $Y$-filtration $Y_n\I\C$ under the projection $q\colon \I\C\to\I\H$.

Let $\K\C$ and $\K$ denote the kernels of the first Johnson homomorphisms on $\I\C$ and $\I=J_1\M$, respectively.
In other words, $\K=J_2\M$.
We review a homomorphism $\K\C \to \Z$ arising from the Casson invariant (see \cite[Section~1]{MaMe13} or \cite[Section~6.8]{CHM08}).
Let $e \colon \Sigma_{g,1}\cup_\partial\Sigma_{0,1} \hookrightarrow S^3$ be an embedding such that its image is a Heegaard surface.
Then, for $(M,m) \in \K\C$, one can obtain an integral homology 3-sphere $M_e$ by replacing a tubular neighborhood $e(\Sigma_{g,1}) \times [-1,1] \subset S^3$ with $M$ via the homeomorphism $m$.
The Casson invariant of $M_e$ gives homomorphisms $\K\C \to \Z$ which depend on the choice of $e$,
and the difference of two such homomorphisms restricted to $\K$ is known to be written in terms of the second Johnson homomorphism $\tau_2$.
See \cite[Theorem 6.1(iii)]{Mor89}, for details. 

For $n\ge1$,
let $O(a_1,a_2,a_3,\ldots, a_n)\in\A_{n}^c$ denote the $1$-loop Jacobi diagram
\[
O(a_1,a_2,a_3,\ldots, a_n)=
\begin{tikzpicture}[baseline=1.4ex, scale=0.25, dash pattern={on 2pt off 1pt}]
\draw (0,0) circle [radius=2]; 
\draw (10:2) -- (10:4);
\draw (50:2) -- (50:4);
\draw (90:2) -- (90:4);
\draw (130:2) -- (130:4);
\node [right] at (10:3.8) {$a_3$};
\node [above right] at (50:3.6) {$a_2$};
\node [above] at (90:3.8) {$a_1$};
\node [above left] at (130:3.6) {$a_n$};
\node at (-10:3){$\cdot$};
\node at (-25:3){$\cdot$};
\node at (-40:3){$\cdot$};
\node at (150:3){$\cdot$};
\node at (165:3){$\cdot$};
\end{tikzpicture}
\]
for $a_1,a_2,a_3,\ldots,a_n\in\{1^\pm,2^\pm,\ldots,g^\pm\}$.
\begin{theorem}\label{thm:JohnsonKernel}
For $g\ge 4$, the degree $4$ part of $\Ztilde^Y$ induces a homomorphism
\[
\bar{z}_4\colon\K\C \to \frac{\A_4^c\otimes\Q/\Z}{\braket{\frac{1}{2}\delta O(a,b,b) \mid a,b \in \{1^{\pm},2^{\pm},\ldots, g^{\pm}\}}},
\]
and its restriction to the intersection of
$\Ker\tilde{\tau}_2\subset \K$
and the kernels of the homomorphisms induced by the Casson invariant is non-trivial.
\end{theorem}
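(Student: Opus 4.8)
The plan is to split the statement into the construction of the homomorphism $\bar z_4$ on $\K\C$ and the verification of its non-triviality on the prescribed subgroup of $\K$, with the bulk of the work in the latter. First I would record that a homology cylinder lies in $\K\C$ exactly when its first Johnson homomorphism vanishes, and that by clasper calculus this coincides with lying in $Y_2\I\C$; hence Theorem~\ref{thm:Z-homo} already supplies a homomorphism $\bar z_4\colon\K\C\to\A_4^c\otimes\Q/\Z$. The role of the quotient by $\langle\frac{1}{2}\delta O(a,b,b)\rangle$ is to absorb the contribution of the Casson invariant to the $1$-loop part: since the extension depends on the chosen basis of $\pi_1\Sigma_{g,1}$, equivalently on the embedding $e$ entering the Casson construction, I would show that altering this data changes $\bar z_4$ only by elements of the form $\frac{1}{2}\delta O(a,b,b)$, so that composing with the projection yields a canonical homomorphism on $\K\C$. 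Concretely this amounts to matching the $1$-loop degree-$4$ term produced by the theta-graph part of $\Ztilde^Y(M_e)$ with the subgroup generated by the $\frac{1}{2}\delta O(a,b,b)$.

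For the non-triviality I would exploit the inclusion $\Ker\tilde\tau_2\subset\Ker\tau_2=J_3\M$. An element of $\Ker\tilde\tau_2$ has trivial second Johnson homomorphism, so its image under $\cc$ lies in $Y_3\I\C$ and its class in $Y_3\I\C/Y_4$ is computed by the surgery map $\ss$. On this piece Theorem~\ref{thm:main} gives $\bar z_4(\ss(J))=\frac{1}{2}\delta(J)$ for $J\in\A_3^c$, which reduces the problem to exhibiting an honest mapping class $\varphi\in\I(3)\subset\K$ --- this is where $g\ge4$ re-enters, matching the case $n=2$ of Theorem~\ref{thm:restriction-torelli} --- that lies in $\Ker\tilde\tau_2$ and in every Casson kernel while having $\frac{1}{2}\delta(J)\neq0$ in the quotient. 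The cleanest route is the $0$-loop (tree) part: the subgroup we quotient by is generated by $1$-loop diagrams, so provided its intersection with the $0$-loop summand of $\A_4^c\otimes\Q/\Z$ is trivial --- which I would read off from the explicit description of $\delta$ in Section~\ref{sec:delta} --- it suffices to produce a torsion element on which the $0$-loop part of $\bar z_4$ is non-zero, which is exactly the output of the case $n=2$ of Theorem~\ref{thm:restriction-torelli}.

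It then remains to check that the witnessing element can be taken inside $\Ker\tilde\tau_2\cap(\text{Casson kernels})$. Since Morita's refinement is determined by the $Y_2/Y_3$-data together with a mod-$2$ correction, its vanishing on an element of $\Ker\tau_2$ is a finite condition that I would verify on the explicit clasper presenting $\varphi$. For the Casson condition, all Casson-invariant homomorphisms agree on $\Ker\tau_2$, their differences being governed by $\tau_2$, so the intersection of their kernels is cut out by the single equation $\lambda(M_e)=0$; I would arrange this either by choosing the clasper so that $M_e$ is an integral homology sphere with vanishing Casson invariant, or by correcting $\varphi$ by an element of the Casson kernel whose $\frac{1}{2}\delta$-image lies in $\langle\frac{1}{2}\delta O(a,b,b)\rangle$ and is therefore invisible in the quotient. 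For the sharper $1$-loop version, where the $0$-loop shortcut is unavailable, the decisive step is a linear-algebra computation in $\A_4^c$ proving $\frac{1}{2}\delta(J)\notin\langle\frac{1}{2}\delta O(a,b,b)\rangle$, which I would carry out by exhibiting a degree-$4$ Jacobi diagram whose coefficient is genuinely half-integral in $\frac{1}{2}\delta(J)$ but integral on every generator $\frac{1}{2}\delta O(a,b,b)$.

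The main obstacle I anticipate is precisely this interplay of the mod-$\Z$ reduction with the quotient in the $1$-loop part: one must keep $\varphi$ inside $\Ker\tilde\tau_2\cap(\text{Casson kernels})$ --- the locus on which the classical degree-$2$ and theta-graph invariants are already exhausted --- while guaranteeing that its degree-$4$ image escapes the subgroup $\langle\frac{1}{2}\delta O(a,b,b)\rangle$ encoding the Casson invariant. Showing that $\bar z_4$ detects information strictly beyond the Casson invariant and $\tilde\tau_2$ combined is the heart of the theorem, and I expect it to rest on a careful half-integral coefficient count rather than on any diagram identity valid over $\Z$.
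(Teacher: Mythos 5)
Your construction of $\bar z_4$ on $\K\C$ rests on the claim that $\K\C$ coincides with $Y_2\I\C$, and this is false: by \cite[Theorem~1.3]{MaMe03}, $\I\C/Y_2\cong\Lambda^3H\times_{\Lambda^3H\otimes\Z/2\Z}B_3$ with $\tau_1$ the projection to $\Lambda^3H$, so $\K\C/Y_2$ is non-trivial --- it carries the Birman--Craggs information, and Lemma~\ref{lem:value-of-z2} records that $\bar z_2$ is already non-zero on it. Consequently Theorem~\ref{thm:Z-homo} only supplies a homomorphism on $Y_2\I\C/Y_4$, not on $\K\C$, and the quotient by $\langle\frac{1}{2}\delta O(a,b,b)\rangle$ is not there to absorb any dependence on the basis or on the Heegaard embedding $e$: it is exactly what is needed to repair the failure of additivity on $\K\C$. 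The paper's Lemma~\ref{lem:barz-homo} shows that for $M,N\in\K\C$ one has $\Ztilde^Y_4(M\circ N)-\Ztilde^Y_4(M)-\Ztilde^Y_4(N)=\Ztilde^Y_2(M)\star\Ztilde^Y_2(N)$, and then uses the explicit generators of $\bar z_2(\K\C/Y_2)$ to check that the connected part of this product lies in $\Im\iota+\langle\frac{1}{2}\delta O(a,b,b)\rangle$. That computation is the actual content of the first half of the theorem and is missing from your proposal; your substitute argument would not establish that the map is a monoid homomorphism. A related slip: $\Ker\tilde\tau_2\subset J_3\M$ does not map into $Y_3\I\C$ under $\cc$ (the containment between the two filtrations goes the other way, $Y_n\I\C\subset J_n\I\C$), so you cannot reduce an arbitrary element of $\Ker\tilde\tau_2$ to the image of $\ss$ on $\A_3^c$.

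For the non-triviality your plan is essentially the paper's: take the order-two element $\ss(T(a_3,a_2,a_1,a_2,a_3))\in\Im(\I(3)\to Y_3\I\C/Y_4)$ produced in Lemma~\ref{lem:oneloopproduct} (the $n=2$ case of Theorem~\ref{thm:restriction-torelli}), whose $0$-loop image under $\bar z_4$ is $\nu([a_1,[a_2,a_3]])\ne0$ and survives the quotient because $\langle\frac{1}{2}\delta O(a,b,b)\rangle$ contains no tree part. Your handling of the Morita and Casson conditions is, however, more laborious than necessary and partly misdirected: since this element is $2$-torsion and both $\tilde\tau_2$ and the Casson homomorphisms extend to $\K\C$ with torsion-free targets, they vanish on it automatically, so no choice of embedding, no vanishing of $\lambda(M_e)$, and no correction term are required.
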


The Johnson homomorphism can be considered also for a closed surface of genus $g$, and let $\K_g$ denote the kernel of the first Johnson homomorphism.
In \cite{MSS20},
Morita, Sakasai, and the third author determined the first homology $H_1(\K_g;\Q)$,
and showed that $\tilde{\tau}_2$ and the Casson invariant give an isomorphism between $H_1(\K_g;\Q)$ and their images for $g\ge6$.
We see that Theorem~\ref{thm:JohnsonKernel} holds also in the case of closed surfaces, and conclude the following.

\begin{cor}
\label{cor:H1Kg}
For $g\ge6$, the torsion subgroup of $H_1(\K_g;\Z)$ is non-trivial.
\end{cor}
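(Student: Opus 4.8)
The plan is to reduce the corollary to two ingredients and then conclude by a short homological argument. The first ingredient is the closed-surface analogue of Theorem~\ref{thm:JohnsonKernel}: a homomorphism $\bar{z}_4$ on the closed-surface Johnson kernel $\K_g$ whose restriction to the intersection of $\Ker\tilde{\tau}_2$ with the kernel of the homomorphism induced by the Casson invariant is non-trivial. The second ingredient is the result of \cite{MSS20} that for $g\ge 6$ the pair $(\tilde{\tau}_2,\mathrm{Casson})$ induces an isomorphism from $H_1(\K_g;\Q)$ onto its image.

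Granting these, I would argue as follows. By the first ingredient there is an element $x\in\K_g$ with $\tilde{\tau}_2(x)=0$ and $\mathrm{Casson}(x)=0$, yet $\bar{z}_4(x)\ne 0$. Under the isomorphism of the second ingredient, the rationalization of the class $[x]\in H_1(\K_g;\Z)$ is sent to $(\tilde{\tau}_2(x),\mathrm{Casson}(x))=0$, so $[x]$ maps to $0$ in $H_1(\K_g;\Q)=H_1(\K_g;\Z)\otimes\Q$. As the kernel of $H_1(\K_g;\Z)\to H_1(\K_g;\Z)\otimes\Q$ is precisely the torsion subgroup, $[x]$ is a torsion class. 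On the other hand, since $\bar{z}_4$ takes values in an abelian group it factors through the abelianization $H_1(\K_g;\Z)$, so $\bar{z}_4(x)\ne 0$ forces $[x]\ne 0$. Thus $[x]$ is a non-trivial torsion element, and the torsion subgroup of $H_1(\K_g;\Z)$ is non-trivial.

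The main obstacle is therefore the first ingredient, namely carrying Theorem~\ref{thm:JohnsonKernel} over to the closed surface. I would exploit the surjection $\M_{g,1}\to\M_g$ obtained by capping $\partial\Sigma_{g,1}$ with a disk, which restricts to a homomorphism $\K\to\K_g$ and is compatible with $\tilde{\tau}_2$ and with the Casson homomorphism through the induced map on $\pi_1$. The closed-surface $\bar{z}_4$ is then defined by applying the homology-cylinder invariant of the bounded case to a lift in $\K$ and projecting $\A_4^c$ along the relations produced by capping the boundary. Two checks are required: first, independence of the lift, i.e.\ that this composite kills the image in the closed quotient of $\A_4^c\otimes\Q/\Z$ of the kernel of $\K\to\K_g$ (generated by the boundary twist and the point-pushing subgroup); and second, the delicate point that the explicit Jacobi diagram detecting $x$ in Theorem~\ref{thm:JohnsonKernel}, built from the surgery map and the formula of Theorem~\ref{thm:main}, is not annihilated by the capping projection, so that $\bar{z}_4(x)$ remains non-zero in the closed-surface quotient of $\A_4^c\otimes\Q/\Z$. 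Since only the finitely many capping relations can interfere, I expect this survival to be checkable directly on the chosen generator, which completes the closed-surface version and hence the corollary.
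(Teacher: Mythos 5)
Your proposal follows essentially the same route as the paper: both reduce the corollary to (i) a closed-surface version of Theorem~\ref{thm:JohnsonKernel}, obtained by descending the bounded-surface invariant modulo the relations coming from capping the boundary (the paper packages these as the ideal $\overline{I}_4^c$ from \cite{HaMa09,HaMa12}), and (ii) the result of \cite{MSS20} that $\tilde{\tau}_2$ and the Casson invariant detect all of $H_1(\K_g;\Q)$, after which the torsion conclusion is the same short homological argument. The one step you defer --- that the detecting class $\tfrac{1}{2}\delta(T(a_3,a_2,a_1,a_2,a_3))$ survives the capping projection --- is exactly the check the paper performs, by observing that the $0$-loop part $\overline{I}_{4,0}$ is generated by the elements $\sum_{i=1}^{g} T(i^+,i^-,a_1,a_2,a_3,a_4)$ and that the detecting tree is not of this form when the labels $a_j,a_j^\ast$ are distinct.
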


The second application of $\bar{z}_{n+1}$ and $\delta$ is the determination of the module $Y_3\I\C/Y_4$.
Massuyeau and Meilhan showed that the natural inclusion $\cc \colon \I\to \I\C$ induces an isomorphism $H_1(\I;\Z)\cong \I\C/Y_2$ in \cite[Theorem~1.3]{MaMe03}.
They also investigated the $Y_3$-equivalence classes,
and showed that the surgery map $\ss$ gives an isomorphism $\A_2^c \cong Y_2\I\C/Y_3$ in \cite[Corollary~5.1]{MaMe13}.
We determine the abelian group $Y_3\I\C/Y_4$ as described below.
After fixing a symplectic basis $\{\alpha_1,\beta_1,\dots,\alpha_g,\beta_g\}$ of $H$, we identify $H$ with the free module generated by the set $\{1^{\pm},2^{\pm},\ldots,g^{\pm}\}$ of labels via the correspondence $\alpha_i \leftrightarrow i^{-}$ and $\beta_i \leftrightarrow i^{+}$.
For $n\ge3$, let us denote by $T(a_1,a_2,\ldots, a_n)\in \A_{n-2}^c$ the tree Jacobi diagram as
\[
T(a_1,a_2,\ldots, a_n)=
\begin{tikzpicture}[baseline=2ex, scale=0.25, dash pattern={on 2pt off 1pt}]
\node [left] at (0,0){$a_1$};
\node [above] at (2,2){$a_2$};
\node [above] at (4,2){$a_3$};
\node [below] at (6,2){$\cdots$};
\node [below] at (8.3,2){$\cdot$};
\node [below] at (9,2){$\cdot$};
\node [above] at (10,2){$a_{n-1}$};
\node [right] at (12,0){$a_n$.};
\draw (0,0) -- (12,0);
\draw (2,0) -- (2,2);
\draw (4,0) -- (4,2);
\draw (10,0) -- (10,2);
\end{tikzpicture}
\]

\begin{theorem}\label{thm:str-of-y3}
There is an exact sequence
\[
\begin{CD}
0@>>>
(\Lambda^3H\oplus \Lambda^2H)\otimes\Z/2\Z@>j>>
\A_3^c@>\ss>>
Y_3\I\C/Y_4@>>>0
\end{CD}
\]
of abelian groups, where $j$ is defined by
\begin{align*}
&j(a\wedge b\wedge c)=T(a,b,c,b,a)+T(b,c,a,c,b)+T(c,a,b,a,c),\\
&j(a\wedge b)=O(a,b,b)+O(b,a,a)
\end{align*}
for $a,b,c\in\{1^{\pm},\ldots,g^{\pm}\}$. 
Especially, we have
\[
Y_3\I\C/Y_4\cong
(L_3\oplus S^2H)\otimes\Z/2\Z\oplus (D_3\oplus \Lambda^3 H),
\]
where $D_3=\Ker(H\otimes L_4\to L_5)$ is the kernel of the bracket map.
\end{theorem}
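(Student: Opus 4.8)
The plan is to take the surjectivity of $\ss$ as the starting point and reduce the theorem to a computation of $\Ker\ss$, which I would pin down by playing the leading-order part of the LMO functor against the $2$-torsion detected by $\bar z_4$. First, $\ss$ is surjective: by clasper calculus every class in $Y_3\I\C/Y_4$ is realized by surgery on a disjoint union of connected degree-$3$ graph claspers, and such surgeries are precisely the images under $\ss$ of connected Jacobi diagrams (the standard surjectivity of the surgery map onto the associated graded, used already by Massuyeau--Meilhan for $Y_2\I\C/Y_3$). For the kernel I would use that $\Ztilde^Y_{<3}(M)=\emptyset$ for $M\in Y_3\I\C$, so the degree-$3$ term $\log\Ztilde^Y_3$ descends to $Y_3\I\C/Y_4$ and the composite $\log\Ztilde^Y_3\circ\ss$ is, up to sign, the canonical map $\A_3^c\to\A_3^c\otimes\Q$. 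Hence $\ss\otimes\Q$ is an isomorphism and $\Ker\ss\subseteq\tor(\A_3^c)$; it therefore suffices to compute $\A_3^c$ with its torsion and then to single out the classes that die under $\ss$.

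Next I would determine $\A_3^c$ as a $\Z$-module by splitting it according to the first Betti number of the underlying graph. A degree-$3$ connected diagram has $5$, $3$, or $1$ univalent vertices according to whether it has $0$, $1$, or $2$ independent loops. The $2$-loop part (a single leg) is shown to vanish by $\AS$ and $\IHX$; the tree part is generated by the $T(a_1,\dots,a_5)$ modulo $\AS$ and $\IHX$, and the $1$-loop part by the $O(a_1,a_2,a_3)$ modulo the dihedral symmetries from $\AS$ together with $\IHX$. An $\Sp$-equivariant/combinatorial analysis then identifies the tree and $1$-loop parts as extensions whose rational parts are $D_3$ and $\Lambda^3H$, respectively, and whose torsion is entirely $2$-torsion; in particular the free summand $D_3\oplus\Lambda^3H$ of the theorem is already visible and $\tor(\A_3^c)$ is a $2$-group.

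It remains to prove $\Im j=\Ker\ss$ with $j$ injective. For the inclusion $\Im j\subseteq\Ker\ss$ I would exhibit explicit $Y_4$-equivalences realizing the vanishing of $\ss(T(a,b,c,b,a)+T(b,c,a,c,b)+T(c,a,b,a,c))$ and of $\ss(O(a,b,b)+O(b,a,a))$ by clasper moves. For the reverse inclusion I would invoke the commuting square of Theorem~\ref{thm:main}, $\bar z_4\circ\ss=(\id\otimes\tfrac12)\circ\delta$: since $\id\otimes\tfrac12\colon\A_4^c\otimes\Z/2\Z\to\A_4^c\otimes\Q/\Z$ is injective, every $x\in\Ker\ss$ satisfies $\delta(x)=0$, so $\Ker\ss\subseteq\Ker(\delta)\cap\tor(\A_3^c)$. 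Evaluating the explicit formula for $\delta$ from Section~\ref{sec:delta} on the $2$-torsion found above and computing its kernel, I expect to obtain exactly $\Im j$; the chain $\Im j\subseteq\Ker\ss\subseteq\Ker(\delta)\cap\tor(\A_3^c)\subseteq\Im j$ then forces equality. The injectivity of $j$ together with the resulting identifications of the tree and $1$-loop quotients finally yields $Y_3\I\C/Y_4\cong(L_3\oplus S^2H)\otimes\Z/2\Z\oplus(D_3\oplus\Lambda^3H)$.

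The main obstacle is the inclusion $\Im j\subseteq\Ker\ss$. Unlike every other step it cannot be reduced to a diagrammatic computation, because a detection homomorphism can only prove nontriviality, never $Y_4$-triviality; one genuinely has to lift the two families of $2$-torsion relations in $\A_3^c$ to explicit $Y_4$-trivial clasper surgeries on homology cylinders. A secondary difficulty is the honest evaluation of $\delta$ on $\tor(\A_3^c)$ and the bookkeeping identifying its kernel with $\Im j$, which rests on both the explicit formula for $\delta$ and the module structure of the tree and $1$-loop parts.
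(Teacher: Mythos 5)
Your architecture coincides with the paper's: surjectivity of $\ss$ together with $\Ztilde^Y_3\circ\ss=\pm\iota$ reduces everything to $\Ker\ss\subseteq\tor\A_3^c$; the vanishing $\A_{3,2}^c=0$ and the identifications $\tor\A_{3,0}^c\cong(H\otimes L_2)\otimes\Z/2\Z$ and $\tor\A_{3,1}^c\cong H^{\otimes 2}\otimes\Z/2\Z$ are exactly the inputs used there; and the inclusion $\Im j\subseteq\Ker\ss$ is indeed established by explicit clasper moves with special leaves (Lemma~\ref{lem:kernel of surgery map}, via Lemmas~\ref{lem:crossingchange}--\ref{lem:asrelation} and \ref{lem:relationclasper}) --- you correctly single this out as the step no invariant can deliver. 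The one place you diverge, and where there is a genuine (if repairable) gap, is the reverse inclusion. You infer $\delta(x)=0$ for $x\in\Ker\ss$ from the injectivity of $\id\otimes\frac12\colon\A_4^c\otimes\Z/2\Z\to\A_4^c\otimes\Q/\Z$; but this map is injective only if $\A_4^c$ has no $2$-torsion, and while the tree and $1$-loop parts $\A_{4,0}^c$ and $\A_{4,1}^c$ are torsion-free (Theorem~\ref{thm:treepart}, Proposition~\ref{prop:oneloopbasis}), the $2$-loop part $\A_{4,2}^c$ --- which is exactly where $\delta''$ of a degree-$3$ one-loop diagram lands --- is not controlled anywhere. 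The paper never passes back from $\Q/\Z$ to $\Z/2\Z$: instead it bounds $|\Im(\bar z_4\circ\ss|_{\tor\A_3^c})|$ from below (the values $\frac12O(a,a,b,b)+\frac12\Theta(a,b)$ are nonzero and independent because their $1$-loop components already are, so the unknown $2$-loop contribution is irrelevant) and closes the argument by the order count $|\Ker\ss|\cdot|\Im(\ss|_{\tor\A_3^c})|=|\tor\A_3^c|$ against the lower bound $|\Ker\ss|\ge|(\Lambda^3H\oplus\Lambda^2H)\otimes\Z/2\Z|$.

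A second caveat: the step you defer as ``bookkeeping'' --- showing $\Ker(\tfrac12\delta)\cap\tor\A_{3,0}^c=\Im\Delta_{1,0}$ --- is not a self-contained diagram calculation. Deciding when a mod-$2$ combination of degree-$4$ trees vanishes in $\A_{4,0}^c\otimes\Z/2\Z$ requires the structure of $\A_{4,0}^c\cong D_4'$, and the paper instead identifies the tree part of $\bar z_4\circ\ss$ with $\nu\circ\kappa$ (Proposition~\ref{prop:highersatolevine}) and feeds in Levine's quasi-Lie algebra results together with the Conant--Schneiderman--Teichner analysis of the higher Sato--Levine invariant (Theorem~\ref{thm:inj of tree part}, Lemmas~\ref{lem:kernelsatolevine} and \ref{lem:periodicpart}). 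Your plan goes through once these two points are addressed, but they should be named rather than absorbed into a direct evaluation of $\delta$.
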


This theorem is related with the Goussarov-Habiro conjecture of finite-type invariants of homology cylinders discussed in Section~\ref{sec:FTI}.

\subsection*{Acknowledgments}
The authors would like to thank \mbox{Gw\'ena\"el Massuyeau}, \mbox{Jean-Baptiste Meilhan} and \mbox{Kazuo Habiro} for useful discussions,
and \mbox{Takuya Sakasai} for informing them about the higher Sato-Levine invariant. 
This study was supported in part by JSPS KAKENHI 18K03310 and 16K05159.
The first author was supported by Iwanami Fujukai Foundation.

\section{Preliminaries}
We establish some notation on homology cylinders, graph claspers, and Jacobi diagrams following \cite{Hab00C} and \cite{CHM08},
and review the definition of the LMO functor.

\subsection{Homology cylinders and the homology cobordism group}
\label{sec:homology cylinder}
A \emph{homology cylinder} over $\Sigma_{g,1}$ is a compact connected oriented 3-manifold $M$ equipped with an orientation-preserving homeomorphism
$m\colon \partial(\Sigma_{g,1}\times [-1,1])\to \partial M$
such that the two inclusions
$m_{\pm}=m|_{\Sigma_{g,1}\times\{\pm1\}}\colon\Sigma_{g,1}\times\{\pm1\}\to M$ induce the same isomorphism on their integral homology groups. 
Two homology cylinders $(M, m)$ and $(M', m')$ are said to be \emph{equivalent} if there exists an orientation-preserving
homeomorphism $\Phi\colon M\to M'$ such that $\Phi\circ m=m'$.
A homology cylinder is a special case of a cobordism which is a 3-manifold whose boundary consists of $\Sigma_{g_{+},1}$, $\Sigma_{g_{-},1}$ and annulus without any homological condition (see \cite[Definition~2.2]{CHM08} for the precise definition).

For two homology cylinders $(M,m)$ and $(M',m')$,
we obtain a new homology cylinder $M\circ M'$
by pasting the top of $M$ and the bottom of $M'$ using the identification $m'_-\circ (m_+)^{-1}$.
More generally, for two cobordisms $M$ from $\Sigma_{q,1}$ to $\Sigma_{r,1}$ and $M'$ from $\Sigma_{p,1}$ to $\Sigma_{q,1}$,
we obtain a new cobordism $M\circ M'$ from $\Sigma_{p,1}$ to $\Sigma_{r,1}$ in the same way.
We denote by $\I\C$ the set of equivalence classes of homology cylinders,
which forms a monoid with this composition.

Two homology cylinders $M$ and $M'$ over $\Sigma_{g,1}$ are said to be \emph{homology cobordant} if the closed oriented 3-manifold $M\cup_{m \circ (m')^{-1}} (-M')$ bounds a compact oriented smooth 4-manifold $W$ such that the inclusions induce isomorphisms between their integral homology groups.
We denote by $\sim_H$ this equivalence relation,
and also denote the quotient by $\I\H=\I\C/{\sim_H}$,
which is called the \emph{homology cobordism group} of homology cylinders.
We denote the projection map by $q\colon\I\C\to\I\H$.

\subsection{Bottom-top tangles}
\label{sec:bt_tangle}
For  $g\ge1$,
choose $g$ pairs of points $(p_1,q_1),\ldots,(p_g,q_g)$ in $[-1,1]^2$ uniformly in the first direction as $p_i=(\frac{2(2i-g)-3}{2g+1},0)$ and $q_i=(\frac{2(2i-g)-1}{2g+1},0)$.
Let us call a homology cylinder over $[-1,1]^2$ a \emph{homology cube}.
Let $B$ be a homology cube and identify $\partial[-1,1]^3$ with $\partial B$.
For $g,h\ge1$,
let $\gamma=(\gamma^+,\gamma^-)$ be a framed oriented tangle in $B$
with $g$ top components $\gamma_1^{+},\ldots, \gamma_g^{+}$ and $h$ bottom components $\gamma_1^{-},\ldots, \gamma_h^{-}$
such that each $\gamma_j^{-}$ runs from $q_j\times\{-1\}$ to $p_j\times\{-1\}$,
and each $\gamma_j^{+}$ runs from $p_j\times\{1\}$ to $q_j\times\{1\}$.
Such a pair $(B,\gamma)$ is called a \emph{bottom-top tangle of type $(g,h)$} in a homology cube.
We give examples of bottom-top tangles in Table~\ref{tab:ElemCob}.

If we are given a bottom-top tangle $(B,\gamma)$ of type $(g,h)$ in a homology cube,
by digging $B$ along the tangle $\gamma$,
we obtain a cobordism $(M,m)$ from the top surface $\Sigma_{g,1}$ to the bottom surface $\Sigma_{h,1}$, where the homeomorphism $m \colon \Sigma_{g,1} \cup (S^1\times[-1,1]) \cup \Sigma_{h,1} \to \partial M$ is uniquely determined (up to isotopy) by the framing of the tangle $\gamma$.
See \cite[Theorem~2.10]{CHM08}, for details.
Assume that $g=h$ and that the linking matrix
\[
\Lk(\gamma)=
\begin{pmatrix}
O_g&I_g\\
I_g&O_g
\end{pmatrix},
\]
where $O_g$ and $I_g$ are the zero matrix and identity matrix of size $g$, respectively.
In this case, we obtain a homology cylinder over $\Sigma_{g,1}$ (see \cite[Section~8.1]{CHM08}).
Let $\alpha_1, \beta_1,\ldots, \alpha_g,\beta_g$ be the oriented simple closed curves in Figure~\ref{fig:Sigma_g1}.
\begin{figure}[h]
 \centering
 \includegraphics[width=0.5\textwidth]{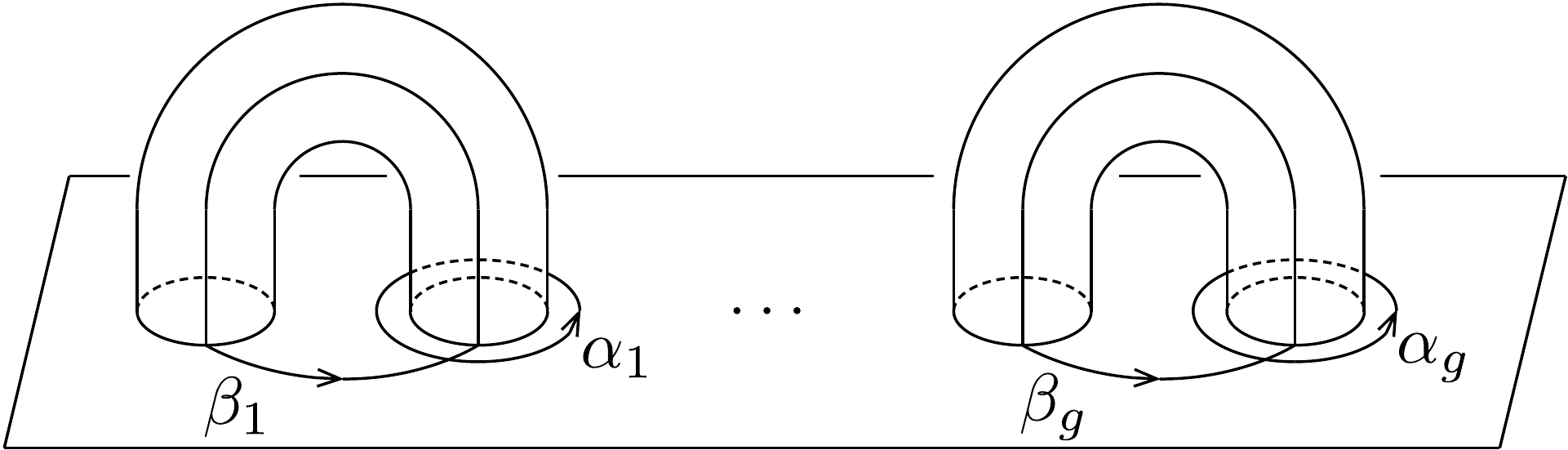}
 \caption{Oriented simple closed curves $\alpha_i$ and $\beta_i$ on $\Sigma_{g,1}$.}
 \label{fig:Sigma_g1}
\end{figure}
Conversely,
if we are given a cobordism $M$ from $\Sigma_{g,1}$ to $\Sigma_{h,1}$ satisfying some homological condition,
by attaching 3-dimensional 2-handles to the boundary of $M$ along each of $\beta_1,\ldots,\beta_g$ in the top surface and $\alpha_1,\ldots,\alpha_h$ in the bottom surface,
we obtain a homology cube $B$.
Denoting by $\gamma$ the co-cores of these 2-handles,
we obtain a bottom-top tangle $(B,\gamma)$ in a homology cube.

Under this correspondence,
the composition of cobordisms $M$ from $\Sigma_{q,1}$ to $\Sigma_{r,1}$ and $M'$ from $\Sigma_{p,1}$ to $\Sigma_{q,1}$ induces that of two bottom-top tangles $\gamma$ of type $(q,r)$ and $\gamma'$ of type $(p,q)$.
We denote the composition by $\gamma\circ\gamma'$,
which is of type $(p,r)$,
and the composition is explicitly described in \cite[Section~2.3]{CHM08}.

\subsection{Graph claspers}\label{section:graph clasper}
Goussarov~\cite{GGP01} and Habiro~\cite{Hab00C} independently constructed tools to study finite-type invariants of links and homology cylinders, 
which are called \emph{clovers} and \emph{claspers}, respectively.
Here, we explain graph claspers which Habiro called allowable graph claspers.

Let $M$ be a compact oriented 3-manifold.
A \emph{graph clasper} $G$ is a (not necessarily connected) compact orientable surface embedded into the interior of $M$
which decomposes into 3 kinds of constituents called \emph{nodes}, \emph{leaves}, and \emph{edges}.
Nodes, leaves, and edges are homeomorphic to disks, annuli, and $[0,1]^2$'s, respectively.
Each edge is glued to exactly two constituents consisting of nodes and leaves along $[0,1]\times \{0\}$ and $[0,1]\times \{1\}$, respectively,
and satisfies the conditions as follows.
Each leaf should be connected to a single node and to no leaf by an edge,
and each node should be connected to exactly three constituents consisting of nodes or leaves.
In other words,
a graph clasper corresponds to a (not necessarily connected) uni-trivalent graph
by assigning nodes, leaves, and edges to trivalent vertices, univalent vertices, and edges of the graph, respectively.
Note that, by the condition we assigned on leaves,
each connected component of a graph clasper has at least one node.
By changing nodes, leaves, and edges into a framed link as in Figure~\ref{fig:clasper} with blackboard framings,
we obtain a framed link $L(G)$ in $M$, and we denote by $M_G$ the 3-manifold obtained by surgery along $L(G)$.
\begin{figure}
 \centering
 \includegraphics[width=0.5\textwidth]{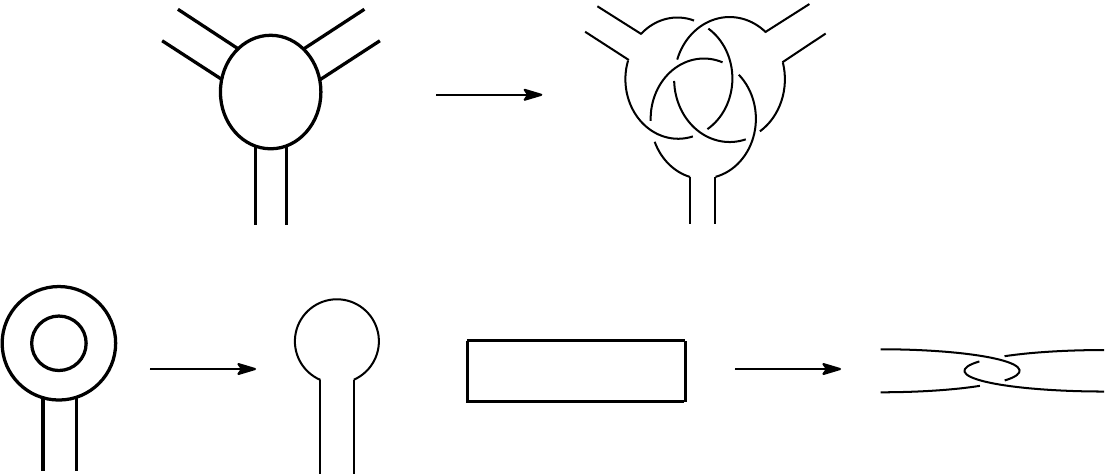}
 \caption{Changing nodes, leaves, and edges into a framed links.}
 \label{fig:clasper}
\end{figure}

\subsection{$Y_n$-equivalence and the $Y$-filtration}
Two homology cylinders $M$ and $N$ are said to be \emph{$Y_n$-equivalent} if one can pass from $M$ to $N$ by surgery on connected graph claspers in $M$ each of which has $n$ nodes.
For details, see \cite{CHM08} and \cite{Hab00C}.

Let us denote a submonoid of $\I\C$ as
\[
Y_n\I\C=\{M\in\I\C \mid \text{$M$ is $Y_n$-equivalent to $\Sigma_{g,1}\times[-1,1]$}\}.
\]
Move~2 in \cite[p.~14]{Hab00C},
which is the same as in \cite[Theorem~2.4]{GGP01},
or Move~9 in \cite[p.~15]{Hab00C}
implies that,
for $1\le k\le l$, if two homology cylinders are $Y_l$-equivalent, they are $Y_k$-equivalent.
Thus, we have a filtration 
\[
\I\C=Y_1\I\C\supset Y_2\I\C\supset Y_3\I\C\supset \cdots
\]
on the monoid $\I\C$ which is called the \emph{$Y$-filtration} on $\I\C$.

As shown in \cite[Theorem~3]{Gou99} and \cite[Section~8.5]{Hab00C},
$\I\C/Y_{n+1}$ is a finitely generated nilpotent group,
and $Y_{n}\I\C/Y_{n+1}$ is an abelian group for $n\ge1$.
Moreover, the graded quotient
\[
\Gr^Y\I\C=\bigoplus_{n\geq 1}Y_n\I\C/Y_{n+1}
\]
forms a graded Lie algebra with the Lie bracket defined by
\[
[\overline{M},\overline{N}]=\overline{M\circ N\circ M'\circ N'}\in Y_{i+j}\I\C/Y_{i+j+1}
\]
for $M\in Y_i\I\C$ and $N\in Y_j\I\C$,
where $M'$ and $N'$ are representatives of the inverse elements of $\overline{M}\in Y_i\I\C/Y_{i+1}$ and $\overline{N}\in Y_j\I\C/Y_{j+1}$.
By setting $Y_n\I\H=q(Y_n\I\C)$,
the projection $q\colon \I\C\to\I\H$ induces a homomorphism $\Gr q\colon \bigoplus_{n=1}^\infty Y_n\I\C/Y_{n+1}\to \bigoplus_{n=1}^\infty Y_n\I\H/Y_{n+1}$.

\subsection{The $\F$-filtration on $\Z\I\C$}
\label{sec:F-filtration}
Let $\Z\I\C$ be the monoid ring of $\I\C$,
that is, the free module generated by the set $\I\C$ of equivalence classes of homology cylinders equipped with the product induced by $\I\C$.
For $M\in\I\C$ and a graph clasper $G$ in $M$,
let $[M,G]$ denote the \emph{surgery bracket} defined by
\[
[M,G]=\sum_{G'\subset G}(-1)^{|G'|}M_{G'}\in \Z\I\C,
\]
where $G'$ runs through all the subsets of the set of connected components of $G$.
The \emph{$\F$-filtration} $\F_n\I\C$ on $\Z\I\C$ is defined to be the submodule
\[
\langle [M,G] \mid M\in\I\C\text{ and }G\text{ is a graph clasper in }M\text{ with }n\text{ nodes} \rangle
\]
spanned by surgery brackets for $n\ge 1$.

\subsection{The module of Jacobi diagrams}
\label{sec:jacobi}
Let $C$ be a finite set.
We consider a uni-trivalent graph whose each univalent vertex is colored with an element in $C$,
and each trivalent vertex is equipped with a cyclic order of its incident edges.
We call such a uni-trivalent graph a \emph{Jacobi diagram}.
and we denote by $\A(C)$ the abelian group defined as
\[
\A(C)
=\frac{\Z\{\text{Jacobi diagrams colored with } C\}}{\AS,\ \IHX,\ \textrm{self-loop}},
\]
where the self-loop relation means that a Jacobi diagram with a self-loop is trivial in $\A(C)$,
and the AS and IHX relations are as follows:
\[
\AS\colon 
\begin{tikzpicture}[baseline=-0.4ex, scale=0.25, dash pattern={on 2pt off 1pt}]
\draw (0,0) -- (0,2);
\draw (0:0)
to [out=200, in=140] (-50:3);
\draw (0:0)
to [out=-20, in=50] (285:0.7);
\draw (260:1.05)
to [out=220, in=40] (230:3);
\end{tikzpicture}
=-\begin{tikzpicture}[baseline=-0.4ex, scale=0.26, dash pattern={on 2pt off 1pt}]
\draw (0:0) -- (90:2); 
\draw (0:0) -- (-50:2.5); 
\draw (0:0) -- (230:2.5); 
\node at (-50:2.5) [right]{,};
\end{tikzpicture}
\qquad
\IHX\colon 
\begin{tikzpicture}[baseline=2.6ex, scale=0.5, dash pattern={on 2pt off 1pt}]
\draw (0,0) to [out=45, in=135] (2,0); 
\draw (1,0.5) -- (1,1.5); 
\draw (0,2) to [out=-45, in=225] (2,2); 
\end{tikzpicture}\,-\,
\begin{tikzpicture}[baseline=2.6ex, scale=0.5, dash pattern={on 2pt off 1pt}]
\draw (0,0)  to [out=45, in=-45] (0,2); 
\draw (0.5,1) -- (1.5,1); 
\draw (2,0)  to [out=135, in=-135] (2,2); 
\end{tikzpicture}\,+\,
\begin{tikzpicture}[baseline=2.6ex, scale=0.5, dash pattern={on 2pt off 1pt}]
\draw (0,0) to (2,2); 
\draw (0,2) to (0.9,1.1);
\draw (1.1,0.9) to (2,0);
\draw (0.6,0.6) to (1.4,0.6); 
\end{tikzpicture}=0.
\]
Note that, everywhere in this paper,
we assume that trivalent vertices of Jacobi diagrams depicted are oriented counterclockwise.

We mainly consider the case when $C$ is the set $\{1^{\pm},\ldots, g^{\pm}\}$ of $2g$ elements,
and simply denote as $\A=\A(\{1^{\pm},\ldots, g^{\pm}\})$.
The \emph{internal degree} $\ideg(J)$ of a Jacobi diagram $J$ is defined to be the number of trivalent vertices in $J$.
Let us denote the submodule of $\A$ generated by the (not necessarily connected) Jacobi diagrams of $\ideg=n$ by $\A_n$.
Recall that a strut is a connected Jacobi diagram without trivalent vertices,
and that we denote by $\A^c$ (resp. $\A^Y$) the submodule of $\A$ generated by connected Jacobi diagrams (resp. by Jacobi diagrams without struts).
We also denote their $\ideg=n$ parts as $\A^c_n=\A^c\cap \A_n$ and $\A^Y_n=\A^Y\cap \A_n$.
The submodule of $\A_n^c$ generated by connected Jacobi diagrams with $k$ loops is denoted by $\A_{n,k}^{c}$, 
namely, connected Jacobi diagrams with Euler characteristic $1-k$.

Let $p,q,r$ be non-negative integers,
and let $J$ and $J'$ be Jacobi diagrams colored with $\{1^+, \ldots, q^+, 1^-, \ldots, p^-\}$ and $\{1^+, \ldots, r^+, 1^-, \ldots, q^-\}$, respectively.
We assume $J$ has no struts with labels in $\{1^+,\ldots, q^+\}$ in the both ends.
In \cite[Section~1.2]{CHM08}, such $J$ is said to be \emph{top-substantial}.
We denote the composition of $J$ and $J'$ as
\[
J\circ J'=\sum\left(
\text{\parbox{21em}{
all ways of gluing the $i^+$-colored vertices of $J$\\
to the $i^-$-colored vertices of $J'$ for all $1\le i\le q$
}}
\right)\in\A(\{1^+,\ldots,r^+,1^-,\ldots,p^-\}).
\]
There is another composition $\star$ on the submodule $\A^Y$.
For Jacobi diagrams $J$ and $J'$ colored with $\{1^\pm,\ldots, g^\pm\}$ without struts,
we define
\[
J\star J'=\sum\left(
\text{\parbox{23em}{
all ways of gluing some of $i^+$-colored vertices of $J$\\
to some of $i^-$-colored vertices of $J'$ for all $1\le i\le g$
}}
\right)\in\A^Y.
\]
The product $\star$ makes $\A^Y$ an associative algebra.
Actually, it is a co-commutative graded Hopf algebra. See \cite[Proposition~8.5]{CHM08}, for details.

Furthermore, we denote by $\widehat{\A}$ the completion of $\A$ with respect to the total degree of Jacobi diagrams, where the \emph{total degree} of a Jacobi diagram $J$ is defined to be half the number of vertices of $J$.
Then, for a series $x \in \widehat{\A}$ of non-empty Jacobi diagrams, $\exp(x) = \sum_{n=0}^\infty x^{\sqcup n}/n!$ and $\log(\emptyset+x) = \sum_{n=1}^\infty (-1)^{n-1}x^{\sqcup n}/n$ make sense, where $x^{\sqcup n}$ denotes the disjoint union of $n$ copies of $x$.
Note that the subspace $\widehat{\A}^Y$ coincides with the completion of $\A^Y$ with respect to the internal degree.

\subsection{The surgery map}
\label{sec:surgery}
Here, we review the surgery maps 
\[
\S\colon\A_n^Y\to \F_n\I\C/\F_{n+1}\I\C,\ 
\ss\colon\A_n^c\to Y_n\I\C/Y_{n+1}.
\]

As explained in Section~\ref{section:graph clasper},
we obtain a (not necessarily connected) compact orientable surface from $J\in \A_n^Y$ by assigning disks and annuli called nodes and leaves to trivalent and univalent vertices in $J$,
and by gluing them to unit squares corresponding to edges.
We embed the surface in $\Sigma_{g,1}\times [-1,1]$ and obtain a graph clasper $G$ as follows.
First, we identify $\Sigma_{g,1}\times [-1,1]$ with a bottom-top tangle $\gamma$ in a cube $[-1,1]^3$,
and embed the surface in $[-1,1]^3 \setminus \gamma$ instead of $\Sigma_{g,1}\times [-1,1]$.
Fix an orientation of the surface,
and embed each leaf along a meridian of the tangle which represents the color of the corresponding univalent vertex.
The annulus is assumed to be vertical to the tangle, namely, the self linking number of the core circle is zero with respect to the framing induced by the annulus.
Also, the orientation of the annulus is assumed to coincide with that of the normal bundle of the tangle in $\Sigma_{g,1}\times[-1,1]$.
Here, we take a sufficiently small meridian,
and assume that neither leaves nor edges go through the hole of the annulus. 
Second, we embed the nodes in an arbitrary way,
and connect nodes and leaves by edges in $\Sigma_{g,1}\times [-1,1]$ so that the orientations of the constituents are compatible.
In this way, we obtain a graph clasper $G$.
Define homomorphisms $\S\colon\A_n^Y\to \F_n\I\C/\F_{n+1}\I\C$ and $\ss\colon\A_n^c\to Y_n\I\C/Y_{n+1}$ by
\[
\S(J)=[\Sigma_{g,1}\times[-1,1],G]\in \F_n\I\C/\F_{n+1}\I\C
\text{ and }
\ss(J)=(\Sigma_{g,1}\times[-1,1])_G\in Y_n\I\C/Y_{n+1},
\]
respectively.
The surgery map $\ss\colon\A_n^c\to Y_n\I\C/Y_{n+1}$ is surjective when $n\ge2$ as shown in \cite[Section~8.5]{Hab00C} (see also \cite[Theorem~8.8]{CHM08}),
and induces an isomorphism $\A_n^c\otimes\Q\cong (Y_n\I\C/Y_{n+1})\otimes\Q$ whose inverse map up to sign is given by the degree $n$ part of $\Ztilde^Y$ as shown in \cite[Theorem~8.8]{CHM08}.
\subsection{The LMO functor}
\label{sec:LMO}
The LMO functor is an invariant of 3-dimensional cobordisms formulated as a functor from a certain category of cobordisms to a category of Jacobi diagrams.
Our homomorphisms $\overline{Z}_{n+1}$ and $\bar{z}_{n+1}$ mentioned in Section~\ref{section:Intro} are constructed from the mod $\Z$ reduction of the LMO functor.

We briefly review the definition of the LMO functor following \cite{CHM08}.
We denote by $\LCob_q$ the (non-strict) monoidal category of Lagrangian $q$-cobordisms and by $\tsA$ the strict monoidal category of top-substantial Jacobi diagrams.
An object of $\LCob_q$ is an element of the free magma generated by a letter $\bullet$, and a morphism $v \to w$ is a Lagrangian $q$-cobordism from $\Sigma_{|v|,1}$ to $\Sigma_{|w|,1}$, where $|v|$ denotes the length of $v$.
For instance, $\bullet((\bullet\bullet)\bullet)$ is an object, and ``$q$'' implies that we take into account parenthesizings.
``Lagrangian'' means a homological condition on cobordisms which is a 3-manifold with boundary explained in Section~\ref{sec:homology cylinder}, especially homology cylinders are Lagrangian.
The composition $\circ$ of $(M,m)$ and $(N,n)$ is defined by stacking $N$ on $M$,
namely, $(M,m)\circ (N,n)=(M\cup_{m_+=n-}N, m_-\cup n_+)$, where $m_+$ and $m_-$ denote identifications of the top surface and bottom surface in $M$ with reference surfaces as in Section~\ref{sec:homology cylinder}.

The tensor product $\otimes$ of $(M,m)$ and $(N,n)$ is given by horizontal juxtaposition of them.
An object of $\tsA$ is a non-negative integer and a morphism $g \to h$ is a series of top-substantial Jacobi diagrams labeled by $\{1^{+},\dots,g^{+},1^{-},\dots,h^{-}\}$.
The composition $\circ$ is the same as in Section~\ref{sec:jacobi}.
The tensor product of $x\colon g \to h$ and $x'\colon g' \to h'$ is given by the disjoint union of $x$ and $(x' / i^{+}\mapsto (i+g)^{+},\ j^{-}\mapsto (j+h)^{-})$, where $i=1,\dots,g'$ and $j=1,\dots,h'$.

The \emph{LMO functor} $\Ztilde$ is a functor $\LCob_q \to \tsA$ defined as follows.
For an object $v$, we set $\Ztilde(v) = |v|$.
Let $(M,m)$ be a morphism $v \to w$ and $(B,\gamma)$ the corresponding bottom-top $q$-tangle obtained as in Section~\ref{sec:bt_tangle}.
Choose a framed link $L$ in $[-1,1]^3$ such that $[-1,1]^3_L \cong B$.
Then the morphism $\Ztilde(M,m)$ which is a series of Jacobi diagrams is defined via the Kontsevich invariant of the tangle $\gamma \cup L \subset [-1,1]^3$.

For $M=(M,m)$, the value $\Ztilde(M) \in \widehat{\A}(\{1^{+},\dots,g^{+},1^{-},\dots,h^{-}\})$ is uniquely decomposed as $\exp(x)\sqcup\exp(y)$, where $x, y \in \widehat{\A}^c(\{1^{+},\dots,g^{+},1^{-},\dots,h^{-}\})$ are a linear combination of struts and a series of connected Jacobi diagrams without struts, respectively.
We define the \emph{$Y$-part} $\Ztilde^Y(M)$ of $\Ztilde(M)$ by $\Ztilde^Y(M) = \exp(y)$.

In this paper, we use the algebra homomorphism $\Ztilde^Y\colon \Q\I\C \to \widehat{\A}^Y\otimes \Q$ which is the linear extension of $\Ztilde^Y$.
Here $\I\C$ is regarded as a submonoid of $\LCob_q(w,w)$, where $w$ is the word $(\cdots((\bullet\bullet)\bullet)\cdots)$ of length $g$.

\begin{remark}
\label{rem:Associator}
The Kontsevich invariant of tangles depends on the choice of an associator, and we have to care about an associator and parenthesizings when we compute the LMO functor.
Throughout this paper, we fix an even rational Drinfel'd associator and consider $\Ztilde_{\leq2}(M)$ or the leading term and the next term of $\Ztilde^Y(M)$.
Then one needs not care about parenthesizings due to $\Ztilde_{\leq1}(P_{u,v,w}^{\pm1})=\Id_3$, and our invariants $\overline{Z}_{n+1}$ and $\bar{z}_{n+1}$ are independent of the choice of an even rational Drinfel'd associator (see \cite[Table~5.2 and Proposition~5.8]{CHM08}).
On the other hand, the LMO functor depends on the choice of curves in Figure~\ref{fig:Sigma_g1}, and so do $\overline{Z}_{n+1}$ and $\bar{z}_{n+1}$.
\end{remark}

\begin{table}[h]
\centering
$\Id_1=\raisegraph{-2em}{5em}{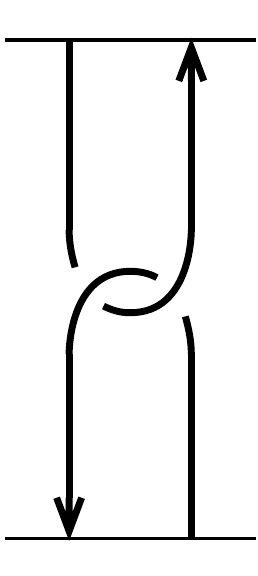}\quad
c=\raisegraph{-2em}{4.7em}{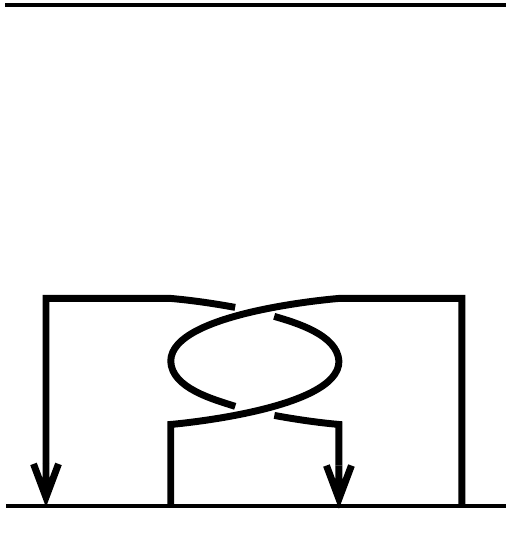}\quad
\Delta=\raisegraph{-2em}{5em}{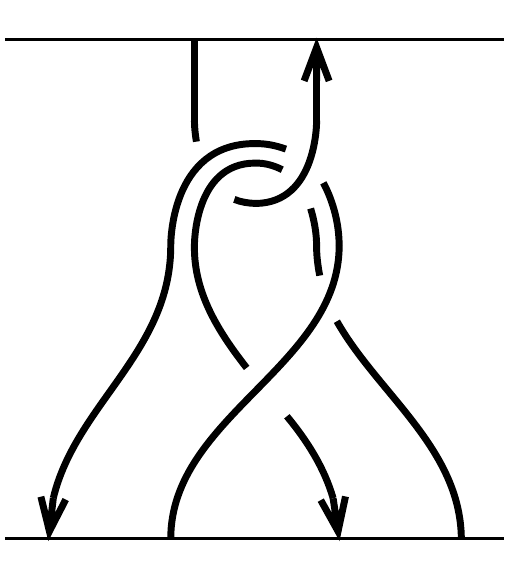}\quad
\psi=\raisegraph{-2em}{5em}{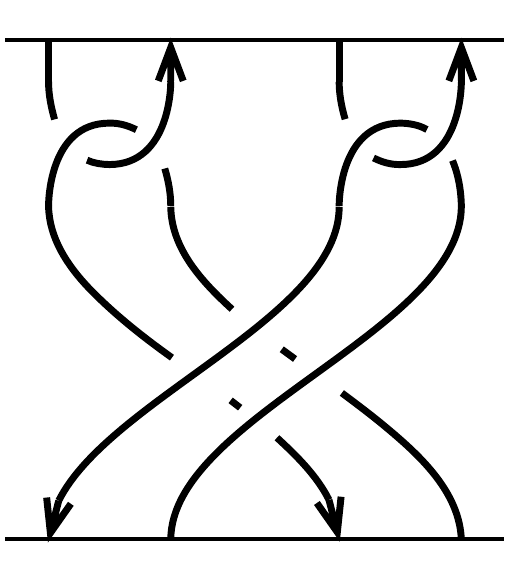}\quad
Y=\raisegraph{-2em}{5em}{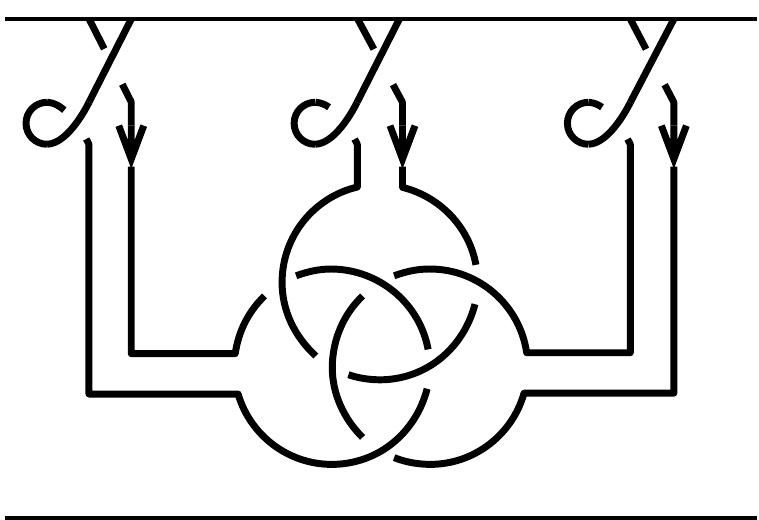}$
\vspace{1em}
\caption{Some bottom-top tangles.}
\label{tab:ElemCob}
\end{table}

For some tangles, we use notation in Table~\ref{tab:ElemCob} as in \cite{CHM08}.
The following values are used in Section~\ref{sec:Z_and_z} (see \cite[Table~5.2]{CHM08}):

\begin{align*}
 \log \Ztilde(\Id_g)
 &= \log\Id_g = \sum_{i=1}^g \strutgraph{i^+}{i^-}\ , \\
 (\log \Ztilde(c))_{\leq 2}
 &= -\dstrutgraph{1^{-}}{2^{-}}\ ,\\
 (\log \Ztilde(\Delta))_{\leq 1}
 &= \strutgraph{1^+}{1^-} +\strutgraph{1^+}{1^-} + \frac{1}{2}\dYgraph{1^+}{1^-}{2^-}\ ,\\
 (\log \Ztilde(\psi^{\pm 1}))_{\leq 1} 
 &= \strutgraph{1^{+}}{2^{-}} + \strutgraph{2^{+}}{1^{-}}\ ,\\
 (\log \Ztilde(Y))_{\leq 2} 
 &= -\uYgraph{1^+}{2^+}{3^+} +\frac{1}{2}\uHgraph{1^+}{1^+}{2^+}{3^+} +\frac{1}{2}\uHgraph{2^+}{2^+}{3^+}{1^+} +\frac{1}{2}\uHgraph{3^+}{3^+}{1^+}{2^+}\ .
\end{align*}

\section{The operation $\delta$ on Jacobi diagrams}
\label{sec:delta}
As we explained in Section~\ref{section:Intro},
the operation $\delta\colon\A_n^Y\to\A_{n+1}^Y\otimes\Z/2\Z$ describes our homomorphism
$\overline{Z}_{n+1}\colon \F_n\I\C/\F_{n+1}\I\C\to \A_{n+1}^Y\otimes \Q/\Z$.
In this section,
we define $\delta\colon\A_n^Y\to\A_{n+1}^Y\otimes\Z/2\Z$ (Section~\ref{section:definedelta}).
We also investigate $\Ker\delta$ when $n$ is odd (Section~\ref{section:kernel-of-delta}) which gives some estimate on the kernel of the surgery map $\ss\colon\A_{n,1}\to Y_n\I\C/Y_{n+1}$.

\subsection{Definition of $\delta\colon\A_n^Y\to\A_{n+1}^Y\otimes\Z/2\Z$}\label{section:definedelta}
Let $J$ be a Jacobi diagram colored with $\{1^{\pm},2^{\pm},\ldots,g^{\pm}\}$ without struts,
and denote by $U(J)$ the set of univalent vertices.
For $v\in U(J)$,
we denote by $\delta_v(J)$ the sum of two Jacobi diagrams obtained by changing the neighborhood of the edge incident to $v$ as follows:
\[
\delta_v\colon\
\begin{tikzpicture}[baseline=3ex, scale=0.25, dash pattern={on 2pt off 1pt}]
\node [left] at (0,-0.1) {$\cdots$};
\setlength{\unitlength}{0.8pt}
\node [right] at (6,-0.1) {$\cdots$};
\node [above] at (3.2,2.8) {$\ell(v)$};
\draw (0,0) -- (6,0);
\draw (3,0) -- (3,3);
\end{tikzpicture}
\mapsto \
\begin{tikzpicture}[baseline=3ex, scale=0.25, dash pattern={on 2pt off 1pt}]
\node [left] at (0,-0.1) {$\cdots$};
\node [right] at (6,-0.1) {$\cdots$};
\node [above] at (1.5,2.8) {$\ell(v)$};
\node [above] at (4.5,2.8) {$\ell(v)$};
\draw (0,0) -- (6,0);
\draw (2,0) -- (2,3);
\draw (4,0) -- (4,3);
\end{tikzpicture}
+\
\begin{tikzpicture}[baseline=3ex, scale=0.25, dash pattern={on 2pt off 1pt}]
\node [left] at (0,-0.1) {$\cdots$};
\node [right] at (6,-0.3) {$\cdots$,};
\node [above] at (1,2.8) {$\ell(v)$};
\node [above] at (5,2.8) {$\ell(v)^*$};
\draw (0,0) -- (6,0);
\draw (3,0) -- (3,1.5);
\draw (1.5,3) -- (3,1.5);
\draw (4.5,3) -- (3,1.5);
\end{tikzpicture}
\]
where we denote the label of $v$ by $\ell(v)\in\{1^{\pm},\ldots,g^{\pm}\}$ and $(j^\pm)^*=j^{\mp}$ for $1\le j\le g$.
For unordered pairs $\{v,w\}$ in $U(J)$ such that $v\ne w$ and $\ell(v)=\ell(w)$, 
we also denote by $\delta_{vw}(J)$ the Jacobi diagram obtained by changing the neighborhoods of the edges incident to $v$ and $w$ as:
\[
\delta_{vw}\colon
\begin{tikzpicture}[baseline=2.5ex, scale=0.2, dash pattern={on 2pt off 1pt}]
\draw (1.1,0) -- (7.1,0);
\draw (4.1,0) -- (4.1,3);
\node at (-0.3,-0.1){$\cdots$};
\node at (8.8,-0.1){$\cdots$};
\node at (4.3, 4.2){$\ell(v)$};
\draw (15.1,0) -- (21.1,0);
\draw (18.1,0) -- (18.1,3);
\node at (13.8,-0.1){$\cdots$};
\node at (22.8,-0.1){$\cdots$};
\node at (18.3, 4.2){$\ell(w)$};
\end{tikzpicture}
\mapsto\
\begin{tikzpicture}[baseline=2.5ex, scale=0.2, dash pattern={on 2pt off 1pt}]
\draw (1.1,0) -- (7.1,0);
\node at (-0.3,-0.1){$\cdots$};
\node at (8.8,-0.1){$\cdots$};
\draw (15.1,0) -- (21.1,0);;
\node at (13.8,-0.1){$\cdots$};
\node at (22.8,-0.1){$\cdots$};
\draw (18.1,0) arc [start angle = 0, end angle = 180, x radius = 7, y radius = 2.2];
\draw (11.1,2.2) -- (11.1,3.8);
\node at (11.1,4.9){$\ell(v)$};
\node [right] at (24.8,0){,};
\end{tikzpicture}
\vspace*{2ex}
\]
where $v$ and $w$ possibly lie in different connected components of $J$.
\begin{definition}
Denoting
\[
\delta' J=\sum_{v\in U(J)}\delta_v J\qquad\text{and}\qquad
\delta'' J=\sum_{\begin{subarray}{c}\{v,w\}\subset U(J)\\v\ne w\\ \ell(v)=\ell(w)\end{subarray}}\delta_{vw}J,
\]
we define a homomorphism $\delta\colon\A_n^Y\to\A_{n+1}^Y\otimes\Z/2\Z$ by
$\delta(J)=\delta'(J)+\delta''(J)$.
\end{definition}

\begin{lemma}
The map $\delta\colon\A_n^Y\to \A_{n+1}^Y\otimes \Z/2\Z$ is well-defined.
\end{lemma}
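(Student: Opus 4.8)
The plan is to regard $\delta$ as first defined on the free $\Z$-module generated by strut-less Jacobi diagrams with $n$ trivalent vertices, and to check that it descends to the quotient $\A_n^Y$ with values in $\A_{n+1}^Y\otimes\Z/2\Z$. Concretely this means verifying two things: that each summand $\delta_{vw}$ is independent of the ordering of the unordered pair $\{v,w\}$ and that each of $\delta_v,\delta_{vw}$ produces a well-defined element of the target (no struts are created, and the cyclic orders at the newly created trivalent vertices are pinned down by the counterclockwise convention), and then that $\delta=\delta'+\delta''$ annihilates the three defining relations $\AS$, $\IHX$, and the self-loop relation. The ordering independence of $\delta_{vw}$ is exactly where the coefficients $\Z/2\Z$ are needed: interchanging $v$ and $w$ reverses the cyclic order at the new trivalent vertex of the connecting arc (whose leg keeps the common label $\ell(v)=\ell(w)$), so the two diagrams differ by a single $\AS$ move, i.e.\ by a sign that is invisible in $\A^Y_{n+1}\otimes\Z/2\Z$.

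The self-loop relation is immediate: a self-loop lives at a trivalent vertex, and every operation $\delta_v$ and $\delta_{vw}$ alters $J$ only in a neighborhood of univalent vertices, so any preexisting self-loop survives in every diagram occurring in $\delta'(J)+\delta''(J)$. These diagrams vanish by the self-loop relation in the target, and hence $\delta$ kills self-looped generators.

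For $\AS$ and $\IHX$ the key point is that both relations are supported on trivalent vertices and internal edges, while $U(J)$ together with its labels—and therefore the indexing set of $\delta'$ and the set of equal-label pairs indexing $\delta''$—is canonically the same for the two diagrams of an $\AS$ relation and for the three diagrams $I,H,X$ of an $\IHX$ relation. Whenever an operated univalent vertex (or pair) is incident only to edges disjoint from the support of the relation, $\delta_v$ (resp.\ $\delta_{vw}$) changes a region disjoint from that support, so the images of the relation's diagrams again differ only by the same $\AS$ or $\IHX$ move and cancel, since $\A^Y_{n+1}\otimes\Z/2\Z$ still satisfies $\AS$ and $\IHX$. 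It therefore suffices to treat the \emph{boundary terms}, in which a modified univalent vertex is attached to the trivalent vertex of an $\AS$ relation or to one of the two trivalent vertices of an $\IHX$ relation.

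The $\AS$ boundary case is cheap over $\Z/2\Z$, where $\AS$ simply asserts that a diagram equals its vertex-reversal: performing the reversal at the relation's vertex and then inserting the doubling and dualizing pieces of $\delta_v$ (or the arc of $\delta_{vw}$) gives, up to a single $\AS$ move, the same diagrams as inserting first and reversing afterwards, and the leftover sign dies mod $2$. I expect the genuine obstacle to be the $\IHX$ boundary case: there the relation is a true three-term identity, and when a leg operated on by $\delta'$ or $\delta''$ is attached to an $\IHX$ vertex, the external edge carrying it is wired into the core differently in $I$, $H$, and $X$, so the local modification meets differently configured cores and the relation cannot be pulled outside directly. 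The heart of the proof is to expand $\delta'+\delta''$ on these boundary legs explicitly and to reorganize the outcome as a $\Z/2\Z$-combination of $\IHX$ relations among the enlarged diagrams, using antisymmetry to pair terms, the mod $2$ coefficients to absorb residual signs, and the interplay of the doubling term with the dualizing term (the leg labeled $\ell(v)^{*}$) so that the two pieces $\delta'$ and $\delta''$ combine correctly. Establishing this reassembly—that $\delta$ sends each $\IHX$ generator into the $\IHX$-submodule of $\A^Y_{n+1}\otimes\Z/2\Z$—is what makes the lemma nontrivial.
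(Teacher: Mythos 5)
Your framing is right: the only substantive checks are compatibility with $\AS$ and $\IHX$, the $\AS$ case is killed by the mod~$2$ coefficients, and the nontrivial case is when the operated univalent vertex sits on a leg adjacent to the support of the relation. Your remarks on the ordering-independence of $\delta_{vw}$ and on the self-loop relation are also correct. However, the $\IHX$ boundary case is precisely where you stop: you write that one must ``expand $\delta'+\delta''$ on these boundary legs explicitly and reorganize the outcome as a $\Z/2\Z$-combination of $\IHX$ relations,'' and that ``establishing this reassembly is what makes the lemma nontrivial'' --- but you never establish it. That reassembly is the entire content of the paper's proof, so as written the proposal has a genuine gap rather than a complete argument.

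Moreover, the mechanism you anticipate is not quite the one that works. No interplay between $\delta'$ and $\delta''$, nor between the doubling term and the dualizing term of $\delta_v$, is needed: each elementary operation is separately compatible with $\IHX$. Concretely, if $v$ is the univalent vertex on one of the four external legs of an $\IHX$ triple, then applying $\delta_v$ to $I-H+X$ produces two groups of three diagrams --- the three diagrams with the $\ell(v)$-leg doubled, and the three diagrams in which that leg ends in a new trivalent vertex carrying $\ell(v)$ and $\ell(v)^{*}$ --- and \emph{each} group is itself an instance of the $\IHX$ relation (up to signs that vanish mod~$2$), hence each vanishes on its own in $\A_{n+1}^Y\otimes\Z/2\Z$. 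The same term-by-term argument handles $\delta_{vw}$. To complete your proof you would need to write out this two-group decomposition and verify that each group is an $\IHX$ triple; expecting a cross-cancellation between the two pieces of $\delta$ would send you looking for an identity that is not there.
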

\begin{proof}
We need to check that two Jacobi diagrams which are equivalent modulo the AS and IHX relations map to the same Jacobi diagram under $\delta$.
Let $v$ be a univalent vertex in a Jacobi diagram $J$ of $\ideg=n$.
For the case of the AS relation, we have equalities
\[
\delta_v\left(\begin{tikzpicture}[baseline=0ex, scale=0.25, dash pattern={on 2pt off 1pt}]
\tiny
\draw (0,0) -- (0,2);
\draw (0:0) to [out=180, in=160] (235:1.1)
to [out=-20, in=140] (-50:3);
\draw (0:0) to [xscale=-1, out=180, in=160] (235:1.1)
to [xscale=-1, out=-20, in=140] (260:1.1);
\draw (260:1.3) to [xscale=-1, out=-20, in=140] (-50:3);
\node [above] at (0,2){$\ell(v)$};
\end{tikzpicture}\right)
=
\begin{tikzpicture}[baseline=0ex, scale=0.25, dash pattern={on 2pt off 1pt}]
\tiny
\draw (0.4,-0.2) -- (0.4,2);
\draw (-0.4,-0.2) -- (-0.4,2);
\draw (0:0) to [out=180, in=160] (235:1.1)
to [out=-20, in=140] (-50:3);
\draw (0:0) to [xscale=-1, out=180, in=160] (235:1.1)
to [xscale=-1, out=-20, in=140] (260:1.1);
\draw (260:1.3) to [xscale=-1, out=-20, in=140] (-50:3);
\node [above] at (-1,2){$\ell(v)$};
\node [above] at (1,2){$\ell(v)$};
\end{tikzpicture}
+
\begin{tikzpicture}[baseline=0ex, scale=0.25, dash pattern={on 2pt off 1pt}]
\tiny
\draw (0,0) -- (0,1);
\draw (0,1) --(-0.8,2);
\draw (0,1) --(0.8,2);
\draw (0:0) to [out=180, in=160] (235:1.1)
to [out=-20, in=140] (-50:3);
\draw (0:0) to [xscale=-1, out=180, in=160] (235:1.1)
to [xscale=-1, out=-20, in=140] (260:1.1);
\draw (260:1.3) to [xscale=-1, out=-20, in=140] (-50:3);
\node [above] at (1.2,2){$\ell(v)^*$};
\node [above] at (-1.2,2){$\ell(v)$};
\end{tikzpicture}
=\begin{tikzpicture}[baseline=0ex, scale=0.26, dash pattern={on 2pt off 1pt}]
\tiny
\draw (0.4,-0.1) -- (0.4,2);
\draw (-0.4,-0.1) -- (-0.4,2);
\draw (0:0) to [out=0, in=135] (-50:2.5);
\draw (0:0) to [xscale=-1,out=0, in=135] (-50:2.5);
\node [above] at (-1,2){$\ell(v)$};
\node [above] at (1,2){$\ell(v)$};
\end{tikzpicture}
+\begin{tikzpicture}[baseline=0ex, scale=0.26, dash pattern={on 2pt off 1pt}]
\tiny
\draw (0,0) -- (0,1);
\draw (0,1) --(-0.8,2);
\draw (0,1) --(0.8,2);
\draw (0:0) to [out=0, in=135] (-50:2.5);
\draw (0:0) to [xscale=-1,out=0, in=135] (-50:2.5);
\node [above] at (1.2,2){$\ell(v)^*$};
\node [above] at (-1.2,2){$\ell(v)$};
\end{tikzpicture}
=\delta_v\left(
\begin{tikzpicture}[baseline=0.2ex, scale=0.26, dash pattern={on 2pt off 1pt}]
\tiny
\draw (0,0) -- (0,2);
\draw (0:0) to [out=0, in=135] (-50:2.5);
\draw (0:0) to [xscale=-1,out=0, in=135] (-50:2.5);
\node [above] at (0,2){$\ell(v)$};
\end{tikzpicture}\right)\in A_{n+1}^{Y}\otimes\Z/2\Z.
\]
For the case of the IHX relation,
we also have
\begin{align*}
&\delta_v\left(\,
\begin{tikzpicture}[baseline=2.6ex, scale=0.5, dash pattern={on 2pt off 1pt}]
\tiny
\draw (0,0) to [out=45, in=135] (2,0); 
\draw (1,0.5) -- (1,1.5); 
\draw (0.3,1.8) to [out=-45, in=225] (2,2); 
\node at (0.2,2.1){$\ell(v)$};
\end{tikzpicture}\,-\,
\begin{tikzpicture}[baseline=2.6ex, scale=0.5, dash pattern={on 2pt off 1pt}]
\tiny
\draw (0,0)  to [out=45, in=-45] (0.3,1.8); 
\draw (0.5,1) -- (1.5,1); 
\draw (2,0)  to [out=135, in=-135] (2,2); 
\node at (0.2,2.1){$\ell(v)$};
\end{tikzpicture}\,+\,
\begin{tikzpicture}[baseline=2.6ex, scale=0.5, dash pattern={on 2pt off 1pt}]
\tiny
\draw (0,0) to (2,2); 
\draw (0.3,1.8) to (0.9,1.1);
\draw (1.1,0.9) to (2,0);
\draw (0.4,0.4) to (1.6,0.4); 
\node at (0.2,2.1){$\ell(v)$};
\end{tikzpicture}\,\right)\\
&=
\left(
\begin{tikzpicture}[baseline=2.6ex, scale=0.5, dash pattern={on 2pt off 1pt}]
\tiny
\draw (0,0) to [out=45, in=135] (2,0); 
\draw (1,0.5) -- (1,1.5); 
\draw (0.4,1.8) to [out=-45, in=180] (1,1.5);
\draw (0,1.6) to [out=-45, in=180] (1,1.1);
\draw (1,1.5) to [out=0, in=225] (2,2); 
\node at (0.3,2.1){$\ell(v)$};
\node at (-0.5,1.6){$\ell(v)$};
\end{tikzpicture}\,-\,
\begin{tikzpicture}[baseline=2.6ex, scale=0.5, dash pattern={on 2pt off 1pt}]
\tiny
\draw (0,0)  to [out=45, in=270] (0.5,1);
\draw (0.5,1) to [out=90, in=-45] (0,1.7);
\draw (0.9,1) to [out=90, in=-45] (0.4,1.8);
\draw (0.5,1) -- (1.5,1); 
\draw (1.8,0)  to [out=135, in=-135] (2,2);
\node at (0.4,2.1){$\ell(v)$};
\node at (-0.5,1.7){$\ell(v)$};
\end{tikzpicture}\,+\,
\begin{tikzpicture}[baseline=2.6ex, scale=0.5, dash pattern={on 2pt off 1pt}]
\tiny
\draw (0,0) to (2,2); 
\draw (0.3,1.8) to (0.9,1.1);
\draw (1.1,0.9) to (2,0);
\draw (0,1.6) to (0.7,0.9);
\draw (0.9,0.7) to (1.2,0.4);
\draw (0.4,0.4) to (1.6,0.4); 
\node at (0.4,2.1){$\ell(v)$};
\node at (-0.5,1.7){$\ell(v)$};
\end{tikzpicture}\,\right)
+\left(\,
\begin{tikzpicture}[baseline=3ex, scale=0.5, dash pattern={on 2pt off 1pt}]
\tiny
\draw (0,0) to [out=45, in=135] (2,0); 
\draw (1,0.5) -- (1,1.5); 
\draw (0.4,1.7) to [out=-45, in=225] (2,2); 
\draw (0.4,1.7) -- (0,1.7);
\draw (0.4,1.7) -- (0.4,2.1);
\node at (-0.5,1.7){$\ell(v)$};
\node at (0.5,2.3){$\ell(v)^*$};
\end{tikzpicture}\,-\,
\begin{tikzpicture}[baseline=3ex, scale=0.5, dash pattern={on 2pt off 1pt}]
\tiny
\draw (0,0)  to [out=45, in=-45] (0.4,1.7); 
\draw (0.6,1) -- (1.5,1); 
\draw (2,0)  to [out=135, in=-135] (2,2); 
\draw (0.4,1.7) -- (0,1.7);
\draw (0.4,1.7) -- (0.4,2.1);
\node at (-0.5,1.7){$\ell(v)$};
\node at (0.5,2.3){$\ell(v)^*$};
\end{tikzpicture}\,+\,
\begin{tikzpicture}[baseline=3ex, scale=0.5, dash pattern={on 2pt off 1pt}]
\tiny
\draw (0,0) to (2,2); 
\draw (0.4,1.7) to (0.9,1.1);
\draw (1.1,0.9) to (2,0);
\draw (0.4,0.4) to (1.6,0.4); 
\draw (0.4,1.7) to (0,1.7);
\draw (0.4,1.7) to (0.4,2.1);
\node at (-0.5,1.7){$\ell(v)$};
\node at (0.5,2.3){$\ell(v)^*$};
\end{tikzpicture}\,\right).
\end{align*}
Here, both three terms in the right-hand side equal to $0$ in $\A_{n+1}^Y\otimes\Z/2\Z$ by the AS and IHX relations.
We see that the map $\delta_{vw}$ also preserves the AS and IHX relations in a similar way for univalent vertices $v$ and $w$ with identical labels.
\end{proof}

The map $\delta$ satisfies the Leibniz rule as follows.
\begin{prop}\label{prop:Leibniz}
Let $k$, $l$ be positive integers.
For $x\in \A_k^Y$ and $y\in \A_l^Y$,
we have
\[
\delta(x\star y)=(\delta x)\star y+x\star (\delta y)\in \A_{k+l+1}^Y\otimes\Z/2\Z. 
\]
\end{prop}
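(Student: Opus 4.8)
The plan is to prove the identity by a direct combinatorial comparison of the two sides at the level of individual diagrams. First I would use the bilinearity of both $\star$ and $\delta$ to reduce to the case $x=J$, $y=J'$ for single Jacobi diagrams $J\in\A_k^Y$ and $J'\in\A_l^Y$ without struts. Writing $J\star J'=\sum_\phi G_\phi$, where $\phi$ ranges over the partial matchings that glue some $i^+$-colored vertices of $J$ to some $i^-$-colored vertices of $J'$, I would record the basic bookkeeping fact that the univalent vertices of each $G_\phi$ are exactly the univalent vertices of $J$ and of $J'$ left unglued by $\phi$, and that gluing creates no new trivalent vertices, so $J\star J'\in\A_{k+l}^Y$ and every term on both sides lands in $\A_{k+l+1}^Y\otimes\Z/2\Z$ as required. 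Expanding $\delta=\delta'+\delta''$ on each side then turns the claim into an equality between two explicit sums of diagrams.

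The heart of the argument is a matching of terms. I would first isolate the \emph{non-interacting} contributions, in which the local operation $\delta_v$ or $\delta_{vw}$ is performed at univalent vertices that survive $\phi$. Since modifying a leg commutes with gluing the remaining legs, these match precisely: the part of $\delta'(J\star J')+\delta''(J\star J')$ indexed by legs (or same-label pairs) lying in a single factor equals the part of $(\delta'J)\star J'+J\star(\delta'J')+(\delta''J)\star J'+J\star(\delta''J')$ in which the leg newly created by $\delta$ is \emph{not} glued. What then remains on the right-hand side are the \emph{interacting} terms, where a univalent vertex produced by $\delta$ is itself consumed by a gluing, and on the left-hand side the cross-factor contributions of $\delta''$, namely the pairs $\{v,w\}$ with $v\in U(J)$ and $w\in U(J')$. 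The whole identity therefore reduces to showing that these interacting terms and cross-factor $\delta''$ terms cancel in $\A_{k+l+1}^Y\otimes\Z/2\Z$.

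The one clean lever I would use here is a symmetry cancellation: the two univalent vertices created by the first summand of $\delta_v$ carry the \emph{same} color, so any configuration in which one of them is glued occurs together with the configuration in which its partner is glued instead, and the two diagrams differ by transposing two edges at a single trivalent vertex. By the AS relation they cancel in pairs --- in fact already over $\Z$ --- so this summand contributes nothing to the interacting part. This leaves the interacting contributions coming from the second summand of $\delta_v$ (the trivalent vertex bearing the conjugate colors $\ell(v)$ and $\ell(v)^*$), from the glued copies of the leg sprouted by $\delta_{vw}$, and from the cross-factor $\delta''$ terms; I would organize these by the local shape they create on, or across, a gluing edge.

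Establishing the cancellation of this last group is where I expect the main difficulty to lie, and it is the step that genuinely requires both the IHX relation and the reduction modulo $2$. The delicate point is that a trivalent vertex inserted on a glued edge and a trivalent vertex joining the two factors are interchangeable only after applying IHX, and one must check that the conjugate-colored legs produced by the second summand of $\delta_v$ and of $\delta_w$, together with the glued $\delta_{vw}$-legs, pair up correctly with the cross-factor $\delta''$ terms once IHX has been applied and signs are discarded over $\Z/2\Z$. I would verify this cancellation locally, one gluing edge at a time, and then obtain the Leibniz rule by summing the resulting local identities over all matchings $\phi$.
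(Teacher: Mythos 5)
Your overall strategy---bilinear reduction to single diagrams, separating non-interacting from interacting contributions, and matching the latter against the cross-factor $\delta''$ terms using IHX mod $2$---is in the same spirit as the paper's proof in Appendix~B (which organizes the bookkeeping by decomposing $\delta=\sum_i(\delta^{i^+}+\delta^{i^-})$ and $\star$ into the partial gluings $\star_i$, and then proves three identities in Lemma~B.2). However, there is a genuine error at the step you call the ``one clean lever.'' The two legs $v_1,v_2$ produced by the first summand of $\delta_v$ are attached at two \emph{distinct, adjacent} trivalent vertices $t_1,t_2$ inserted along the original edge, not at a single trivalent vertex. The configuration in which $v_1$ is glued to $w\in U_{i^-}(J')$ (leaving $v_2$ free) and the one in which $v_2$ is glued (leaving $v_1$ free) are therefore different underlying graphs---they differ in which of $t_1,t_2$ carries the connection to $J'$---and are not related by an AS move at one vertex. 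They do not cancel, over $\Z$ or over $\Z/2\Z$. What is true, and what the paper uses, is that their sum is governed by IHX: it equals the single diagram in which a new trivalent vertex sits on the gluing edge and carries a free $i^+$-colored leg (and similarly, when both $v_1$ and $v_2$ are glued, the two resulting diagrams sum to a diagram with a trivalent vertex joining the two gluing edges).

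These IHX-combined terms are not noise to be discarded: they are precisely the partners of the cross-factor $\delta''$ contributions $\delta_{vw}(J\star J')$ with $v\in U(J)$, $w\in U(J')$, and of the interacting terms in which the $\ell(w)^*$-colored tip created by the second summand of $\delta_w$ on the other factor gets consumed by a gluing. Once you delete them via the purported AS cancellation, the remaining interacting terms cannot balance, so the local verification you defer to the end would fail. To repair the argument you need to keep the doubled-leg interacting terms, combine each symmetric pair via IHX into an ``inserted $Y$ on a gluing edge'' diagram, and only then carry out the matching; this is exactly the content of the displayed IHX computations and of identities (B.1)--(B.3) in the paper.
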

Since we do not use Proposition~\ref{prop:Leibniz} in the rest of the paper,
we give a proof in Appendix~\ref{section:leibniz}.

\subsection{The kernel of $\delta$}\label{section:kernel-of-delta}
Recall that $\A_{n,r}^{c}$ is the submodule of $\A_n^c$ generated by connected Jacobi diagrams with $r$ loops. 
Here, we introduce a map $\Delta_{n,r}\colon \A_{n,r}^c\to A_{2n+1,2r}^c$ whose image is in the kernel of our homomorphism $\bar{z}_{n+1}$.
We use $\Delta_{n,r}$ to investigate the kernel of $\ss$ in Section~\ref{section:kernel-tree}.
\begin{definition}
For $n\ge0$ and $r\ge0$,
we define a map $\Delta_{n,r}\colon \A_{n,r}^c\to A_{2n+1,2r}^c$ by
\[
\Delta_{n,r}(J)=\sum_{v\in U(J)}
\begin{tikzpicture}[baseline=-0.4ex, scale=0.26, dash pattern={on 2pt off 1pt}]
\node [left] at (0,0) {$\ell (v)$};
\draw (0,0) -- (2,0); 
\draw (2,0) -- (4.5,1); 
\draw (2,0) -- (4.5,-1); 
\node [right] at (4.5,1) {$J_v$};
\node [right] at (4.5,-1.1) {$J_v$,};
\end{tikzpicture}
\]
where
$J_v$ denotes the complement of a neighborhood of a vertex $v$ in $J$, namely,
$J=
\begin{tikzpicture}[baseline=-0.6ex, scale=0.26, dash pattern={on 2pt off 1pt}]
\node [left] at (0,0) {$\ell (v)$};
\draw (0,0) -- (1.5,0); 
\node [right] at (1.5,0) {$J_v$.};
\end{tikzpicture}$
\end{definition}
In \cite{CST12W},
Conant, Schneiderman, and Teichner defined $\Delta_{n,0}$ to describe the kernel of the composite map $\Gr q\circ\ss\colon \A_n^c\to Y_n\I\H/Y_{n+1}$, where $\Gr q\colon Y_n\I\C/Y_{n+1}\to Y_n\I\H/Y_{n+1}$ is the induced map by the projection $q\colon\I\C\to\I\H$.
They used $\Delta_{n,0}$ to determine the kernel of the realization map from $\A_{n,0}^c$ to the graded quotient of the Whitney tower filtration on the set of framed links.
The map $\Delta_{n,r}$ is a generalization of their map.

\begin{prop}\label{prop:Delta}
For $n\ge0$ and $r\ge0$,
\[
\delta'\circ \Delta_{n,r}=\delta''\circ\Delta_{n,r}=0\colon\A_{n,r}^c\to \A_{2n+2}^c\otimes\Z/2\Z.
\]
\end{prop}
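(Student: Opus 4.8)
The plan is to exploit the reflection symmetry that is built into $\Delta_{n,r}$. Write $\Delta_{n,r}(J)=\sum_{v\in U(J)}D_v$, where $D_v$ is the connected diagram consisting of a univalent vertex $r_v$ colored $\ell(v)$ joined, through one trivalent vertex $c_v$, to two copies of $J_v$. Each $D_v$ carries the obvious involution $\sigma_v$ interchanging the two copies of $J_v$ and fixing $r_v$ and $c_v$; on $U(D_v)$ this $\sigma_v$ acts \emph{freely away from the single fixed point} $r_v$. The whole argument rests on the mod-$2$ principle that two Jacobi diagrams related by a graph isomorphism represent the same class in $\A^{Y}_{2n+2}\otimes\Z/2\Z$, since the AS sign is invisible over $\Z/2\Z$. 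Hence, whenever a local operation is performed at a univalent vertex $u$ and at $\sigma_v(u)$, the two output diagrams are carried onto one another by $\sigma_v$ and so cancel mod $2$. This collapses each of the two sums to the contribution of the $\sigma_v$-fixed configurations, which is where all the content lies.

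For $\delta''\circ\Delta_{n,r}$ I would expand $\delta''(D_v)=\sum\delta_{uw}(D_v)$ over unordered same-colored pairs $\{u,w\}\subset U(D_v)$ and sort these pairs into $\sigma_v$-orbits. A pair not fixed by $\sigma_v$ occurs together with its image and the two terms cancel mod $2$ by the principle above; the pairs involving $r_v$ are never fixed, so they too cancel. The only fixed pairs are the \emph{mirror pairs} $\{u,\sigma_v(u)\}$, one for each univalent vertex $a$ of $J_v$. A direct inspection identifies $\delta_{u,\sigma_v(u)}(D_v)$ with the diagram $E_{v,a}$ obtained by gluing two copies of $J_{\{v,a\}}$, the complement of neighborhoods of both $v$ and $a$, to two trivalent vertices carrying the colors $\ell(v)$ and $\ell(a)$. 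This description is manifestly symmetric in the two deleted vertices, so $E_{v,a}=E_{a,v}$ in $\A^{c}_{2n+2}\otimes\Z/2\Z$; summing over the ordered pairs $v\neq a$ and cancelling $(v,a)$ against $(a,v)$ then yields $\delta''\circ\Delta_{n,r}=0$.

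For $\delta'\circ\Delta_{n,r}$ the same $\sigma_v$-cancellation annihilates every contribution coming from a non-root univalent vertex, leaving $\delta'(\Delta_{n,r}(J))\equiv\sum_{v}\delta_{r_v}(D_v)\pmod 2$, and it remains to show this residual sum over the root vertices vanishes. Here I would write out the two diagrams making up $\delta_{r_v}(D_v)$, namely the one splitting $c_v$ into two trivalent vertices each colored $\ell(v)$ and the one attaching a new trivalent vertex carrying $\ell(v)$ and $\ell(v)^{*}$, and reduce the total using the IHX and AS relations together with the mod-$2$ collapse of AS. I expect this root analysis to be the \emph{main obstacle}: the root is the unique $\sigma_v$-fixed univalent vertex, so its contribution is not killed automatically by the reflection, and one must genuinely invoke IHX (rather than symmetry alone) to see that the surviving split-term and $\ell(v)^{*}$-term diagrams cancel across the various $v$. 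Once this root cancellation is established, both identities hold, and since every term produced lies in internal degree $2n+2$ the composites land in $\A^{c}_{2n+2}\otimes\Z/2\Z$ as claimed, completing the proof.
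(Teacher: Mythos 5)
Your treatment of $\delta''\circ\Delta_{n,r}$ and your reduction of $\delta'\circ\Delta_{n,r}$ to the root contributions $\sum_v\delta_{r_v}(D_v)$ coincide with the paper's argument: the reflection symmetry kills all non-mirror pairs mod $2$, the surviving mirror-pair diagrams are symmetric in the two deleted vertices and cancel in the double sum, and for $\delta'$ only the $\sigma_v$-fixed root survives. Up to that point the proposal is correct and is essentially the paper's proof.

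The problem is that you stop exactly where the content begins. You write that you ``would'' reduce the residual root sum using IHX and AS and that you ``expect'' this to be the main obstacle, but you never carry it out, and the expected shape of the cancellation (``the surviving split-term and $\ell(v)^{*}$-term diagrams cancel across the various $v$'') is not what actually happens. In the paper the two pieces of $\delta_{r_v}(D_v)$ are handled separately and differently. First, the $\ell(v)^{*}$-branch term vanishes for each $v$ individually, by the IHX relation combined with the reflection symmetry of $\Delta_v(J)$; it does not need to interact with other values of $v$. Second, the split term is identified with the diagram $D_{vv}$ obtained by joining two copies of $J$ by a new edge attached at the midpoints of the edges at $v$ in each copy, and the identity $\sum_{v\in U(J)}D_{vv}=0$ in $\A_{2n+2}^c\otimes\Z/2\Z$ is a separate combinatorial lemma (Lemma~\ref{lem:jacobi}): for each fixed $v$ one has $\sum_{w\in U(J)}D_{vw}=0$ by the Kirchhoff law (sweeping the attaching point of the new edge around the second copy of $J$ via IHX/AS), and then the off-diagonal terms cancel mod $2$ because $D_{vw}=D_{wv}$, isolating the diagonal sum. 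This Kirchhoff-law step is the essential idea of the whole proposition and is absent from your proposal; without it the proof is incomplete.
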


\begin{proof}
Let $J$ be a connected Jacobi diagram of $\ideg(J)=n$ with $r$ loops.
For $v\in U(J)$,
we denote by $\Delta_v(J)$ the Jacobi diagram obtained by taking the double of the complement of a neighborhood of $v$ as explained in the definition of $\Delta_{n,r}$.
We have 
\[
\Delta_{n,r}(J)=\sum_{v\in U(J)}\Delta_v(J).
\]

First, we show that $(\delta''\circ \Delta_{n,r})(J)$ is zero.
For $v, w\in U(J)$ such that $v\ne w$,
we denote by $w_1,w_2\in U(\Delta_v(J))$ the vertices which arise in $\Delta_v(J)$ as two copies of $w\in U(J)$. 
Since the Jacobi diagram $\Delta_v(J)$ is symmetric with respect to the edge incident to $v$,
$\delta''(\Delta_v(J))$ is equal to
\[
\sum_{\begin{subarray}{c}w\in U(J)\\w\ne v\end{subarray}}\delta_{w_1w_2}(\Delta_v(J))\in \A_{2n+2,2r+1}^c\otimes\Z/2\Z.
\]
For mutually distinct $v,w\in U(J)$,
denote the Jacobi diagram as
$J=
\begin{tikzpicture}[baseline=-0.4ex, scale=0.26, dash pattern={on 2pt off 1pt}]
\node [left] at (0,0) {$\ell (v)$};
\draw (0,0.2) -- (2,0.2); 
\node [right] at (2,0) {$J_{vw}$};
\draw (5.4,0.2) -- (7.4,0.2); 
\node [right] at (7.4,0) {$\ell (w)$,};
\end{tikzpicture}$
where $J_{vw}$ denotes the complement of neighborhoods of  $v,w\in U(J)$.
Since we see that
\[
\delta_{w_1w_2}(\Delta_v(J))=\,
\begin{tikzpicture}[baseline=-0.6ex, scale=0.26, dash pattern={on 2pt off 1pt}]
\node [left] at (0,-0.1) {$\!\!\ell (v)$};
\draw (0,0) -- (2,0); 
\draw (2,0) -- (4.5,1.2); 
\draw (2,0) -- (4.5,-1.2); 
\node [right] at (4.5,1.3) {$J_{vw}$};
\node [right] at (4.5,-1.3) {$J_{vw}$};
\draw (10.5,0) -- (8,1.2); 
\draw (10.5,0) -- (8,-1.2); 
\draw (10.5,0) -- (12.3,0); 
\node [right] at (12.5,-0.1) {$\ell (w)\!\!$};
\end{tikzpicture}
=\delta_{v_1v_2}(\Delta_w(J)),
\]
every Jacobi diagram in the sum
\[
(\delta''\circ \Delta_{n,r})(J)
=\sum_{v\in U(J)}\delta''(\Delta_v(J))
=\sum_{v\in U(J)}\sum_{w\in U(J)\setminus\{v\}}\delta_{w_1w_2}(\Delta_v(J))\in\A_{2n+2,2r+1}^c\otimes\Z/2\Z
\]
appears twice, and the sum is equal to zero.

Next, we consider $(\delta'\circ \Delta_{n,r})(J)$.
For $v\in U(J)$, $\delta'(\Delta_v(J))$ is equal to $\delta_v(\Delta_v(J))$ by the symmetry of $\Delta_v(J)$.
It is described as
\[
\delta_v(\Delta_v(J))=\,
\begin{tikzpicture}[baseline=3ex, scale=0.26, dash pattern={on 2pt off 1pt}]
\node [left] at (0,0) {$J_v$};
\draw (0,0) -- (9,0); 
\node [right] at (9,0) {$J_v$};
\draw (3,0) -- (3,3);
\node [above] at (3,3) {$\ell (v)$};
\draw (6,0) -- (6,3);
\node [above] at (6,3) {$\ell (v)$};
\end{tikzpicture}
+\,
\begin{tikzpicture}[baseline=3ex, scale=0.26, dash pattern={on 2pt off 1pt}]
\node [left] at (0,0) {$J_v$};
\draw (0,0) -- (9,0); 
\node [right] at (9,0) {$J_v$};
\draw (4.5,0) -- (4.5,1.5);
\draw (4.5,1.5) -- (3,3);
\draw (4.5,1.5) -- (6,3);
\node [above] at (2.8,3) {$\ell (v)$};
\node [above] at (6.2,3) {$\ell (v)^*$};
\end{tikzpicture}
\in\A_{2n+2,2r}^c\otimes\Z/2\Z,
\]
and the second term is zero by the IHX relation and its symmetry.
In Lemma~\ref{lem:jacobi} below,
we see that the sum of the first terms in $\delta_v(\Delta_v(J))$ is also zero, where $v$ runs through $U(J)$.
\end{proof}

Let $J$ be a connected Jacobi diagram of $\ideg=n$.
For $v,w\in U(J)$,
let us take two copies of $J$,
and connect the middle point of the edge incident to $v$ in the first copy to that of the edge incident to $w$ in the second copy by a new edge.
Denote the resulting Jacobi diagram by $D_{vw}\in \A_{2n+2}^c\otimes\Z/2\Z$.
Especially, we see that $D_{vv}=\delta_v(\Delta_v(J))$.
\begin{lemma}\label{lem:jacobi}
For a connected Jacobi diagram $J$ of $\ideg=n$,
\[
\sum_{v\in U(J)}D_{vv}=0\in \A_{2n+2}^c\otimes\Z/2\Z.
\]
\end{lemma}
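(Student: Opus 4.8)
The plan is to deduce the diagonal identity $\sum_{v\in U(J)}D_{vv}=0$ from the vanishing of the \emph{full} double sum $\sum_{v,w\in U(J)}D_{vw}$, where $D_{vw}$ is the already-defined diagram joining the $v$-edge of the first copy of $J$ to the $w$-edge of the second. Working throughout in $\A_{2n+2}^c\otimes\Z/2\Z$, where the AS sign is invisible, I would establish two facts: that modulo $2$ the double sum collapses onto its diagonal, and that the double sum itself vanishes by an IHX-telescoping (``leg-sliding'') argument applied to one copy at a time.

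For the first fact I would use the involution $\sigma$ interchanging the two copies of $J$. Since $\sigma$ carries the connecting edge of $D_{vw}$ to that of $D_{wv}$ and preserves all labels, it gives a graph isomorphism $D_{vw}\cong D_{wv}$; because vertex-orientation signs are trivial mod $2$, this yields $D_{vw}=D_{wv}$ in $\A_{2n+2}^c\otimes\Z/2\Z$. Hence the off-diagonal contribution $\sum_{v\neq w}D_{vw}=\sum_{\{v,w\},\,v\neq w}(D_{vw}+D_{wv})$ is a sum of terms $2D_{vw}\equiv 0$, so that $\sum_{v,w}D_{vw}\equiv\sum_{v}D_{vv}\pmod 2$.

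For the second fact I would prove a leg-sliding lemma: for a connected Jacobi diagram with at least one trivalent vertex and a fixed external edge, the sum over all univalent vertices of the diagrams obtained by grafting that edge onto each leg is zero mod $2$. Applying the IHX relation at the internal edge created when grafting next to a fixed trivalent vertex $p$ shows that the three graftings onto the three edges at $p$ sum to zero mod $2$; summing these relations over all trivalent vertices of the first copy, each internal edge is counted twice (coefficient $2\equiv 0$) while each leg-edge is counted once, so only the sum over legs survives. Reading $D_{vw}$, for fixed $w$, as the result of grafting onto the $v$-leg of the first copy the external edge that runs into the second copy at $w$, this gives $\sum_{v}D_{vw}\equiv 0$ for each $w$, hence $\sum_{v,w}D_{vw}\equiv 0$; combining with the first fact yields $\sum_{v}D_{vv}\equiv 0$, as desired.

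The main obstacle is that the obvious symmetry of each individual $D_{vv}$ is useless mod $2$: the copy-swap automorphism only produces the sign $-1\equiv 1$, so no single term is separately zero, and one must instead pass to the full double sum, trading the rigid diagonal matching of legs for a free matching amenable to IHX-telescoping. The two delicate points I expect to check are the degenerate case $\ideg J=0$ (a strut, where there is no trivalent vertex and the two diagonal terms instead simply coincide), and the fact that grafting the external edge onto a given edge of $J$ is well defined independently of where along that edge the new trivalent vertex is inserted, which is what makes the per-vertex IHX relations add up cleanly.
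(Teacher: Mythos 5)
Your proof is correct and follows essentially the same route as the paper: both reduce $\sum_v D_{vv}$ to the full double sum $\sum_{v,w}D_{vw}$ by noting $D_{vw}=D_{wv}$ kills the off-diagonal part mod $2$, and both show each row sum $\sum_v D_{vw}$ vanishes by the Kirchhoff law (your IHX-telescoping over trivalent vertices is exactly the standard proof of that law, which the paper simply cites from the literature).
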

\begin{proof}
Let $v\in U(J)$.
By the AS and IHX relations,
or so-called the Kirchhoff law, we have
\[
\sum_{w\in U(J)}D_{vw}=0
\]
by moving round the attaching point of the new edge in the second copy of $J$.
See, for example, \cite[Remark~5.13]{CDM12}.
Thus, we obtain
\[
0=\sum_{v\in U(J)}\sum_{w\in U(J)}D_{vw}=\sum_{v\in U(J)}\sum_{w\in U(J)\setminus\{v\}}D_{vw}+\sum_{v\in U(J)}D_{vv}.
\]
Since $D_{vw}=D_{wv}$ by the definition,
all the Jacobi diagrams in the sum $\sum_{v\in U(J)}\sum_{w\in U(J)\setminus\{v\}}D_{vw}$ appear twice, 
and the sum is equal to zero.
Thus, we obtain $\sum_{v\in U(J)}D_{vv}=0$.
\end{proof}

\section{The homomorphisms $\overline{Z}_{n+1}$ and $\bar{z}_{n+1}$}
\label{sec:Z_and_z}
In this section, we introduce homomorphisms $\overline{Z}_{n+1}$ and $\bar{z}_{n+1}$ mentioned in Section~\ref{section:Intro},
and prove Theorem~\ref{thm:main} which enables us to compute them.

Let $\iota_n\colon \A_n^Y\to \A_n^Y\otimes \Q$ be the natural map defined by $\iota_n(J)=J\otimes 1$.
We sometimes denote $\iota=\iota_n$.
Recall that $\Ztilde^Y\colon \Z\I\C \to \widehat{\A}^Y\otimes \Q$ is an algebra homomorphism which maps $\F_n\I\C$ to $\widehat{\A}_{\ge n}^Y\otimes \Q$.
Moreover, it satisfies $\Ztilde^Y_n(\F_n\I\C) \subset \Im\iota_n$ by \cite[Proof of Theorem~7.11]{CHM08}.

\begin{definition}
For $n \geq 1$, define a homomorphism
\[
\overline{Z}_{n+1}\colon \F_{n}\I\C/\F_{n+1}\I\C \to \A_{n+1}^Y\otimes \Q/\Z
\]
by $\overline{Z}_{n+1}([x]) = \Ztilde^Y_{n+1}(x) \bmod \Z$.
Similarly, a homomorphism 
\[
\bar{z}_{n+1}\colon Y_{n}\I\C/Y_{n+1} \to \A_{n+1}^c\otimes \Q/\Z
\]
is defined by $\bar{z}_{n+1}([M]) = (\log\Ztilde^Y(M))_{n+1} \bmod \Z$.
In other words,
$$
\bar{z}_{n+1}(M)=
\begin{cases}
\Ztilde_{n+1}^Y(M) \bmod \Z & \text{if $n\ge 2$},\\
\displaystyle \Ztilde_2^Y(M)-\frac{1}{2}\Ztilde_1^Y(M)\sqcup \Ztilde_1^Y(M) \bmod \Z & \text{if $n=1$}.
\end{cases}
$$
\end{definition}

We should check that $\overline{Z}_{n+1}$ and $\bar{z}_{n+1}$ are well-defined.
First, the inclusion $\Ztilde_{n+1}^Y(\F_{n+1}\I\C) \subset \Im\iota_{n+1}$ implies $\overline{Z}_{n+1}$ is well-defined.
It suffices for the well-definedness of $\bar{z}_{n+1}$ to prove that $(\log\Ztilde^Y(M))_{n+1} \equiv (\log\Ztilde^Y(M_{G}))_{n+1} \mod \Z$ for $M \in Y_{n}\I\C$ and a connected graph clasper $G$ of degree $n+1$.
Since $M-M_{G} = [M,G] \in \F_{n+1}\I\C$, we have $\Ztilde^Y_1(M) - \Ztilde^Y_1(M_G) = 0\in \A_1^Y$ and the inclusion $\Ztilde_{n+1}^Y(\F_{n+1}\I\C) \subset \Im\iota_{n+1}$ implies $\Ztilde^Y_{n+1}(M) - \Ztilde^Y_{n+1}(M_G) \in \Im\iota_{n+1}$.
Thus, we conclude that $(\log\Ztilde^Y(M))_{n+1} \equiv (\log\Ztilde^Y(M_{G}))_{n+1} \mod \Z$.

\begin{remark}\label{rem:BC-homo}
In \cite[Theorem~1.3]{MaMe03}, 
an isomorphism between modules
$\I\C/Y_2$ and $\Lambda^3H\times_{\Lambda^3H\otimes\Z/2\Z}B_3$
is given,
where $B_3$ is the space of cubic boolean functions on the set $\Spin(\Sigma_{g,1})$ of spin structures.
In \cite{MaMe13}, $Y_3$-equivalence classes of homology cylinders and values of $\Ztilde_2^Y$ on $\K\C$ are investigated.
As corollaries of these results, we see that the homomorphism
\[
\Ztilde_1^Y\oplus \bar{z}_2\colon\I\C/Y_2\to (\A_1^c\otimes\Q)\oplus (\A_2^c\otimes\Q/\Z)
\]
is injective.

Moreover, we will see in Remark~\ref{rem:BC-homo2} that we can describe the mod $2$ reduction of $\tau_1$ in terms of $\bar{z}_2$.
By combining with \cite{MaMe13},
$\bar{z}_2$ has all information of the Birman-Craggs homomorphisms.
\end{remark}

In the same way as $\bar{z}_{n+1}$, we have the following.

\begin{prop}
For $n\ge1$,
the map
\[
(\log\Ztilde^Y)_{n+2}\bmod\frac{1}{2}\Z \colon Y_n\I\C\to\A_{n+2}^c\otimes\left(\Q / \tfrac{1}{2}\Z\right)
\]
is a homomorphism,
and factors through $Y_n\I\C/Y_{n+1}$.
\end{prop}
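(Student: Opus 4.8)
The plan is to prove the two assertions — that the map is a homomorphism of the monoid $Y_n\I\C$ into the abelian group $\A_{n+2}^c\otimes(\Q/\tfrac12\Z)$, and that it is constant on $Y_{n+1}$-equivalence classes — in parallel with the well-definedness argument for $\bar z_{n+1}$ given above. Three facts drive everything: the functoriality $\Ztilde^Y(M\circ N)=\Ztilde^Y(M)\star\Ztilde^Y(N)$ together with the fact that $\Ztilde^Y(M)$ is group-like in $(\widehat{\A}^Y,\star)$; the integrality $\Ztilde^Y_m(\F_m\I\C)\subset\Im\iota_m$ from \cite{CHM08}; and its one-step refinement $\Ztilde^Y_{m+1}(\F_m\I\C)\subset\tfrac12\Im\iota_{m+1}$, i.e.\ the statement that $\overline Z_{m+1}$ takes values in the half-integral subgroup $\A_{m+1}^Y\otimes(\tfrac12\Z/\Z)$. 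This last fact is exactly Theorem~\ref{thm:main}: since $\S$ is surjective and the right vertical arrow there is $\id\otimes\tfrac12$, the image of $\overline Z_{m+1}$ is half-integral.

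For the descent, let $M\in Y_n\I\C$ and let $M'=M_C$ be obtained by surgery on a connected graph clasper $C$ with $n+1$ nodes, so that $M$ and $M'$ are $Y_{n+1}$-equivalent and $M-M'=[M,C]\in\F_{n+1}\I\C$. Because $M$ and $M'$ are $Y_{n+1}$-equivalent, $(\log\Ztilde^Y M)_a=(\log\Ztilde^Y M')_a$ for $a\le n$, while at $a=n+1$ the difference is $\Ztilde^Y_{n+1}([M,C])$, which is integral. Writing $\Ztilde^Y=\exp(\log\Ztilde^Y)$ and comparing degree-$(n+2)$ parts, the connected class $(\log\Ztilde^Y M)_{n+2}-(\log\Ztilde^Y M')_{n+2}$ equals $\Ztilde^Y_{n+2}([M,C])$ minus $\tfrac12$ times a sum of $\sqcup$-products of these lower classes. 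The leading term is half-integral by the refinement at level $m=n+1$, and every correction is $\tfrac12$ times a product of integral classes; hence the difference lies in $\tfrac12\A_{n+2}^c$ and vanishes modulo $\tfrac12\Z$. For $n\ge3$ no corrections occur and the difference is simply $\Ztilde^Y_{n+2}([M,C])$.

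For additivity I would extract the connected part of $\Ztilde^Y(M)\star\Ztilde^Y(N)=\exp(\log\Ztilde^Y M)\star\exp(\log\Ztilde^Y N)$ for $M,N\in Y_n\I\C$, using that for a group-like element the coefficient of a connected diagram coincides with its coefficient in the logarithm. The terms with no cross-gluing reproduce $(\log\Ztilde^Y M)_{n+2}+(\log\Ztilde^Y N)_{n+2}$, so the defect is a sum of connected diagrams of internal degree $n+2$ built, via gluing, from at least one homogeneous component of each factor. Since gluing preserves internal degree and each component has degree $\ge n$, such a diagram uses at least two and at most $\lfloor(n+2)/n\rfloor$ components; this is possible only for $n\le2$, so the map is additive modulo $\Z$ for $n\ge3$. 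For $n=2$ the lone defect glues the integral classes $\Ztilde^Y_2(M)$ and $\Ztilde^Y_2(N)$ and is integral; for $n=1$ the defect pairs the integral class $\Ztilde^Y_1=-\tau_1$ with $\Ztilde^Y_2$ (coefficient $1$), plus three-component terms carrying a factor $\tfrac12$ from $\exp$ but involving only the integral degree-$1$ classes. As $\Ztilde^Y_2$ is half-integral on $\I\C$ (the $n=1$ case of Theorem~\ref{thm:main}, equivalently the Birman–Craggs homomorphisms of Remark~\ref{rem:BC-homo}), the defect lies in $\tfrac12\A_{n+2}^c$ and vanishes modulo $\tfrac12\Z$.

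The main obstacle is the low-degree bookkeeping for $n=1,2$: one must confirm that the a priori rational defect and correction terms collectively land in $\tfrac12\A_{n+2}^c$, never producing a finer denominator. This rests on the two half-integrality inputs from Theorem~\ref{thm:main} — the refinement $\Ztilde^Y_{n+2}(\F_{n+1}\I\C)\subset\tfrac12\Im\iota_{n+2}$ for the descent and the half-integrality of $\bar z_2$ for $n=1$ — together with the observation that at internal degree $n+2$ the exponential and logarithm introduce a factor $\tfrac12$ only by squaring the integral lowest-degree term, whereas the half-integral class $\Ztilde^Y_2$ enters only linearly, so no finer denominator can appear. For $n\ge3$ the degree count removes all corrections and both assertions are immediate.
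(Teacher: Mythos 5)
Your argument is correct and rests on the same two inputs as the paper's proof: the functoriality $\Ztilde^Y(M\circ N)=\Ztilde^Y(M)\star\Ztilde^Y(N)$ combined with internal-degree counting for $M,N\in Y_n\I\C$ (which kills all cross terms for $n\ge3$ and leaves only integral or half-integral ones for $n\le2$), and the half-integrality $\Ztilde^Y_{m+1}(\F_m\I\C)\subset\tfrac12\Im\iota_{m+1}$ supplied by Theorem~\ref{thm:main}. The only cosmetic difference is the descent step, where the paper composes $M$ with an inverse representative $N'$ so that $M\circ N'\in Y_{n+1}\I\C$ and reuses the additivity just proved, while you compare $M$ with $M_C$ directly via the surgery bracket $[M,C]\in\F_{n+1}\I\C$; both reduce to the same half-integrality statement in degree $n+2$, so the proofs are essentially identical.
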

\begin{proof}
For $M,N\in Y_n\I\C$, Theorem~\ref{thm:main} and the surjectivity of $\ss$ imply
\[
\Ztilde^Y_{n+2}(M\circ N)-\Ztilde^Y_{n+2}(M)-\Ztilde^Y_{n+2}(N)
\equiv\sum_{i=1}^{n+1}\Ztilde^Y_{n+2-i}(M)\star\Ztilde^Y_i(N)
\equiv 0\mod\frac{1}{2}\Z.
\]
By taking the connected part,
we see that $(\log\Ztilde^Y)_{n+2}\bmod\frac{1}{2}\Z$ is a homomorphism.

Next, let $M,N\in Y_n\I\C$ satisfy $M\sim_{Y_{n+1}}N$,
and denote by $N'\in Y_n\I\C$ a representative of the inverse element of $N$ in $Y_n\I\C/Y_{n+1}$.
We have $M\circ N'\in Y_{n+1}\I\C$, and
\[
0\equiv\Ztilde^Y_{n+2}(M\circ N')
\equiv\Ztilde^Y_{n+2}(M)+\Ztilde^Y_{n+2}(N')
\equiv\Ztilde^Y_{n+2}(M)-\Ztilde^Y_{n+2}(N) \mod\frac{1}{2}\Z.
\]
Taking the connected part,
we obtain $(\log\Ztilde^Y)_{n+2}(M)\equiv(\log\Ztilde^Y)_{n+2}(N)\mod\frac{1}{2}\Z$.
\end{proof}

\begin{theorem}\label{thm:Z-homo}
For $n\ge1$,
the homomorphism $\bar{z}_{n+1}\colon Y_n\I\C/Y_{n+1}\to \A_{n+1}^c\otimes\Q/\Z$
extends to a homomorphism on $Y_{\lfloor \frac{n}{2}\rfloor+1}\I\C/Y_{n+1}$.
\end{theorem}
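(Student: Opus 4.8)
The plan is to show that the very formula defining $\bar z_{n+1}$, namely $M\mapsto(\log\Ztilde^Y(M))_{n+1}\bmod\Z$, already yields a homomorphism once its domain is enlarged from $Y_n\I\C$ to $Y_m\I\C$ with $m=\lfloor n/2\rfloor+1$. Write $\tilde z(M)=(\log\Ztilde^Y(M))_{n+1}\bmod\Z$ for $M\in Y_m\I\C$. Since $\Ztilde^Y$ is an algebra homomorphism for the product $\star$, for $M,N\in Y_m\I\C$ one has $\Ztilde^Y(M\circ N)=\Ztilde^Y(M)\star\Ztilde^Y(N)$. I would set $y_M=\log\Ztilde^Y(M)$ and $y_N=\log\Ztilde^Y(N)$, each a series of \emph{connected} diagrams of internal degree $\ge m$, and expand $\log\bigl(\exp(y_M)\star\exp(y_N)\bigr)$ to read off its degree $(n+1)$ part.

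The heart of the matter is that $\exp$ and $\log$ are taken with respect to the disjoint union $\sqcup$, whereas the composition uses $\star$. Carrying out the expansion, the degree $(n+1)$ part splits into the diagonal contributions $(y_M)_{n+1}+(y_N)_{n+1}$ and cross contributions. The crucial bookkeeping is that the $\sqcup$-quadratic correction coming from $\log(\emptyset+\,\cdot\,)$ precisely cancels the $0$-gluing (disjoint union) part of $\star$; what survives in each cross term is a \emph{connected} diagram obtained by gluing at least one $i^{+}$-vertex of a component of $y_M$ to an $i^{-}$-vertex of a component of $y_N$. Because every such component has internal degree $\ge m$, any cross contribution has internal degree $\ge 2m$.

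Here the choice $m=\lfloor n/2\rfloor+1$ enters decisively: $2m\ge n+1$, with equality exactly when $n$ is odd. Thus for $n$ even no cross term reaches degree $n+1$ and $\tilde z(M\circ N)=\tilde z(M)+\tilde z(N)$ holds on the nose; for $n$ odd the only cross contribution in degree $n+1=2m$ is the sum over all ways of gluing at least one pair of vertices between the leading terms $\Ztilde^Y_m(M)$ and $\Ztilde^Y_m(N)$. The step I expect to carry the weight is the \emph{integrality} of this defect: by \cite[Theorem~7.11]{CHM08} the leading-term map $\Ztilde^Y_m$ sends $Y_m\I\C$ into $\Im\iota_m$, so $\Ztilde^Y_m(M)$ and $\Ztilde^Y_m(N)$ are integral, and gluing univalent vertices preserves integrality; hence the defect lies in $\A_{n+1}^c$ and vanishes modulo $\Z$. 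This is exactly why the threshold cannot be lowered: for a larger domain the cross terms in degree $n+1$ would be built from the higher, merely rational, parts of $y_M$ and $y_N$ (the $\bar z$-type contributions), and integrality would fail, just as the analogous defect in the preceding Proposition survives only modulo $\tfrac12\Z$.

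Finally I would dispatch the two routine points. To see that $\tilde z$ factors through $Y_{n+1}$: for $M\in Y_{n+1}\I\C$ the $\sqcup$-quadratic part of $\log\Ztilde^Y(M)$ sits in degree $\ge 2(n+1)$, so $(\log\Ztilde^Y(M))_{n+1}=\Ztilde^Y_{n+1}(M)\in\Im\iota_{n+1}$ is integral and $\tilde z(M)=0$. To see that $\tilde z$ restricts to $\bar z_{n+1}$ on $Y_n\I\C/Y_{n+1}$: this is immediate from the definitions, since $Y_n\I\C\subset Y_m\I\C$ and the defining formulas coincide. Together with the additivity established above, $\tilde z$ descends to a homomorphism on $Y_m\I\C/Y_{n+1}$ extending $\bar z_{n+1}$, as claimed.
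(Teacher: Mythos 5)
Your proposal is correct and follows essentially the same route as the paper: expand $\Ztilde^Y(M\circ N)=\Ztilde^Y(M)\star\Ztilde^Y(N)$ for $M,N\in Y_m\I\C$, observe that all cross terms have internal degree at least $2m\ge n+1$, and use the integrality of the leading term $\Ztilde^Y_m$ on $Y_m\I\C$ (from the proof of \cite[Theorem~7.11]{CHM08}) to kill the only possible degree-$(n+1)$ defect modulo $\Z$. The paper phrases this via $\Ztilde^Y_{n+1}\bmod\Z$ and the two parity cases $2k-1=n+1$ and $2k=n+1$ rather than via $\log$, but the content is identical.
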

\begin{proof}
For $k\ge1$ and $M\in Y_k\I\C$,
we have $\Ztilde^Y(M)-\emptyset\in \widehat{\A}_{\ge k}^Y\otimes\Q$.
Thus, for $M,N\in Y_k\I\C$,
\begin{align*}
\Ztilde^Y(M\circ N)
&=\Ztilde^Y(M)\star\Ztilde^Y(N)\\
&=\{\emptyset+(\Ztilde^Y(M)-\emptyset)\}
\star\{\emptyset+(\Ztilde^Y(N)-\emptyset)\}\\
&\equiv \emptyset+(\Ztilde^Y(M)-\emptyset)+(\Ztilde^Y(N)-\emptyset)
+\Ztilde_k^Y(M)\star \Ztilde_k^Y(N) \mod \widehat{\A}_{\ge 2k+1}^Y\otimes \Q.
\end{align*}
Since the leading term of $\Ztilde^Y$ is integral,
$\Ztilde_k^Y(M)$, $\Ztilde_k^Y(N)\in \Im\iota_k$.
Thus, we have 
\begin{align}
\Ztilde_{2k-1}^Y(M\circ N)&=\Ztilde_{2k-1}^Y(M)+\Ztilde_{2k-1}^Y(N)\in \A_{2k-1}^Y\otimes \Q\label{eq:ztilde1},\\
\Ztilde_{2k}^Y(M\circ N)&\equiv\Ztilde_{2k}^Y(M)+\Ztilde_{2k}^Y(N)\mod\Z.\label{eq:ztilde2}
\end{align}
When $n$ is even, 
by setting $k=\frac{n+2}{2}$ in (\ref{eq:ztilde1}),
we  see that $\Ztilde^Y_{n+1} \bmod \Z$ is a homomorphism on $Y_{\frac{n+2}{2}}\I\C/Y_{n+1}$.
When $n$ is odd,
by setting $k=\frac{n+1}{2}$ in (\ref{eq:ztilde2}),
we also see that $\Ztilde^Y_{n+1} \bmod \Z$ is a homomorphism on $Y_{\frac{n+1}{2}}\I\C/Y_{n+1}$.
\end{proof}

The proof of Theorem~\ref{thm:main} is based on a decomposition of a bottom-top tangle into some elementary pieces (see Figure~\ref{fig:Decomposition}), which is a refinement of the decomposition used in \cite[Theorem~7.11]{CHM08}.
Here the bottom-top $q$-tangles $\Delta_t$ and $\Delta_b$ are defined by
$$
\Delta_t = \raisegraph{-2em}{5em}{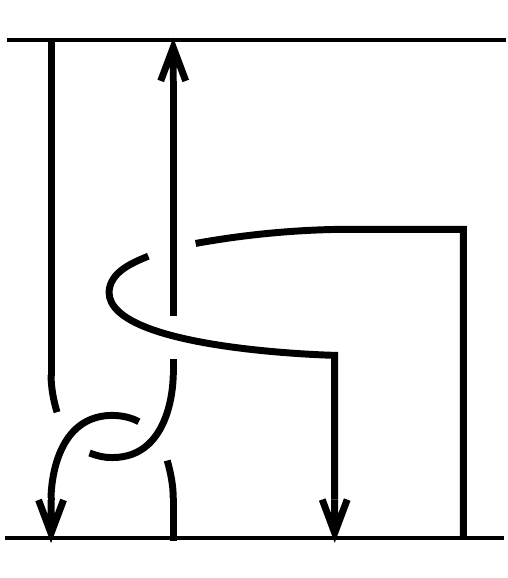} \text{\; and \;} \Delta_b = \raisegraph{-2em}{5em}{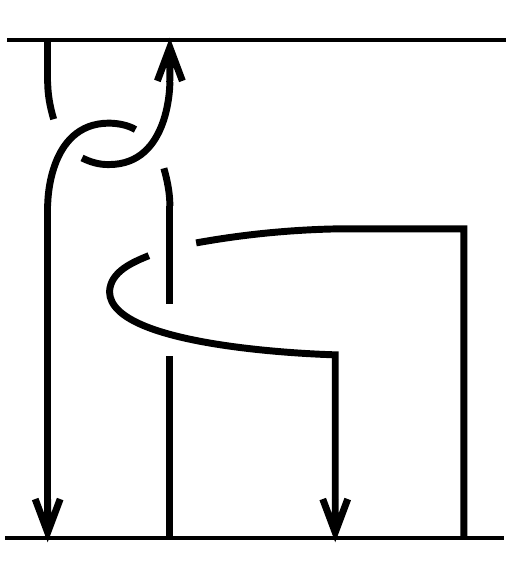},
$$
respectively.
We set $\Delta_t^0=\Id_\bullet$, and for $m \in \Z_{>0}$, let $\Delta_t^m$ denote the composite $(\Delta_t\otimes\Id_{w_{m-1}})\circ\dots\circ(\Delta_t\otimes\Id_{w_1})\circ\Delta_t$, where $w_{j}$ is the word $(\cdots((\bullet\bullet)\bullet)\cdots)$ of length $j$.
We define $\Delta_b^m$ in the same way.

\begin{lemma}
\label{lem:Delta}
 The following equalities hold
\begin{align*}
 (\log\Ztilde(\Delta_t^m))_{\leq 1} 
 &= \strutgraph{1^{+}}{1^{-}} + \sum_{j=2}^{m+1} \strutgraph{1^{+}}{j^{-}} - \sum_{j=2}^{m+1} \frac{1}{2}\dYgraph{1^{+}}{1^{-}}{j^{-}} - \sum_{2\leq j<k\leq m+1} \frac{1}{2}\dYgraph{1^{+}}{j^{-}}{k^{-}}\ , \\
 (\log\Ztilde(\Delta_b^m))_{\leq 1} 
 &= \strutgraph{1^{+}}{1^{-}} + \sum_{j=2}^{m+1} \dstrutgraph{1^{-}}{j^{-}} - \sum_{j=2}^{m+1} \frac{1}{2}\dYgraph{1^{+}}{1^{-}}{j^{-}} - \sum_{2\leq j<k\leq m+1} \frac{1}{2}\ddYgraph{1^{-}}{j^{-}}{k^{-}}\ .
\end{align*}
\end{lemma}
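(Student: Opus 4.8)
The plan is to prove both formulas simultaneously by induction on $m$, using that $\Ztilde$ is a monoidal functor, so that composition and tensor product of cobordisms are sent to the operations $\circ$ and $\otimes$ on $\tsA$. For $m=0$ we have $\Delta_t^0=\Id_\bullet$ and the formula reduces to $\log\Ztilde(\Id_\bullet)_{\le1}=\strutgraph{1^+}{1^-}$ (all sums empty), while the case $m=1$, i.e.\ the values of $(\log\Ztilde(\Delta_t))_{\le1}$ and $(\log\Ztilde(\Delta_b))_{\le1}$, is read off directly from the definitions of the elementary bottom-top tangles $\Delta_t,\Delta_b$ together with the values in Table~5.2 of \cite{CHM08}. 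Throughout, $(\cdot)_{\le1}$ denotes the truncation to internal degree $\le1$, so that only struts ($\ideg=0$) and one-vertex (Y-shaped) diagrams ($\ideg=1$) survive.

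For the inductive step I would use the defining factorization $\Delta_t^m=(\Delta_t\otimes\Id_{w_{m-1}})\circ\Delta_t^{m-1}$, whence by functoriality and monoidality
\[
\Ztilde(\Delta_t^m)=\bigl(\Ztilde(\Delta_t)\otimes\Ztilde(\Id_{w_{m-1}})\bigr)\circ\Ztilde(\Delta_t^{m-1}).
\]
First I would compute the upper factor in internal degree $\le1$: applying the tensor label shift $i^+\mapsto(i+1)^+,\ j^-\mapsto(j+2)^-$ to $\log\Ztilde(\Id_{w_{m-1}})=\sum_i\strutgraph{i^+}{i^-}$ and adding the value for $\Delta_t$, its strut part is $\strutgraph{1^+}{1^-}+\strutgraph{1^+}{2^-}+\sum_{i=2}^m\strutgraph{i^+}{(i+1)^-}$ and its Y-part is $-\tfrac12\dYgraph{1^+}{1^-}{2^-}$. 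The lower factor is supplied by the inductive hypothesis.

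Then I would compute the composition $\circ$ in $\tsA$ up to internal degree $1$, where this operation glues the $+$-legs of the upper factor to the $-$-legs of the lower factor along the $m$-leg interface. The key point is that in this range the composite is controlled by \emph{transport along struts}: gluing struts to struts produces only struts (the strut parts compose like a linear transport of the interface labels, here $1\mapsto\{1,2\}$ and $i\mapsto i+1$ for $i\ge2$), and a trivalent vertex in the composite can only come from an already existing Y-part of one of the two factors, whose free legs are then carried through the struts of the other factor. Concretely, transporting the lower factor's struts (its top leg $1^+$ joined to all of $1^-,\dots,m^-$) and its $\binom{m}{2}$ Y-terms through the upper factor, together with the single new Y-term $-\tfrac12\dYgraph{1^+}{1^-}{2^-}$ from the upper factor carried to the top through the strut $\strutgraph{1^+}{1^-}$ of the lower factor, assembles after a short index bookkeeping into $\sum_{k=1}^{m+1}\strutgraph{1^+}{k^-}-\tfrac12\sum_{1\le j<k\le m+1}\dYgraph{1^+}{j^-}{k^-}$, exactly the claimed value for $\Delta_t^m$. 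The argument for $\Delta_b^m$ is identical in structure, with the top strut $\strutgraph{1^+}{j^-}$ replaced by $\dstrutgraph{1^-}{j^-}$ and $\dYgraph{1^+}{j^-}{k^-}$ by $\ddYgraph{1^-}{j^-}{k^-}$.

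The step I expect to be the main obstacle is the rigorous justification of this composition: the operation $\circ$ acts on the full exponential series $\Ztilde=\exp_\sqcup(\cdots)$, so after taking $\log$ one must verify that the symmetry factors produced by the exponentials of struts cancel so as to leave precisely the transported struts and Y-parts, with the stated $\tfrac12$ coefficients and no extra corrections. I would settle this by the degree constraint: a diagram of internal degree $\le1$ in $\log\Ztilde$ is connected with at most one trivalent vertex, so no trivalent vertex can be manufactured by gluing two struts, and each surviving Y-term arises from exactly one way of gluing a single Y-leg of one factor to a degree-zero strut of the other, which inherits the coefficient $-\tfrac12$ unchanged.
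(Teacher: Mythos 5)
Your proposal is correct and takes essentially the same route as the paper: verify the case $m=1$ from the generator values recalled from \cite[Table~5.2]{CHM08}, then induct on $m$ via the functoriality and monoidality of $\Ztilde$, with your degree-$\leq 1$ analysis of the composition (struts transport legs; no trivalent vertex can be created by gluing struts, so each $Y$-term is carried through with its coefficient $-\tfrac12$ intact) supplying exactly the justification that the paper leaves implicit in the phrase ``by the induction on $m$ and the functoriality of $\Ztilde$.'' The one point where your plan is thinner than the paper's proof is the base case for $\Delta_b$, which is not itself a generator and so cannot literally be read off the table: the paper obtains it from the decomposition $\Delta_b=(\mu\otimes\Id_1)\circ(\Id_1\otimes c')$ with $c'=(\mu\otimes\mu)\circ(\Id_1\otimes\Delta_t\otimes\Id_1)\circ(v_+\otimes v_-\otimes v_+)$, and this is where the strut joining $1^-$ to $2^-$ on the bottom surface actually arises.
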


\begin{proof}
 Once we show the case $m=1$, by the induction on $m$ and the functoriality of $\Ztilde$, we complete the proof.
 We first use the decomposition $\Delta_t = \psi^{-1}\circ\Delta$ (see Table~\ref{tab:ElemCob}) and get
\[(\log\Ztilde(\Delta_t))_{\leq 1}
 = \strutgraph{1^{+}}{1^{-}} + \strutgraph{1^{+}}{2^{-}} - \frac{1}{2}\dYgraph{1^{+}}{1^{-}}{2^{-}}\ .\]
 Next, considering a bottom-top tangle $c'=(\mu\otimes\mu)\circ(\Id_1\otimes\Delta_t\otimes\Id_1)\circ(v_+\otimes v_-\otimes v_+)$ similar to $c$ and the decomposition $\Delta_b=(\mu\otimes\Id_1)\circ(\Id_1\otimes c')$ (see \cite[Tables~5.1 and 5.2]{CHM08}), we have
\[(\log\Ztilde(\Delta_b))_{\leq 1}
 = \strutgraph{1^{+}}{1^{-}} + \dstrutgraph{1^{-}}{2^{-}} - \frac{1}{2}\dYgraph{1^{+}}{1^{-}}{2^{-}}\ .\]
\end{proof}

\begin{proof}[Proof of Theorem~\ref{thm:main}]
We first prove the first commutative diagram, that is,
\[
\Ztilde^Y_{n+1}(\S(J)) = \frac{1}{2}(\delta(J)+ \Y(J)) \in \A_{n+1}^Y\otimes\Q/\Z
\]
for a Jacobi diagram $J \in \A_n^Y$.
The commutativity
\[
\bar{z}_{n+1}(\ss(J)) = \frac{1}{2}\delta(J) \in \A_{n+1}^c\otimes\Q/\Z
\]
of the second diagram follows from the the first one.
Indeed, for a connected Jacobi diagram $J \in \A_{n}^c$, if $n\geq 2$, we see
\[
\bar{z}_{n+1}(\ss(J)) = \Ztilde^Y_{n+1}(\ss(J)) = -\Ztilde^Y_{n+1}(\S(J)) = \frac{1}{2}\delta(J).
\]
In the case $n=1$, one has
\[
 \bar{z}_{2}(\ss(J)) = \Ztilde^Y_{2}(\ss(J)) -\frac{1}{2}\Ztilde^Y_{1}(\ss(J))\sqcup\Ztilde^Y_{1}(\ss(J)) 
 = \left( \frac{1}{2}\delta(J) + \frac{1}{2}J\sqcup J \right) + \frac{1}{2}J\sqcup J
 = \frac{1}{2}\delta(J).
\]

Let us prove the first commutative diagram.
There are $3n$ half-edges incident to trivalent vertices of $J$, which are denoted by $e_{g+1}, e_{g+2},\dots,e_{g+3n}$, and let $N=\{g+1,g+2,\dots,g+3n\}$.
Define $V$, $E$, $L_{t,i}$, and $L_{b,i}$ for $1\le i\le g$ by
\begin{align*}
V &= \left\{(j,k,l) \in N^3 \biggm| \text{\parbox{14em}{$e_j, e_k, e_l$ are the three half-edges incident to a trivalent vertex}} \right\} \Big/\text{cyclic permutation}, \\
E &= \left\{(j,k) \in N^2 \biggm| \text{\parbox{17em}{$e_j, e_k$ are the two half-edges of an edge connecting two trivalent vertices}} \right\} \Big/\text{permutation}, \\
L_{t,i} &= \{j \in N \mid \text{the univalent vertex of the edge containing $e_j$ is colored with $i^{+}$}\}, \\
L_{b,i} &= \{j \in N \mid \text{the univalent vertex of the edge containing $e_j$ is colored with $i^{-}$}\}. 
\end{align*}
Set $r_i = \#L_{t,i}$ and $s_i = \#L_{b,i}$.

\begin{figure}[h]
 \centering
 \includegraphics[width=0.5\textwidth]{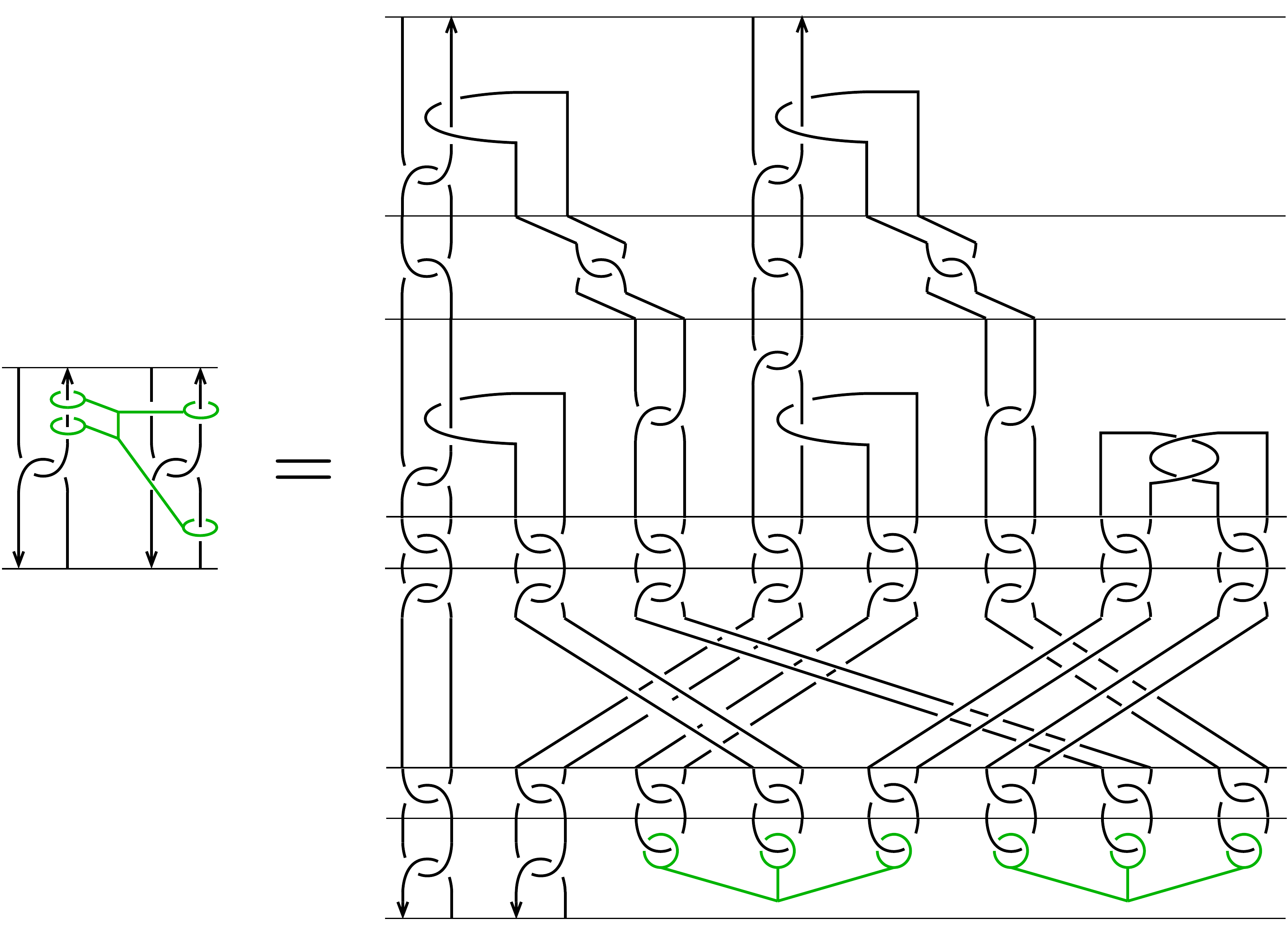}
 \caption{A decomposition of $(\Id_2\otimes Y^{\otimes 2})\circ\Psi \circ (\Delta_t^2\otimes((\Delta_b\otimes \Id_1)\circ \Delta_t) \otimes c)$.
 See \cite[Section~2.3]{CHM08} for the composition of bottom-top tangles.}
 \label{fig:Decomposition}
\end{figure}

Let $G$ be a graph clasper (in the bottom-top tangle $([-1,1]^3,\gamma_g)$ corresponding to the cobordism $\Sigma_{g,1}\times[-1,1]$) realizing $J$ such that $([-1,1]^3,\gamma_g)_G$ is decomposed as
\[
(\Id_g \otimes Y^{\otimes n}) \circ \Psi \circ \left( \left( \bigotimes_{i=1}^g((\Delta_b^{s_i}\otimes\Id_{r_i})\circ\Delta_t^{r_i}) \right) \otimes c^{\otimes\#E} \right),
\]
where $\Psi$ consists of $\psi$, $\psi^{-1}$ (in Section~\ref{sec:LMO}) and $\Id_m$ (see Figure~\ref{fig:Decomposition}).
Note that $G$ can be more complicated in general, but well-definedness of the surgery map (see Section~\ref{sec:surgery}) enables us to choose $G$ so that $([-1,1]^3,\gamma_g)_G$ has the above decomposition.
Indeed, one can put $G$ in the front side except for the leaves, and move the trivalent vertices to the bottom-right.

We denote by $\gamma$ the third factor of the above decomposition.
Also, we may assume that the $j$-th leaf among $3n$ leaves of $n$ $Y$'s in the decomposition arises from the half-edge $e_{g+j}$.
Then, by Section~\ref{sec:LMO} and Lemma~\ref{lem:Delta}, we have the following computation
\begin{gather*}
\begin{split}
 \left( \log\Ztilde(\Psi \circ \gamma) \right)_{\leq 1} 
 &= \sum_{i=1}^g\left( \strutgraph{i^{+}}{i^{-}} + \sum_{l \in L_{t,i}}\strutgraph{i^{+}}{l^{-}} + \sum_{l \in L_{b,i}}\dstrutgraph{i^{-}}{l^{-}} + \sum_{l \in L_{t,i}} -\frac{1}{2}\dYgraph{i^{+}}{i^{-}}{l^{-}} + \sum_{k<l \in L_{t,i}} \pm\frac{1}{2}\dYgraph{i^{+}}{k^{-}}{l^{-}} \right.\\
 &\qquad\qquad \left. + \sum_{l \in L_{b,i}} -\frac{1}{2}\dYgraph{i^{+}}{i^{-}}{l^{-}} + \sum_{k<l \in L_{b,i}} \pm\frac{1}{2}\ddYgraph{i^{-}}{k^{-}}{l^{-}}\ \right) - \sum_{(j,k)\in E} \dstrutgraph{j^{-}}{k^{-}}\ ,
\end{split}\\
 \left( \log\Ztilde(\Id_g \otimes Y^{\otimes n}) \right)_{\leq 2} 
 = \sum_{i=1}^g \strutgraph{i^{+}}{i^{-}} + \sum_{(j,k,l)\in V}\left(
-\uYgraph{l^{+}}{k^{+}}{j^{+}} +\frac{1}{2}\uHgraph{l^{+}}{l^{+}}{k^{+}}{j^{+}} +\frac{1}{2}\uHgraph{k^{+}}{k^{+}}{j^{+}}{l^{+}} +\frac{1}{2}\uHgraph{j^{+}}{j^{+}}{l^{+}}{k^{+}}\ 
\right),
\end{gather*}
where $\pm$'s are determined by $\Psi$.
Here one has
\begin{align*}
 \Ztilde(\S(J)) &= \sum_{G' \subset G} (-1)^{|G'|} \Ztilde((\Sigma_{g,1}\times[-1,1])_{G'}) 
 = (-1)^{n+|G|}(\Id_g \otimes (\emptyset-\Ztilde(Y))^{\otimes n}) \circ \Ztilde(\Psi\circ \gamma),
\end{align*}
where the first equality is just the definition of $\S$ in Section~\ref{sec:F-filtration} and the second equality follows from \cite[Proof of Theorem~7.11]{CHM08}.
Since $\emptyset - \Ztilde(Y) \in \widehat{\A}_{\geq 1}$, we have
\[
(-1)^{n+|G|} \Ztilde_{n+1}(\S(J)) = (\Id_g\otimes(\emptyset - \Ztilde(Y))^{\otimes n}_{n+1})\circ\Ztilde_{0}(\Psi\circ \gamma) + (\Id_g\otimes(\emptyset - \Ztilde(Y))^{\otimes n}_{n})\circ\Ztilde_{1}(\Psi\circ \gamma).
\]

Let us observe the first term in the right-hand side.
By considering the internal degrees, a term in $\Id_g\otimes(\emptyset - \Ztilde(Y))^{\otimes n}_{n+1}$ which might survive after composing with $\Ztilde_{0}(\Psi\circ \gamma)$ is either of the form
\[
\Id_g \sqcup \frac{(-1)^{n-1}}{2} \uHgraph{l'^+}{l'^+}{k'^+}{j'^+}\ \sqcup \bigsqcup_{(j,k,l)\neq(j',k',l') \in V} \uYgraph{l^+}{k^+}{j^+}
\]
for $(j',k',l') \in V$ with $l' \in \bigcup_i L_{t,i}\cup L_{b,i}$ or of the form
\[
\Id_g \sqcup \frac{(-1)^{n+1}}{2!} \uYgraph{l'^+}{k'^+}{j'^+} \sqcup \uYgraph{l'^+}{k'^+}{j'^+} \sqcup \bigsqcup_{(j,k,l)\neq(j',k',l') \in V} \uYgraph{l^+}{k^+}{j^+}
\]
for $(j',k',l') \in V$ with $j', k', l' \in \bigcup_i L_{t,i}\cup L_{b,i}$.
The former corresponds to the first term of $\delta'(J)$, and the latter corresponds to $\Y(J)$.

We next see the second term $(\Id_g\otimes(\emptyset - \Ztilde(Y))^{\otimes n}_{n})\circ\Ztilde_{1}(\Psi\circ \gamma)$.
It follows from $\Ztilde_{1}(\Psi\circ \gamma) = \Ztilde_{0}(\Psi)\circ\Ztilde_{1}(\gamma)$ that $\Ztilde_{1}(\Psi\circ \gamma)$ is the disjoint union of
\[
\exp\left(
\sum_{i=1}^g\left( \strutgraph{i^{+}}{i^{-}} + \sum_{l \in L_{t,i}}\strutgraph{i^{+}}{l^{-}} + \sum_{l \in L_{b,i}}\dstrutgraph{i^{-}}{l^{-}} \right) - \sum_{(j,k)\in E} \dstrutgraph{j^{-}}{k^{-}} \right)
\]
and
\begin{align}
\label{eq:gamma}
\sum_{i=1}^g\left( \sum_{l \in L_{t,i}} -\frac{1}{2}\dYgraph{i^{+}}{i^{-}}{l^{-}} + \sum_{k<l \in L_{t,i}} \pm\frac{1}{2}\dYgraph{i^{+}}{k^{-}}{l^{-}} + \sum_{l \in L_{b,i}} -\frac{1}{2}\dYgraph{i^{+}}{i^{-}}{l^{-}} + \sum_{k<l \in L_{b,i}} \pm\frac{1}{2}\ddYgraph{i^{-}}{k^{-}}{l^{-}}\ \right).
\end{align}
Here one has
\[
\Id_g\otimes(\emptyset - \Ztilde(Y))^{\otimes n}_{n} = \Id_g \sqcup \bigsqcup_{(j,k,l) \in V} \uYgraph{l^{+}}{k^{+}}{j^{+}}\ .
\]
Thus, the non-trivial terms of $(\Id_g\otimes(\emptyset - \Ztilde(Y))^{\otimes n}_{n})\circ\Ztilde_{1}(\Psi\circ \gamma)$ arising from the first and third (resp.\ the second and fourth) terms in (\ref{eq:gamma}) give the second term of $\delta'(J)$ (resp.\ $\delta''(J)$).
\end{proof}

\section{Properties of the homomorphism $\bar{z}_{n+1}$}
In this section, we study the structures of the modules of 1-loop Jacobi diagrams and tree Jacobi diagrams (Sections~\ref{section:oneloop}~and~\ref{section:treepart}).
We also show that the $0$-loop part of the homomorphism $\bar{z}_{n+1}$ is an extension of the higher Sato-Levine invariants defined by Conant, Schneiderman, and Teichner (Section~\ref{section:satolevine}),
and give some information on the kernel of the surgery map $\ss$ (Section~\ref{section:kernel-tree}).

\subsection{The module of 1-loop Jacobi diagrams}\label{section:oneloop}
Here, we determine the structure of the module $\A_{n,1}^c$ of connected 1-loop Jacobi diagrams.

Let $H$ denote the free module generated by $\{1^{\pm},\ldots, g^{\pm}\}$.
We denote by $\D_{2n}=\braket{x,y \mid (xy)^2, x^n, y^2}$ the dihedral group of order $2n$ which acts on $H^{\otimes n}$ as
\[
x\cdot(a_1\otimes a_2\otimes\cdots\otimes a_{n})=a_{n}\otimes  a_1\otimes a_2\otimes\cdots\otimes a_{n-1},\
y\cdot(a_1\otimes a_2\otimes\cdots\otimes a_{n})=(-1)^na_n\otimes \cdots\otimes a_2\otimes a_1.
\]
Let us denote by $(H^{\otimes n})_{\D_{2n}}$ the coinvariants of this action.
The module $\A_{n,1}^c$ is described as follows.
\begin{prop}\label{prop:oneloop}
Let $n\ge2$. 
The homomorphism
\[
\Phi\colon (H^{\otimes n})_{\D_{2n}}\to \A_{n,1}^c,\quad
a_1\otimes a_2\otimes\cdots\otimes a_n\mapsto
O(a_1,a_2,\ldots,a_n)
\]
is well-defined, and it induces an isomorphism.
\end{prop}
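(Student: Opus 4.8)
The plan is to show that $\Phi$ is well defined, surjective, and admits an explicit left inverse; a left inverse forces injectivity, and together with surjectivity this yields bijectivity. For well-definedness I would only need to check compatibility with the two generators of the $\D_{2n}$-action. The cyclic generator $x$ acts trivially because $O(a_n,a_1,\dots,a_{n-1})$ is literally the same oriented diagram as $O(a_1,\dots,a_n)$ after rotating the picture. For the reflection $y$, reversing the orientation of the central loop reverses the counterclockwise cyclic order at each of the $n$ trivalent vertices, so $n$ applications of the AS relation give $O(a_n,\dots,a_1)=(-1)^n O(a_1,\dots,a_n)$; combined with the sign $(-1)^n$ built into the $y$-action, this shows $\Phi(y\cdot w)=\Phi(w)$. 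Hence $\Phi$ descends to the coinvariants $(H^{\otimes n})_{\D_{2n}}$.

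Surjectivity amounts to the claim that the wheels $O(a_1,\dots,a_n)$ span $\A_{n,1}^c$. A connected one-loop diagram has a unique embedded loop, and its complement is a forest whose trees are attached to the loop at trivalent vertices. I would induct on the total tree complexity $\sum_j(\ell_j-1)$, where $\ell_j$ is the number of leaves of the $j$-th attached tree; a short count shows the number of legs equals the number of trivalent vertices, namely $n$. If the complexity is zero every tree is a single leg and the diagram is already a wheel. Otherwise a branch vertex adjacent to the loop can be absorbed onto the loop by a single IHX move, which splits one attaching vertex into two and strictly decreases the complexity; iterating expresses the diagram as a $\Z$-linear combination of wheels. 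The self-loop relation together with the hypothesis $n\ge2$ guarantees that no tadpole is produced.

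The heart of the argument, and the main obstacle, is injectivity, which I would establish by constructing an explicit left inverse $\Psi\colon \A_{n,1}^c\to (H^{\otimes n})_{\D_{2n}}$ rather than by a rank count; note that $(H^{\otimes n})_{\D_{2n}}$ may carry $2$-torsion arising from the signed reflection, so a rational computation cannot by itself give an isomorphism over $\Z$. Reading the loop of a one-loop diagram counterclockwise, at the $j$-th attaching vertex the rooted tree $T_j$ determines an iterated-bracket element $\mathrm{Lie}(T_j)\in L_{\ell_j}\subset H^{\otimes\ell_j}$, and I set $\Psi(J)=[\mathrm{Lie}(T_1)\otimes\cdots\otimes\mathrm{Lie}(T_k)]$. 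The AS and IHX relations at the internal tree vertices correspond exactly to antisymmetry and the Jacobi identity of the bracket, while the cyclic and reflection ambiguities of the reading are absorbed by passing to the $\D_{2n}$-coinvariants. The delicate point I expect to be hardest is that $\Psi$ must also be invariant under AS and IHX applied to edges of the loop itself, where the distinction between ``on the loop'' and ``in a tree'' is not canonical; this reduces to a finite local verification that expanding a branch commutator across a loop edge matches the Lie structure of the cyclic tensor. Granting this verification, $\Psi\circ\Phi=\mathrm{id}$ is immediate, since each tree of a wheel is a single leg, so $\Phi$ is injective; combined with surjectivity, $\Phi$ is an isomorphism.
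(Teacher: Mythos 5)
Your proposal is correct and takes essentially the same route as the paper: well-definedness is checked exactly as there via the AS relation giving $O(a_1,\dots,a_n)=(-1)^nO(a_n,\dots,a_1)$, and the inverse map is the same construction that cuts the unique cycle at its trivalent vertices into rooted trees and reads off their iterated brackets as an element of $(H^{\otimes n})_{\D_{2n}}$. The only cosmetic differences are that you make the surjectivity induction on tree complexity explicit and map rooted trees directly into $L_m\subset H^{\otimes m}$ rather than passing through Levine's quasi-Lie algebra $L'_m$ and the projection $\gamma_m$ as the paper does, and both you and the paper leave the IHX-invariance at the loop edges as an asserted local verification.
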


To prove Proposition~\ref{prop:oneloop},
we review the free quasi-Lie algebra introduced by Levine.
In \cite{Lev02}, Levine defined the free quasi-Lie algebra,
in which the self-annihilation relation $[x,x]=0$ of the free Lie algebra is replaced by the weaker anti-symmetry relation $[x,y]=-[y,x]$.
Levine identified the free quasi-Lie algebra generated by $\{1^{\pm},\ldots, g^{\pm}\}$ with the module generated by rooted trees whose univalent vertices are colored by the set $\{1^{\pm},\ldots,g^{\pm}\}$ modulo the AS relation and IHX relation in \cite[Section~3]{Lev02}.
The induced Lie bracket on the module of rooted trees is given by gluing neighborhoods of the roots of two rooted trees.
Let us denote by $L_n$ and $L'_n$ the degree $n$ parts of the free Lie algebra and free quasi-Lie algebra generated by $\{1^{\pm},\ldots, g^{\pm}\}$, respectively.
For $k\ge1$, 
Levine showed that the natural projection $\gamma_{2k-1}\colon L'_{2k-1}\to L_{2k-1}$ is an isomorphism in ~\cite[Lemma~2.1]{Lev02}
and that there is an exact sequence
\begin{equation}\label{eq:quasi-lie}
\begin{CD}
0@>>>L_k\otimes\Z/2\Z@>\theta_k>> L'_{2k}@>\gamma_{2k}>> L_{2k}@>>>0,
\end{CD}
\end{equation}
where $\theta_k$ is the induced map by the Lie bracket 
\[
L'_k\to L'_{2k},\quad
J\mapsto [J,J]
\]
in \cite[Theorem~2.2]{Lev06}.
\begin{proof}[Proof of Proposition~\ref{prop:oneloop}]
By the $\AS$ relation,
we see that 
\[
O(a_1,a_2,\ldots, a_n)=(-1)^nO(a_n,\ldots,a_2, a_1).
\]
Thus, $\Phi$ is well-defined.
We construct the inverse map of $\Phi$ as follows.
First, let us pick a connected 1-loop Jacobi diagram $J$.
Note that we consider a Jacobi diagram as a uni-trivalent graph
equipped with a cyclic order of incident edges in each trivalent vertex without the AS and IHX relations here.

In the Jacobi diagram $J$, the set of non-separating edges forms a cycle.
When $k\ge3$, we denote the edges as $e_1,e_2,\ldots, e_k$ in order,
so that $e_i$ and $e_{i+1}$ has one common endpoint $v_i$ for $i=1,2,\ldots, k$,
where $e_{k+1}=e_1$.
When $k=2$, we denote the two edges by $e_1$ and $e_2$ and their two common endpoints by $v_1$ and $v_2$.
Let us cut $J$ at each trivalent vertex $v_i$,
and focus on the tree component which does not contain $e_i$.
We regard it as a rooted tree, and denote it by $T_i$.
If we change the cyclic order of edges in $v_i$ using the AS relation,
we can change $J$ to the Jacobi diagram of the form as below.
\begin{center}
\begin{tikzpicture}[baseline=1.4ex, scale=0.4, dash pattern={on 2pt off 1pt}]
\draw (0,0) circle [radius=2.8]; 
\draw (10:2.8) -- (10:4);
\draw (50:2.8) -- (50:4);
\draw (90:2.8) -- (90:4);
\draw (130:2.8) -- (130:4);
\node [right] at (10:3.8) {$T_3$};
\node [above right] at (50:3.6) {$T_2$};
\node [above] at (90:3.8) {$T_1$};
\node [above left] at (130:3.6) {$T_k$};
\node at (-10:3.5){$\cdot$};
\node at (-25:3.5){$\cdot$};
\node at (-40:3.5){$\cdot$};
\node at (150:3.5){$\cdot$};
\node at (165:3.5){$\cdot$};
\node at (10:2.1){$v_3$};
\node at (50:2.1){$v_2$};
\node at (90:2.1){$v_1$};
\node at (130:2.1){$v_k$};
\end{tikzpicture}
\end{center}
Thus, we have obtained a cyclically ordered set of rooted trees $(T_1,T_2,\ldots, T_k)$ from a connected 1-loop Jacobi diagram modulo the AS relation.
To identify a connected 1-loop Jacobi diagram with that of the form above,
we need to choose an orientation of the cycle formed by the non-separating edges.
Hence, what we actually obtained is a map from the set of connected 1-loop Jacobi diagrams modulo the AS relation to the cyclically ordered set of rooted trees $(T_1,T_2,\ldots, T_k)$ with signs modulo $(T_1,T_2,\ldots, T_k)=(-1)^k(T_k,\ldots, T_2, T_1)$.

Recall that the module of rooted trees is identified with the free quasi-Lie algebra.
Using the composite map of the natural projection $\gamma_m\colon L'_m\to L_m$ and the natural embedding $L_m\to H^{\otimes m}$ for $m\ge1$,
and taking tensor products, 
we have a map from the set of cyclically ordered rooted trees $(T_1,T_2,\ldots, T_k)$ with signs to $(H^{\otimes n})_{\braket{x}}$,
where $n$ is the sum of the numbers of univalent vertices in $T_1, T_2, \ldots, T_k$.

Finally, to erase the ambiguity coming from the choice of an orientation of the cycle,
take the quotient by the inversion $y$ as $(H^{\otimes n})_{\D_{2n}}=((H^{\otimes n})_{\braket{x}})_{\braket{y}}$.
Let us denote the number of univalent vertices of $T_i$ by $m_i$, i.e., $T_i \in L'_{m_i}$ for $1\le i\le k$.
Since $\sum_{i=1}^k(m_i-1)+k=n$, we have 
\[
\gamma_{m_1}(T_1)\otimes\gamma_{m_2}(T_2)\otimes\cdots\otimes\gamma_{m_k}(T_k)
=(-1)^k\gamma_{m_k}(T_k)\otimes\cdots\otimes\gamma_{m_2}(T_2)\otimes\gamma_{m_1}(T_1)\in(H^{\otimes n})_{\D_{2n}}.
\]
Thus, the above element of $(H^{\otimes n})_{\D_{2n}}$ does not depend on the choice of an orientation of the cycle formed by the non-separating edges.

The above map from connected 1-loop Jacobi diagrams to $(H^{\otimes n})_{\D_{2n}}$ is actually invariant under the IHX relation with respect to $e_i$, $e_{i+1}$, and the edge in $T_i$ incident to the root $v_i$.
It is also invariant under the IHX relation with respect to the other edges in $T_i$ since the IHX relation in $T_i$ corresponds to the Jacobi identity in the free quasi-Lie algebra.
Thus, the map induces a homomorphism
$\A_{n,1}^c\to (H^{\otimes n})_{\D_{2n}}$,
and it is the inverse map of $\Phi$.
\end{proof}

For $a_1,\ldots, a_n\in \{1^{\pm},2^{\pm},\ldots, g^{\pm}\}$,
we call a $1$-loop Jacobi diagram $O(a_1,a_2,\ldots,a_n)$ \emph{symmetric} if there exists $1\le k\le n$ such that $a_{n-i}=a_{k+i}$ for $0\le i\le n-1$ with indices modulo $n$,
and denote by $\A_{n,1}^{c,s}$ the submodule of $\A_{n,1}^c$ generated by symmetric connected 1-loop Jacobi diagrams of $\ideg=n$.

\begin{prop}\label{prop:oneloopbasis}
Let $n\ge2$.
The rank of the $\Z$-module $\A_{n,1}^c$ is as follows:
\[
\rank\A_{n,1}^c=
\begin{cases}
\displaystyle\frac{1}{2}N_{2g}(n)+\frac{1}{4}(2g+1)(2g)^{\frac{n}{2}}&\text{if }n\text{ is even},\\[8pt]
\displaystyle\frac{1}{2}N_{2g}(n)-\frac{1}{2}(2g)^{\frac{n+1}{2}}&\text{if }n\text{ is odd},
\end{cases}
\]
where $N_{2g}(n)=(\sum_{d \mid n}\varphi(d)(2g)^{n/d})/n$ and $\varphi(d)$ is Euler's totient function.
The torsion subgroup of $\A_{n,1}^c$ is non-trivial only when $n$ is odd,
and described as
\[
\tor\A_{n,1}^c
\cong \A_{n,1}^{c,s}
\cong H^{\otimes\frac{n+1}{2}}\otimes \Z/2\Z.
\]
\end{prop}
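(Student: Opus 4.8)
The plan is to exploit the isomorphism $\A_{n,1}^c\cong(H^{\otimes n})_{\D_{2n}}$ from Proposition~\ref{prop:oneloop} and to analyze this coinvariant module as an abelian group, treating its free rank and its torsion separately. Write $d=2g=\rank_\Z H$ and let $W$ be the set of length-$n$ words in the $2g$ letters $\{1^{\pm},\ldots,g^{\pm}\}$, so that $H^{\otimes n}=\Z[W]$ as a free $\Z$-module. The key observation is that the $\D_{2n}$-action is the permutation action on $W$ (rotations permute positions cyclically, reflections reverse them) twisted by the character $\epsilon\colon\D_{2n}\to\{\pm1\}$ with $\epsilon(x^k)=1$ and $\epsilon(yx^k)=(-1)^n$; that is, $H^{\otimes n}\cong\Z[W]\otimes\epsilon$ as $\Z[\D_{2n}]$-modules. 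Note that $\epsilon$ is trivial when $n$ is even.

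For the rank I would use $\rank_\Z(H^{\otimes n})_{\D_{2n}}=\dim_\Q(H^{\otimes n}\otimes\Q)^{\D_{2n}}=\frac{1}{2n}\sum_{\sigma\in\D_{2n}}\operatorname{tr}(\sigma)$, where a signed permutation acting on positions with $c$ cycles has trace $\epsilon(\sigma)\,d^{c}$. The rotation $x^k$ has $\gcd(n,k)$ cycles, so the rotations contribute $\sum_{k=0}^{n-1}d^{\gcd(n,k)}=nN_{2g}(n)$. A reflection has $(n+1)/2$ cycles when $n$ is odd, while for $n$ even there are $n/2$ reflections with $(n+2)/2$ cycles and $n/2$ with $n/2$ cycles. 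Inserting the reflection sign $\epsilon=(-1)^n$ and dividing by $2n$ then yields $\tfrac12 N_{2g}(n)-\tfrac12(2g)^{(n+1)/2}$ for $n$ odd and $\tfrac12 N_{2g}(n)+\tfrac14(2g+1)(2g)^{n/2}$ for $n$ even, which are the stated rank formulas.

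For the torsion I would decompose $\Z[W]\otimes\epsilon=\bigoplus_{O}(\Z[O]\otimes\epsilon)$ over the $\D_{2n}$-orbits $O\subset W$. Writing $S_O$ for the stabilizer of a chosen word in $O$, the projection formula identifies $\Z[O]\otimes\epsilon$ with $\operatorname{Ind}_{S_O}^{\D_{2n}}(\epsilon|_{S_O})$, so by Shapiro's lemma $(\Z[O]\otimes\epsilon)_{\D_{2n}}\cong(\epsilon|_{S_O})_{S_O}$, which is $\Z$ if $\epsilon|_{S_O}$ is trivial and $\Z/2$ otherwise. Hence $(H^{\otimes n})_{\D_{2n}}$ carries only $2$-torsion, and a $\Z/2$ summand occurs exactly for the orbits whose stabilizer contains a reflection, i.e.\ for words admitting a reflection symmetry. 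When $n$ is even $\epsilon$ is trivial, every summand is $\Z$, and $\A_{n,1}^c$ is torsion-free. When $n$ is odd, applying a reflection axis $\rho$ fixing such a word $w$ gives, in the coinvariants, $[w]=\epsilon(\rho)[w]=-[w]$, hence $2[w]=0$; this reproves that symmetric classes are $2$-torsion and, combined with the orbit decomposition, identifies $\tor\A_{n,1}^c$ with the submodule $\A_{n,1}^{c,s}$ generated by symmetric diagrams.

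It remains to count the torsion rank and match it with $H^{\otimes(n+1)/2}\otimes\Z/2$. The cleanest route is reduction mod $2$: since $-1=1$ in $\mathbb{F}_2$ the sign $\epsilon$ disappears, so $(H^{\otimes n})_{\D_{2n}}\otimes\mathbb{F}_2\cong\mathbb{F}_2[W]_{\D_{2n}}\cong\mathbb{F}_2[W/\D_{2n}]$ is free of rank equal to the number of bracelets, computed by Burnside as $\tfrac12 N_{2g}(n)+\tfrac12(2g)^{(n+1)/2}$ when $n$ is odd. Subtracting the free rank found above leaves a $2$-torsion part of $\mathbb{F}_2$-dimension $(2g)^{(n+1)/2}$ for $n$ odd and $0$ for $n$ even. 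Finally, the palindrome construction sending $b_1\otimes\cdots\otimes b_{(n+1)/2}$ to the symmetric diagram whose colors, read from the reflection-fixed vertex, are $b_1,b_2,\ldots,b_{(n+1)/2},b_{(n+1)/2},\ldots,b_2$ gives a surjection $H^{\otimes(n+1)/2}\to\A_{n,1}^{c,s}$ factoring through $H^{\otimes(n+1)/2}\otimes\Z/2$, and equality of the $\mathbb{F}_2$-dimension $(2g)^{(n+1)/2}$ on both sides forces it to be an isomorphism. I expect the main obstacle to be bookkeeping the sign twist consistently: it must be present for the rational rank computation yet must vanish after reduction mod $2$, and keeping straight which orbits (equivalently, which symmetric diagrams) produce the $\Z/2$ summands is where the argument is most delicate.
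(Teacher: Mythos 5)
Your proof is correct, and it rests on the same foundation as the paper's: the identification $\A_{n,1}^c\cong(H^{\otimes n})_{\D_{2n}}$ from Proposition~\ref{prop:oneloop} together with the combinatorics of length-$n$ words in the $2g$ letters. The difference lies in how the coinvariants of the sign-twisted permutation module are computed. The paper takes coinvariants in two stages --- first by the rotation subgroup $\braket{x}$, identifying $(\Z W)_{\braket{x}}$ with the free module on necklaces, and then by $\braket{y}$, which splits off a $\Z/2\Z$ summand for each symmetric necklace --- and it quotes the necklace and bracelet counts from P\'olya enumeration; you instead decompose $\Z[W]\otimes\epsilon$ orbit by orbit, apply Shapiro's lemma to get a $\Z$ or a $\Z/2\Z$ per bracelet according to whether $\epsilon$ restricts nontrivially to the stabilizer, compute the free rank directly by the Burnside trace formula, and extract the torsion dimension by reduction mod $2$ (legitimate because your orbit decomposition already shows all torsion has order $2$). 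Both routes are elementary and yield identical counts; yours makes the origin of the $2$-torsion (reflections in stabilizers, i.e.\ symmetric words) somewhat more conceptual and avoids quoting the enumeration formulas as black boxes, while the paper's two-stage computation produces the explicit basis decomposition $\Z((\overline{W}\setminus\overline{W}^s)/\braket{y})\oplus(\Z/2\Z)\overline{W}^s$ that it reuses later, for instance in the proofs of Lemma~\ref{lem:oneloop-period} and Theorem~\ref{thm:restriction-torelli}. The final identification $\tor\A_{n,1}^c\cong H^{\otimes\frac{n+1}{2}}\otimes\Z/2\Z$ via the palindrome map is the same in both arguments.
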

\begin{proof}
Let us denote the set of words in $\{1^{\pm},2^{\pm},\ldots, g^{\pm}\}$ of length $n$ by $W_n$, or simply by $W$.
Recall that we have an isomorphism $\A_{n,1}^c\cong (H^{\otimes n})_{\D_{2n}}$ in Proposition~\ref{prop:oneloop}.
First, we interpret the module $(H^{\otimes n})_{\D_{2n}}$ in terms of $W$.
Consider the action of $\D_{2n}$ on $W$ defined by
\[
x\ast(a_1a_2\cdots a_{n})=a_{n}a_1a_2\cdots a_{n-1},\
y\ast(a_1a_2\cdots a_{n})=a_n\cdots a_2 a_1.
\]
When $n$ is even, 
there is a natural $\D_{2n}$-equivariant injective map 
\[
W\to H^{\otimes n},\quad
a_1a_2\cdots a_n\mapsto a_1\otimes a_2\otimes\cdots\otimes a_n
\]
 since $y\in\D_{2n}$ does not change signs of elements in $H^{\otimes n}$.
It induces an isomorphism $\mathbb{Z}(W/\D_{2n})\cong (H^{\otimes n})_{\D_{2n}}$.
Consider the case when $n$ is odd.
Define an action of $\D_{2n}$ on $\mathbb{Z}W$ by
\[
x\cdot(a_1a_2\cdots a_{n})=a_{n}a_1a_2\cdots a_{n-1},\ 
y\cdot(a_1a_2\cdots a_{n})=(-1)^na_n\cdots a_2 a_1.
\]
Note that this action is different from what is induced by the action of $\D_{2n}$ on $W$.
Under this action,
the homomorphism $\mathbb{Z}W\to H^{\otimes n}$ is $\D_{2n}$-equivariant,
and induces an isomorphism
$(\mathbb{Z}W)_{\D_{2n}}\cong (H^{\otimes n})_{\D_{2n}}$.
Since the action of $x\in\D_{2n}$ on $\Z W$ is the same as what is induced by the action on $W$, 
$(\Z W)_{\braket{x}}$ is a free module with basis identified with the set $W/\braket{x}$, which we denote by $\overline{W}_n$ or  $\overline{W}$.
Let us call an element $a_1a_2\cdots a_n$ in $\overline{W}$ a cyclic word.
If there exists $1\le k\le n$ such that $a_{n-i}=a_{k+i}$ for $0\le i \le n-1$ with indices modulo~$n$,
we call $a_1\cdots a_n\in \overline{W}$ \emph{symmetric} in the same way as symmetric Jacobi diagrams,
and denote the set of symmetric cyclic words by $\overline{W}_n^s$ or $\overline{W}^s$.
Since the fixed point set of the action of $y\in\D_{2n}$ on $\overline{W}=W/\braket{x}$ is $\overline{W}^s$,
we have isomophisms
\[
(H^{\otimes n})_{\D_{2n}}\cong (\mathbb{Z}W)_{\D_{2n}}\cong ((\mathbb{Z}W)_{\braket{x}})_{\braket{y}}\cong \Z((\overline{W}\setminus \overline{W}^s)/\braket{y})\oplus (\Z/2\Z)\overline{W}^s.
\]

Next, we compute the orders of the sets $W/\D_{2n}$, $(\overline{W}\setminus \overline{W}^s)/\braket{y}$, and $\overline{W}^s$.
The order of $W/\D_{2n}$ is the number of $(2g)$-ary bracelets of length $n$,
and is equal to $N_{2g}(n)/2+(2g+1)(2g)^{\frac{n}{2}}/4$ when $n$ is even
and $N_{2g}(n)/2+(2g)^{\frac{n+1}{2}}/2$ when $n$ is odd.
It is calculated using P\'olya's enumeration theorem (see, e.g., in \cite[p.~173]{Bol79}).
We also have the equality
\[
|W/\D_{2n}|
=\frac{1}{2}|\overline{W}\setminus \overline{W}^s|+|\overline{W}^s|
=\frac{1}{2}|\overline{W}|+\frac{1}{2}|\overline{W}^s|.
\]
Since the number of $(2g)$-ary necklaces $|\overline{W}|$ of length $n$ is $N_{2g}(n)$ as in \cite[p.~173]{Bol79},
when $n$ is odd, we have 
\begin{align*}
|\overline{W}^s|
&=2|W/\D_{2n}|-|\overline{W}|=(2g)^{\frac{n+1}{2}},\\
|(\overline{W}\setminus \overline{W}^s)/\braket{y}|
&=\frac{1}{2}|\overline{W}\setminus \overline{W}^s|=\frac{1}{2}N_{2g}(n)-\frac{1}{2}(2g)^{\frac{n+1}{2}}.
\end{align*}

Lastly, we construct an isomorphism $H^{\otimes\frac{n+1}{2}}\otimes\Z/2\Z\cong \tor((H^{\otimes n})_{\D_{2n}})$ when $n$ is odd.
The homomorphism
\[
W_{\frac{n+1}{2}}\to \overline{W}_n^s,\quad
a_1a_2\cdots a_{\frac{n+1}{2}}\mapsto a_1a_2\cdots a_{\frac{n+1}{2}}a_{\frac{n+1}{2}}\cdots a_2
\]
is surjective because every element of $\overline{W}_n^s$ has a form $a_1a_2\cdots a_{\frac{n+1}{2}}a_{\frac{n+1}{2}}\cdots a_2$.
Moreover, we see that it is bijective by comparing their orders.
It induces isomorphisms
\[
H^{\otimes\frac{n+1}{2}}\otimes\Z/2\Z
\cong (\Z/2\Z)W_{\frac{n+1}{2}}
\cong (\Z/2\Z)\overline{W}_n^s
\cong \tor((H^{\otimes n})_{\D_{2n}}),
\]
which maps $a_1\otimes a_2\otimes \cdots \otimes a_{\frac{n+1}{2}}$ to $a_1\otimes a_2\otimes\cdots\otimes a_{\frac{n+1}{2}}\otimes a_{\frac{n+1}{2}}\otimes \cdots \otimes a_2$.
\end{proof}

In \cite[Section~8.5]{Hab00C}, \cite[Section~8.2]{CHM08}, and \cite[Section~1.2]{HaMa09},
they considered a Hopf algebra over $\Q$ with a symplectic group action denoted by $A(\Sigma)$, $A(F_g)$, or $\A^{<}(H_\Q)$.
It can be defined with integral coefficients which results in a Hopf algebra whose primitive part of $\ideg=n$ is isomorphic to $\A_n^c$ as in \cite[Remark~6.6]{HaMa12}.
The authors do not know descriptions of $\A_{n,1}^c$ or $\A_{n,1}^c\otimes\Q$ as the $\Sp(2g;\Z)$- or $\Sp(2g;\Q)$-module under the identification except for the submodule $\tor\A_{n,1}^c\cong H^{\otimes \frac{n+1}{2}}\otimes\Z/2\Z$ when $n$ is odd.

Instead, we consider a submodule of $\A_{n,1}^c$
which we use to estimate the image of $\bar{z}_{n+1}$ in Lemma~\ref{lem:periodicpart}.
Let $\A_{2n,1}^{c,\mathrm{period}}$ denote the abelian group generated by 1-loop Jacobi diagrams $O(a_1,a_2,\ldots,a_n,a_1,a_2,\ldots,a_n)$,
where $a_1,a_2,\ldots, a_n\in\{1^{\pm},2^{\pm},\ldots, g^{\pm}\}$.
We also denote $\A_{2n,1}^{c,s,\mathrm{period}}=\A_{2n,1}^{c,s}\cap \A_{2n,1}^{c,\mathrm{period}}$,
which is equal to the submodule generated by symmetric and periodic 1-loop Jacobi diagrams, namely,
$O(a_1,a_2,\ldots,a_n,a_1,a_2,\ldots,a_n)$ such that there exists $1\le k\le n$ satisfying $a_{n-i}=a_{k+i}$ for all $0\le i\le n-1$ with indices modulo $n$.
\begin{lemma}\label{lem:oneloop-period}
Let $n\ge2$.
There are isomorphisms
\[
\A_{n,1}^c\otimes M\cong \A_{2n,1}^{c,\mathrm{period}}\otimes M,\ 
\A_{n,1}^{c,s}\cong \A_{2n,1}^{c,s,\mathrm{period}}\otimes M,
\]
where $M=\Z$ when $n$ is even, and $M=\Z/2\Z$ when $n$ is odd.
\end{lemma}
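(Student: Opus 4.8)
The plan is to realize all four modules concretely in terms of words and cyclic words, as in the proof of Proposition~\ref{prop:oneloopbasis}, and to exhibit every one of the desired isomorphisms as induced by a single \emph{doubling} operation $O(a_1,\dots,a_n)\mapsto O(a_1,\dots,a_n,a_1,\dots,a_n)$, which sends a word $w=a_1\cdots a_n$ of length $n$ to its concatenation $ww$ of length $2n$. Writing $W_n$ for the set of words of length $n$ in $\{1^{\pm},\dots,g^{\pm}\}$ and $\psi\colon W_n\to W_{2n}$, $w\mapsto ww$, I would first record the combinatorial fact that drives everything: the induced map $\overline{\psi}\colon W_n/\D_{2n}\to W_{2n}/\D_{4n}$ on orbit sets is injective. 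The key point is that the shift $x^n\in\D_{4n}$ fixes every periodic word $ww$, so the $\D_{4n}$-orbit of $ww$ meets no non-periodic word and is exactly the $\psi$-image of the $\D_{2n}$-orbit of $w$: a cyclic shift of $ww$ is the doubling of a cyclic shift of $w$, and the reversal of $ww$ is the doubling of the reversal of $w$. Since $\psi$ is injective on words, two periodic words $ww,w'w'$ lie in one $\D_{4n}$-orbit if and only if $w,w'$ lie in one $\D_{2n}$-orbit.

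Next I would check that doubling descends to a homomorphism after tensoring with $M$. Using Proposition~\ref{prop:oneloop}, I regard $\A_{n,1}^c\cong(H^{\otimes n})_{\D_{2n}}$ as the quotient of $\Z W_n$ by the relations $x\ast w-w$ and $(-1)^n\mathrm{rev}(w)-w$, where $\mathrm{rev}$ is word reversal and $x\ast$ the cyclic shift. Sending $w\mapsto O(ww)$ defines a map $\Z W_n\to\A_{2n,1}^c$, and since $2n$ is even the reversal of a length-$2n$ word carries no sign; hence $O(ww)$ is invariant under cyclic shift and under reversal of $ww$. Thus the relation coming from $x$ is respected over $\Z$, while the relation coming from reversal is respected over $\Z$ when $n$ is even and only modulo $2$ when $n$ is odd, where it would otherwise demand $2\,O(ww)=0$ in the free target. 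This is precisely why $M=\Z$ suffices for $n$ even while $M=\Z/2\Z$ is forced for $n$ odd, matching the statement of the lemma. The resulting homomorphism $\A_{n,1}^c\otimes M\to\A_{2n,1}^c\otimes M$ has image the submodule generated by the classes $O(ww)$, namely $\A_{2n,1}^{c,\mathrm{period}}\otimes M$.

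It then remains to identify both sides with free (resp.\ $\Z/2\Z$-) modules on $W_n/\D_{2n}$ and to see that doubling is $\overline{\psi}$ on these bases. When $n$ is even, $\A_{n,1}^c\cong\Z(W_n/\D_{2n})$ and $\A_{2n,1}^c\cong\Z(W_{2n}/\D_{4n})$ are free, so $\A_{2n,1}^{c,\mathrm{period}}$ is free on $\overline{\psi}(W_n/\D_{2n})$; injectivity of $\overline{\psi}$ makes doubling an isomorphism onto it. When $n$ is odd I would use that tensoring the coinvariants with $\Z/2\Z$ kills the sign in the reversal action, giving $\A_{n,1}^c\otimes\Z/2\Z\cong(\Z/2\Z)(W_n/\D_{2n})$, while $\A_{2n,1}^{c,\mathrm{period}}\otimes\Z/2\Z\cong(\Z/2\Z)(W_n/\D_{2n})$ via $\overline{\psi}$; doubling is then the bijection $\overline{\psi}$ on bases, hence an isomorphism. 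The symmetric statement is the same argument restricted to symmetric words: symmetry is a $\D_{2n}$-invariant condition preserved by $\psi$, so doubling carries $\A_{n,1}^{c,s}$ onto $\A_{2n,1}^{c,s,\mathrm{period}}$, and for $n$ odd one invokes Proposition~\ref{prop:oneloopbasis} to identify $\A_{n,1}^{c,s}=\tor\A_{n,1}^c$ with $(\Z/2\Z)\overline{W}^s_n$, noting that symmetric necklaces, being fixed by $y$, index the symmetric $\D_{2n}$-orbits. I expect the main obstacle to be the bookkeeping of signs and stabilizers: verifying cleanly that the $\D_{4n}$-orbit of a periodic word is a single $\D_{2n}$-orbit of doublings, and tracking exactly where the sign $(-1)^n$ forces the passage to $\Z/2\Z$ coefficients.
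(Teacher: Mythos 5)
Your proposal is correct and follows essentially the same route as the paper: both identify $\A_{2n,1}^{c,\mathrm{period}}$ with the (free) module on periodic $\D_{4n}$-orbit classes, use the doubling bijection $W_n\to W_{2n}^{\mathrm{period}}$ under which the $\D_{4n}/\langle x^n\rangle$-action matches the $\D_{2n}$-action, and trace the sign $(-1)^n$ of the reversal to explain why $\Z$ works for $n$ even and $\Z/2\Z$ is needed for $n$ odd. Your writeup just makes more explicit the well-definedness check and the identification of bases that the paper leaves implicit.
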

\begin{proof}
Let $W_{2n}^{\mathrm{period}}$ be the set of periodic words of length $2n$
of the form $a_1a_2\cdots a_na_1a_2\cdots a_n$ for $a_1,a_2,\ldots,a_n\in \{1^\pm,\ldots,g^{\pm}\}$.
The submodule $\A_{2n,1}^{c,\mathrm{period}}\subset \A_{2n,1}^{c}$
has a basis which corresponds to $W_{2n}^{\mathrm{period}}/\D_{4n}$.
There is a bijection 
\[
W_n\to W_{2n}^{\mathrm{period}},\quad
a_1a_2\cdots a_n\mapsto a_1a_2\cdots a_na_1a_2\cdots a_n
\]
whose inverse map is defined by taking the first $n$-words.
The element $x^n\in \D_{4n}$ acts on $W_{2n}^{\mathrm{period}}$ trivially, 
and the action of $D_{4n}/\braket{x^n}$ on $W_{2n}^{\mathrm{period}}$ coincides with that of $D_{2n}$ on $W_n$ under the above bijection.
Thus, the bijection induces an isomorphism $\A_{n,1}^c\cong \A_{2n,1}^{c,\mathrm{period}}$ when $n$ is even.
Since the bijection maps symmetric cyclic words to symmetric cyclic words,
we also have an isomorphism $\A_{n,1}^{c,s}\cong \A_{2n,1}^{c,s,\mathrm{period}}$.
When $n$ is odd, we do not have a map $\A_{n,1}^c\to\A_{2n,1}^{c,\mathrm{period}}$ because the action of $y\in D_{2n}$ on $\Z W_n$ changes signs,
but we have isomorphisms $\A_{n,1}^c\otimes\Z/2\Z\cong\A_{2n,1}^{c,\mathrm{period}}\otimes\Z/2\Z$ and $\A_{n,1}^{c,s}\cong\A_{2n,1}^{c,s,\mathrm{period}}\otimes\Z/2\Z$ in the same way.
\end{proof}

\subsection{The module structure of $\A_{n,0}^c$}\label{section:treepart}
Recall that $L'_n$ is the degree $n$ part of the free quasi-Lie algebra and that $\gamma_n\colon L'_n\to L_n$ is the natural projection explained right after Proposition~\ref{prop:oneloop}.
Let $D'_n$ denote the kernel of the bracket map $[\ ,\ ]\colon H\otimes L'_{n+1}\to L'_{n+2}$.
Levine computed the structure of $D'_n$ as follows.
\begin{theorem}[{\cite[Corollary~2.3]{Lev06}}]\label{thm:treepart}
For $k\ge1$, the sequences
\[
\begin{CD}
0@>>> D'_{2k} @>\id_H\otimes\gamma_{2k+1}>> D_{2k}@>\sl>> L_{k+1}\otimes\Z/2\Z@>>>0,\\
0@>>> (H\otimes L_k)\otimes \Z/2\Z @>\sq>> D'_{2k-1}@>\id_H\otimes\gamma_{2k}>> D_{2k-1}@>>>0
\end{CD}
\]
are exact.
\end{theorem}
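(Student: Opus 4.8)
The plan is to obtain both exact sequences at once from the snake lemma, applied to the map of short exact sequences that compares the quasi-Lie bracket with the Lie bracket. Since both the free Lie algebra and the free quasi-Lie algebra are generated in degree one, every element of $L_{n+2}$ (resp.\ $L'_{n+2}$) is a sum of left-normed brackets, so the bracket maps $b\colon H\otimes L_{n+1}\to L_{n+2}$ and $b'\colon H\otimes L'_{n+1}\to L'_{n+2}$ are surjective. By definition $D_n=\Ker b$ and $D'_n=\Ker b'$, so I obtain two short exact sequences
\[
0\to D'_n\to H\otimes L'_{n+1}\xrightarrow{b'}L'_{n+2}\to 0,\qquad
0\to D_n\to H\otimes L_{n+1}\xrightarrow{b}L_{n+2}\to 0.
\]
Because $\gamma$ is a homomorphism of (quasi-)Lie algebras and $\gamma_1=\id_H$, the projections assemble into a vertical map of the first sequence onto the second, with middle arrow $g=\id_H\otimes\gamma_{n+1}$, right arrow $h=\gamma_{n+2}$, and induced left arrow $f=\id_H\otimes\gamma_{n+1}\colon D'_n\to D_n$. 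The snake lemma then yields the six-term exact sequence
\[
0\to\Ker f\to\Ker g\to\Ker h\xrightarrow{\partial}\Coker f\to\Coker g\to\Coker h\to 0.
\]

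Next I would read off the kernels and cokernels of $g$ and $h$ from Levine's description of the projections $\gamma$. I use that $\gamma_{2k-1}$ is an isomorphism \cite[Lemma~2.1]{Lev02} and that $\gamma_{2k}$ fits into the exact sequence~(\ref{eq:quasi-lie}), whose kernel is $\theta_k(L_k\otimes\Z/2\Z)$; since $H$ is free, tensoring~(\ref{eq:quasi-lie}) by $H$ stays exact and gives $\Ker(\id_H\otimes\gamma_{2k})\cong(H\otimes L_k)\otimes\Z/2\Z$ with vanishing cokernel. When $n=2k$, the middle arrow $\id_H\otimes\gamma_{2k+1}$ is an isomorphism while the right arrow $\gamma_{2k+2}$ has kernel $L_{k+1}\otimes\Z/2\Z$ and is surjective; the six-term sequence collapses to $0\to D'_{2k}\to D_{2k}\to L_{k+1}\otimes\Z/2\Z\to 0$, which is the first asserted sequence, with $\sl$ the quotient map. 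When $n=2k-1$, the right arrow $\gamma_{2k+1}$ is an isomorphism while the middle arrow has kernel $(H\otimes L_k)\otimes\Z/2\Z$ and is surjective; here injectivity of $\gamma_{2k+1}$ forces $\Ker g\subset D'_{2k-1}$ (a class in $\Ker g$ brackets to an element killed by the injective $\gamma_{2k+1}$, hence to zero), so the sequence becomes $0\to(H\otimes L_k)\otimes\Z/2\Z\xrightarrow{\sq}D'_{2k-1}\xrightarrow{\id_H\otimes\gamma_{2k}}D_{2k-1}\to 0$, the second asserted sequence.

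The one point requiring genuine care, as opposed to diagram chasing, is matching the abstract maps produced by the snake lemma with the named maps $\sl$ and $\sq$. For $\sq$ this is essentially $\id_H\otimes\theta_k$, the squaring operation $J\mapsto[J,J]$ performed in the $L'$-factor and reduced mod $2$; the argument above shows its image already lies in $D'_{2k-1}$, so no separate verification of this containment is needed. For $\sl$ I would compute the connecting homomorphism $\partial$ explicitly: given $d\in D_{2k}\subset H\otimes L_{2k+1}$, lift it uniquely through the isomorphism $\id_H\otimes\gamma_{2k+1}$, take the quasi-Lie bracket, observe it lands in $\Ker\gamma_{2k+2}=\theta_{k+1}(L_{k+1}\otimes\Z/2\Z)$ since $b(d)=0$, and apply $\theta_{k+1}^{-1}$; this is the self-bracketing-mod-$2$ map measuring the failure of $d$ to lift to $D'_{2k}$.

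The main obstacle will be verifying that this snake-lemma description of $\sl$ coincides with Levine's explicit self-annihilation formula and that the resulting map is well defined mod $2$; once that identification and the inputs~(\ref{eq:quasi-lie}) and $\gamma_{2k-1}\cong$ are secured, the exactness of both sequences is formal.
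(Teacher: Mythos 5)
The paper does not prove this statement; it is imported verbatim from Levine \cite[Corollary~2.3]{Lev06}, so there is no internal proof to compare against. Your snake-lemma derivation is correct and is in substance how Levine deduces his Corollary~2.3 from his Theorem~2.2 (the sequence~(\ref{eq:quasi-lie})): the rows $0\to D'_n\to H\otimes L'_{n+1}\to L'_{n+2}\to 0$ and $0\to D_n\to H\otimes L_{n+1}\to L_{n+2}\to 0$ are short exact because both algebras are spanned in each degree by left-normed brackets (a consequence of antisymmetry and Jacobi alone, so valid in the quasi-Lie setting), the vertical maps commute with the brackets, and your case analysis of $\Ker g$, $\Coker g$, $\Ker h$, $\Coker h$ using the isomorphism $\gamma_{2k+1}$ and the flatness of $H$ is right. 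The two points you flag are indeed the only ones needing care, and both come out fine: the image of $\Ker(\id_H\otimes\gamma_{2k})=(H\otimes L_k)\otimes\Z/2\Z$ lands in $D'_{2k-1}$ exactly as you argue (its bracket is killed by the injective $\gamma_{2k+1}$), and this inclusion is precisely the map $\sq$ induced by $\id_H\otimes\theta_k$; and your explicit description of the connecting homomorphism (lift $d\in D_{2k}$ through the isomorphism $\id_H\otimes\gamma_{2k+1}$, apply the quasi-Lie bracket, land in $\Im\theta_{k+1}$, invert $\theta_{k+1}$) coincides with the definition of $\sl$ in \cite[Remark~2.4]{Lev06} and \cite[Definition~5.6]{CST12W}, so no further matching is required beyond unwinding those definitions.
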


Here, the homomorphism $\sq\colon (H\otimes L_k)\otimes\Z/2\Z\to D'_{2k-1}$ is induced by the map
\[
\id_H\otimes\theta_k\colon
H\otimes L'_k
\to D'_{2k-1},\quad
x\otimes T\,\mapsto\,
x\otimes[T,T],
\]
and the homomorphism $\sl\colon D_{2k}\to L_{k+1}\otimes\Z/2\Z$ is explained in \cite[Remark~2.4]{Lev06} and \cite[Definition~5.6]{CST12W}.

Levine~\cite{Lev02} constructed a homomorphism $\eta'\colon \A_{n,0}^c\to D'_n$ which was shown to be an isomorphism in \cite[Theorem~1.4]{CST12L}.
It is defined as the sum of $\ell(v)\otimes T_v\in H\otimes L'_{n+1}$ over all univalent vertices $v$ in a tree Jacobi diagram $T$,
where $T_v$ is a rooted tree obtained by changing $v$ to a root.
For example, it sends
\begin{align*}
T(a_1,a_2,a_3)&\mapsto a_1\otimes [a_2,a_3]+a_2\otimes [a_3,a_1]+a_3\otimes [a_1,a_2],\\
T(a_1,a_2,a_3,a_4)&\mapsto a_1\otimes [a_2,[a_3,a_4]]+a_2\otimes [[a_3,a_4],a_1]+a_3\otimes[a_4,[a_1,a_2]]+a_4\otimes[[a_1,a_2],a_3].
\end{align*}
\begin{remark}
The definition of the map $\eta'\colon \A_{n,0}^c\to D'_n$ in this paper differs from \cite{CHM08,Lev02} by $(-1)^{n-1}$,
and coincides with those in \cite{CST12L,HaMa12}.
\end{remark}
The composite map $\eta'^{-1}\circ \sq\colon  (H\otimes L_k)\otimes\Z/2\Z\to \A_{2k-1,0}^c$ is induced by a homomorphism
\[
(H\otimes L'_k)\otimes\Z/2\Z\to \A_{2k-1,0}^c,\quad
x\otimes T\,\mapsto\,
\begin{tikzpicture}[baseline=1ex, scale=0.4, dash pattern={on 2pt off 1pt}]
\node [left] at (0,0) {$T$};
\draw (0,0)--(2,0);
\draw (1,0)--(1,1.2);
\node [right] at (2,0) {$T$,};
\node [above] at (1,1.2) {$x$};
\end{tikzpicture}
\]
and it gives an isomorphism $\tor\A_{2k-1,0}^c\cong (H\otimes L_k)\otimes \Z/2\Z$ for $k\ge 1$ as we see from the second exact sequence in Theorem~\ref{thm:treepart}.

\subsection{Relation to the higher Sato-Levine invariants}
\label{section:satolevine}
Let us denote the $0$-loop part of $\bar{z}_{n+1}\colon Y_n\I\C/Y_{n+1}\to\A_{n+1}^c\otimes\Q/\Z$ by $\bar{z}_{n+1,0}$.
Massuyeau~\cite[Theorem~5.13]{Mas12} showed that the $0$-loop part of the LMO functor is written as the total Johnson map induced by some symplectic expansion.
More simply, the $0$-loop part of $\Ztilde_{\le n}^Y$ is essentially the action $\rho_{n+1}$ on the nilpotent quotient $\pi_1\Sigma_{g,1}/\pi_1\Sigma_{g,1}(n+2)$.
See \cite[Sections~3.4 and 4.3]{HaMa12}, for details.
Thus, the homomorphism $\bar{z}_{n+1,0}\colon Y_n\I\C/Y_{n+1}\to\A_{n+1,0}^c\otimes\Q/\Z$ is also essentially the mod $\Z$ reduction of the degree $n+1$ part of the total Johnson map induced by the symplectic expansion.
Here, we show that it is also considered as an extension of the higher Sato-Levine invariants of framed links in \cite{CST16}.

Firstly, we review the Johnson homomorphisms on the $Y$-filtration constructed in \cite{GaLe05}.
Recall the homology cobordism group of homology cylinders over $\Sigma_{g,1}$ which we denote by $\I\H$ and the inclusions $m_+, m_-\colon \Sigma_{g,1}\to\partial M$ for a homology cylinder $(M,m)$ in Section~\ref{sec:homology cylinder}.
They induce a homomorphism
\[
\rho_n\colon\I\H\to \Aut(\pi_1\Sigma_{g,1}/\pi_1\Sigma_{g,1}(n+1)),
\quad
M\mapsto(m_-)_*^{-1}\circ(m_+)_*
\]
for all $n\ge2$ identifying the nilpotent quotients of $\pi_1\Sigma_{g,1}$ and $\pi_1 M$ by Stallings' theorem~\cite[Theorem~3.4]{Sta65}.
Identify the free module $H$ generated by $\{1^{\pm},\ldots, g^{\pm}\}$ with the first homology group $H_1(\Sigma_{g,1};\Z)$ as in right before Theorem~\ref{thm:str-of-y3}.
The quotient group $\pi_1\Sigma_{g,1}(n)/\pi_1\Sigma_{g,1}(n+1)$ is also identified with $L_n$.
By setting $J_n\H=\Ker \rho_n$,
the $n$-th Johnson homomorphism on $J_n\H$ is defined by
\[
\tau_n\colon J_n\H \to \Hom(H,L_{n+1}),\quad
M\mapsto ([\gamma]\mapsto [\rho_{n+1}(M)(\gamma)\gamma^{-1}])
\]
for $\gamma\in \pi_1\Sigma_{g,1}/\pi_1\Sigma_{g,1}(n+2)$,
and we see that $\Ker\tau_n=J_{n+1}\H$ by definition.
The intersection form $\mu\colon H\otimes H\to \Z$ induces an isomorphism
$H\cong H^*$ as $x\mapsto \mu(x,*)$,
and we identify $\Hom(H, L_{n+1})=H^*\otimes L_{n+1}\cong H\otimes L_{n+1}$.
Under the identification, the image $\Im\tau_n$ is equal to the kernel $D_n=\Ker(H\otimes L_{n+1}\to L_{n+2})$ of the bracket map as shown in \cite[Proposition~2.5]{GaLe05}.

\begin{remark}
Since $\bar{z}_{n+1,0}$ is essentially the mod $\Z$ reduction of the degree $n+1$ part of the total Johnson map as in \cite[Theorem~5.13]{Mas12},
it factors through $\Gr q\colon Y_n\I\C/Y_{n+1}\to Y_n\I\H/Y_{n+1}$.
\end{remark}

Secondly, we review a surjective homomorphism 
\[
\kappa\colon \Ker(\tau_{2k-1}\colon Y_{2k-1}\I\H/Y_{2k}\to D_{2k-1})\to L_{k+1}\otimes \Z/2\Z
\]
constructed by Conant, Schneiderman, and Teichner,
which is an analogue of the higher Sato-Levine invariant of framed links.
Recall the isomorphisms $\eta'\colon \A_{n,0}^c\cong D'_n$ in \cite[Theorem~1.4]{CST12L} for $n\ge1$.
As in \cite[Theorem~3]{Lev01},
the diagram
\begin{equation}
\label{diagram:AD}
\vcenter{
 \xymatrix{
\A_{n,0}^c\ar[r]^{\Gr q\circ\ss\qquad}\ar[rd]_(0.45){\eta'}^(0.6){\cong}& Y_n\I\H/Y_{n+1} \ar[r] & J_n\H/J_{n+1}\H \ar[d]_{\cong}^{\tau_n}\\
   &  D_n' \ar[r]^{\id_H\otimes\gamma_{n+1}} & D_n
  }
}
\end{equation}
commutes for $n\ge1$,
where $Y_n\I\H/Y_{n+1}\to J_n\H/J_{n+1}\H$ is the natural homomorphism induced by the inclusion $Y_n\I\H\to J_n\H$.
Here, $\Gr q\circ \ss\colon \A_{n,0}^c\to Y_n\I\H/Y_{n+1}$ is surjective for $n\ge2$,
where $\Gr q\colon Y_n\I\C/Y_{n+1}\to Y_n\I\H/Y_{n+1}$ is defined in Section~\ref{sec:homology cylinder}.
Note that there is an error in \cite{Lev01} which is corrected in \cite{Lev02}.
See also \cite[Theorem~7.7]{HaMa12}.
By using Theorem~\ref{thm:treepart},
it is shown that 
\[
\Coker(Y_{2k}\I\H/Y_{2k+1}\to J_{2k}\H/J_{2k+1}\H)
\cong \Coker(D'_{2k}\to D_{2k})\cong L_{k+1}\otimes\Z/2\Z
\]
and that $\Gr q\circ \ss\colon \A_{2k,0}^c\to Y_{2k}\I\H/Y_{2k+1}$ is an isomorphism
for $k\ge1$ in \cite[Theorem~6.2]{CST12L}.
See also \cite[Corollary~50]{CST16}.
In a similar way to the connecting homomorphism of the snake lemma,
the natural homomorphism
\[
\Ker(Y_{2k-1}\I\H/Y_{2k}\to J_{2k-1}\H/J_{2k}\H)\to
\Coker(Y_{2k}\I\H/Y_{2k+1}\to J_{2k}\H/J_{2k+1}\H)
\]
which maps $M\mapsto [M]$ is well-defined, and it induces a homomorphism
\[
\kappa\colon\Ker(\tau_{2k-1}\colon Y_{2k-1}\I\H/Y_{2k}\to D_{2k-1})\to L_{k+1}\otimes\Z/2\Z
\]
by the identification stated above.
In \cite[Proposition~25]{CST16}, $\kappa$ is shown to be surjective.
\begin{remark}
The map $\kappa$ is essentially the same as the higher Sato-Levine invariants in \cite[Section~5.1]{CST12W} on the Whitney tower filtration on the set of framed links in $S^3$.
\end{remark}

Thirdly, we relate the $0$-loop part $\bar{z}_{2k,0}\colon Y_{2k-1}\I\C/Y_{2k}\to A_{2k,0}^c\otimes \Q/\Z$ of $\bar{z}_{2k}$ with the homomorphism $\kappa$.
Define a homomorphism
\[
\nu\colon L'_{k+1}\otimes \Z/2\Z\to \A_{2k,0}^{c}\otimes\Q/\Z
\]
by $T\mapsto \!
\begin{tikzpicture}[mydash, baseline=-0.7ex, scale=0.25]
\draw (0,0.1) -- (1.5,0.1);
\node [left] at (0,0) {$\displaystyle(T$};
\node [right] at (1.5,0) {$T)$/2.};
\end{tikzpicture}$
As in the exact sequence (\ref{eq:quasi-lie}),
the kernel of the projection
$L'_{k+1}\otimes\Z/2\Z\to L_{k+1}\otimes\Z/2\Z$ induced by $\gamma_{k+1}$
is generated by $\Im\theta_{\frac{k+1}{2}}$ when $k$ is odd,
and trivial when $k$ is even.
By the IHX relation,
we see that $\nu$ factors through the projection.

\begin{prop}
\label{prop:highersatolevine}
For $k\ge1$,
the homomorphism
$\nu\colon L_{k+1}\otimes \Z/2\Z\to \A_{2k,0}^{c}\otimes\Q/\Z$ is injective,
and the following diagram commutes:
\[
\xymatrix{
\Ker(\tau_{2k-1}\colon Y_{2k-1}\I\C/Y_{2k}\to D_{2k-1}) \ar[r]^-{\kappa\circ \Gr q} \ar[d]_-{\inc} & L_{k+1}\otimes \Z/2\Z \ar[d]^-{\nu} \\
Y_{2k-1}\I\C/Y_{2k} \ar[r]^-{\bar{z}_{2k,0}} & \A_{2k,0}^{c}\otimes\Q/\Z.
}
\]
\end{prop}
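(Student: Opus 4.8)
The plan is to deduce both assertions from a single computation identifying the doubling map $\nu$ with the \emph{self-linking} map $\sl$ of Theorem~\ref{thm:treepart}. First observe that $\A_{2k,0}^c$ is torsion-free: via Levine's isomorphism $\eta'\colon\A_{2k,0}^c\xrightarrow{\cong}D'_{2k}$ and the first exact sequence of Theorem~\ref{thm:treepart}, $D'_{2k}$ embeds into $D_{2k}\subset H\otimes L_{2k+1}$. Consequently the $2$-torsion of $\A_{2k,0}^c\otimes\Q/\Z$ is $\tfrac12 D'_{2k}/D'_{2k}\cong\A_{2k,0}^c\otimes\Z/2\Z$, and both $\nu$ and the restriction of $\bar{z}_{2k,0}$ take values there (for $\bar{z}_{2k,0}$ this will follow from step (i) below, using $2D_{2k}\subset D'_{2k}$). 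The technical heart is the following \emph{Key Lemma}: writing $T\cup T\in\A_{2k,0}^c$ for the tree obtained by joining the roots of two copies of $T\in L'_{k+1}$, so that $\nu(T)=\tfrac12(T\cup T)$, the element $\eta'(T\cup T)\in D'_{2k}$ becomes divisible by $2$ in $D_{2k}$, and
\[
\sl\!\left(\tfrac12\,\eta'(T\cup T)\right)=\gamma_{k+1}(T)\in L_{k+1}\otimes\Z/2\Z.
\]
I would prove this by computing $\eta'(T\cup T)=\sum_{v}\ell(v)\otimes(T\cup T)_v$ explicitly: the $2k+2$ univalent vertices fall into $k+1$ pairs of corresponding leaves of the two copies, and after applying $\gamma_{2k+1}$ the two members of each pair contribute equally, producing the factor $2$; matching the resulting representative with the definition of $\sl$ in \cite[Remark~2.4]{Lev06} and \cite[Definition~5.6]{CST12W} gives $\gamma_{k+1}(T)$.

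Granting the Key Lemma, injectivity of $\nu$ is immediate: if $\nu(T)=\tfrac12\eta'(T\cup T)\bmod D'_{2k}$ vanishes, then $\tfrac12\eta'(T\cup T)\in D'_{2k}=\Ker\sl$, whence $\gamma_{k+1}(T)=\sl(\tfrac12\eta'(T\cup T))=0$, i.e.\ $T$ is zero in $L_{k+1}\otimes\Z/2\Z$. The same lemma yields the clean reformulation $\nu\circ\sl=(\,\cdot\,\bmod D'_{2k})\colon D_{2k}\to\A_{2k,0}^c\otimes\Q/\Z$, since for $d\in D_{2k}$ with $\sl(d)=\gamma_{k+1}(T)$ one has $d\equiv\tfrac12\eta'(T\cup T)\pmod{D'_{2k}}$.

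For commutativity, I would reduce to this identity. Both composites factor through $\Gr q$ and, being $2$-torsion valued, see only the tree part; by surjectivity of $\ss$ and diagram~(\ref{diagram:AD}), every $M\in\Ker\tau_{2k-1}$ is represented by $\Gr q\,\ss(J_0)$ with $J_0\in\A_{2k-1,0}^c$ and $\eta'(J_0)\in\Im\sq$. On such $M$: (i) by Massuyeau's identification of the $0$-loop part of the LMO functor with the total Johnson map \cite[Theorem~5.13]{Mas12}, $\bar{z}_{2k,0}(M)=\tau_{2k}(M)\bmod D'_{2k}$, the sign being irrelevant since the value is $2$-torsion; (ii) by the snake-lemma construction of $\kappa$ together with the identification $\Coker(D'_{2k}\to D_{2k})\cong L_{k+1}\otimes\Z/2\Z$ induced by $\sl$, one has $\kappa(\Gr q(M))=\sl(\tau_{2k}(M))$. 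Combining with $\nu\circ\sl=(\,\cdot\,\bmod D'_{2k})$ gives $\nu(\kappa(\Gr q(M)))=\tau_{2k}(M)\bmod D'_{2k}=\bar{z}_{2k,0}(M)$, as required.

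The main obstacle is the Key Lemma, and within it the precise matching of $\tfrac12\eta'(T\cup T)$ with $\sl$: this requires unwinding the definition of $\sl$ and performing the rerooting/quasi-Lie computation carefully, tracking the AS signs so that the doubled pairs really coincide after $\gamma_{2k+1}$. A secondary point needing care is the normalization in (i)--(ii): one must confirm that the integral Johnson value $\tau_{2k}(M)\in D_{2k}$ is exactly the representative whose reduction modulo the sublattice $D'_{2k}\cong\A_{2k,0}^c$ computes $\bar{z}_{2k,0}$, and that the snake map defining $\kappa$ indeed returns $\sl\circ\tau_{2k}$. Both are bookkeeping once the Key Lemma is available, but the factor-of-two normalizations must be kept consistent throughout.
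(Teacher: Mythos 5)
Your proposal is correct and follows essentially the same route as the paper: there, too, $\nu$ is identified with the (injective) connecting homomorphism of the diagram built from the exact sequence $0\to\A_{2k,0}^c\xrightarrow{(\id_H\otimes\gamma_{2k+1})\circ\eta'}D_{2k}\xrightarrow{\sl}L_{k+1}\otimes\Z/2\Z\to 0$, and commutativity is obtained by combining $\kappa=\overline{\sl}\circ(\tau_{2k}\bmod\eta(\A_{2k,0}^c))$ with Massuyeau's identification of the $0$-loop part of the LMO functor with the total Johnson map, the sign being killed by $2$-torsion exactly as you note. Your ``Key Lemma'' $\sl\bigl(\tfrac12\eta(T\cup T)\bigr)=\gamma_{k+1}(T)$ is precisely the fact the paper quotes from \cite[Section~3.8]{CST16} rather than reproving.
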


\begin{proof}[Proof of Proposition~\ref{prop:highersatolevine}]
Let $\eta=(\id_H\otimes\gamma_{2k+1})\circ \eta'\colon \A_{2k,0}^c\to D_{2k}$,
which is equal to $\tau_{2k}\circ \ss$ by the commutative diagram~(\ref{diagram:AD}).
Consider the commutative diagram
\[
\xymatrix{
0 \ar[r] & \A_{2k,0}^c \ar[r]^-{\eta} \ar[d] & D_{2k} \ar[r]^-{\sl} \ar[d] & L_{k+1}\otimes\Z/2\Z \ar[r] \ar[d] & 0 \\
0 \ar[r] & \A_{2k,0}^c\otimes\Q \ar[r]^-{\eta} & D_{2k}\otimes\Q \ar[r] & 0, &
}
\]
where the first horizontal sequence is obtained from the isomorphism $\eta'\colon \A_{2k,0}^c\cong D'_{2k}$ in \cite[Theorem~1.4]{CST12L} and Theorem~\ref{thm:treepart}.
As explained in \cite[Section~3.8]{CST16},
the homomorphism $\sl$ maps
$\frac{1}{2}\eta
\begin{tikzpicture}[mydash, baseline=-0.5ex, scale=0.25]
\draw (0,0.1) -- (1.5,0.1);
\node [left] at (0,0) {$(T$};
\node [right] at (1.5,0) {$T)$};
\end{tikzpicture}
\in D_{2k}$
to $T\in L_{k+1}\otimes\Z/2\Z$ for a rooted tree $T$ of $\ideg=k$.
Thus, the connecting homomorphism, which is injective, is equal to $\nu$.

The homomorphism $\sl$ induces an isomorphism $\overline{\sl}\colon D_{2k}/\eta(\A_{2k,0}^c)\cong L_{k+1}\otimes\Z/2\Z$.
By the definition of $\kappa$,
we have 
\[
\overline{\sl}^{-1}\circ \kappa=\tau_{2k}\bmod \eta(\A_{2k,0}^c)\colon \Ker(\tau_{2k-1}\colon Y_{2k-1}\I\C/Y_{2k}\to D_{2k-1})\to D_{2k}/\eta(\A_{2k,0}^c).
\]
On the other hand, 
by \cite[Theorem~5.13]{Mas12},
we see that 
\[
\eta\circ \Ztilde^Y_{2k,0}\bmod\eta(\A_{2k,0}^c)=-\tau_{2k}^{\theta}\bmod\eta(\A_{2k,0}^c)\colon Y_{2k-1}\I\C/Y_{2k}\to (D_{2k}\otimes\Q)/\eta(\A_{2k,0}^c),
\]
where 
$\Ztilde^Y_{2k,0}$ is the (not necessarily connected) $0$-loop part of $\Ztilde^Y_{2k}$,
and $\tau_{2k}^\theta$ is the degree $2k$ part of the total Johnson map induced by the symplectic expansion $\theta$ which is denoted by $\theta^{\Ztilde}$ in \cite[Proposition~5.6]{Mas12}.
Thus, we obtain 
\[
\eta\circ\Ztilde^Y_{2k,0}\bmod\eta(\A_{2k,0}^c)=-\overline{\sl}^{-1}\circ \kappa\in (D_{2k}\otimes\Q)/\eta(\A_{2k,0})
\] on $\Ker\tau_{2k-1}$, and we see that $\bar{z}_{2k,0}=\nu\circ\kappa$ since $\overline{\sl}^{-1}=-\overline{\sl}^{-1}$ and $\nu$ is the connecting homomorphism.
\end{proof}

We also have some interesting relation between $\bar{z}_{2,0}$ and $\tau_1$.
Note that $D_1\cong \Lambda^3H$,
and the module $\C/Y_2$ is generated by $T(a,b,c)$ for $a,b,c\in\{1^{\pm},\ldots,g^{\pm}\}$  and $\K\C/Y_2$.
\begin{remark}\label{rem:BC-homo2}
The diagram
\[
\begin{CD}
\I\C/Y_2 @>\bar{z}_{2,0}>> \A_{2,0}^c\otimes\Q/\Z\\
@V(\tau_1\otimes\frac{1}{2})\circ \Gr qVV@VV\delta''V\\
\Lambda^3H\otimes \Q/\Z@>\cong>>\A_{3,1}^c\otimes \Q/\Z
\end{CD}
\]
commutes,
where the lower horizontal isomorphism will be explained in Section~\ref{section:proof-of-thmY3},
and the right vertical homomorphism is the induced one by $\delta''\colon\A_2^c\to \A_3^c$.
\end{remark}
We will see the image $\bar{z}_2(\K\C/Y_2)$ later in Lemma~\ref{lem:value-of-z2},
and it implies that $(\delta''\circ \bar{z}_{2,0})(\K\C/Y_2)=0\in\A_{3,1}^c\otimes \Q/\Z$.
Thus, Remark~\ref{rem:BC-homo2} follows from the equalities
\begin{align*}
(\delta''\circ \bar{z}_{2,0}\circ \ss)(T(a,b,c))
&=\frac{1}{2}(\delta''\circ \delta')(T(a,b,c))
=\frac{1}{2}O(a,b,c)\in\A_{3,1}^c\otimes \Q/\Z,\\
\Bigl(\Bigl(\tau_1\otimes\frac{1}{2}\Bigr)\circ \Gr q\circ \ss\Bigr)(T(a,b,c))
&=\frac{1}{2}((\id_H\otimes \gamma_2)\circ \eta')(T(a,b,c))
=\frac{1}{2}a\wedge b\wedge c\in\Lambda^3H\otimes \Q/\Z,
\end{align*}
which are deduced from Theorem~\ref{thm:main} and the commutative diagram~(\ref{diagram:AD}).

\subsection{The kernel of the surgery map on the $0$-loop part}\label{section:kernel-tree}
Here, using our homomorphism $\bar{z}_{n+1}$,
we investigate the kernel of the surgery map $\ss$.
More precisely, we show:
\begin{theorem}\label{thm:inj of tree part}
For $k\ge 1$, we have the following.
\begin{enumerate}
\interlinepenalty=10000
\item
$\Ker(\bar{z}_{2k}\circ \ss\colon\tor\A_{2k-1,2r}^c\to \A_{2k}^c\otimes\Q/\Z)
\supset \Im\Delta_{k-1,r}$ for $r\ge0$.\nopagebreak[2]
\item
$\Ker(\bar{z}_{2k}\circ \ss\colon\tor\A_{2k-1,0}^c\to \A_{2k}^c\otimes\Q/\Z)
=\Im\Delta_{k-1,0}$.
\end{enumerate}
\end{theorem}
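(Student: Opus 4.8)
The plan for the inclusion $\Im\Delta_{k-1,r}\subset\Ker(\bar z_{2k}\circ\ss)$ is to combine two observations. First, by Theorem~\ref{thm:main} one has $\bar z_{2k}\circ\ss=(\id\otimes\tfrac12)\circ\delta$ on $\A_{2k-1}^c$, so Proposition~\ref{prop:Delta} (which gives $\delta\circ\Delta_{k-1,r}=\delta'\circ\Delta_{k-1,r}+\delta''\circ\Delta_{k-1,r}=0$) immediately yields $\bar z_{2k}(\ss(\Delta_{k-1,r}(J)))=\tfrac12\delta(\Delta_{k-1,r}(J))=0$. Second, I must check $\Im\Delta_{k-1,r}\subset\tor\A_{2k-1,2r}^c$, so that the inclusion is meaningful. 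Each summand $\Delta_v(J)$ carries the graph automorphism interchanging the two copies of $J_v$; this automorphism preserves the cyclic order at every internal vertex (the two copies are identical), while at the central node it transposes two of the three incident half-edges and hence reverses the orientation there. By the AS relation $\Delta_v(J)=-\Delta_v(J)$, so each $\Delta_v(J)$, and therefore $\Delta_{k-1,r}(J)$, is $2$-torsion.

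\textbf{Part (2).} By Part (1) with $r=0$ it remains to prove $\Ker(\bar z_{2k}\circ\ss|_{\tor\A_{2k-1,0}^c})\subset\Im\Delta_{k-1,0}$, and the plan is to transport the problem to the algebraic model. Using the second exact sequence of Theorem~\ref{thm:treepart} and the isomorphism $\eta'\colon\A_{2k-1,0}^c\cong D'_{2k-1}$, the composite $\Theta=\sq^{-1}\circ\eta'$ identifies $\tor\A_{2k-1,0}^c\cong(H\otimes L_k)\otimes\Z/2\Z$, every torsion class being a sum of symmetric trees $\eta'^{-1}\sq(x\otimes T)$. Unwinding the description of $\Delta_v$ via the formula for $\eta'^{-1}\circ\sq$ in Section~\ref{section:treepart} shows $\eta'(\Delta_{k-1,0}(J))=\sq(\eta(J)\bmod2)$, where $\eta=(\id_H\otimes\gamma_k)\circ\eta'\colon\A_{k-1,0}^c\to D_{k-1}$; hence $\Theta(\Im\Delta_{k-1,0})=\Im\eta\bmod2$. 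So the goal becomes to show that the kernel of $\bar z_{2k}\circ\ss\circ\eta'^{-1}\circ\sq$ on $(H\otimes L_k)\otimes\Z/2\Z$ is exactly $\Im\eta\bmod2$.

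I would treat the $0$-loop and $1$-loop parts separately, since $\delta'$ keeps a tree a tree while $\delta''$ creates a loop. For the $0$-loop part, note that $\ss$ of a torsion tree lies in $\Ker\tau_{2k-1}$ (its $\eta$-image vanishes by the commutative diagram~(\ref{diagram:AD})), so Proposition~\ref{prop:highersatolevine} applies and $\bar z_{2k,0}\circ\ss=\nu\circ\kappa\circ\Gr q\circ\ss$ with $\nu$ injective. I expect to identify $\kappa\circ\Gr q\circ\ss$, under $\Theta$, with the mod-$2$ bracket map $(H\otimes L_k)\otimes\Z/2\Z\to L_{k+1}\otimes\Z/2\Z$, whose kernel is $D_{k-1}\otimes\Z/2\Z$. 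When $k$ is even, $\id_H\otimes\gamma_k\colon D'_{k-1}\to D_{k-1}$ is surjective, so $\Im\eta\bmod2=D_{k-1}\otimes\Z/2\Z$ and the $0$-loop part already pins the kernel down to $\Im\Delta_{k-1,0}$.

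\emph{The main obstacle} is the case $k$ odd, where $\Im\eta=\Ker(\sl\colon D_{k-1}\to L_{(k+1)/2}\otimes\Z/2\Z)$ is a proper subgroup of $D_{k-1}$ (first exact sequence of Theorem~\ref{thm:treepart}) and the $0$-loop part only cuts the kernel down to $D_{k-1}\otimes\Z/2\Z$. Here I would show that the $1$-loop part $\tfrac12\delta''$, restricted to this subgroup, detects precisely $\sl$: connecting the matching legs of the two copies of a doubled tree produces symmetric periodic $1$-loop diagrams, and using Lemma~\ref{lem:oneloop-period} together with the description of $\sl$ in \cite{Lev06,CST12W} one should obtain a well-defined injection $(D_{k-1}/\Im\eta)\otimes\Z/2\Z\cong L_{(k+1)/2}\otimes\Z/2\Z\hookrightarrow\A_{2k,1}^c\otimes\Q/\Z$ through which $\tfrac12\delta''$ factors. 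Verifying this diagrammatic identity—that $\delta''$ of a double realizes the Levine self-linking class $\sl$—is the delicate computational heart of the argument; granting it, the kernel of $\bar z_{2k}\circ\ss$ on $\tor\A_{2k-1,0}^c$ is $\Im\eta\bmod2=\Im\Delta_{k-1,0}$, which completes Part (2).
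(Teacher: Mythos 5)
Your overall route is the same as the paper's: part (1) via Theorem~\ref{thm:main} plus Proposition~\ref{prop:Delta}, and part (2) by transporting everything to $(H\otimes L_k)\otimes\Z/2\Z$, using Proposition~\ref{prop:highersatolevine} to identify the $0$-loop part with $\nu\circ\kappa\circ\Gr q\circ\ss$, splitting on the parity of $k$, and letting the $1$-loop part $\tfrac12\delta''$ detect what survives when $k$ is odd. Part (1) is complete (your AS-symmetry check that $\Im\Delta_{k-1,r}$ is $2$-torsion is a worthwhile addition the paper leaves implicit), your identification $\eta'(\Delta_{k-1,0}(J))=\sq(\eta(J)\bmod 2)$ is correct, and the even-$k$ case of (2) is correctly settled by the injectivity of $\gamma_{k+1}$ in odd degree together with Lemma~\ref{lem:kernelsatolevine}; this matches the paper, which phrases the reduction through the isomorphism $\overline{\sq}\colon L'_{k+1}\otimes\Z/2\Z\to(\tor\A_{2k-1,0}^c)/\Im\Delta_{k-1,0}$ of Section~\ref{section:kernel-tree}.

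The genuine gap is in the odd-$k$ case, where you write ``granting it'' for precisely the step that carries the weight of the theorem: the claim that $\delta''$ applied to the double of a rooted tree realizes Levine's $\sl$, i.e.\ induces an injection $L_{(k+1)/2}\otimes\Z/2\Z\hookrightarrow\A_{2k,1}^c\otimes\Q/\Z$. This is not a formal consequence of anything you cite; it is a nontrivial diagrammatic computation. In the paper it occupies Lemmas~\ref{lem:xi} and~\ref{lem:periodicpart}: one first shows $\delta''\circ\sq=\xi$ on $H\otimes L_k\otimes\Z/2\Z$ by an explicit expansion of $\delta''(T(a_k,\dots,a_1,a_0,a_1,\dots,a_k))$ into $1$-loop diagrams, and then, for $v\in L'_m$ with $m=(k+1)/2$, one must verify via repeated use of the Jacobi identity and the $y$-invariance of $\xi$ that the many cross terms cancel so that $(\delta''\circ\overline{\sq}\circ\theta_m)(v)=\xi(v\otimes v)$ lands in the symmetric periodic part $\A_{4m-2,1}^{c,s,\mathrm{period}}\otimes\Z/2\Z$ and corresponds, under Lemma~\ref{lem:oneloop-period} and Proposition~\ref{prop:oneloopbasis}, to the natural inclusion $L_m\otimes\Z/2\Z\to H^{\otimes m}\otimes\Z/2\Z$. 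Without this cancellation argument the injectivity you need is unestablished (and one must also note that $\id\otimes\tfrac12$ is injective here because $\A_{4m-2,1}^c$ is torsion-free in even internal degree). So the plan is the right one, but the proof is incomplete until this computation is actually carried out.
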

Theorem~\ref{thm:inj of tree part} gives an information on the kernel of the surgery map $\ss\colon \A_n^c\to Y_n\I\C/Y_{n+1}$.
Massuyeau and Meilhan studied $Y_n\I\C/Y_{n+1}$ for $n=1,2$ in \cite{MaMe03} and \cite{MaMe13}.
They essentially showed that $\Ker(\ss|_{\A_1^c})=\Im\Delta_{0,0}$ in \cite[Theorem~1.3]{MaMe03}.
For example, $T(a,a,b)+T(a,b,b) \in \Ker\ss$ for $a,b \in \{1^{\pm},\ldots,g^{\pm}\}$ is deduced from the slide relation in \cite[Section~2.1]{MaMe03}.
They also showed that $\ss\colon\A_2^c\to Y_2\I\C/Y_3$ is an isomorphism in \cite[Corollary~5.1]{MaMe13}.
We determine $\Ker(\ss|_{\A_3^c})$ in Theorem~\ref{thm:str-of-y3}.
In \cite{CST16}, the kernel of the composition map of $\Gr q\circ\ss \colon \A_{n,0}^c\to Y_{n}\I\H/Y_{n+1}$ is also investigated.
\begin{theorem}[{\cite[Lemma~42 and Corollaries 50 and 51]{CST16}}]
Let $k\ge1$.
When $n=2k$,
the surjective homomorphism $\Gr q\circ\ss \colon \A_{n,0}^c\to Y_n\I\H/Y_{n+1}$ is an isomorphism.
When $n=2k-1$, $\Ker (\Gr q\circ\ss)\supset\Im\Delta_{k-1,0}$.
Moreover, when $n=4k-1$,
$\Ker (\Gr q\circ\ss)=\Im\Delta_{2k-1,0}$.
\end{theorem}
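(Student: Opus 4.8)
The plan is to translate each assertion into Levine's algebra of quasi-Lie elements by means of the commutative diagram~(\ref{diagram:AD}) and the two exact sequences of Theorem~\ref{thm:treepart}, isolating the single geometric ingredient. Throughout write $\eta=(\id_H\otimes\gamma_{n+1})\circ\eta'\colon\A_{n,0}^c\to D_n$, which by~(\ref{diagram:AD}) equals $\tau_n$ composed with the natural map and with $\Gr q\circ\ss$. For $n=2k$ the map $\eta'\colon\A_{2k,0}^c\to D'_{2k}$ is an isomorphism \cite[Theorem~1.4]{CST12L} and $\id_H\otimes\gamma_{2k+1}$ is injective by the first exact sequence of Theorem~\ref{thm:treepart}, so $\eta$, and therefore $\Gr q\circ\ss$, is injective; since $\Gr q\circ\ss\colon\A_{2k,0}^c\to Y_{2k}\I\H/Y_{2k+1}$ is surjective for $2k\ge2$ (Section~\ref{section:satolevine}), it is an isomorphism, which is the first assertion.

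For $n=2k-1$ the second exact sequence of Theorem~\ref{thm:treepart} gives $\Ker(\id_H\otimes\gamma_{2k})=\Im\sq$, so $\Ker\eta=\eta'^{-1}(\Im\sq)=\tor\A_{2k-1,0}^c\cong(H\otimes L_k)\otimes\Z/2\Z$; in particular $\Ker(\Gr q\circ\ss)\subset\tor\A_{2k-1,0}^c$. Comparing the doubled-tree picture defining $\Delta_{k-1,0}$ with the formula for $\eta'^{-1}\circ\sq$ in Section~\ref{section:treepart}, one finds $\eta'(\Delta_{k-1,0}(J))=\sum_{v}\sq(\ell(v)\otimes\gamma_k(T_v))\in\Im\sq$, whence $\eta(\Delta_{k-1,0}(J))=0$ and $\Gr q\circ\ss(\Im\Delta_{k-1,0})\subset\Ker\tau_{2k-1}$. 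This is only a consistency check; to obtain the actual inclusion $\Im\Delta_{k-1,0}\subset\Ker(\Gr q\circ\ss)$ I must show that surgery on the clasper realizing the doubled tree $\Delta_v(J)$ produces a cylinder homology cobordant through $Y_{2k}$ to the trivial one. I expect this to be the main obstacle: it cannot be deduced from the vanishing of $\kappa$ alone, since for general odd $n$ the homomorphism $\kappa$ has nonzero kernel, and here the Whitney-tower/clasper null-cobordism constructions of \cite{CST12W,CST16} are required.

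For $n=4k-1$ the reverse inclusion comes from two computations. First I would identify $\kappa\circ\Gr q\circ\ss$ on $\tor\A_{4k-1,0}^c\cong(H\otimes L_{2k})\otimes\Z/2\Z$ with the mod~$2$ reduction $\beta\colon(H\otimes L_{2k})\otimes\Z/2\Z\to L_{2k+1}\otimes\Z/2\Z$ of the Lie bracket: Proposition~\ref{prop:highersatolevine} expresses $\kappa\circ\Gr q$ on $\Ker\tau_{4k-1}$ through the injective map $\nu$ and $\bar{z}_{4k,0}$, while Theorem~\ref{thm:main} gives $\bar{z}_{4k,0}(\ss(J))=\tfrac12\delta'(J)$ for $J\in\A_{4k-1,0}^c$, and evaluating on the generators $\eta'^{-1}\sq(x\otimes S)$ yields $\beta$. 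Consequently, as $\Ker(\Gr q\circ\ss)\subset\tor\A_{4k-1,0}^c$, every $a\in\Ker(\Gr q\circ\ss)$ has $\beta(a)=\kappa(\Gr q\circ\ss(a))=0$, so $\Ker(\Gr q\circ\ss)\subset\Ker\beta$. Second, since the source $\A_{2k-1,0}^c$ of $\Delta_{2k-1,0}$ carries the odd index $2k-1$, the Levine map $\id_H\otimes\gamma_{2k}\colon D'_{2k-1}\to D_{2k-1}$ is the surjection of the second exact sequence of Theorem~\ref{thm:treepart}, so $(\id_H\otimes\gamma_{2k})\eta'(\A_{2k-1,0}^c)=D_{2k-1}$; reducing mod~$2$ and using right exactness of $-\otimes\Z/2\Z$ then identifies $\Im\Delta_{2k-1,0}$ with $\Ker\beta$. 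Combining with the previous paragraph, $\Im\Delta_{2k-1,0}\subset\Ker(\Gr q\circ\ss)\subset\Ker\beta=\Im\Delta_{2k-1,0}$, which is the desired equality. The parity is essential: for $n=4k+1$ the source index $2k$ is even, $\id_H\otimes\gamma_{2k+1}$ is only injective (first exact sequence), $\Im\Delta\subsetneq\Ker\beta$, and equality breaks down.
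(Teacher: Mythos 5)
You are in an unusual situation here: the paper offers no internal proof of this statement at all --- it is imported verbatim from \cite{CST16} (Lemma~42 and Corollaries~50 and~51), with the surrounding machinery merely summarized in Sections~\ref{section:satolevine} and~\ref{section:kernel-tree}. Measured against that machinery, your algebraic reductions are sound and essentially the intended ones: the even case follows, as you say, from the isomorphism $\eta'$ of \cite[Theorem~1.4]{CST12L}, the injectivity of $\id_H\otimes\gamma_{2k+1}$ in the first exact sequence of Theorem~\ref{thm:treepart}, diagram~(\ref{diagram:AD}), and surjectivity of $\Gr q\circ\ss$ in degree $\ge 2$; likewise your reduction $\Ker(\Gr q\circ\ss)\subset\tor\A_{2k-1,0}^c$ and your identification of $\kappa\circ\Gr q\circ\ss$ on the torsion with the mod~$2$ bracket, combined with $\gamma_{2k+1}$ being an isomorphism in odd degree, is exactly the content of Lemma~\ref{lem:kernelsatolevine} and correctly explains why the equality holds for $n=4k-1$ and fails for $n=4k+1$.

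The genuine gap is the one you yourself flag: the inclusion $\Im\Delta_{k-1,0}\subset\Ker(\Gr q\circ\ss)$ for $n=2k-1$ is never proved, and since the $n=4k-1$ equality needs this forward inclusion, two of the three assertions remain open in your proposal. You are right that it is not an algebraic consequence of anything available here: your consistency check only places $\ss(\Im\Delta_{k-1,0})$ in $\Ker\tau_{2k-1}$, which is genuinely larger than zero (indeed $\kappa$ is surjective onto $L_{k+1}\otimes\Z/2\Z$), and the paper's own kernel computation in Theorem~\ref{thm:inj of tree part} lives in $Y_{2k-1}\I\C/Y_{2k}$, not in $\I\H$ --- the $1$-loop obstruction does not factor through homology cobordism (Corollary~\ref{cor:cobordism}), so one cannot transfer that result. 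What is required is precisely \cite[Lemma~42]{CST16}: a geometric construction (symmetric surgery on the doubled clasper realizing $\Delta_v(J)$, exploiting the involution exchanging the two copies of $J_v$, in the Whitney-tower/null-cobordism framework of \cite{CST12W,CST16}) producing a homology cobordism from the surgered cylinder to the trivial one modulo $Y_{2k}$. A secondary thin spot: your appeal to ``right exactness of $-\otimes\Z/2\Z$'' to identify $\Im\Delta_{2k-1,0}$ with $\Ker\beta$ is not a proof; it silently invokes the nontrivial fact that $\overline{\sq}$ is well defined and bijective, i.e., that $\Im\Delta$ is the \emph{entire} kernel of the mod~$2$ quasi-Lie bracket $(H\otimes L_{2k})\otimes\Z/2\Z\to L'_{2k+1}\otimes\Z/2\Z$, which the paper again quotes from \cite{CST12W} and \cite{Lev02,Lev06} rather than derives.
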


For $k\ge1$, let us define a map
$\xi\colon H^{\otimes (k+1)}\otimes\Z/2\Z\to \A_{2k,1}^c\otimes \Z/2\Z$
by 
\[
\xi(a_0\otimes a_1\otimes \cdots \otimes a_k)
=O(a_0, a_1,\ldots, a_{k-1},a_k,a_{k-1},\ldots, a_1).
\]
It factors through $(H^{\otimes (k+1)})_{\braket{y}}\otimes\Z/2\Z$,
since 
\[
O(a_0, a_1,\ldots, a_{k-1},a_k,a_{k-1},\ldots, a_1)
=O(a_k,a_{k-1},\ldots, a_1,a_0, a_1,\ldots, a_{k-1}).
\]
\begin{lemma}\label{lem:xi}
The homomorphism
\[
\delta''\circ \sq\colon H\otimes L_k\otimes\Z/2\Z
\to\A_{2k,1}^{c}\otimes\Z/2\Z.
\]
coincides with the map $\xi$ restricted to $H\otimes L_k\otimes\Z/2\Z$.
\end{lemma}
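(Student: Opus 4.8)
The plan is to evaluate both homomorphisms on a generator $x\otimes T$, where $x\in\{1^{\pm},\ldots,g^{\pm}\}$ and $T$ is a rooted tree representing an element of $L'_k$ with leaves coloured $a_1,\dots,a_k$, and to compare the results in $\A_{2k,1}^c\otimes\Z/2\Z$. Recall from the description of $\eta'^{-1}\circ\sq$ given above that $\sq(x\otimes T)$ is the tree Jacobi diagram $J$ formed from two copies of $T$ whose roots are joined at a central trivalent vertex carrying an extra $x$-leg; its univalent vertices are the $x$-leg together with the leaves $a_1^L,\dots,a_k^L$ and $a_1^R,\dots,a_k^R$ of the two copies. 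First I would expand $\delta''(J)=\sum_{\{v,w\}}\delta_{vw}(J)$ and split the same-label pairs $\{v,w\}$ into the $k$ \emph{diagonal} pairs $\{a_p^L,a_p^R\}$ and all the remaining pairs.

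The remaining pairs cancel mod $2$ by the reflection $\sigma$ of $J$ that exchanges the two copies of $T$ and fixes the $x$-leg. Since $\sigma(J)=J$ and $\sigma$ is an isomorphism of the underlying graph, it matches each within-copy pair $\{a_p^L,a_q^L\}$ with $\{a_p^R,a_q^R\}$, each off-diagonal cross pair $\{a_p^L,a_q^R\}$ with $p\neq q$ with $\{a_q^L,a_p^R\}$, and each pair $\{x,a_p^L\}$ occurring when $a_p=x$ with $\{x,a_p^R\}$; in every case $\delta_{vw}(J)$ takes equal values on the two distinct pairs, so the two contributions cancel over $\Z/2\Z$. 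Hence $\delta''(J)\equiv\sum_{p=1}^{k}\delta_{a_p^L a_p^R}(J)$.

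It remains to identify this diagonal sum with $\xi(x\otimes T)$. Connecting $a_p^L$ to $a_p^R$ opens $J$ into a connected $1$-loop diagram whose loop runs from the central vertex along the root-to-leaf-$p$ spine of the left copy, across the new arc (which bears an $a_p$-leg), and back along the leaf-$p$-to-root spine of the right copy; the branches of $T$ off this spine hang on the loop, those coming from the right copy appearing in reversed cyclic order. Expanding each hanging branch into tensors by the inverse of the isomorphism $\Phi$ of Proposition~\ref{prop:oneloop} (equivalently by iterated IHX), this diagram becomes a sum of cyclic tensors of palindromic shape $x\otimes w\otimes a_p\otimes\overline{w}$ in $(H^{\otimes 2k})_{\D_{4k}}\cong\A_{2k,1}^c$, where $w$ runs over the tensor terms read along the left spine and $\overline{w}$ is its reversal. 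By the defining formula for $\xi$, it then suffices to see that summing these palindromes over $p$ and over $w$ recovers the palindromes $x\otimes a_{i_1}\otimes\cdots\otimes a_{i_k}\otimes a_{i_{k-1}}\otimes\cdots\otimes a_{i_1}$ attached to the tensor terms $a_{i_1}\otimes\cdots\otimes a_{i_k}$ of $T\in L_k\subset H^{\otimes k}$.

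The main obstacle is precisely this combinatorial identity. The reversal $\overline{w}$—forced geometrically by traversing the right copy of $T$ backwards around the loop, and hence by the mirror symmetry of the doubling—is exactly what turns a product of spine-readings into a full palindrome, so the symmetry of $J$ is essential here. I would establish the identity by induction on the bracket structure of $T$, with base case $T=a_1$, where $\delta_{a_1^L a_1^R}(J)=O(x,a_1)=\xi(x\otimes a_1)$ directly, and inductive step $T=[T_1,T_2]$, checking that the spine sums over the leaves of $T_1$ and of $T_2$ assemble into the tensor expansion of $[T_1,T_2]$. All signs are irrelevant over $\Z/2\Z$, which is what keeps the argument valid even when the colours $a_1,\dots,a_k,x$ coincide.
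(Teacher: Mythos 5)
Your overall strategy matches the paper's: evaluate both homomorphisms on a generator $x\otimes T$, recognize $\sq(x\otimes T)$ as the symmetric doubled tree with an extra $x$-leg, discard the off-diagonal same-label pairs in $\delta''$ by the mod~$2$ cancellation coming from the reflection exchanging the two copies of $T$, and match the surviving diagonal terms with $\xi(x\otimes T)$. Your cancellation argument is correct and is in fact spelled out more carefully than in the paper, where it is absorbed into ``a straightforward computation.'' The soft spot is your last step. The paper evaluates only on left-normed brackets $x\otimes[a_1,[a_2,[\cdots[a_{k-1},a_k]\cdots]]]$, which generate $H\otimes L_k\otimes\Z/2\Z$; there the doubled tree is a caterpillar, the diagonal term at the $i$-th leaf is visibly $\xi(x\otimes a_1\otimes\cdots\otimes a_{i-1}\otimes v_{i+1}\otimes a_i)$ with $v_{i+1}=[a_{i+1},[\cdots,[a_{k-1},a_k]\cdots]]$, and the sum over $i$ equals $\xi(x\otimes T)$ by the standard tensor expansion of a left-normed bracket --- no induction needed. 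Your proposed induction on $T=[T_1,T_2]$ does not obviously close as stated: the doubled diagram for $[T_1,T_2]$ is not built from the doubled diagrams for $T_1$ and $T_2$ (in the diagonal term at a leaf of $T_1$, the two copies of $T_2$ hang off the loop rather than being doubled around their own $x$-leg), so the inductive hypothesis cannot be applied to a subdiagram and you would need a strictly stronger statement about spine-readings of arbitrary rooted trees. Since both maps are homomorphisms, the cleanest repair is to restrict to left-normed generators, after which your diagonal computation already finishes the proof.
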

\begin{proof}
Pick $a_0,a_1,\ldots,a_k\in \{1^{\pm},\ldots,g^{\pm}\}$,
and let us denote $v_i=[a_i,[a_{i+1},[\cdots[a_{k-1},a_k]\cdots]]]\in L'_{k-i+1}$ for $1\le i\le k$ and $v_{k+1}=1\in\Z$.
A straightforward computation shows
\begin{align*}
&\quad(\delta''\circ \sq)(a_0\otimes [a_1,[a_2,[\cdots[a_{k-1},a_k]])\\
&=\delta''(T(a_k,a_{k-1},\ldots,a_1,a_0,a_1,\ldots,a_{k-1},a_k))\\
&=O(a_1,v_2,a_0,v_2)+\sum_{i=2}^{k}O(a_i,v_{i+1},a_{i-1},\ldots,a_1,a_0,a_1,\ldots,a_{i-1},v_{i+1})\\
&=\sum_{i=1}^{k}\xi(a_0\otimes a_1\otimes \cdots \otimes a_{i-1}\otimes v_{i+1}\otimes a_i)\\
&=\xi(a_0\otimes [a_1,[a_2,[\cdots[a_{k-1},a_k]\cdots]]]),
\end{align*}
where we slightly abuse the notation $O(a_i,v_{i+1},a_{i-1}\ldots,a_1,a_0,a_1,\ldots,a_{i-1},v_{i+1})$
which is a 1-loop Jacobi diagram whose ends of two edges are not univalent vertices but attached to the two copies of the tree $v_{i+1}$.
Since the degree $k$ part of the free Lie algebra $L'_k$ is generated by elements $[a_1,[a_2,[\cdots[a_{k-1},a_k]\cdots]]]$,
we obtain $\delta''\circ \sq=\xi$.
\end{proof}

As explained in \cite[Section~5.4]{CST12W},
the bracket map 
\[
[\ ,\ ]\colon H\otimes L_k'\otimes \Z/2\Z\to L'_{k+1}\otimes\Z/2\Z
\]
factors through $H\otimes L_k\otimes\Z/2\Z$.
If we take the quotient of the target of the isomorphism
\[
\eta'^{-1}\circ\sq\colon (H\otimes L_k)\otimes\Z/2\Z\to \tor\A_{2k-1,0}^c,
\]
we have a homomorphism $(H\otimes L_k)\otimes\Z/2\Z\to (\tor\A_{2k-1,0}^c)/\Im\Delta_{k-1,0}$.
It factors through the bracket map $[\ ,\ ]\colon H\otimes L_k\otimes \Z/2\Z\to 
L'_{k+1}\otimes\Z/2\Z$,
and induces an isomorphism
\[
\overline{\sq}\colon L'_{k+1}\otimes\Z/2\Z\to (\tor\A_{2k-1,0}^c)/\Im\Delta_{{k-1},0}.
\]
In \cite[Section~3.8]{CST16},
it is shown that,
for $T'\in L'_k\otimes\Z/2\Z$ and $a\in H$,
\[
(\kappa\circ \Gr q\circ \ss\circ \overline{\sq})([a,T'])
=(\kappa\circ \Gr q\circ \ss)
\Biggl(\begin{tikzpicture}[baseline=1ex, scale=0.4, dash pattern={on 2pt off 1pt}]
\node [left] at (0,0) {$T'$};
\draw (0,0)--(2,0);
\draw (1,0)--(1,1.2);
\node [right] at (2,0) {$T'$};
\node [above] at (1,1.2) {$a$};
\end{tikzpicture}\Biggr)
=[a,T']\in L_{k+1}\otimes\Z/2\Z,
\]
where the map $\Gr q\circ \ss$ and the rooted tree $[a,T']$ in this paper 
are denoted by $\tilde{\theta}_{2k-1}$ and $J$ in \cite[Section~3.8]{CST16}, respectively.
It implies the following.
\begin{lemma}[\cite{CST16}]\label{lem:kernelsatolevine}
The composite map
\[
\begin{CD}
L_{k+1}'\otimes\Z/2\Z@>\overline{\sq}>>(\tor\A_{2k-1,0}^c)/\Im\Delta_{{k-1},0}@>\kappa\circ \Gr q\circ\ss>> L_{k+1}\otimes\Z/2\Z
\end{CD}
\]
coincides with what is induced by the natural projection $\gamma_{k+1}\colon L_{k+1}'\to L_{k+1}$.
\end{lemma}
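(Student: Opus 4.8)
The plan is to exploit that both maps under comparison are homomorphisms out of $L'_{k+1}\otimes\Z/2\Z$, so that it suffices to check their agreement on a generating set. The natural generators are the bracket elements $[a,T']$ with $a\in\{1^{\pm},\ldots,g^{\pm}\}$ a degree-one generator and $T'\in L'_k\otimes\Z/2\Z$. First I would record that these span $L'_{k+1}\otimes\Z/2\Z$: since the free quasi-Lie algebra is generated in degree one, its degree $k+1$ part satisfies $L'_{k+1}=[H,L'_k]$, and tensoring with $\Z/2\Z$ preserves this. Hence any homomorphism on $L'_{k+1}\otimes\Z/2\Z$ is determined by its values on the elements $[a,T']$.

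Next I would invoke the pointwise computation of \cite[Section~3.8]{CST16} recalled immediately before the statement. By the construction of $\overline{\sq}$ via $\eta'^{-1}\circ\sq$ and its factorization through the bracket map, $\overline{\sq}([a,T'])$ is the class of the tree Jacobi diagram obtained by doubling $T'$ and attaching a univalent vertex colored $a$, and that computation gives
\[
(\kappa\circ\Gr q\circ\ss\circ\overline{\sq})([a,T'])=[a,T']\in L_{k+1}\otimes\Z/2\Z.
\]
On the other hand, $\gamma_{k+1}$ is a Lie algebra homomorphism, so it sends the quasi-Lie bracket $[a,T']$ to $[a,\gamma_k(T')]$, which is exactly the element $[a,T']$ of $L_{k+1}\otimes\Z/2\Z$ appearing on the right-hand side above. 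Thus the composite $(\kappa\circ\Gr q\circ\ss)\circ\overline{\sq}$ and $\gamma_{k+1}$ agree on every generator $[a,T']$, and therefore coincide as homomorphisms, which is the assertion of the lemma.

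The content here is essentially algebraic once the pointwise formula of \cite{CST16} is in hand, so there is no genuine obstacle, only bookkeeping to get right. I must make sure that the description of $\overline{\sq}$ on a bracket $[a,T']$ matches the doubled-tree diagram used in the cited computation (this is precisely how $\overline{\sq}$ was defined), and that the element written $[a,T']\in L_{k+1}\otimes\Z/2\Z$ on the right is read as $\gamma_{k+1}([a,T'])$. The only step requiring a word of justification is the spanning claim $L'_{k+1}=[H,L'_k]$, which is immediate from the generation of the free quasi-Lie algebra in degree one.
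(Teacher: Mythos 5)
Your proposal is correct and follows essentially the same route as the paper: the paper likewise derives the lemma directly from the pointwise computation of \cite[Section~3.8]{CST16} on bracket elements $[a,T']$, with the spanning of $L'_{k+1}\otimes\Z/2\Z$ by such brackets left implicit. Your explicit justification of that spanning step via $L'_{k+1}=[H,L'_k]$ is a harmless (and correct) addition.
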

Note that the surgery map $\ss$ may not be well-defined on $(\tor\A_{2k-1,0}^c)/\Im\Delta_{{k-1},0}$.
However, $\Gr q\circ \ss\colon (\tor\A_{2k-1,0}^c)/\Im\Delta_{{k-1},0}\to \Ker(\tau_{2k-1}\colon Y_{2k-1}\I\H/Y_{2k}\to D_{2k-1})$ is well-defined by \cite[Lemma~42]{CST16}.
We are interested in the kernel of the composite map in Lemma~\ref{lem:kernelsatolevine},
which is non-trivial only when $k$ is odd,
and coincides with the image of $\theta_{\frac{k+1}{2}}$.
Setting $m=(k+1)/2$, we show the following.
\begin{lemma}\label{lem:periodicpart}
For $m\ge1$,
the image of the homomorphism
\[
\begin{CD}
L_m\otimes\Z/2\Z@>\theta_m>> L_{2m}'\otimes\Z/2\Z
@>\overline{\sq}>> (\tor\A_{4m-3,0}^c)/\Im\Delta_{2m-2,0}
@>\delta''>>\A_{4m-2,1}^c\otimes\Z/2\Z
\end{CD}
\]
is contained in $\A_{4m-2,1}^{c,s,\mathrm{period}}\otimes\Z/2\Z$.
Under the isomorphism
\[
\A_{4m-2,1}^{c,s,\mathrm{period}}\otimes\Z/2\Z
\cong \A_{2m-1,1}^{c,s}
\cong H^{\otimes m}\otimes\Z/2\Z
\]
which comes from Lemma~\ref{lem:oneloop-period} and Proposition~\ref{prop:oneloopbasis},
the above map $L_m\otimes\Z/2\Z\to H^{\otimes m}\otimes\Z/2\Z$
coincides with the natural inclusion.
\end{lemma}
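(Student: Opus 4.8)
The plan is to collapse the composite to a concrete operation on tensors using Lemma~\ref{lem:xi}, and then to recognise the outcome as the concatenation square of $S$.

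First I would reduce to $\xi$. By Proposition~\ref{prop:Delta} the operation $\delta''$ kills $\Im\Delta_{2m-2,0}$, so $\delta''$ descends to $(\tor\A_{4m-3,0}^c)/\Im\Delta_{2m-2,0}$, and by Lemma~\ref{lem:xi} the composite $\delta''\circ\overline{\sq}$ is the map induced by $\xi$ through the (surjective) bracket map $[\,\cdot\,,\cdot\,]\colon H\otimes L_{2m-1}\to L'_{2m}$. Writing $\Xi=\delta''\circ\overline{\sq}\circ\theta_m$, this gives $\Xi(S)=\xi(\Lambda)$ for $S\in L_m$, where $\Lambda\in H\otimes L_{2m-1}\otimes\Z/2\Z$ is any lift of $\theta_m(S)=[S,S]$ along the bracket map and $\xi$ is extended linearly over the tensor expansions coming from $L_{2m-1}\hookrightarrow H^{\otimes(2m-1)}$. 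Since $\theta_m(\sum_\alpha c_\alpha m_\alpha)\equiv\sum_\alpha c_\alpha[m_\alpha,m_\alpha]\bmod 2$, it suffices to treat $S$ a single Lie monomial.

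Next I would record the two elementary facts about $\xi$ on a word $w=b_0b_1\cdots b_{2m-1}$: one has $\xi(w)=O(b_0,b_1,\dots,b_{2m-1},b_{2m-2},\dots,b_1)$, which is always symmetric, lies in $\A_{4m-2,1}^{c,s,\mathrm{period}}$ exactly when $w$ is a palindrome ($b_i=b_{2m-1-i}$), and in that case corresponds under the isomorphism $\A_{4m-2,1}^{c,s,\mathrm{period}}\otimes\Z/2\Z\cong\A_{2m-1,1}^{c,s}\cong H^{\otimes m}\otimes\Z/2\Z$ of Lemma~\ref{lem:oneloop-period} and Proposition~\ref{prop:oneloopbasis} to its first half $b_0\otimes\cdots\otimes b_{m-1}$; moreover $\xi(w)=\xi(w^{\mathrm{rev}})$, where $w^{\mathrm{rev}}$ is the reversed word. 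The heart of the argument is then the identity $\Xi(S)=\xi(S\cdot S)$ in $\A_{4m-2,1}^c\otimes\Z/2\Z$, where $S\cdot S\in H^{\otimes 2m}$ denotes the concatenation square of the image of $S$ under $L_m\hookrightarrow H^{\otimes m}$. Granting it, the computation is transparent: writing $S=\sum_u c_u\,u$ and using that the image of $S$ is reversal-invariant mod $2$ (because reversal acts on $L_m$ by $(-1)^{m-1}$), the terms $\xi(uu')$ with $u'\neq u^{\mathrm{rev}}$ pair off under $(u,u')\mapsto(u'^{\,\mathrm{rev}},u^{\mathrm{rev}})$ with equal coefficients and cancel, while the fixed terms $u'=u^{\mathrm{rev}}$ give the palindromes $u\,u^{\mathrm{rev}}$, whose first halves sum to $\sum_u c_u\,u=S$. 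Thus $\Xi(S)$ is symmetric and periodic and equals the image of $S$ under the natural inclusion $L_m\otimes\Z/2\Z\hookrightarrow H^{\otimes m}\otimes\Z/2\Z$, which is what the lemma asserts.

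The main obstacle is exactly the identity $\Xi(S)=\xi(S\cdot S)$. The lift $\Lambda$ is non-canonical and, read letter by letter, genuinely differs from $S\cdot S$; what must be shown is that this difference lies in $\ker\xi$ (which mod $2$ is generated by $w+w^{\mathrm{rev}}$ together with the cyclic-symmetry relations of the $O$-diagrams). Conceptually this is the statement that the $\Z/2\Z$-symmetry of the square $[S,S]$—exchange of the two copies of $S$—survives the passage to a Lie lift and is matched by the reversal relation built into $\xi$. I expect to handle it by fixing a bracketing of $S$, choosing the lift adapted to it, and inducting on $m$ (with $m=1,2$ as computed base cases); the delicate point is the bookkeeping of which tensor monomials of $\Lambda$ are palindromic and the verification that the remaining ones occur in fixed-point-free reversal pairs, so that only the palindromic part—reconstructing $S$—survives mod $2$.
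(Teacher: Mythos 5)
Your overall architecture is the same as the paper's: reduce via Proposition~\ref{prop:Delta} and Lemma~\ref{lem:xi} to computing $\xi$ on a Lie lift of $[S,S]$, show the answer is $\xi$ of the concatenation square $S\cdot S$, and then read off the palindromic part under the isomorphism of Lemma~\ref{lem:oneloop-period} and Proposition~\ref{prop:oneloopbasis}. Your finishing move is correct and even a little cleaner than the paper's: the paper exhibits $\xi(vv)$ directly as a sum of $\xi$'s of palindromes $ww^{\mathrm{rev}}$ by pairing two different tensor expansions of $v$, whereas you cancel the non-palindromic terms of $\xi(S\cdot S)$ in fixed-point-free reversal pairs using $\xi(w)=\xi(w^{\mathrm{rev}})$ and the mod~$2$ reversal-invariance of Lie elements; both are valid, and your version makes the role of the $\langle y\rangle$-invariance more transparent.

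The genuine gap is exactly where you flag it: the identity $\delta''\circ\overline{\sq}\circ\theta_m(S)=\xi(S\cdot S)$ is asserted with only a plan, and this identity is the entire computational content of the lemma. It is not a formal consequence of lift-independence, since $S\cdot S$ is not itself a lift of $[S,S]$ along the bracket map $H\otimes L'_{2m-1}\to L'_{2m}$; already for $S=[a,b]$ one finds $\xi(\Lambda)-\xi(S\cdot S)=\xi(aabb)+\xi(bbaa)+\xi(abab)+\xi(baba)$, which vanishes only after invoking $\xi(w)=\xi(w^{\mathrm{rev}})$. The paper proves the identity not by induction on $m$ but by a direct telescoping computation: it reduces to right-normed monomials $v=[a_1,[a_2,[\cdots,[a_{m-1},a_m]\cdots]]]$, takes the explicit Jacobi-identity lift $\sum_{i=1}^{m}a_i\otimes[v_{i+1},[a_{i-1},[\cdots,[a_1,v]]\cdots]]$ of $[v,v]$, and shows via the rewriting $\xi(a_i[v_{i+1},w])=\xi(v_iw)+\xi(v_{i+1}[a_i,w])$ (which again uses the $y$-invariance of $\xi$) that the sum collapses mod~$2$ to the single term $\xi(vv)$. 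Your proposed route — ``fix a bracketing, choose the lift adapted to it, induct on $m$, and bookkeep which monomials are palindromic'' — is not obviously viable as stated, because $[[a_1,S'],[a_1,S']]$ does not reduce to data about $[S',S']$ in a way that sets up an induction, and the palindromic bookkeeping on an arbitrary lift is precisely the uncontrolled part. To close the gap you would need to either reproduce the telescoping identity above or find a substitute argument that a specific lift of $[S,S]$ agrees with $S\cdot S$ modulo the span of $w+w^{\mathrm{rev}}$.
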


\begin{proof}[Proof of Lemma~\ref{lem:periodicpart}]
Let $v=[a_1,[a_2,[\cdots,[a_{m-1},a_m]\cdots]]]\in L'_m\otimes\Z/2\Z$.
For $1\le i\le m$,
we also denote $v_i=[a_i,[a_{i+1},[\cdots,[a_{m-1},a_m]\cdots]]]\in L'_{m-i+1}\otimes\Z/2\Z$.
By the IHX relations or the Jacobi identity, we have
\[
[v,v]
=\sum_{i=1}^{m-1}[a_i,[v_{i+1},[a_{i-1},[\cdots,[a_2,[a_1,v]]\cdots]]]]\in L'_{2m}\otimes\Z/2\Z.
\]
Thus, we obtain by Lemma~\ref{lem:xi}:
\begin{align}
(\delta''\circ\overline{\sq}\circ\theta_m)(v)
&=(\delta''\circ\overline{\sq})([v,v])\notag\\
&=(\delta''\circ \sq)\left(\sum_{i=1}^{m}a_i\otimes[v_{i+1},[a_{i-1},[\cdots,[a_2,[a_1,v]]\cdots]]]\right)\notag\\
&=\xi(a_m\otimes[a_{m-1},[\cdots,[a_2,[a_1,v]]\cdots]])
+\sum_{i=1}^{m-1}\xi(a_i\otimes[v_{i+1},[a_{i-1},[\cdots,[a_2,[a_1,v]]\cdots]]]).\label{eq:sumvv1}
\end{align}
In the following, we denote the tensor product $a\otimes b$ simply as $ab$.
For $1\le i\le m-1$, we have
\begin{align}
&\quad\xi(a_i[v_{i+1},[a_{i-1},[\cdots,[a_2,[a_1,v]]\cdots]]])\notag\\
&=\xi(a_iv_{i+1}[a_{i-1},[\cdots,[a_2,[a_1,v]]\cdots]]+a_i[a_{i-1},[\cdots,[a_2,[a_1,v]]\cdots]]v_{i+1})\notag\\
&=\xi(v_i[a_{i-1},[\cdots,[a_2,[a_1,v]]\cdots]]
+v_{i+1}a_i[a_{i-1},[\cdots,[a_2,[a_1,v]]\cdots]]
+a_i[a_{i-1},[\cdots,[a_2,[a_1,v]]\cdots]]v_{i+1})\notag\\
&=\xi(v_i[a_{i-1},[\cdots,[a_2,[a_1,v]]\cdots]]
+v_{i+1}[a_i,[a_{i-1},[\cdots,[a_2,[a_1,v]]\cdots]]]).\label{eq:sumvv2}
\end{align}
Here, we use the fact that $\xi$ is invariant under the action of $y\in\D_{2m}$.
By Equations~(\ref{eq:sumvv1}) and (\ref{eq:sumvv2}),
we obtain $(\delta''\circ\overline{\sq}\circ\theta_m)(v)=\xi(vv)$.
Since
\[
v=[a_1,[a_2,[\cdots,[a_{m-1},a_m]\cdots]]]
=\sum_{i=1}^{m}a_iv_{i+1}a_{i-1}\cdots a_2a_1
=\sum_{i=1}^ma_1a_2\cdots a_{i-1}v_{i+1}a_i,
\]
we have
\[
\xi(vv)=\sum_{i=1}^{m}\xi(a_iv_{i+1}a_{i-1}\cdots a_2a_1^2a_2\cdots a_{i-1}v_{i+1}a_i).
\]
Each Jacobi diagram which appears in the right-hand side is periodic and invariant under the inversion $y$.
Thus, we have
\[
\xi(vv)\in \A_{4m-2,1}^{c,s,\mathrm{period}}\otimes\Z/2\Z.
\]
Under the isomorphism 
\[
\A_{4m-2,1}^{c,s,\mathrm{period}}\otimes\Z/2\Z
\cong H^{\otimes m}\otimes\Z/2\Z,
\]
given by Lemma~\ref{lem:oneloop-period} and Proposition~\ref{prop:oneloopbasis}, 
it corresponds to 
\[
\sum_{i=1}^{m}a_iv_{i+1}a_{i-1}\cdots a_2a_1
=[a_1,[a_2,[\cdots,[a_{m-1},a_m]\cdots]]]
=v\in L_m\otimes\Z/2\Z.
\]
\end{proof}

\begin{proof}[Proof of Theorem~\ref{thm:inj of tree part}]
(1)
The inclusion
$\Ker(\bar{z}_{2k}\circ \ss)\supset \Im\Delta_{k-1,r}$
is a corollary of Proposition~\ref{prop:Delta} and Theorem~\ref{thm:main}.

(2)
Since $\overline{\sq}\colon L_{k+1}'\otimes\Z/2\Z\to \tor\A_{2k-1,0}^c/\Im\Delta_{k-1,0}$ is an isomorphism as explained in Section~\ref{section:kernel-tree},
it suffices to show that the composite map
\[
\bar{z}_{2k}\circ\ss\circ \overline{\sq}\colon L_{k+1}'\otimes\Z/2\Z\to \A_{2k}^c\otimes\Q/\Z
\]
is injective, where the composite map is well-defined by Theorem~\ref{thm:inj of tree part}~(1).
By Proposition~\ref{prop:highersatolevine}, we have
\[
\bar{z}_{2k,0}\circ\ss\circ \overline{\sq}=\nu\circ\kappa\circ\Gr q\circ\ss\circ \overline{\sq},
\]
where $\bar{z}_{2k,0}$ is the $0$-loop part of $\bar{z}_{2k}$.
When $k$ is even, this is injective since $\nu$ is injective by Proposition~\ref{prop:highersatolevine}, and $\kappa\circ\Gr q\circ\ss\circ \overline{\sq}$ is injective by Lemma~\ref{lem:kernelsatolevine} and \cite[Lemma~2.1]{Lev02}.
When $k$ is odd,
$\Ker(\bar{z}_{2k,0}\circ\ss\circ \overline{\sq})=\Im\theta_m$ by Lemma~\ref{lem:kernelsatolevine} and the exact sequence~(\ref{eq:quasi-lie}), where we set $m=(k+1)/2$.
Thus, it suffices to show that the homomorphism
\[
\bar{z}_{4m-2,1}\circ \ss\circ \overline{\sq}\circ\theta_m
=\Bigl(\id\otimes\frac{1}{2}\Bigr)\circ \delta''\circ \overline{\sq}\circ\theta_m\colon
L_m\otimes\Z/2\Z\to \A_{4m-2,1}^c\otimes\Q/\Z
\]
is injective,
where $\bar{z}_{4m-2,1}$ is the $1$-loop part of $\bar{z}_{4m-2}$.
It follows from the facts that 
\[
\delta''\circ \overline{\sq}\circ\theta_m\colon L_m\otimes\Z/2\Z\to \A_{4m-2,1}^c\otimes\Z/2\Z
\]
is injective by Lemma~\ref{lem:periodicpart}
and that $\id\otimes\frac{1}{2}\colon \A_{4m-2,1}^c\otimes\Z/2\Z\to \A_{4m-2,1}^c\otimes\Q/\Z$ is also injective since $\A_{4m-2,1}^c$ is torsion-free.
\end{proof}

\section{Proofs of Theorems \ref{thm:restriction-torelli}, \ref{thm:JohnsonKernel}, and \ref{thm:str-of-y3}}\label{section:applications}
In this section, 
we prove Theorems~\ref{thm:restriction-torelli}, \ref{thm:JohnsonKernel}, and \ref{thm:str-of-y3}.
Recall that Theorems~\ref{thm:restriction-torelli} and \ref{thm:JohnsonKernel} state that our homomorphism
$\bar{z}_{n+1}\colon Y_{\lfloor\frac{n}{2}\rfloor+1}\I\C\to \A_{n+1}^c\otimes\Q/\Z$
restricted to subgroups of the Torelli group $\I$ gives new abelian quotients,
and Theorem~\ref{thm:str-of-y3} determines the module structure of $Y_3\I\C/Y_4$.

\subsection{Proof of Theorem~\ref{thm:restriction-torelli}} \label{section:proof-mainthm}
Here, we prove Theorem~\ref{thm:restriction-torelli} which shows that the image of the natural homomorphism
$\I(2n-1)/\I(2n)\to Y_{2n-1}\I\C/Y_{2n}$
has many torsion elements of order $2$ whose images under 
$\bar{z}_{2n}\colon Y_{2n-1}\I\C/Y_{2n}\to\A_{2n}^c\otimes\Q/\Z$
are non-trivial.

To prove Theorem~\ref{thm:restriction-torelli},
we recall three Lie algebras $\A^c$,
$\Gr^Y\I\C=\bigoplus_{n=1}^\infty Y_n\I\C/Y_{n+1}$,
and $\Gr\I=\bigoplus_{n=1}^\infty \I(n)/\I(n+1)$.
The module $\A^c$ of connected Jacobi diagrams becomes a Lie algebra under the commutator bracket induced by the associated algebra $\A^Y$,
and the surgery map $\ss\colon \A^c\to \Gr^Y\I\C$ is a Lie algebra homomorphism as explained in \cite[Section~8.5]{Hab00C}.
See also \cite[Section~1.2 and Section~2.2]{HaMa09} and \cite[Section~8.4]{CHM08}.
Especially, $\ss\colon \A^c\to \Gr^Y\I\C$ sends the Lie subalgebra of $\A^c$ generated by the $\ideg=1$ part $\A^c_1$ to the Lie subalgebra of $\Gr^Y\I\C$  generated by $\I\C/Y_2$.
The latter one is equal to the image of the Lie algebra homomorphism
$\Gr \cc\colon\Gr\I\to \Gr^Y\I\C$ induced by the inclusion $\cc\colon \I\to\I\C$
since $\cc$ induces an isomorphism $\I/\I(2)\cong \I\C/Y_2$ as shown in \cite[Theorem~1.3]{MaMe03}.
As a conclusion, $\ss$ sends the Lie subalgebra of $\A^c$ generated by $\A^c_1$ to $\Im(\Gr\cc)$.

We construct Jacobi diagrams of odd internal degrees and of order $2$ in the Lie subalgebra generated by $\A^c_1$ to prove Theorem~\ref{thm:restriction-torelli} via combinatorial calculus.
\begin{lemma}~\label{lem:commutator}
Let $2\le n\le g-2$ and $\{a_1,a_2,\ldots, a_n,a_{n+1},a_{n+2}\}\subset \{1^{\pm},2^{\pm},\ldots,g^{\pm}\}$.
If $a_i^*\ne a_j$ and $a_i\ne a_j$ for $1\le i< j \le n+2$,
the elements
\begin{align*}
&T(a_{n+2},a_{n-1},a_{n-2},\ldots,a_2,a_1,a_2,\ldots,a_{n-1},a_n),\\
&T(a_{n+2},a_{n-1},a_{n-2},\ldots,a_2,a_1,a_2,\ldots,a_{n-1},a_n,a_{n+1}),
\end{align*}
are contained in the Lie subalgebra generated by $\A_1^c$.
\end{lemma}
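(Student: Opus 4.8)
The plan is to realize the two caterpillar diagrams
\[
T_1=T(a_{n+2},a_{n-1},\ldots,a_2,a_1,a_2,\ldots,a_{n-1},a_n),\qquad T_2=T(a_{n+2},a_{n-1},\ldots,a_2,a_1,a_2,\ldots,a_{n-1},a_n,a_{n+1})
\]
as iterated Lie brackets of $Y$-graphs $T(\,\cdot\,,\,\cdot\,,\,\cdot\,)\in\A_1^c$. Recall that the Lie bracket on $\A^c$ is the commutator $[x,y]=x\star y-y\star x$ for the product $\star$ of Section~\ref{sec:jacobi}, which glues univalent vertices of dual colors $i^{+}$ and $i^{-}$; the disjoint ($0$-gluing) parts cancel, so the bracket lands in the connected module $\A^c$. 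For a bracket of two tree diagrams the term with a \emph{single} gluing is again a tree, namely the caterpillar obtained by splicing the two spines, whereas every additional simultaneous gluing raises the loop degree. The whole point of the genericity hypothesis—$a_1,\ldots,a_{n+2}$ pairwise distinct and pairwise non-dual—is that it forbids the second dual pair needed for a second gluing, so that each bracket in the construction is a single pure tree with coefficient $\pm1$.

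I would argue by induction on the number of univalent vertices, peeling off the leftmost trivalent vertex and using the dual color $a_{n+2}^{*}$ as a universal connector. The key identity is
\[
T(a_{n+2},c_2,c_3,\ldots,c_M)=\pm\bigl[\,T(a_{n+2},c_2,a_{n+2}^{*})\,,\,T(a_{n+2},c_3,\ldots,c_M)\,\bigr],
\]
where the second factor reuses $a_{n+2}$ as its new leftmost (connecting) leg; after contracting the unique dual pair $(a_{n+2},a_{n+2}^{*})$ the first factor contributes the legs $a_{n+2},c_2$ and the second contributes $c_3,\ldots,c_M$, reproducing the caterpillar on the left. Both $T_1$ and $T_2$ have leftmost leg $a_{n+2}$ occurring \emph{exactly once}, and no two of their legs are dual (repeated colors such as $a_2,\ldots,a_{n-1}$ are equal, not dual), so the identity applies verbatim; iterating $M-3$ times reduces the diagram to the $Y$-graph $T(a_{n+2},c_{M-1},c_M)\in\A_1^c$, exhibiting $T_1$ and $T_2$ as iterated brackets of $\A_1^c$. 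One checks en route that each auxiliary $Y$-graph $T(a_{n+2},c_j,a_{n+2}^{*})$ is a legitimate nonzero element of $\A_1^c$: its two dual legs lie on distinct edges, so it is not a self-loop.

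The step that needs genuine care is the claim that exactly one gluing survives in each bracket. A \emph{second} simultaneous gluing would require a second pair of dual colors shared by the two factors; since $a_{n+2}^{*}$ occurs only as the inserted connector and no two genuine legs are dual, no such pair exists, so no $1$-loop correction appears—this is precisely where the hypotheses $a_i\neq a_j$ and $a_i^{*}\neq a_j$ are consumed. One must also track the \emph{direction} of $\star$: because $a_{n+2}^{*}$ is absent from the second factor, only one of $x\star y$ and $y\star x$ realizes the connector gluing, so the commutator does not cancel the tree and equals $\pm$ the caterpillar, the sign being a unit coming from the symplectic pairing and hence harmless for membership in the integral Lie subalgebra. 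I expect this single-gluing verification—together with confirming that the once-occurring boundary color $a_{n+2}$ makes its dual $a_{n+2}^{*}$ automatically free of every leg, so that the construction never exhausts the available colors even when $g=n+2$—to be the main obstacle; the hypothesis $n\le g-2$ then enters only to guarantee that $n+2$ pairwise non-dual colors $a_1,\ldots,a_{n+2}$ exist at all.
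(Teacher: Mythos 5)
Your proposal is correct and follows essentially the same route as the paper: both realize the caterpillar as an iterated Lie bracket of tripods carrying a dual "connector" leg, with the pairwise non-dual hypothesis guaranteeing that exactly one gluing survives in each commutator so that the bracket equals $\pm$ a single tree. The only difference is organizational — you peel legs off one end using the fixed connector $a_{n+2}^{*}$, whereas the paper's induction uses the varying connectors $a_{n+1}^{*}$ and $a_n^{*}$ to grow the caterpillar at both ends — and this does not change the substance of the argument.
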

\begin{proof}
For simplicity, we assume $a_i\in\{1^-,2^-,\ldots,g^-\}$ for $1\le i\le n+2$.
Apparently, $T(a_4,a_1,a_2)\in\A_1^c$,
and $T(a_4,a_1,a_2,a_3)=[T(a_4,a_1,a_1^*),T(a_1,a_2,a_3)]$ is a commutator of $\A_1^c$.

Assume that 
\[
T(a_{n+1},a_{n-2},\ldots,a_2,a_1,a_2,\ldots,a_{n-1})\text{ and }
T(a_{n+1},a_{n-2},\ldots,a_2,a_1,a_2,\ldots,a_{n-1},a_n)
\]
are contained in the Lie subalgebra generated by $\A_1^c$.
Then, we have
\begin{align*}
&[T(a_{n+2},a_{n-1},a_{n+1}^*),T(a_{n+1},a_{n-2},\ldots,a_2,a_1,a_2,\ldots,a_{n-1},a_n)]\\
&=T(a_{n+2},a_{n-1},a_{n-2},\ldots,a_2,a_1,a_2,\ldots,a_{n-1},a_n),\\
&[T(a_{n+2},a_{n-1},a_{n-2},\ldots,a_2,a_1,a_2,\ldots,a_{n-1},a_n),T(a_n^*,a_n,a_{n+1})]\\
&=T(a_{n+2},a_{n-1},a_{n-2},\ldots,a_2,a_1,a_2,\ldots,a_{n-1},a_n,a_{n+1}).
\end{align*}
We see inductively these elements are contained in the Lie subalgebra generated by $\A_1^c$.
\end{proof}

\begin{lemma}\label{lem:oneloopproduct}
Let $1\le n\le g-2$ and $\{a_1,a_2,\ldots, a_n, a_{n+1}\}\subset \{1^{\pm},2^{\pm},\ldots,g^{\pm}\}$.
If $a_i^*\ne a_j$ and $a_i\ne a_j$ for $1\le i< j \le n+1$,
the elements
\[
\ss(T(a_{n+1},a_n,\ldots,a_2,a_1,a_2,\ldots,a_n,a_{n+1})),\
\ss(O(a_n,a_{n-1},\ldots,a_2,a_1,a_2,\ldots,a_{n-1},a_n))
\]
are contained in
\[
\Im\left(\I(2n-1)\to Y_{2n-1}\I\C/Y_{2n}\right).
\]
\end{lemma}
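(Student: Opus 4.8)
The plan is to use the fact, established in the paragraph preceding Lemma~\ref{lem:commutator}, that $\ss\colon\A^c\to\Gr^Y\I\C$ is a graded Lie algebra homomorphism sending the Lie subalgebra $\mathfrak{h}$ of $\A^c$ generated by $\A_1^c$ into $\Im(\Gr\cc)$. Since the degree $(2n-1)$ part of $\Im(\Gr\cc)$ is precisely $\Im(\I(2n-1)/\I(2n)\to Y_{2n-1}\I\C/Y_{2n})$, it suffices to prove that the two Jacobi diagrams $T(a_{n+1},a_n,\ldots,a_1,\ldots,a_n,a_{n+1})$ and $O(a_n,\ldots,a_1,\ldots,a_n)$, both of internal degree $2n-1$, lie in $\mathfrak{h}$; applying $\ss$ then finishes the proof. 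The case $n=1$ is immediate, since $T(a_2,a_1,a_2)\in\A_1^c\subset\mathfrak{h}$ and $O(a_1)=0$ by the self-loop relation, so I assume $n\ge2$. The hypothesis $n\le g-2$ is used to supply auxiliary labels disjoint from $\{a_1,\ldots,a_{n+1}\}$, which is what makes the constructions below possible.

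For the tree, the plan is to realize it as a single commutator with one of the trees provided by Lemma~\ref{lem:commutator}. Let $P$ denote the second tree of Lemma~\ref{lem:commutator}, which lies in $\mathfrak{h}$, using a fresh first label $a_{n+2}$; I would then form the bracket $[\,T(a_{n+1},a_n,a_{n+2}^{*}),\,P\,]$ of $P$ with a $Y$-piece in $\A_1^c$. After arranging all of $a_1,\ldots,a_{n+2}$ to carry the same sign, the $\star$-product is clean: in one order the single positive leg $a_{n+2}^{*}$ glues to the leg $a_{n+2}$ of $P$ and grafts the branch carrying $a_{n+1},a_n$ onto the spine, producing exactly the symmetric tree, while the opposite order contributes nothing because $P$ has no positive leg. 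Hence the symmetric tree lies in $\mathfrak{h}$.

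For the one-loop diagram $O$, the plan is to cut its loop at two edges into two spine arcs, each a caterpillar carrying two auxiliary connector legs forming a dual pair, and to recover $O$ as the two-gluing (one-loop) component of the commutator of these arcs. Concretely I would write $O$ as the connected one-loop part of $[\,T(q,a_n,\ldots,a_1,p),\,T(q^{*},a_n,\ldots,a_2,p^{*})\,]$, where each arc is a simple caterpillar lying in $\mathfrak{h}$ (these are built by peeling off $Y$-pieces of the form $T(\text{end},\text{next},\text{next}^{*})$ and inducting, exactly as in Lemma~\ref{lem:commutator}, requiring no extra labels). This commutator also produces tree terms coming from the single gluings at $p$ alone or at $q$ alone; these are symmetric caterpillars with dual-pair endpoints, and I would show each of them to lie in $\mathfrak{h}$ by a further commutator decomposition of the same kind. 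Subtracting them leaves $O\in\mathfrak{h}$, as desired.

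The main obstacle is the bookkeeping in the one-loop case. The commutator of two trees is not a pure one-loop diagram, and the repeated labels $a_2,\ldots,a_n$ together with the dual-pair connector legs generate several admissible gluings in the $\star$-product; in particular, splitting a caterpillar places the two endpoints of a dual pair in different factors, which creates spurious gluings. Controlling these terms---by a careful assignment of signs to the auxiliary labels so that each intermediate commutator reduces to a single diagram, by reusing freed auxiliary labels so as to stay within the budget of $n\le g-2$ symplectic pairs, and by recognizing every leftover tree term as an element of $\mathfrak{h}$---is the delicate part of the argument, whereas the tree case is essentially a one-line consequence of Lemma~\ref{lem:commutator}.
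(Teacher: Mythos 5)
Your reduction to showing that the two diagrams lie in the Lie subalgebra generated by $\A_1^c$ (your $\mathfrak{h}$), and your treatment of the tree, coincide with the paper: the paper forms exactly the commutator $[T(a_{n+1},a_n,a_{n+2}^*),\,T(a_{n+2},a_{n-1},\ldots,a_1,\ldots,a_{n-1},a_n,a_{n+1})]$ with the second factor supplied by Lemma~\ref{lem:commutator}, and your observation that only the gluing $a_{n+2}^*\mapsto a_{n+2}$ survives (the reverse product being trivial) is correct. The tree case is fine.

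The $1$-loop case, however, has a genuine gap at precisely the step you defer. Cutting the loop into two long arcs $T(q,a_n,\ldots,a_1,p)$ and $T(q^*,a_n,\ldots,a_2,p^*)$ produces, besides $O(a_n,\ldots,a_1,\ldots,a_n)$, the two single-gluing trees $T(q,a_n,\ldots,a_1,\ldots,a_n,q^*)$ and $T(p,a_1,\ldots,a_n,a_n,\ldots,a_2,p^*)$, whose endpoints form dual pairs. These are \emph{not} covered by Lemma~\ref{lem:commutator}, whose hypothesis $a_i^*\ne a_j$ excludes dual-pair labels, and a ``further commutator decomposition of the same kind'' does not obviously terminate: any $Y$-piece carrying the positive connector needed to graft onto one end of such a tree also admits an unwanted gluing onto the dual negative label sitting at the other end, so each attempted decomposition regenerates spurious trees (and even a spurious loop with a doubled label) of the same problematic shape. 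The paper never proves these dual-ended trees lie in the subalgebra individually; it sidesteps the regress by taking one factor of the cut to be the degree-one piece $T(a_n,a_{n+1}^{*},a_{n+2}^{*})$ carrying \emph{both} connectors, and then adding a second commutator $[T(a_n,a_{n+1},a_{n+2}^*),\,T(a_{n+2},a_{n-1},\ldots,a_n,a_{n+1}^*)]$ chosen so that its spurious term $+T(a_{n+2},\ldots,a_n,a_n,a_{n+2}^*)$ cancels the corresponding term of the first commutator on the nose, while the remaining pair $-T(a_{n+1}^*,\ldots,a_{n+1})-T(a_{n+1},\ldots,a_{n+1}^*)$ vanishes by the AS reversal identity for trees with an odd number of legs. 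Producing such a cancelling partner (or otherwise disposing of the dual-ended trees) is the actual content of the loop case, and your proposal does not supply it.
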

\begin{proof}
Choose $a_{n+2}$ so that $a_i$'s satisfy the condition in Lemma~\ref{lem:commutator}.
For simplicity, we assume $a_i\in\{1^-,2^-,\ldots,g^-\}$ for $1\le i\le n+2$.
When $n\ge2$,
we have
\begin{align*}
&[T(a_{n+1},a_n,a_{n+2}^*),
T(a_{n+2},a_{n-1},a_{n-2},\ldots,a_2,a_1,a_2,\ldots,a_n,a_{n+1})] \\
&=T(a_{n+1},a_n,a_{n-1},\ldots,a_2,a_1,a_2,\ldots,a_n,a_{n+1}).
\end{align*}
By Lemma~\ref{lem:commutator}, it is contained in the Lie subalgebra generated by $\A_1^c$,
This is also true for $n=1$ since $T(a_2,a_1,a_2)\in\A_1^c$.
We also have
\begin{align*}
&[T(a_n,a_{n+1}^*,a_{n+2}^*), T(a_{n+2},a_{n-1},\ldots,a_2,a_1,a_2,\ldots,a_{n-1},a_n,a_{n+1})]\\
&=-T(a_{n+1}^*,a_n,a_{n-1},\ldots,a_2,a_1,a_2,\ldots,a_{n-1},a_n,a_{n+1})\\
&\quad-T(a_{n+2},a_{n-1},\ldots,a_2,a_1,a_2,\ldots,a_{n-1},a_n,a_n,a_{n+2}^*)\\
&\quad+O(a_n,a_{n-1},\ldots,a_2,a_1,a_2,\ldots,a_{n-1},a_n),\\
&[T(a_n,a_{n+1},a_{n+2}^*), T(a_{n+2},a_{n-1},\ldots,a_2,a_1,a_2,\ldots,a_{n-1},a_n,a_{n+1}^*)]\\
&=-T(a_{n+1},a_n,a_{n-1},\ldots,a_2,a_1,a_2,\ldots,a_{n-1},a_n,a_{n+1}^*)\\
&\quad+T(a_{n+2},a_{n-1},\ldots,a_2,a_1,a_2,\ldots,a_{n-1},a_n,a_n,a_{n+2}^*)
\end{align*}
for $n\ge2$.
The sum of these elements is equal to $O(a_n,a_{n-1},\ldots,a_2,a_1,a_2,\ldots,a_{n-1},a_n)$,
and it is also contained in the Lie subalgebra generated by $\A_1^c$.
When $n=1$, $O(a_1)$ is zero by the self-loop relation.

As we explained in the beginning of this section,
the images of the two elements above under the surgery map $\ss$ is contained in
$\Im(\I(2n-1)\to Y_{2n-1}\I\C/Y_{2n})$.
\end{proof}

\begin{proof}[Proof of Theorem~\ref{thm:restriction-torelli}]
Let $n\ge2$.
By the AS relation, we see that the elements
\[
T(a_{n+1},a_n,\ldots,a_2,a_1,a_2,\ldots,a_n,a_{n+1}),\
O(a_n,a_{n-1},\ldots,a_2,a_1,a_2,\ldots,a_{n-1},a_n)
\]
are of order 2.
Thus, it suffices to check that the two elements in $Y_{2n-1}\I\C/Y_{2n}$ obtained in Lemma~\ref{lem:oneloopproduct} are non-trivial.
By Theorem~\ref{thm:main}, we have
\begin{align*}
(\bar{z}_{2n,0}\circ\ss)(T(a_{n+1},a_n,\ldots,a_2,a_1,a_2,\ldots,a_n,a_{n+1}))&=\frac{1}{2}T(a_{n+1},\ldots,a_2,a_1,a_1,a_2,\ldots,a_{n+1}),\\
(\bar{z}_{2n,1}\circ\ss)(O(a_n,a_{n-1},\ldots,a_2,a_1,a_2,\ldots,a_{n-1},a_n))&=\frac{1}{2}O(a_n,\ldots,a_2,a_1,a_1,a_2,\ldots,a_n).
\end{align*}
The first one is equal to $\nu([a_1,[a_2,[\cdots,[a_{n-1},a_n]\cdots]]])\in A_{2n,0}^c\otimes\Q/\Z$,
and it is non-trivial since $\nu$ is injective.
The second one is also non-trivial since $O(a_n,\ldots,a_2,a_1,a_1,a_2,\ldots,a_n)$ corresponds to an element of  $W_{2n}/\D_{4n}$, which is the basis of the free module $\A_{2n,1}^c$ given in the proof of Proposition~\ref{prop:oneloopbasis}.
Thus, the two elements in $Y_{2n-1}\I\C/Y_{2n}$ in Lemma~\ref{lem:oneloopproduct} are also non-trivial.

When $n=1$,
\[
(\bar{z}_{2,0}\circ\ss)(T(a_2,a_1,a_2))=\frac{1}{2}T(a_2,a_1,a_1,a_2)\text{ and }
(\bar{z}_{2,1}\circ\ss)(T(a_2,a_1,a_2))=\frac{1}{2}O(a_1,a_2)
\]
are also non-trivial.
\end{proof}

Morita's refinement $\tilde{\tau}_n$ extends to a homomorphism on $Y_n\I\C$ as explained in \cite{Mas12}.
Since the target of $\tilde{\tau}_n$ is a free module,
the elements $\ss(T(a_n,\ldots,a_2,a_1,a_2,\ldots,a_n))$ and $\ss(O(a_n,\ldots,a_2,a_1,a_2,\ldots,a_n))$ are in the kernel.
Thus, our homomorphism $\bar{z}_{2n}$ is non-trivial on $\Im(\I(n)\to Y_n\I\C)\cap\Ker\tilde{\tau}_n$,
and Corollary~\ref{cor:kerneljohnson} follows.

The $Y_{2n}$-equivalence class $\ss(O(a_n,\ldots,a_2,a_1,a_2,\ldots,a_n))$ obtained from the graph clasper with 1 loop is trivial on $Y_{2n-1}\I\H/Y_{2n}$ by \cite[Theorem~2]{Lev01}.
Thus, we also have the following.
\begin{cor}\label{cor:cobordism}
For $2\le n\le g-2$,
the $1$-loop part of $\bar{z}_{2n}\colon Y_{2n-1}\I\C\to \A_{2n}^c\otimes\Q/\Z$ restricted to $\I(2n-1)$ does not factor through $Y_{2n-1}\I\H/Y_{2n}$.
\end{cor}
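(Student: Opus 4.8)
The plan is to exhibit a single element of $\I(2n-1)$ that becomes trivial in $Y_{2n-1}\I\H/Y_{2n}$ yet has non-zero image under the $1$-loop part $\bar{z}_{2n,1}$; the existence of such an element alone obstructs any factorization of the restriction through $Y_{2n-1}\I\H/Y_{2n}$.

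First I would choose labels $a_1,\dots,a_{n+1}$ satisfying the genericity hypothesis $a_i^{*}\ne a_j$ and $a_i\ne a_j$ of Lemma~\ref{lem:oneloopproduct}, which is possible since $n\le g-2$. That lemma places the class $\ss(O(a_n,a_{n-1},\dots,a_2,a_1,a_2,\dots,a_{n-1},a_n))$ in $\Im(\I(2n-1)\to Y_{2n-1}\I\C/Y_{2n})$, so I may pick a lift $x\in\I(2n-1)$. From the computation carried out in the proof of Theorem~\ref{thm:restriction-torelli},
\[
\bar{z}_{2n,1}(x)=(\bar{z}_{2n,1}\circ\ss)(O(a_n,\dots,a_1,\dots,a_n))=\tfrac{1}{2}O(a_n,\dots,a_2,a_1,a_1,a_2,\dots,a_n)\in\A_{2n,1}^c\otimes\Q/\Z,
\]
and this class is non-zero: the underlying diagram corresponds to a basis element of the free $\Z$-module $\A_{2n,1}^c$ exhibited in Proposition~\ref{prop:oneloopbasis} (recall that $\A_{2n,1}^c$ is torsion-free, since $2n$ is even), so $\id\otimes\tfrac12$ does not kill it. Hence $\bar{z}_{2n,1}(x)\ne 0$.

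Next I would record that the very same class vanishes in the homology cobordism quotient. Since $\ss(O(\dots))$ is realized by a connected graph clasper carrying one loop, its image under $\Gr q\colon Y_{2n-1}\I\C/Y_{2n}\to Y_{2n-1}\I\H/Y_{2n}$ is trivial; this is exactly the statement preceding the corollary, guaranteed by \cite[Theorem~2]{Lev01}. Conceptually, $Y_{2n-1}\I\H/Y_{2n}$ detects only the tree ($0$-loop) part, so the $1$-loop contribution is invisible after projecting to $\I\H$.

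Finally the argument closes by contradiction. Suppose the restriction of $\bar{z}_{2n,1}$ to $\I(2n-1)$ factored through $Y_{2n-1}\I\H/Y_{2n}$, say as $\bar{\phi}\circ\Gr q$ for some homomorphism $\bar{\phi}$. Then $\bar{z}_{2n,1}(x)=\bar{\phi}\bigl(\Gr q(\ss(O(\dots)))\bigr)=\bar{\phi}(0)=0$, contradicting the non-vanishing established above. The only point demanding genuine care is the non-triviality of $\tfrac12 O(a_n,\dots,a_1,a_1,\dots,a_n)$ in $\A_{2n,1}^c\otimes\Q/\Z$, but this is already secured by the explicit bracelet basis of $\A_{2n,1}^c$ from Proposition~\ref{prop:oneloopbasis} together with the torsion-freeness of $\A_{2n,1}^c$; everything else is a direct assembly of Lemma~\ref{lem:oneloopproduct}, the computation in the proof of Theorem~\ref{thm:restriction-torelli}, and \cite[Theorem~2]{Lev01}.
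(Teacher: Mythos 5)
Your proposal is correct and follows exactly the paper's (very terse) argument: take the class $\ss(O(a_n,\dots,a_2,a_1,a_2,\dots,a_n))$ from Lemma~\ref{lem:oneloopproduct}, note that its image under $\bar{z}_{2n,1}$ is the nonzero element $\tfrac12 O(a_n,\dots,a_1,a_1,\dots,a_n)$ by the computation in the proof of Theorem~\ref{thm:restriction-torelli} and Proposition~\ref{prop:oneloopbasis}, while it dies in $Y_{2n-1}\I\H/Y_{2n}$ by Levine's theorem. Nothing is missing; the supporting details you spell out (torsion-freeness of $\A_{2n,1}^c$ in even degree, the bracelet basis) are exactly the ones the paper relies on implicitly.
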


\subsection{Proof of Theorem~\ref{thm:JohnsonKernel}}
\label{sec:JohnsonKernel}
Here, we prove Theorem~\ref{thm:JohnsonKernel} which states that $\bar{z}_4$ gives a homomorphism on $\K\C$ whose restriction to the intersection of $\K$ and the kernels of Morita's refinement $\tilde{\tau}_2$ and the Casson invariant is non-trivial.

In Lemma~\ref{lem:oneloopproduct},
we construct an element
\[
\ss(T(a_3,a_2,a_1,a_2,a_3))\in \Im(\I(3)\to Y_3\I\C/Y_4)
\]
of order $2$ whose image under $\bar{z}_{4,0}$ is non-trivial.
The homomorphisms $\tilde{\tau}_2$ and the Casson invariant extend to homomorphisms on $\K\C$ as explained in \cite{Mas12} and \cite[Section~8.6]{CHM08}, respectively,
and the element is in the kernels of these homomorphisms because their targets are free modules.

Thus, $\bar{z}_{4,0}$ is non-trivial in the intersection of $\I(3)$ and the kernels of these homomorphisms,
and Theorem~\ref{thm:JohnsonKernel} follows from the following.
\begin{lemma}\label{lem:barz-homo}
The map
\[
\bar{z}_4=(\log\Ztilde^Y)_4\bmod\Z\colon\K\C \to \frac{\A_4^c\otimes\Q/\Z}{\braket{\frac{1}{2}\delta O(a,b,b) \mid a,b \in \{1^{\pm},2^{\pm},\ldots, g^{\pm}\}}},
\]
is a homomorphism.
\end{lemma}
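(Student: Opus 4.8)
The plan is to isolate the obstruction to additivity as a single low-degree diagrammatic term and then to recognize that term inside the distinguished subgroup $\langle\tfrac12\delta O(a,b,b)\rangle$.

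First I would exploit that $\Ztilde^Y\colon\Q\I\C\to\widehat{\A}^Y\otimes\Q$ is an algebra homomorphism for $\star$, so that $\Ztilde^Y(M\circ N)=\Ztilde^Y(M)\star\Ztilde^Y(N)$, and that each $\Ztilde^Y(M)$ is group-like. For $M\in\K\C$ one has $\tau_1(M)=0$, hence $\Ztilde^Y_1(M)=0$, so $a:=\log\Ztilde^Y(M)$ is a series of connected diagrams starting in internal degree $2$; write $a=a_2+a_3+a_4+\cdots$ and likewise $b=\log\Ztilde^Y(N)=b_2+b_3+\cdots$. Expanding $\exp(a)\star\exp(b)$ by internal degree (using that $\star$ is bilinear and additive in internal degree, and that $a_1=b_1=0$ kills the contributions of degrees $(1,3)$ and $(3,1)$) and then subtracting the $\sqcup$-quadratic correction produced by $\log$, I expect to obtain
\[
\bigl(\log\Ztilde^Y(M\circ N)\bigr)_4=a_4+b_4+(a_2\star b_2)^{\mathrm{conn}},
\]
where $(a_2\star b_2)^{\mathrm{conn}}$ is the sum of those terms of $a_2\star b_2$ in which at least one pair of univalent vertices is glued; these are connected because $a_2$ and $b_2$ are. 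Reducing modulo $\Z$ this gives the defect
\[
\bar{z}_4(M\circ N)-\bar{z}_4(M)-\bar{z}_4(N)\equiv\bigl(\Ztilde^Y_2(M)\star\Ztilde^Y_2(N)\bigr)^{\mathrm{conn}}\pmod{\Z},
\]
which depends only on the internal degree $2$ parts.

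Next I would feed in the fact that on $\K\C$ the degree $2$ part is half-integral. By Remark~\ref{rem:BC-homo} and the computation of $\bar{z}_2(\K\C/Y_2)$ in Lemma~\ref{lem:value-of-z2}, the class $\bar{z}_2(M)=\Ztilde^Y_2(M)\bmod\Z$ is $2$-torsion, so $u_M:=2\Ztilde^Y_2(M)$ and $u_N:=2\Ztilde^Y_2(N)$ lie in $\A_2^c$ and are explicit (essentially $1$-loop) Birman--Craggs diagrams. Since $(\,\cdot\star\cdot\,)^{\mathrm{conn}}$ is bilinear, the defect becomes
\[
\tfrac14\,(u_M\star u_N)^{\mathrm{conn}}\pmod{\Z},
\]
with $u_M,u_N$ integral. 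Thus it remains to show that $\tfrac14(u_M\star u_N)^{\mathrm{conn}}$ lies in $\langle\tfrac12\delta O(a,b,b)\rangle$ modulo $\Z$; in particular $(u_M\star u_N)^{\mathrm{conn}}$ must be even, and half of it must reduce mod $2$ to a sum of the classes $\delta O(a,b,b)$.

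Finally, the heart of the argument is this diagrammatic identification, and it is the step I expect to be the main obstacle. I would compute $(u_M\star u_N)^{\mathrm{conn}}$ by gluing an $i^{+}$-colored leg of $u_M$ to an $i^{-}$-colored leg of $u_N$ for each index $i$, simplifying by the AS, IHX and self-loop relations. Because the Birman--Craggs values are carried by $1$-loop diagrams $O(x,y)$, each single gluing of two such produces a connected $2$-loop diagram of internal degree $4$, exactly of the shape of $\delta'' O(a,b,b)$, while the relabelling produced by the $\star$-gluing reproduces the leg-doubling and leg-reversal that define $\delta=\delta'+\delta''$ in Section~\ref{sec:delta}. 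Matching signs and colorings, I would show that the quarter-integral contributions cancel in pairs, so that $\tfrac14(u_M\star u_N)^{\mathrm{conn}}$ collapses to a sum of the classes $\tfrac12\delta O(a,b,b)$. This precise bookkeeping, together with the explicit form of $\Ztilde^Y_2$ on $\K\C$ supplied by Lemma~\ref{lem:value-of-z2}, is exactly what dictates the quotient appearing in the statement and completes the proof that $\bar{z}_4$ is a homomorphism.
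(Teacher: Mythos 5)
Your reduction is exactly the one the paper uses: multiplicativity of $\Ztilde^Y$ together with $\Ztilde^Y_1=0$ on $\K\C$ gives
$\Ztilde^Y_4(M\circ N)-\Ztilde^Y_4(M)-\Ztilde^Y_4(N)=\Ztilde^Y_2(M)\star\Ztilde^Y_2(N)$,
and after passing to connected parts the whole question becomes whether the connected part of $\Ztilde^Y_2(M)\star\Ztilde^Y_2(N)$ lies in $\Im\iota+\braket{\frac{1}{2}\delta O(a,b,b)}$. Up to this point your argument is correct and coincides with the paper's, including the observation that $2\Ztilde^Y_2$ is integral on $\K\C$ so that the defect is governed by Lemma~\ref{lem:value-of-z2}.

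The genuine gap is in the final step, which you explicitly defer and whose sketch starts from a misreading of Lemma~\ref{lem:value-of-z2}. The generators of $\bar{z}_2(\K\C/Y_2)$ are $\frac{1}{2}\{T(a,b,b,a)+O(a,b)\}$ and $\frac{1}{2}\Theta$ (a legless theta graph), not one-loop diagrams $O(x,y)$: the tree summand $T(a,b,b,a)$ cannot be dropped, since the tree--tree and tree--loop gluings (single and double) produce most of the connected part of the product, while the legless $\Theta$ contributes nothing to it. Moreover, a single gluing of $O(x,y)$ with $O(z,w)$ yields a dumbbell-shaped two-loop diagram, not a diagram of the shape $\delta''O(a,b,b)$, so the claimed direct match of glued terms with $\delta$ does not hold term by term. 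The paper settles the step by the explicit computation: the connected part of $\frac{1}{2}\{T(a,b,b,a)+O(a,b)\}\star\frac{1}{2}\{T(c,d,d,c)+O(c,d)\}$ equals $\frac{1}{2}$ times a sum of classes $O(b,b,d,d)+\Theta(b,d)$ weighted by the gluing indicators $w(\cdot,\cdot)$ --- the quarter-integral contributions pair off using the reflection symmetries of $T(a,b,b,a)$ and $O(a,b)$ --- and these classes are then placed in $\Im\iota+\braket{\frac{1}{2}\delta O(a,b,b)}$. Since this bookkeeping is precisely what forces the particular quotient in the statement, your proposal as written is a correct reduction but not yet a proof.
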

To prove Lemma~\ref{lem:barz-homo},
we compute the image of
$\bar{z}_2\colon \I\C/Y_2\to \A_2^c\otimes\Q/\Z$ restricted to $\K\C/Y_2$.
\begin{lemma}\label{lem:value-of-z2}
The image $\bar{z}_2(\K\C/Y_2)$ in $\A_2^c\otimes\Q/\Z$ is generated by
\[
\frac{1}{2}T(a,b,b,a)+\frac{1}{2}O(a,b)\text{ and }
\frac{1}{2}\,
\begin{tikzpicture}[baseline=-0.5ex, scale=0.25, dash pattern={on 2pt off 1pt}]
\draw (0,0) circle [x radius=2, y radius=1]; 
\draw (90:1) -- (-90:1);
\end{tikzpicture}
\]
for $a,b\in\{1^{\pm}, 2^{\pm}, \ldots, g^{\pm}\}$.
\end{lemma}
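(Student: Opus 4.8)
The plan is to identify the subgroup $\K\C/Y_2$ inside $\I\C/Y_2$ explicitly and then evaluate the homomorphism $\bar z_2$ on a generating set. First I would record that $Y_2\I\C\subset\K\C$, since $\tau_1$ is (up to sign) the leading tree term of $\Ztilde^Y$ and hence vanishes on $Y_2\I\C$; consequently $\tau_1$ descends to $\I\C/Y_2$ and its kernel is precisely the image of $\K\C$, so that $\K\C/Y_2=\Ker(\tau_1\colon\I\C/Y_2\to D_1)$ with $D_1\cong\Lambda^3H$. Invoking the Massuyeau--Meilhan isomorphism $\I\C/Y_2\cong\Lambda^3H\times_{\Lambda^3H\otimes\Z/2\Z}B_3$ recalled in Remark~\ref{rem:BC-homo}, this kernel is carried isomorphically onto the subspace of $B_3$ of boolean functions on $\Spin(\Sigma_{g,1})$ of degree at most $2$, which is generated by the constant function together with the quadratic products $\bar\omega(a)\bar\omega(b)$.

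For the quadratic generators I would use the surgery map. Since $\tau_1(\ss(T(a,a,b)))=a\wedge a\wedge b=0$ in $\Lambda^3H$, the class $\ss(T(a,a,b))$ lies in $\K\C/Y_2$ and realizes the generator corresponding to $\bar\omega(a)\bar\omega(b)$. Applying Theorem~\ref{thm:main} in the case $n=1$ gives $\bar z_2(\ss(T(a,a,b)))=\tfrac12\delta(T(a,a,b))$, and I would compute the two pieces of $\delta=\delta'+\delta''$ directly from the definition: the unique same-label pair of $a$-colored univalent vertices contributes $\delta''(T(a,a,b))=O(a,b)$, while $\delta'(T(a,a,b))$, after reducing the resulting four-legged trees modulo the AS and IHX relations and modulo $2$, contributes $T(a,b,b,a)$. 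This yields the first family of generators $\tfrac12T(a,b,b,a)+\tfrac12O(a,b)$.

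The remaining generator is genuinely outside the image of $\ss$ restricted to $\A_1^c$: every $\delta(T(a,b,c))$ produces diagrams carrying univalent vertices, so the legless theta graph $\Theta$ (the right-hand generator in the statement) can only arise from the constant boolean function, i.e.\ the Arf/Casson direction of $\K\C/Y_2$. For this I would realize the corresponding class by an explicit homology cylinder representing $1$ in the degree $\le2$ part of $B_3$ and evaluate $\Ztilde_2^Y$ on it through degree $2$, or equivalently invoke the computation of $\Ztilde_2^Y$ on $\K\C$ carried out in \cite{MaMe13}; in either case the value is $\tfrac12\Theta$. Finally, since $\bar z_2$ is a homomorphism on $\I\C/Y_2$ and the listed elements generate $\K\C/Y_2$, their images generate $\bar z_2(\K\C/Y_2)$.

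I expect the main obstacle to be twofold. The bookkeeping in the $\delta'$-computation---tracking the signs introduced by $\Psi$ and collapsing the several four-leg trees to the single representative $T(a,b,b,a)$ via the AS and IHX relations modulo $2$---is delicate but essentially routine. The genuine difficulty is the legless generator: it is invisible at the tree level, so one must either exhibit a concrete representative of the constant boolean function and run the Kontsevich/LMO computation out to degree $2$, or extract the theta-coefficient from \cite{MaMe13}; ensuring that this value is exactly $\tfrac12\Theta$ and that no further generators of the degree $\le2$ part of $B_3$ have been overlooked is the crux of the argument.
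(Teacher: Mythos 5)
Your route is genuinely different from the paper's, which proves the lemma in one line: on $\K\C$ one has $\bar{z}_2=\Ztilde^Y_2\bmod\Z$ because the degree-one part vanishes there, and the image is then read off from the commutative diagram just after Claim~5.7 of \cite{MaMe13} via the identification \cite[Lemma~8.4]{CHM08}. You instead reconstruct the tree and $1$-loop generators internally, by identifying $\K\C/Y_2$ with the degree-$\le2$ part of $B_3$ through \cite{MaMe03} and evaluating $\bar{z}_2\circ\ss$ on $T(a,a,b)$ with Theorem~\ref{thm:main}. That computation does close up, but for a reason worth making explicit: besides $T(a,b,b,a)$ and $O(a,b)$, the sum $\delta(T(a,a,b))$ contains $T(a,a,b,b^*)$ (from the second term of $\delta_v$ at the $b$-colored leg) and, twice over, the contributions of the two $a$-colored legs. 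The latter cancel modulo $2$, and $T(a,a,b,b^*)$ vanishes because a tree with two equal labels at a common trivalent vertex is $2$-torsion by the AS relation while $\A_{2,0}^c\cong D_2'$ is torsion-free by \cite[Theorem~1.4]{CST12L} and Theorem~\ref{thm:treepart}; without this observation the asserted value $\tfrac12T(a,b,b,a)+\tfrac12O(a,b)$ would not follow. What your approach buys is that the first family of generators becomes a corollary of the paper's own $\delta$-calculus rather than an imported computation; what it does not buy is independence from \cite{MaMe13}: both the legless theta generator and the assertion that the classes $\ss(T(a,a,b))$ together with one representative of the constant Boolean function exhaust $\K\C/Y_2$ (so that no further elements appear in the image) still require the Massuyeau--Meilhan results, which is exactly the input the paper cites. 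As written these two points remain gaps; once you commit to the citation you yourself name, the argument is complete and consistent with the paper's.
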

\begin{proof}
Since $\bar{z}_2=(\log \Ztilde^Y)_2 \bmod \Z = \Ztilde^Y_2 \bmod \Z$ on $\K\C$,
this follows from the commutative diagram just after Claim~5.7 in \cite{MaMe13}
under the isomorphism \cite[Lemma~8.4]{CHM08}.
\end{proof}

\begin{proof}[Proof of Lemma~\ref{lem:barz-homo}]
Recall that the leading term $(\log\Ztilde^Y)_1\colon \I\C\to A_1^c\otimes \Q$ equals the first Johnson homomorphism with opposite sign.
Thus, for $M$, $N\in\K\C$, we have $\Ztilde^Y(M)-\emptyset$, $\Ztilde^Y(N)-\emptyset\in\A_{\ge2}^Y\otimes \Q$.
Since
\[
\Ztilde^Y(M\circ N)
=\Ztilde^Y(M)\star \Ztilde^Y(N)
=(\Ztilde^Y(M)-\emptyset)\star (\Ztilde^Y(N)-\emptyset)
+\Ztilde^Y(M)+\Ztilde^Y(N)-\emptyset,
\]
we have
\[
\Ztilde_4^Y(M\circ N)-\Ztilde_4^Y(M)-\Ztilde_4^Y(N)
=\Ztilde_2^Y(M)\star \Ztilde_2^Y(N).
\]
Thus, it suffices to show that the connected part of $\Ztilde_2^Y(M)\star \Ztilde_2^Y(N)$ is in
\[
\Im \iota+\left\langle\frac{1}{2}\delta O(a,b,b) \biggm| a,b\in\{1^{\pm},2^{\pm},\ldots, g^{\pm}\}\right\rangle.
\]
Let us denote
\[
w(i^{\epsilon},j^{\epsilon'})=
\begin{cases}
1&\text{if }i=j,\ \epsilon=+\text{ and }\epsilon'=-,\\
0&\text{otherwise}
\end{cases}
\]
for $1\le i\le g$, $1\le j\le g$ and $\epsilon,\epsilon'\in\{+,-\}$, and
$\Theta(a_1,a_2)=
\begin{tikzpicture}[baseline=-0.4ex, scale=0.25, dash pattern={on 2pt off 1pt}]
\draw (0,0) circle [x radius=2, y radius=1]; 
\draw (0:2) -- (0:4);
\draw (180:2) -- (180:4);
\draw (90:1) -- (-90:1);
\node [right] at (0:3.8) {$a_2$};
\node [left] at (180:3.8) {$a_1$};
\end{tikzpicture}$
for $a_1,a_2\in\{1^{\pm},\ldots,g^{\pm}\}$.
We have
\begin{align*}
&\quad\frac{1}{2}\{T(a,b,b,a)+O(a,b)\}
\}\star\frac{1}{2}\{T(c,d,d,c)+O(c,d)\}\\
&=\frac{1}{2}\{w(a,c)(O(b,b,d,d)+\Theta(b,d))
+w(a,d)(O(b,b,c,c)+\Theta(b,c))\\
&\quad+w(b,c)(O(a,a,d,d)+\Theta(a,d))
+w(b,d)(O(a,a,c,c)+\Theta(a,c))\}\\
&\quad +(\textrm{disconnected part})\in\A_4^Y\otimes\Q/\Z.
\end{align*}
By Lemma~\ref{lem:value-of-z2}, 
the connected part of $\Ztilde_2^Y(M)\star \Ztilde_2^Y(N)$
is in $\Im \iota+\braket{\frac{1}{2}\delta O(a,b,b)\mid a,b\in\{1^{\pm},2^{\pm},\ldots, g^{\pm}\}}$.
\end{proof}

\subsection{The case of closed surfaces}
We have so far considered surfaces with one boundary component and homology cylinders over such surfaces.
This subsection is devoted to discussing the case of closed surfaces.
The goal is to prove that the abelianization of the Johnson kernel has torsion elements.

Let us denote the Torelli group, the monoid of homology cylinder, and its Johnson kernel in the case of closed surface $\Sigma_g$ of genus $g$ by $\I_g$, $\I\C_g$, and $\K\C_g$, respectively.
It follows from \cite{HaMa12} that $\Ztilde^Y\colon \I\C \to \widehat{\A}\otimes\Q$ induces a homomorphism $\I\C_g \to (\widehat{\A}\otimes\Q)/(s\circ\varphi)^{-1}(I^{<})$ (also denoted by $\Ztilde^Y$) satisfying the commutative diagram
\[
\xymatrix{
\I\C \ar@{->>}[d] \ar[r]^-{\Ztilde^Y}&\widehat{\A}\otimes\Q \ar@{->>}[d]\\
\I\C_g \ar[r]^-{\Ztilde^Y}& (\widehat{\A}\otimes\Q)/(s\circ\varphi)^{-1}(I^{<}),
}
\]
where $I^{<}$ is an ideal introduced in \cite[Section~7.1]{HaMa09} and $\varphi$ and $s$ are isomorphisms used in \cite[Section~7.4]{HaMa09}.

Let $\overline{I}_n^c$ denote the image of $I_n=(s\circ\varphi)^{-1}(I^{<})\cap(\A_n\otimes\Q)$ under the projection $\widehat{\A}\otimes\Q \twoheadrightarrow \widehat{\A}^c\otimes\Q/\Z$ to the connected part.
We define the homomorphism $\wideparen{z}_{n+1}\colon Y_n\I\C_g/Y_{n+1} \to (\A_{n+1}^c\otimes\Q/\Z)/\overline{I}_{n+1}^c$ by $\wideparen{z}_{n+1}([M]) = (\log\Ztilde^Y(M))_{n+1}$.
By Theorem~\ref{thm:main} and the definitions of $\bar{z}_{n+1}$ and $\wideparen{z}_{n+1}$, the diagram
\[
\xymatrix{
\A^c_n \ar[r]^-{\ss} \ar[d]^-{\delta} & Y_n\I\C/Y_{n+1} \ar@{->>}[r] \ar[d]^-{\bar{z}_{n+1}} & Y_n\I\C_g/Y_{n+1} \ar[d]^-{\wideparen{z}_{n+1}} \\
\A_{n+1}^c\otimes\Z/2\Z \ar[r]^-{\id\otimes\frac{1}{2}} & \A_{n+1}^c\otimes\Q/\Z \ar@{->>}[r] & (\A_{n+1}^c\otimes\Q/\Z)/\overline{I}_{n+1}^c
}
\]
is commutative.
Furthermore, the same argument in the proof of Theorem~\ref{thm:Z-homo} shows that the homomorphism $\bar{z}_{n+1}\colon Y_n\I\C_{g}/Y_{n+1}\to (\A_{n+1}^c\otimes\Q/\Z)/\overline{I}_{n+1}^c$ extends to a homomorphism on $Y_{\lfloor \frac{n}{2}\rfloor+1}\I\C_{g}/Y_{n+1}$, which is proved in the same way as Theorem~\ref{thm:Z-homo}.

\begin{proof}[Proof of Corollary~\ref{cor:H1Kg}]
By the same argument as the proof of Theorem~\ref{thm:JohnsonKernel} in Section~\ref{sec:JohnsonKernel}, the composite map
\[
\I_{g}(2) \xrightarrow{\cc} Y_2\I\C_g/Y_4 \xrightarrow{\wideparen{z}_4} (\A_{4}^c\otimes\Q/\Z)/ \left( \overline{I}_{4}^c + \langle\tfrac{1}{2}\delta O(a,b,b) \mid a,b \in \{1^{\pm},\dots, g^{\pm}\}\rangle \right)
\]
extends to $\K_{g}$ (and $\K\C_g$).

We next focus on the $0$-loop part of this map and show its non-triviality.
Let $\overline{I}_{4,0}^c$ be the image of $\overline{I}_{4}^c$ under the projection $\A_{4}^c\otimes\Q/\Z \twoheadrightarrow \A_{4,0}^c\otimes\Q/\Z$, and consider the composite map
\[
\K_{g} \to (\A_{4}^c\otimes\Q/\Z)/ \left( \overline{I}_{4}^c + \langle\tfrac{1}{2}\delta O(a,b,b) \mid a,b \in \{1^{\pm},\dots, g^{\pm}\}\rangle \right) \twoheadrightarrow (\A_{4,0}^c\otimes\Q/\Z)/\overline{I}_{4,0}^c.
\]
Here we see that $\overline{I}_{4,0}$ is generated by $\left\{ \sum_{i=1}^{g} T(i^+,i^-,a_1,a_2,a_3,a_4) \mid a_j \in \{1^\pm,\dots,g^\pm\} \right\}$ over $\Q/\Z$.
Hence, if $a_j,a_j^\ast$ ($j=1,2,3$) are distinct labels, then $\frac{1}{2}\delta(T(a_3,a_2,a_1,a_2,a_3)) \notin \overline{I}_{4}^c$.
It follows that, if we restrict the above map to the intersection of the kernels of Morita's refinement of the second Johnson homomorphism and the Casson invariant, it is still non-trivial.

On the other hand, $H_1(\K_g;\Q)$ is completely determined in \cite[Theorem~1.3]{MSS20} and described by Morita's refinement of the second Johnson homomorphism and the Casson invariant.
Therefore, $H_1(\K_g;\Z)$ must have torsion elements.
\end{proof}

\subsection{Proof of Theorem~\ref{thm:str-of-y3}}
\label{section:proof-of-thmY3}
Here, we prove Theorem~\ref{thm:str-of-y3} which determines the module structure of the graded quotient $Y_3\I\C/Y_4$.
We need the following lemma to prove Theorem~\ref{thm:str-of-y3}.
\begin{lemma}\label{lem:kernel of surgery map}
For $a,b,c\in\{1^{\pm},\ldots,g^{\pm}\}$,
\begin{enumerate}
\item
$\ss(O(a,a,b))=\ss(O(a,b,b))\in Y_3\I\C/Y_4$.
\item
$\ss(\Delta_{1,0}(T(a,b,c)))=0\in Y_3\I\C/Y_4$.
\end{enumerate}
\end{lemma}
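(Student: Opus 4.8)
The plan is to prove both statements by exhibiting the relevant diagrams as elements of $\Ker\ss$, using two inputs. First, $\ss\colon\A^c\to\Gr^Y\I\C$ is a graded Lie algebra homomorphism, as recalled at the start of Section~\ref{section:proof-mainthm} following \cite{Hab00C}. Second, Massuyeau and Meilhan identified $\Ker(\ss|_{\A_1^c})=\Im\Delta_{0,0}$ in \cite{MaMe03}, so that every \emph{slide relation} $s_{x,y}:=T(x,x,y)+T(x,y,y)$ (the value of $\Delta_{0,0}$ on a strut) lies in $\Ker\ss$. Combining these, for any $u\in\A^c$ the bracket $[u,s_{x,y}]$ again lies in $\Ker\ss$, since $\ss([u,s_{x,y}])=[\ss(u),\ss(s_{x,y})]=[\ss(u),0]=0$. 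Since $\ss\otimes\Q$ is an isomorphism (Section~\ref{sec:surgery}), $\Ker\ss$ is a torsion group, consistent with the fact that the diagrams in both statements are $2$-torsion: $O(a,a,b)$ and $O(a,b,b)$ are symmetric $1$-loop diagrams, hence $2$-torsion by Proposition~\ref{prop:oneloopbasis}, and $\Delta_{1,0}(T(a,b,c))\in\tor\A_{3,0}^c$ by Section~\ref{section:treepart}. In particular, because $2\,O(a,b,b)=0$, statement (1) is equivalent to $O(a,a,b)+O(a,b,b)\in\Ker\ss$, while statement (2) asks for $\Delta_{1,0}(T(a,b,c))\in\Ker\ss$.

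The main step is then to realize $O(a,a,b)+O(a,b,b)$ and $\Delta_{1,0}(T(a,b,c))$ as $\Z$-linear combinations of iterated $\star$-brackets, each summand of which contains a slide-relation factor $s_{x,y}$ and therefore lies in $\Ker\ss$. This is carried out in the style of Lemmas~\ref{lem:commutator} and \ref{lem:oneloopproduct}, where $1$-loop diagrams and long caterpillars were built up from $\A_1^c$ by bracketing; here I would instead start the bracketing from the degree-$1$ elements $s_{x,y}$. For (2), introducing an auxiliary label $p$, I would first bracket $s_{p,a}$ with a $Y$-tree $T(b,c,p^{*})$ to obtain a degree-$2$ element of $\Ker\ss$, and then bracket the result with a further $Y$-tree to double it and produce the symmetric caterpillar $\Delta_a(T(a,b,c))=T(c,b,a,b,c)$; summing over the three choices of doubled leaf gives $\Delta_{1,0}(T(a,b,c))$. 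For (1), the two closing gluings of a $\star$-bracket create the loop, yielding $O(a,a,b)+O(a,b,b)$.

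The hard part will be the bookkeeping. A $\star$-bracket of trees produces, besides the intended doubled caterpillar or loop, various \emph{tree} terms coming from the single gluings, and one must check that these either cancel in pairs or are themselves slide relations, so that the net combination equals the target diagram modulo $\Ker\ss$. The AS and IHX relations together with the $2$-torsion (which kills all signs) are what make this feasible, but the identity must be verified term by term. An alternative for (2) is the direct clasper-calculus route: $\Delta_{1,0}(T(a,b,c))$ is realized by a \emph{symmetric} graph clasper obtained by doubling, and one tries to cancel its two parallel sheets by Habiro's zip move to show it is $Y_4$-equivalent to the trivial cylinder. The delicate point there is that Conant–Schneiderman–Teichner's inclusion $\Im\Delta_{1,0}\subset\Ker(\Gr q\circ\ss)$ in \cite{CST16}, and the vanishing $\delta\circ\Delta_{1,0}=0$ from Proposition~\ref{prop:Delta} combined with Theorem~\ref{thm:main}, only give triviality in $\I\H$ or membership in $\Ker(\bar{z}_4\circ\ss)$ respectively; upgrading to honest $Y_4$-triviality in $\I\C$ is precisely what the geometric argument (or the bracket identity above) must supply, and is the crux of the lemma.
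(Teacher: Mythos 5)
Your framing is right---both parts amount to showing that the $2$-torsion elements $O(a,a,b)+O(a,b,b)$ and $\Delta_{1,0}(T(a,b,c))$ lie in $\Ker\ss$, and the crux is genuine $Y_4$-triviality in $\I\C$ rather than in $\I\H$---but the proof you propose is not carried out, and the strategy itself cannot succeed. First, your route to (2) is vacuous: the intermediate element $[T(b,c,p^{*}),\,s_{p,a}]$ (where $s_{x,y}=T(x,x,y)+T(x,y,y)$ is the slide relation) lies in $\A_2^c\cap\Ker\ss$, and since $\ss\colon\A_2^c\to Y_2\I\C/Y_3$ is an isomorphism by \cite[Corollary~5.1]{MaMe13}, that element is $0$; bracketing it further produces $0$, not $T(c,b,a,b,c)$. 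Second, and decisively for (1): the Lie ideal generated by $\Ker(\ss|_{\A_1^c})=\Im\Delta_{0,0}$ meets degree $3$ exactly in $[\A_2^c,\Im\Delta_{0,0}]$, and a direct computation of $[u,\,T(x,x,y)+T(x,y,y)]$ shows that every gluing involving one of the two equal-labelled legs of a slide term cancels against the gluing to the other such leg, because the two resulting diagrams differ by an AS move at that trivalent vertex. Hence only single gluings along the non-repeated leg survive; each surviving term is $u$ with one leg replaced by a trivalent vertex carrying two equal labels, and in particular the $1$-loop part of $[\A_2^c,\Im\Delta_{0,0}]$ vanishes (the surviving $1$-loop diagrams carry a rooted tree $[x,x]$, which dies under the identification of Proposition~\ref{prop:oneloop}). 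Since $O(a,a,b)+O(a,b,b)$ is a nonzero element of $\A_{3,1}^c$ by Proposition~\ref{prop:oneloopbasis}, part (1) is unreachable by bracketing with slide relations; a similar mismatch (the surviving trees all have a repeated-label end vertex, while the caterpillars $T(c,b,a,b,c)$ for distinct $a,b,c$ do not) obstructs (2).

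The paper's proof is instead genuinely geometric, and this is not optional. Part (1) manipulates the graph clasper realizing $O(a,a,b)$ using Move~(a) of \cite[Lemma~A.6]{Mei06}, Habiro's Moves~3, 5 and 11, the zip construction, and crossing changes from \cite[Lemma~E.5]{Oht02}, concluding that it is $Y_4$-equivalent to the clasper for $-O(a,a,b)$, while a $180^{\circ}$ rotation of the same picture yields $-O(a,b,b)$; hence $\ss(O(a,a,b))=\ss(O(a,b,b))$. Part (2) rests on the relations of Lemma~\ref{lem:relationclasper} among surgeries along claspers with a special $(-1)$-framed leaf, namely $\ss(T(a,\twist,b,c))\sim_{Y_4}\ss(T(c,b,a,b,c))\circ\ss(O(a,b,c))^{-1}$ and $\ss(T(a,\twist,b,c))\circ\ss(T(a,b,\twist,c))^{-1}\sim_{Y_4}\ss(T(a,c,b,c,a))$, combined with the generalized AS relation of Lemma~\ref{lem:asrelation}. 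These twisted claspers have no counterpart in the Lie algebra $\A^c$, which is exactly why the relation $\Delta_{1,0}(T(a,b,c))\in\Ker\ss$ is invisible to the bracket calculus you propose. To complete your write-up you would have to carry out this clasper calculus (or an equivalent geometric argument); the algebraic shortcut is not available.
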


\begin{proof}[Proof of Theorem~\ref{thm:str-of-y3}]
Recall that the surgery map $\ss\colon \A_3^c\to Y_3\I\C/Y_4$ is surjective
and that the composite map
$\Ztilde^Y_3\circ \ss\colon \A_3^c\to\A_3^c\otimes\Q$ coincides with $\iota$ up to sign.
Thus, we have an exact sequence
\[
\begin{CD}
0@>>>\Ker\ss@>>>\tor\A_3^c@>>>\tor(Y_3\I\C/Y_4)@>>>0.
\end{CD}
\]
Each $2$-loop Jacobi diagram of $\ideg=3$ has only one univalent vertex,
and we have $\A^c_{3,2}=0$ by \cite[Proof of Lemma~5.30]{CDM12} and the self-loop relation.
Thus, as we saw in Proposition~\ref{prop:oneloopbasis} and Theorem~\ref{thm:treepart},
the module $\tor\A_3^c$ is generated by $T(c,b,a,b,c)$ and $O(a,b,b)$ for $a,b,c\in\{1^{\pm},2^{\pm},\dots,g^{\pm}\}$,
and it is isomorphic to $(H\otimes L_2\oplus H^{\otimes 2})\otimes \Z/2\Z$.

The submodules of $\A_3^c$ generated by $\Delta_{1,0}(T(a,b,c))$ and $O(a,a,b)+O(a,b,b)$ are isomorphic to $\Lambda^3H\otimes \Z/2\Z$ and $\Lambda^2H\otimes \Z/2\Z$, respectively.
By Lemma~\ref{lem:kernel of surgery map}, we have
\[
|\Ker \ss|\ge |(\Lambda^3H\oplus \Lambda^2H)\otimes\Z/2\Z|.
\]
On the other hand,
by Proposition~\ref{prop:highersatolevine},
we have 
\[
\bar{z}_{4,0}(\ss(\tor A^c_{3,0}))=\bar{z}_{4,0}(\tor(Y_3\I\C/Y_4))=(\nu\circ\kappa\circ\Gr q)(\tor(Y_3\I\C/Y_4)),
\]
and it is isomorphic to $L_3\otimes\Z/2\Z$ since $\nu$ is injective and $\kappa$ is surjective.
The value $\bar{z}_4(\ss(O(a,a,b)))=\frac{1}{2}O(a,a,b,b)+\frac{1}{2}\Theta(a,b)\ne0\in \A_4^c\otimes\Q/\Z$ shows that the image $\bar{z}_{4,1}(\ss(\tor\A^c_{3,1}))$ has order greater than or equal to that of $S^2H\otimes\Z/2\Z$.
Thus, we obtain
\[
|\tor(Y_3\I\C/Y_4)|\ge |(L_3\oplus S^2H)\otimes\Z/2\Z|.
\]
Comparing the orders of $\Ker \ss$,
$\tor(Y_3\I\C/Y_4)$, and $\tor\A_3^c$,
we see that $\Ker \ss$ is isomorphic to $(\Lambda^3H\oplus \Lambda^2H)\otimes\Z/2\Z$
and that $\bar{z}_4$ gives an isomorphism
\[
\tor(Y_3\I\C/Y_4) \cong  (L_3\oplus S^2H)\otimes\Z/2\Z.
\]

Next, we consider the free part of $Y_3\I\C/Y_4$.
We have isomorphisms $\A_{n,0}^c/\tor A_{n,0}^c\cong D'_n/\tor D'_n\cong D_n$ when $n$ is odd,
where the first isomorphism follows from \cite[Theorem~1]{Lev02} or \cite[Theorem~1.4]{CST12L}, and the second one follows from Theorem~\ref{thm:treepart} since $D_n$ is torsion-free.
On the other hand, the homomorphism $\Lambda^3H\to \A_{3,1}^c/\tor\A_{3,1}^c$ defined by $a\wedge b\wedge c\mapsto O(a,b,c)$ is well-defined and surjective.
Moreover, it is an isomorphism because their ranks coincide as we see from Proposition~\ref{prop:oneloopbasis}.
Thus, we obtain isomorphisms of abelian groups
\[
(Y_3\I\C/Y_4)/\tor(Y_3\I\C/Y_4)
\cong A_3^c/\tor A_3^c
\cong D_3\oplus \Lambda^3H,
\]
where the first isomorphism is induced by $\ss \colon \A_3^c\to Y_3\I\C/Y_4$ as in \cite[Theorem~8.8]{CHM08}.
\end{proof}

\begin{remark}
\label{rem:z4inj}
It follows from the proof of Theorem~\ref{thm:str-of-y3} that $\bar{z}_4|_\mathrm{tor} \colon \tor(Y_3\I\C/Y_4) \to \A_4^c\otimes\Q/\Z$ is injective.
Indeed, the inequalities $|\tor(Y_3\I\C/Y_4)| \geq |\Im \bar{z}_4| \geq |(L_3\oplus S^2H)\otimes\Z/2\Z|$ are equalities.
\end{remark}

To show Lemma~\ref{lem:kernel of surgery map},
we review some moves on graph claspers given in~\cite{Hab00C}, \cite{Gou99}, \cite{Gus00}, and \cite{GGP01},
and graph claspers with special leaves investigated in \cite{Mei06}, \cite{MaMe13}, and \cite{CST16}.
A leaf of a graph clasper in a 3-manifold $M$ is said to be \emph{special} if it bounds a disk in $M$ and it is $(-1)$-framed with respect to the disk.
Graph claspers with special leaves are called \emph{twisted claspers} in \cite{CST16}.
As shown in \cite[Lemma~E.21]{Oht02} and \cite[Lemma~4.9]{GGP01},
the surgery along a connected twisted clasper with $n$ nodes in a 3-manifold $M$ does not change its $Y_{n+1}$-equivalence class when $n\ge2$.

In the following, we use additional constituents of claspers which are introduced in \cite[Section~2.2]{Hab00C}.
\begin{figure}[h]
 \centering
 \includegraphics{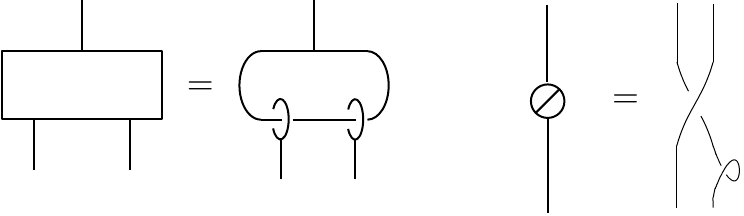}
\caption{A box and a twist of an edge.}
 \label{fig:box}
\end{figure}
The left-hand side in Figure~\ref{fig:box} of the first equality is called a \emph{box} which has three incident ends of edges.
The end of the edge in the upper side of the box is called the \emph{input end} and the other two ends in the lower side are called the \emph{output ends}.
The box denotes three leaves of claspers as in the right-hand side.
The left-hand side of the second equality denotes a positive half twist of an edge,
which is equal to the segment of the framed link depicted in the right-hand side.

We have several equivalences between graph claspers with special leaves which are analogous to those listed in \cite[Appendix E]{Oht02} and \cite[Appendix A]{MaMe13}.

\begin{lemma}\label{lem:crossingchange}
Let $G$ be a connected twisted clasper with $n$ nodes and no boxes in a 3-manifold $M$,
and let $K$ be a framed knot in $M$ disjoint from $G$.
Choose a band $b$ which connects an edge of $G$ and $K$ in $M$,
and denote by $G'$ a twisted clasper obtained from $G$ by taking a connecting sum of the edge with $K$ along $b$.
Then, we have
\[
M_{G}\sim_{Y_{n+2}} M_{G'}.
\]
\end{lemma}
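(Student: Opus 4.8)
The plan is to transform $G'$ back into $G$ by unknotting and untwisting the connect-summed copy of $K$ inside the edge through a finite sequence of crossing changes, and to show that each crossing change changes the surgered manifold only by surgery on a connected twisted clasper with at least $n+1$ nodes. Since the surgery along such a clasper preserves the $Y_{n+2}$-equivalence class by \cite[Lemma~E.21]{Oht02} and \cite[Lemma~4.9]{GGP01}, this will give $M_{G}\sim_{Y_{n+2}}M_{G'}$.

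First I would localize the modification. Because $G$ and $G'$ agree outside a regular neighborhood $N$ of the edge $e$, the band $b$, and $K$, it suffices to work inside $N$, where $e$ is replaced by its band-connected sum $e\#K$. By general position the knot type and framing of the $K$-summand can be reduced to those of a trivial, untwisted band by finitely many self-crossing changes of the edge together with the removal of $\pm1$ full twists; once the inserted band is trivial it can be isotoped off rel the rest of $G$, recovering $G$.

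The key step is to realize each of these local modifications as clasper surgery. A self-crossing change of the edge is effected by surgery on a small clasper whose leaves are meridional loops of the two strands of $e$ meeting at the crossing, and a full twist on the edge is effected by a twisted clasper; grafting the relevant node onto a new node inserted on $e$ fuses this gadget into $G$ and yields a connected clasper $\widehat{G}$ with at least $n+1$ nodes. Since the new leaves are meridians of $e$---unknots bounding disks transverse to $e$---a $\pm1$ framing correction absorbed into a twist of the new edge makes them special leaves, so that $\widehat{G}$ is a twisted clasper. Applying the $Y_{n+2}$-triviality of connected twisted claspers with $n+1$ nodes to each modification in turn then yields the claim.

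The hard part will be the degree bookkeeping of the last step: verifying that the clasper realizing a crossing change genuinely acquires (at least) one new node and a special leaf---so that one gains a full unit of degree and lands in $Y_{n+2}$ rather than merely $Y_{n+1}$---and checking that the framing of $K$ contributes only through the twist/special-leaf mechanism, leaving no surviving lower-degree term. This will require the precise local moves of \cite[Section~2.2]{Hab00C} and \cite{GGP01} relating crossing changes and edge twists to twisted-clasper surgery, applied with care to the hypothesis $K\cap G=\emptyset$, which is exactly what guarantees that the new leaf is a meridian and hence special.
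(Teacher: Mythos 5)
Your strategy is the same as the paper's: the paper disposes of this lemma in one line by citing \cite[Lemma~A.1]{MaMe13} (which packages exactly your ``crossing change on an edge $=$ surgery on a difference clasper with one extra node'' step) together with the fact that a connected twisted clasper with $n+1\ge 2$ nodes gives $Y_{n+2}$-trivial surgery; you are essentially re-deriving the cited lemma before applying the same triviality statement.

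One justification in your writeup is wrong, although the conclusion it supports is still true. You claim that the new leaves of the difference clasper --- meridians of the strands of $e$ at the crossing --- can be made \emph{special} by a framing correction. They cannot: a special leaf must bound a disk whose interior is disjoint from the rest of the clasper (this disjointness is what makes the triviality lemmas of \cite[Lemma~E.21]{Oht02} and \cite[Lemma~4.9]{GGP01} applicable), whereas a meridian of $e$ only bounds a disk that $e$ punctures, and no amount of framing adjustment changes that. The correct, and simpler, reason that each difference clasper $\widehat{G}$ is twisted is that it contains the original special leaf of $G$ unchanged: $G$ is assumed to be a twisted clasper, the crossing-change gadget is grafted onto the edge $e$ away from that leaf, so $\widehat{G}$ is a connected twisted clasper with at least $n+1$ nodes and the $Y_{n+2}$-triviality applies. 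With that substitution your argument goes through; the remaining bookkeeping about the framing of $K$ only produces half-twists of the edge, which are likewise absorbed at degree $n+1$.
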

\begin{proof}
This immediately follows from \cite[Lemma~A.1]{MaMe13}
since the surgery along a twisted clasper with $n+1$ nodes in a 3-manifold $M$ does not change its $Y_{n+2}$-equivalence class.
The case when $n=1$ is proved in \cite[Lemma~A.10]{MaMe13}.
\end{proof}

\begin{lemma}\label{lem:leafchange}
Let $G$ be a connected twisted clasper with $n$ nodes and no boxes in $M$ whose two leaves are locally described as in the left-hand side in the Figure~\ref{fig:leafchange},
where the thick lines imply the same bundle which may contain edges and leaves of claspers.
\begin{figure}[h]
 \centering
 \includegraphics{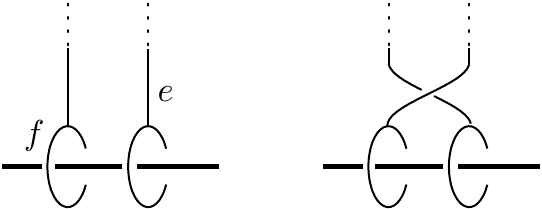}
 \caption{Swapping the positions of two leaves.}
 \label{fig:leafchange}
\end{figure}
Let $G'$ denote another connected twisted clasper obtained from $G$ by swapping the positions of the two leaves as in the right-hand side.
Then, we have
\[
G\sim_{Y_{n+2}}G'.
\]
\end{lemma}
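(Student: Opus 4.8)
The plan is to realize the interchange of the two leaves by an explicit ambient isotopy of $M$ together with finitely many elementary crossing changes, and then to show that every such crossing change leaves the $Y_{n+2}$-equivalence class of the surgered manifold unchanged. Concretely, I would first isotope $G$ so that passing from the left-hand configuration of Figure~\ref{fig:leafchange} to the right-hand one is accomplished by dragging one of the two leaves across the bundle of thick lines and past the second leaf, while keeping the remainder of $G$ fixed. Away from the region where the moving leaf sweeps across the bundle the two pictures are ambient isotopic, so it suffices to control each crossing created during the drag.

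The key step is to analyze the two types of elementary crossing separately. When the moving leaf passes a strand of the bundle that is an edge or a leaf of $G$, I would apply the same zip/crossing-change mechanism used in the proof of Lemma~\ref{lem:crossingchange} (see \cite[Appendix~A]{MaMe13} and \cite[Appendix~E]{Oht02}): the crossing change is undone at the cost of surgery on a \emph{connected} twisted clasper obtained by grafting a new node onto $G$ along the band realizing the crossing, hence a clasper with $n+1$ nodes. When the two leaves of $G$ cross one another, the analogous leaf-passing move again produces a connected twisted clasper with $n+1$ nodes. In both cases connectivity of the correction clasper is inherited from $G$ because it is built by attaching the new constituent directly to $G$.

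Since each correction is surgery on a connected twisted clasper with $n+1\ge 2$ nodes, the fact recalled just before Lemma~\ref{lem:crossingchange} (\cite[Lemma~E.21]{Oht02}, \cite[Lemma~4.9]{GGP01}) shows that it preserves the $Y_{n+2}$-equivalence class. Composing the isotopy with these invisible corrections then yields $M_G\sim_{Y_{n+2}}M_{G'}$, that is, $G\sim_{Y_{n+2}}G'$, as claimed.

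The main obstacle is the bookkeeping of the corrections: I must verify that every elementary crossing change introduces only a \emph{connected} clasper with at least $n+1$ nodes and no lower-degree terms, and that the framings of the special leaves and any half-twists on edges (in the sense of Figure~\ref{fig:box}) are tracked so that all corrections remain within the twisted-clasper framework without producing a spurious box that would lower the node count. The delicate point is checking that the pair of oppositely signed crossings created when a leaf is dragged across the bundle and back does not itself contribute below degree $n+2$; this is precisely where the hypotheses that $G$ is connected and free of boxes are used.
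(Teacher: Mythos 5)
Your proposal is correct and follows essentially the same route as the paper: the paper reduces the leaf swap to a single crossing change between the edge $e$ of one leaf and the other leaf $f$, realized via Lemma~\ref{lem:crossingchange} with $K$ a meridian of $f$, so that the correction is a connected twisted clasper with $n+1\ge 2$ nodes and hence invisible up to $Y_{n+2}$-equivalence. The only difference is that you also budget for crossings with the strands of the bundle, which the isotopy can be chosen to avoid entirely; this costs nothing since those crossings would be handled by the same mechanism.
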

\begin{proof}
This follows from Lemma~\ref{lem:crossingchange} by taking a meridian of the leaf $f$ as $K$ and a band which connects $e$ and $K$.
In other words, the proof is given by applying a crossing change to the edge $e$ and the leaf $f$ of $G$.
See, for example, figures in the proof of \cite[Lemma~E.6]{Oht02}.
\end{proof}

\begin{lemma}\label{lem:asrelation}
Let $G$ be a connected twisted clasper with $n$ nodes and no boxes in $\Sigma_{g,1}\times [-1,1]$,
and let $G'$ denote another connected twisted clasper obtained from $G$ by inserting a positive half twist in an edge $e$.
Then, we have an equivalence
\[
(\Sigma_{g,1}\times [-1,1])_G\sim  (\Sigma_{g,1}\times [-1,1])_{G'}
\]
if $e$ is incident to special leaves.
When $n\ge 2$,
we also have a $Y_{n+2}$-equivalence
\[
(\Sigma_{g,1}\times [-1,1])_G\circ (\Sigma_{g,1}\times [-1,1])_{G'}\sim_{Y_{n+2}} \Sigma_{g,1}\times [-1,1]
\]
if $e$ is not incident to special leaves.
\end{lemma}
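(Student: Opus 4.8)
The plan is to prove the two assertions separately, in each case reducing to the clasper moves of Lemmas~\ref{lem:crossingchange} and~\ref{lem:leafchange} together with the basic clasper identities of \cite{Hab00C,Oht02,GGP01,MaMe13}.

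For the first assertion I would argue by an ambient isotopy of $\Sigma_{g,1}\times[-1,1]$ rel boundary carrying the framed link $L(G)$ to $L(G')$. The crucial point is that a special leaf bounds a disk $D$ and is $(-1)$-framed. Rotating this leaf by a half-turn about a diameter of $D$ is realized by an ambient isotopy supported in a neighborhood of $D$ and of the part of $e$ near the leaf; tapering the isotopy to the identity before it reaches the node, its only net effect is to insert a positive half-twist in $e$. The $(-1)$-framing is precisely what makes this rotation compatible with the framing annulus of the leaf, so the isotopy carries $G$ to $G'$ and the two surgeries produce equivalent homology cylinders. This is the local analogue of the leaf-framing identities in \cite[Appendix~E]{Oht02} and \cite[Appendix~A]{MaMe13}, and being an honest isotopy it requires neither a $Y$-equivalence nor any hypothesis on $n$; it is exactly the disk-bounding, $(-1)$-framed structure that lets the half-twist be absorbed.

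For the second assertion I would realize the composite $(\Sigma_{g,1}\times[-1,1])_G\circ(\Sigma_{g,1}\times[-1,1])_{G'}$ as surgery on the disjoint union $G\sqcup G'$, with $G$ in the lower half and $G'$ in the upper half of $\Sigma_{g,1}\times[-1,1]$. I would then bring the twisted edge of $G'$ alongside the corresponding edge $e$ of $G$ and fuse the two claspers along a band. Because $e$ is not incident to a special leaf, the half-twist genuinely reverses the orientation of the fused edge, so the two halves are glued with opposite signs. Applying Lemma~\ref{lem:leafchange} to reorganize the leaves and Lemma~\ref{lem:crossingchange} to simplify the fusion bands---each step altering the result only up to a $Y_{n+2}$-equivalence, since it is produced by surgery on a connected twisted clasper with enough nodes for the cited $Y_{n+2}$-triviality to apply (this is where the hypothesis $n\ge2$ enters)---the fused clasper collapses, and the opposite-sign gluing makes the two halves cancel, leaving $\Sigma_{g,1}\times[-1,1]$ up to $Y_{n+2}$.

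The main obstacle will be the refinement from $Y_{n+1}$ to $Y_{n+2}$. That inserting a half-twist inverts $G$ modulo $Y_{n+1}$ is merely the clasper realization of the antisymmetry relation and is routine; the real content is that the cancellation persists modulo $Y_{n+2}$. Controlling this requires tracking, through the fusion and through every application of Lemmas~\ref{lem:crossingchange} and~\ref{lem:leafchange}, each framing and sign produced by the half-twist, and verifying that every correction term is carried by a twisted clasper falling in the range where the stated triviality applies, so that it vanishes in $Y_{n+1}\I\C/Y_{n+2}$. Distinguishing the two regimes---absorption of the half-twist for special leaves in the first assertion versus genuine orientation reversal for ordinary edges in the second---and keeping this sign bookkeeping consistent throughout the reduction is the delicate part of the argument.
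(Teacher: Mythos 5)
Your first assertion and its proof match the paper exactly: the half twist is absorbed into the special leaf by an isotopy supported near its bounding disk, so no $Y$-equivalence and no hypothesis on $n$ is needed there.

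For the second assertion there is a genuine gap. Your plan is to start from $G\sqcup G'$ in disjoint layers, ``fuse the two claspers along a band,'' and then let ``the fused clasper collapse'' so that ``the opposite-sign gluing makes the two halves cancel.'' Band-summing two clasper edges is not a recognized clasper operation with a controlled effect on the surgered manifold, and Lemmas~\ref{lem:crossingchange} and~\ref{lem:leafchange} only \emph{modify} a clasper up to $Y_{n+2}$ --- they cannot make a nontrivial clasper trivial, so they supply no cancellation mechanism. The actual engine of the proof runs in the opposite direction: one starts from a \emph{single} clasper containing a box whose two output edges differ by a positive half twist, whose surgery is homeomorphic to $\Sigma_{g,1}\times[-1,1]$ by \cite[Move~4]{Hab00C}; one then applies the zip construction (together with Move~(b) of \cite[Lemma~A.6]{Mei06}, \cite[Moves~3 and 6]{Hab00C}, and crossing changes via \cite[Lemma~E.5]{Oht02} and Lemma~\ref{lem:crossingchange}) to split this trivial clasper into $G$ and $G'$ sitting in disjoint horizontal layers, plus correction claspers. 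The hypothesis $n\ge 2$ is used precisely to guarantee that every correction clasper produced by the zip construction has at least $n+2$ nodes (with extra care when $e$ and a special leaf share a node), hence is negligible modulo $Y_{n+2}$. Without the box/zip machinery and the appeal to Move~4, your sketch identifies where the difficulty lies but does not contain the idea that resolves it.
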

\begin{proof}
If $e$ is incident to special leaves,
inserting a half twist to $e$ does not change the isotopy class of the twisted clasper $G$.

Next, assume that $e$ is not incident to special leaves.
In this case, the proof is almost the same as \cite[Lemma~A.15]{MaMe13}.
In the following, we consider only the case when the number of special leaves is $1$, and $e$ and the special leaf are not incident to the same node.
The other cases are proved in a similar way.
For example, consider the case when $e$ is an edge in a twisted clasper
\begin{center}
  \includegraphics{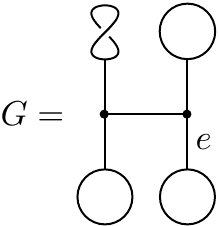}
\end{center}
with $2$ nodes.
We have the following equivalences:
\begin{center}
  \includegraphics{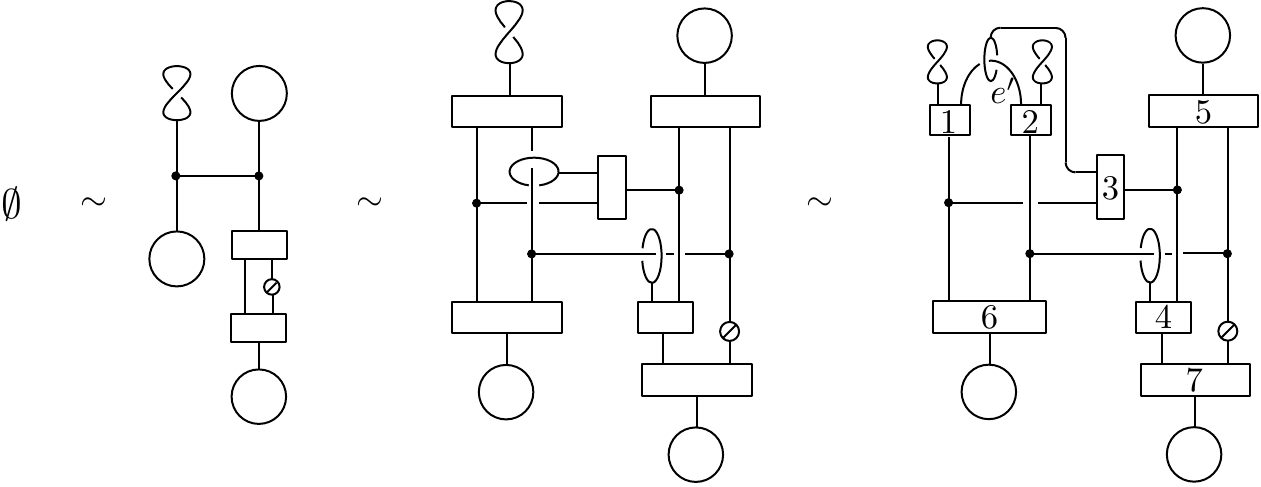}\ \raisebox{1.7cm}{.}
\end{center}
Here, the first equivalence means that the $3$-manifold obtained by the surgery along the clasper in the right-hand side is homeomorphic to $\Sigma_{g,1}\times[-1,1]$,
and comes from \cite[Move~4]{Hab00C}.
The second equivalence is given by applying the zip construction in \cite{Hab00C} with the marking being the right output of the upper box (in the general case, we apply the zip construction to the both ends of $e$).
The third equivalence is Move~(b) in \cite[Lemma~A.6]{Mei06},
and we numbered the boxes in the resulting clasper.
We denote by $e'$ the edge connecting the left output of Box~1 and the right output of Box~2.
After applying the zip construction with the marking being the both ends of $e'$,
the edge $e'$ becomes a connected graph clasper with $3$~nodes (in the general case, this graph clasper has $n+1$ nodes).
By a crossing change using \cite[Lemma~E.5]{Oht02},
we can make the leaf attached to the upper output of Box~3 bound a disk under the $Y_4$-equivalence.
Apply the inverse of the zip construction stated above,
and erase Box~3 by \cite[Move~3]{Hab00C}.
Then, we obtain the left clasper below.
\begin{center}
  \includegraphics{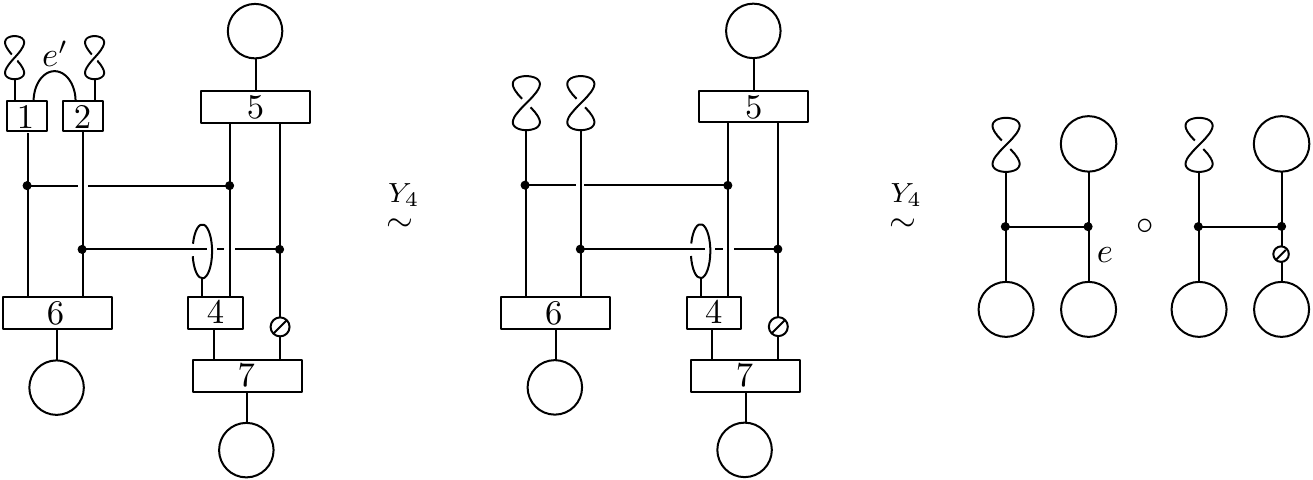}\ \raisebox{1.7cm}{.}
\end{center}
Applying the zip construction with the same marking again,
the edge $e'$ becomes a graph clasper with 4~nodes (in the general case, this clasper has $n+2$~nodes when $n\ge2$),
and we can erase it under the $Y_4$-equivalence.
Here, we use the assumption that the special leaf and $e$ are not incident to the same node.
When they are incident to the same node,
the sum of the number of nodes in two graph claspers which attached to the inputs of Boxes~1 and 2 in the left figure is $n+1$.
Even in this case,
when $n\ge2$,
we can erase several boxes in the same way as Box~$3$ under the $Y_{n+2}$-equivalence before applying the zip construction with the marking being the ends of $e'$, and increase the sum of the number of nodes up to $n+2$ or more.
Moreover, when the number of special leaves is not $1$,
we apply the same moves one by one in neighborhoods of special leaves,
and we can erase each edge which corresponds to $e'$ arising from Move~(b) in \cite[Lemma~A.6]{Mei06}.

After erasing boxes arising from the zip construction by \cite[Move~3]{Hab00C},
the rest is the clasper in the middle.
By a crossing change using Lemma~\ref{lem:crossingchange},
we can make the leaf attached to the left output of Box~4 bound a disk,
and can erase Box~4 by \cite[Move~3]{Hab00C}.
Applying \cite[Move~6]{Hab00C} to Boxes~5, 6, and 7 we can change each box into two leaves,
and we obtain two disjoint graph claspers.
By Lemma~\ref{lem:crossingchange}, we can move these claspers into disjoint horizontal layers $\Sigma_{g,1}\times[-1,0)$ and $\Sigma_{g,1}\times (0,1]$,
and we obtain $(\Sigma_{g,1}\times [-1,1])_G\circ (\Sigma_{g,1}\times [-1,1])_{G'}\sim_{Y_{n+2}} \Sigma_{g,1}\times [-1,1]$.
\end{proof}

\begin{proof}[Proof of Lemma~\ref{lem:kernel of surgery map}~(1)]
We have the following equivalences:
\begin{center}
  \includegraphics{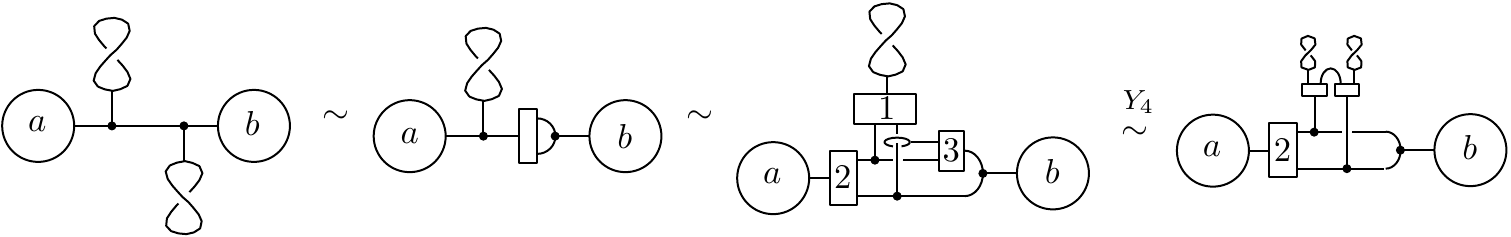}\ \raisebox{0.8cm}{.}
\end{center}
The first equivalence is Move~(a) in \cite[Lemma~A.6]{Mei06}
which derives from \cite[Theorem~3.1]{GGP01},
and the second equivalence is \cite[Move~11]{Hab00C}.
The third equivalence is obtained by the same moves as in Lemma~\ref{lem:asrelation}.
More precisely, the moves in the third equivalence are as follows. 
First, apply Move~(b) in \cite[Lemma~A.6]{Mei06} which changes Box~1 into the two boxes, and apply the zip construction with the marking being the both ends of the edge connecting them.
Next, by a crossing change using \cite[Lemma~E.5]{Oht02}, we can make the leaf in the upper output of Box~3 bound a disk.
Lastly, apply the inverse of the zip construction stated above,
and erase Box~3 by \cite[Move~3]{Hab00C}.

There are two unnumbered boxes in the right-hand side.
Apply the zip construction with the marking being the output of one of the two boxes attached to a special leaf.
Since the twisted clasper which arises from these moves has 3~nodes,
we can erase it under the $Y_4$-equivalence by \cite[Lemma~E.21]{Oht02} or \cite[Lemma~4.9]{GGP01}.
Applying the same moves to the other unnumbered box,
we can also erase the twisted clasper arising from the other special leaf.
The rest is the graph clasper with 3 ~nodes
which is represented by the Jacobi diagram $-O(a,a,b)$.

On the other hand, if we rotate the graph clasper 180 degrees in the left-hand side,
we also see that it is also $Y_4$-equivalent to $-O(a,b,b)$.
Thus, we obtain $\ss(O(a,a,b))\sim_{Y_4} \ss(O(a,b,b))$. 
\end{proof}

As in the notation in Section~\ref{section:Intro},
we denote by $T(a_1,a_2,\ldots, a_{n+2})$ the tree Jacobi diagram with colors $a_i\in \{1^{\pm},2^{\pm},\ldots,g^{\pm},\twist\}$.
Let us consider the case when at least one of the labels of the univalent vertices is $\twist$.
By assigning a special leaf to each univalent vertices with the label $\twist$,
we obtain a twisted clasper in $\Sigma_{g,1}\times[-1,1]$ in the same way as in Section~\ref{sec:surgery}.
By Lemmas~\ref{lem:crossingchange}, \ref{lem:leafchange}, and \ref{lem:asrelation},
the $Y_{n+2}$-equivalence class of a manifold obtained by the surgery along the twisted clasper does not depend on the choice of a representing surface.

By slightly extending the definition of $\ss$,
we denote by $\ss(T(a_1,a_2,a_3,a_4))\in Y_3\I\C/Y_4$ the $Y_4$-equivalence class of the manifold obtained by surgery along a twisted clasper that corresponds to $T(a_1,a_2,a_3,a_4)$, where $a_i\in \{1^{\pm},2^{\pm},\ldots,g^{\pm},\twist\}$.
Note that the $Y_4$-equivalence class $\ss(T(a_1,a_2,a_3,a_4))\in Y_3\I\C/Y_4$ is invariant under the generalized $\AS$ relation which derives from Lemma~\ref{lem:asrelation}, but it may change under the $\IHX$ relation of the Jacobi diagrams.

\begin{lemma}\label{lem:relationclasper}
For $a,b,c\in\{1^{\pm},\ldots,g^{\pm}\}$, we have the following.
\begin{enumerate}
\item $\ss(T(a,\twist,b,c))\sim_{Y_4}\ss(T(c,b,a,b,c))\circ \ss(O(a,b,c))^{-1}$.
\item $\ss(T(a,\twist,b,c))\circ \ss(T(a,b,\twist,c))^{-1}\sim_{Y_4}\ss(T(a,c,b,c,a))$.
\end{enumerate}
\end{lemma}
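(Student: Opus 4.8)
The plan is to resolve the special leaf. A $2$-node twisted clasper realizing $T(a_1,a_2,a_3,a_4)$ with one leaf colored $\twist$ has surgery that preserves the $Y_3$-equivalence class (surgery on a connected twisted clasper with two nodes does not change the $Y_3$-class), so each side of both identities represents a class in $Y_3\I\C/Y_4$; I will write this group additively, identifying $\circ$ with $+$ and $(\cdot)^{-1}$ with $-$. The engine is a \emph{doubling move}: pushing a parallel copy of the subclasper attached to the node carrying the special leaf through that leaf converts the $2$-node twisted clasper into a combination of ordinary ($3$-node) claspers, well defined modulo $Y_4$. Concretely I would establish this by the same box-and-zip clasper calculus used in Lemmas~\ref{lem:asrelation} and \ref{lem:kernel of surgery map}: replace the special leaf by a box via \cite[Move~6]{Hab00C} and \cite[Lemma~A.6]{Mei06}, apply the zip construction along the edge incident to that leaf, and use the crossing-change and leaf-swap moves of Lemmas~\ref{lem:crossingchange} and \ref{lem:leafchange} to separate the resulting pieces, discarding every connected twisted clasper with at least two nodes by \cite[Lemma~E.21]{Oht02}.

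For part (1) I would apply the doubling move at the node $v_1$ carrying $\twist$. The two remaining edges at $v_1$ run to the leaf $a$ and to the subtree with leaves $b,c$ rooted at the second node; doubling reflects this configuration across $v_1$, so the leading $3$-node term is the symmetric tree with $a$ at the axis and a copy of the $(b,c)$-subtree in each wing, namely $T(c,b,a,b,c)$. The reconnection of the two parallel copies produced by the zip contributes a single $1$-loop clasper carrying the three colors $a,b,c$, and tracking its orientation gives the summand $-O(a,b,c)$, which is $\ss(O(a,b,c))^{-1}$; this yields $\ss(T(a,\twist,b,c)) = \ss(T(c,b,a,b,c)) - \ss(O(a,b,c))$.

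For part (2) I would exploit that the clasper $\IHX$ move is supported in a ball around the internal edge joining the two nodes and is insensitive to the colors of the leaves, hence remains valid modulo $Y_4$ when one leaf is special. Applying it to the internal edge of the two-node claspers $T(a,\twist,b,c)$ and $T(a,b,\twist,c)$, whose four outer legs are $a,\twist,b,c$, gives $\ss(T(a,\twist,b,c)) - \ss(T(a,b,\twist,c)) = -\ss(T(a,c,b,\twist))$. I then resolve the single remaining twisted clasper $T(a,c,b,\twist)$ by the doubling move of part (1); since the symmetric tree part is $2$-torsion and the $1$-loop corrections are the same ones already isolated, the outcome collapses to $\ss(T(a,c,b,c,a))$. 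An equivalent route is to resolve both $T(a,\twist,b,c)$ and $T(a,b,\twist,c)$ directly and compare them using the generalized $\AS$ relation of Lemma~\ref{lem:asrelation}.

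The main obstacle is the clasper calculus behind the doubling move and, above all, the correct identification of the $1$-loop correction: one must control the framing of the $(-1)$-framed special leaf under the zip construction and determine precisely which reconnection of the two parallel copies survives modulo $Y_4$ and with which orientation, since an error there alters the sign or the colors of the $O$-term. In part (2) the delicate point is checking that the loop corrections arising at the two nodes cancel, or are absorbed by the $2$-torsion of the symmetric trees, so that only $T(a,c,b,c,a)$ remains; keeping this bookkeeping consistent through the $\IHX$ reduction is where the argument must be carried out with care.
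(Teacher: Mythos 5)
Your part (1) follows essentially the paper's route: the ``doubling move'' you describe is exactly the resolution of the special leaf by Move~(a) of \cite[Lemma~A.6]{Mei06}, \cite[Move~11]{Hab00C}, and the zip construction, which produces the $3$-node tree $T(c,b,a,b,c)$ together with a residual clasper identified with $-O(a,b,c)$ via \cite[Lemma~E.13]{Oht02} and the AS relation. You leave the orientation bookkeeping for the $O$-term undone, but you flag it, and the strategy is the right one.

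Part (2) has a genuine gap: the claim that the clasper $\IHX$ move ``remains valid modulo $Y_4$ when one leaf is special'' is false, and the paper warns about exactly this just before the lemma ($\ss(T(a_1,a_2,a_3,a_4))$ with a $\twist$-colored vertex ``may change under the $\IHX$ relation''). The $\IHX$ move for a clasper with $n$ nodes is only controlled modulo $Y_{n+1}$; here $n=2$, and decorating a leaf with $\twist$ does not increase the number of nodes, so the error terms of the local $\IHX$ manipulation are merely $Y_3$-trivial and can survive in $Y_3\I\C/Y_4$. Concretely, your identity cannot hold: evaluating everything with part~(1) and the generalized AS relation of Lemma~\ref{lem:asrelation}, the two loop corrections on the left-hand side cancel (since $O(a,b,c)+O(c,b,a)=0$), while $-\ss(T(a,c,b,\twist))$ contributes a single unmatched $\pm\ss(O(a,b,c))$; consistency would force $\ss(O(a,b,c))=0$ in $Y_3\I\C/Y_4$, contradicting the $\Lambda^3H$ free summand of Theorem~\ref{thm:str-of-y3}. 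Your fallback --- resolve both twisted claspers by part (1) and compare via generalized AS --- is circular: it reduces (2) to the relation $\Delta_{1,0}(T(a,b,c))\in\Ker\ss$, i.e.\ Lemma~\ref{lem:kernel of surgery map}~(2), which the paper deduces \emph{from} (2). The paper's actual argument avoids $\IHX$ on twisted claspers altogether: it decomposes a single auxiliary clasper in two different ways (before and after sliding a leaf through the special leaf), compares the two decompositions to get $\ss(T(a,\twist,b,c))\circ\ss(T(a,b,\twist,c))^{-1}\sim_{Y_4}\ss(T(c,a,a,b,c))\circ\ss(T(a,c,c,b,a))$, and only then applies AS and $\IHX$ to the resulting \emph{untwisted} $3$-node trees.
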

\begin{proof}
\begin{enumerate}
\item
We have the following equivalences.
\begin{center}
  \includegraphics{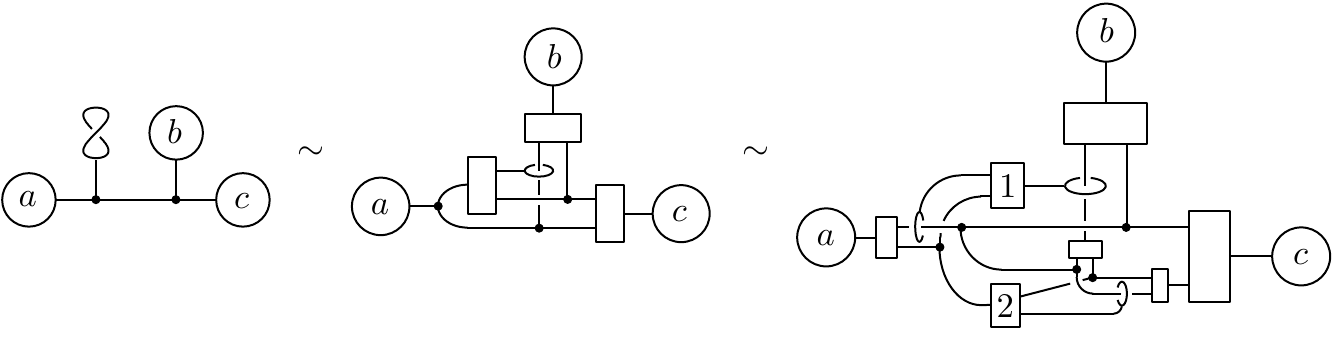}\ \raisebox{0.6cm}{.}
\end{center}
Here, the first equivalence is Move~(a) in \cite[Lemma~A.6]{Mei06} and \cite[Move~11]{Hab00C},
and the second one is given by applying the zip construction with the marking being the lower output in the left box.
These moves produce a tree clasper with 3 nodes whose leaves are outputs of the unnumbered 5 boxes as depicted in the right-hand side.
We can erase the 5 unnumbered boxes by \cite[Move~5]{Hab00C}.
By a sequence of crossing changes using \cite[Lemmas~E.5 and E.6]{Oht02},
we can move the tree clasper with 3 nodes to a horizontal layer $\Sigma_{g,1}\times[-1,-1+\epsilon]$ for small $\epsilon>0$ which is disjoint from the rest.
The 3-manifold obtained by applying the surgery to the horizontal layer along the tree clasper is the same as $\ss(T(c,b,a,b,c))$.
After moving the tree clasper and erasing Boxes~1 and 2 by \cite[Move~3]{Hab00C},
the rest is a clasper whose one leaf bounds a disk which intersects only one of its own edge.
By \cite[Lemma~E.13]{Oht02} and the AS relation,
it is $Y_4$-equivalent to the graph clasper represented by $-O(a,b,c)$.

\item
Applying \cite[Move~12]{Hab00C} and next \cite[Move~11]{Hab00C} twice, we have equivalences
\begin{center}
  \includegraphics{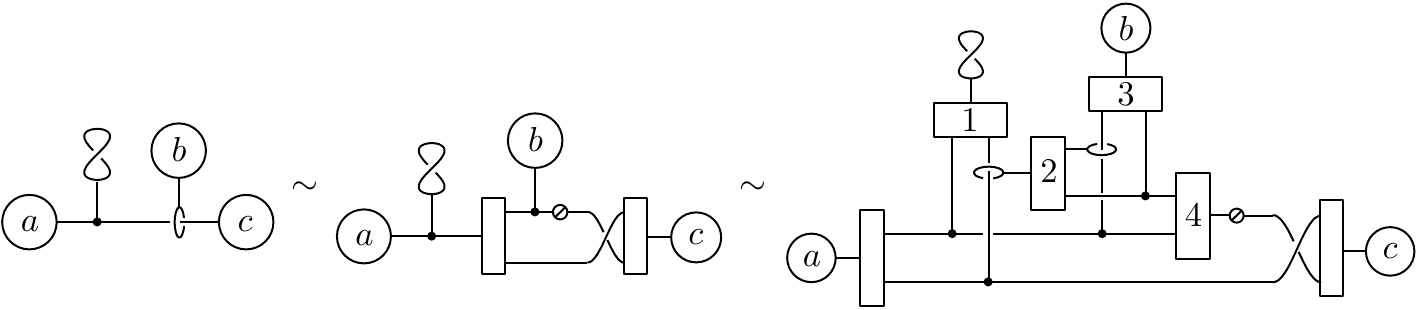}\ \raisebox{0.3cm}{.}
\end{center}
Applying the same procedure as the third equivalence in the proof of Lemma~\ref{lem:kernel of surgery map}~(1),
we can remove the leaf attached to the input of Box~2 from the clasper by \cite[Lemma~E.5]{Oht02}, and erase Box~2 and all edges attached to it.
We can also erase Boxes~3 and 4 by \cite[Move~3]{Hab00C}.
Applying \cite[Move~5]{Hab00C} to the unnumbered boxes,
the rest consists of the graph clasper represented by $T(c,a,a,b,c)$,
the twisted clasper represented by $-T(a,\twist,b,c)$,
and a twisted tree clasper with one node and a special leaf which we denote by $G$.
By \cite[Lemmas~E.5 and E.6]{Oht02} and Lemmas~\ref{lem:crossingchange} and \ref{lem:leafchange},
the three claspers can be isotoped into disjoint horizontal layers $\Sigma_{g,1}\times [-1, -1/3]$, $\Sigma_{g,1}\times (-1/3, 1/3]$, and $\Sigma_{g,1}\times (1/3, 1]$ under the $Y_4$-equivalence.
As a conclusion, we see that the graph clasper is $Y_4$-equivalent to
\[
\ss(T(c,a,a,b,c))\circ \ss(T(a,\twist,b,c))^{-1}\circ (\Sigma_{g,1}\times[-1,1])_G.
\]
Here, note that the $Y_4$-equivalence class of $(\Sigma_{g,1}\times[-1,1])_G$ depends on the isotopy class of the twisted clasper $G$ in $\Sigma_{g,1}\times [-1,1]$.

On the other hand, by moving a leaf through the special leaf,
we have an isotopy
\begin{center}
  \includegraphics{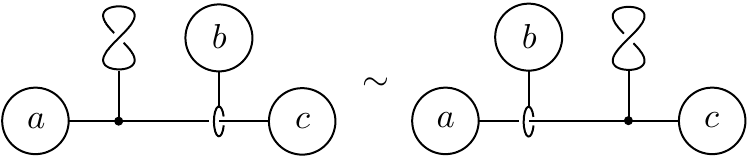}\ \raisebox{0cm}{.}
\end{center}
We see that the graph clasper in the right-hand side above is $Y_4$-equivalent to
\[
\ss(T(a,c,c,b,a))^{-1}\circ \ss(T(a,b,\twist,c))^{-1}\circ (\Sigma_{g,1}\times[-1,1])_G
\]
in the same way.
Comparing the left- and right-hand sides, we obtain
\[
\ss(T(a,\twist,b,c))\circ \ss(T(a,b,\twist,c))^{-1}\sim_{Y_4}
\ss(T(c,a,a,b,c))\circ \ss(T(a,c,c,b,a)).
\]
The AS and IHX relations imply
\[
\ss(T(c,a,a,b,c))\circ \ss(T(a,c,c,b,a))\sim_{Y_4}\ss(T(a,c,b,c,a)),
\]
and we obtain what we want.
\end{enumerate}
\end{proof}

\begin{proof}[Proof of Lemma~\ref{lem:kernel of surgery map}~(2)]
Applying Lemma~\ref{lem:asrelation} twice,
we see that 
\[
\ss(T(a,\twist,b,c))\sim_{Y_4} \ss(T(c,b,\twist,a))^{-1}.
\]
By Lemma~\ref{lem:relationclasper}~(2) and (1),
we obtain
\begin{align*}
&\ss(T(a,c,b,c,a))\\
&\sim_{Y_4} \ss(T(a,\twist,b,c))\circ \ss(T(c,\twist,b,a))\\
&\sim_{Y_4} \ss(T(c,b,a,b,c))\circ \ss(O(a,b,c))^{-1}\circ \ss(T(a,b,c,b,a))\circ \ss(O(c,b,a))^{-1}.
\end{align*}
The AS relation implies $\ss(O(a,b,c))=\ss(O(c,b,a))^{-1}\in Y_3\I\C/Y_4$,
and we obtain
\[
\Delta_{1,0}(T(a,b,c))=T(c,b,a,b,c)+T(a,c,b,c,a)+T(a,b,c,b,a)\in \Ker\ss.
\]
\end{proof}

\subsection{Finite-type invariants of homology cylinders}
\label{sec:FTI}
Finally, we see the proof of Theorem~\ref{thm:str-of-y3} from a viewpoint of finite-type invariants introduced in \cite{Gou99} and \cite{Hab00C}.
First note that $\overline{Z}_{n+1}$ is a finite-type invariant of degree at most $n$ since it vanishes on $\F_{n+1}\I\C$.
The following proposition is an affirmative answer for the Goussarov-Habiro conjecture for the $Y_4$-equivalence.
Here the conjecture is true for the $Y_2$- and $Y_3$-equivalences (see \cite[Corollary~1.5]{MaMe03} and \cite[Theorem~A]{MaMe13} for instance).

\begin{prop}
\label{prop:GHC}
For $M, M' \in \I\C$, they are $Y_{4}$-equivalent if and only if $f(M)=f(M')$ for every finite-type invariant $f$ of degree at most $3$.
\end{prop}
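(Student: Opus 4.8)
The plan is to recast the statement in terms of the two filtrations and then reduce it to an injectivity property that the structure theorem for $Y_3\I\C/Y_4$ already delivers. By definition a finite-type invariant of degree at most $3$ is a homomorphism out of $\Z\I\C$ vanishing on $\F_4\I\C$, and the universal such invariant is the quotient $\Z\I\C\to\Z\I\C/\F_4\I\C$; hence the requirement that $f(M)=f(M')$ for all of them is equivalent to $M-M'\in\F_4\I\C$. So I must prove that $M\sim_{Y_4}M'$ if and only if $M-M'\in\F_4\I\C$. The forward implication is immediate: a single $Y_4$-move replaces $M$ by $M_G$ for a connected graph clasper $G$ with $4$ nodes, and $M-M_G=[M,G]\in\F_4\I\C$; iterating over a finite sequence of such moves yields $M-M'\in\F_4\I\C$.

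For the converse I would set up the standard comparison of the two filtrations. Consider the homomorphism
\[
\mu_3\colon Y_3\I\C/Y_4\to \F_3\I\C/\F_4\I\C,\qquad [M]\mapsto[M-\mathbf{1}],
\]
where $\mathbf 1=\Sigma_{g,1}\times[-1,1]$. It is additive and well defined: for $M,N\in Y_3\I\C$ the forward implication gives $M-\mathbf 1,\ N-\mathbf 1\in\F_3\I\C$, so $(M-\mathbf 1)(N-\mathbf 1)\in\F_6\I\C\subset\F_4\I\C$ shows additivity, and independence of the $Y_4$-representative again follows from the forward implication. Now assume $M-M'\in\F_4\I\C$. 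Since $\F_4\I\C\subset\F_3\I\C$ and the Goussarov--Habiro conjecture holds for $Y_3$ by \cite[Theorem~A]{MaMe13}, we get $M\sim_{Y_3}M'$, so $\overline{M}\,\overline{M'}^{-1}=[N]$ for some $N\in Y_3\I\C$ inside the nilpotent group $\I\C/Y_4$. Then $M\sim_{Y_4}N\circ M'$, and a short computation in $\F_3\I\C/\F_4\I\C$ using the forward implication and $N\circ M'-M'=(N-\mathbf 1)+(N-\mathbf 1)(M'-\mathbf 1)$ gives $\mu_3([N])=[M-M']=0$. It therefore suffices to prove that $\mu_3$ is injective, for then $[N]=0$, i.e. $N\in Y_4\I\C$ and $M\sim_{Y_4}M'$.

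Injectivity of $\mu_3$ is the heart of the matter. Since $\ss\colon\A_3^c\to Y_3\I\C/Y_4$ is surjective and $\mu_3\circ\ss=-\S|_{\A_3^c}$, injectivity of $\mu_3$ is equivalent to $\Ker(\S|_{\A_3^c})=\Ker\ss$; the inclusion $\Ker\ss\subset\Ker(\S|_{\A_3^c})$ is automatic from this relation, so only the reverse inclusion needs work. Take $J\in\A_3^c$ with $\S(J)=0$. The leading-term identity $\Ztilde^Y_3\circ\S=\pm\iota$ (a consequence of \cite[Theorem~7.11]{CHM08}) forces $\iota_3(J)=0$, hence $J\in\tor\A_3^c$ and $\ss(J)\in\tor(Y_3\I\C/Y_4)$. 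Applying $\overline{Z}_4$ to $\S(J)=0$ and using the first diagram of Theorem~\ref{thm:main} gives $\tfrac12(\delta J+\Y J)=0$ in $\A_4^Y\otimes\Q/\Z$; projecting onto the connected summand $\A_4^c\otimes\Q/\Z$ (where $\Y J$ vanishes, being disconnected, while $\delta J$ is connected) and comparing with the second diagram of Theorem~\ref{thm:main} yields $\bar{z}_4(\ss(J))=\tfrac12\delta J=0$. By Remark~\ref{rem:z4inj} the restriction of $\bar{z}_4$ to $\tor(Y_3\I\C/Y_4)$ is injective, so $\ss(J)=0$, i.e. $J\in\Ker\ss$. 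This gives $\Ker(\S|_{\A_3^c})=\Ker\ss$ and completes the argument.

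The main obstacle is exactly this last inclusion $\Ker(\S|_{\A_3^c})\subset\Ker\ss$: it cannot be detected by the rational surgery isomorphism, because the entire discrepancy between the two kernels lives in $2$-torsion, and controlling it is precisely the content of the computation of $\tor(Y_3\I\C/Y_4)$ together with the torsion-injectivity of $\bar{z}_4$ (Theorem~\ref{thm:str-of-y3} and Remark~\ref{rem:z4inj}). Everything surrounding it—the filtration reformulation, the easy implication, and the reduction through $\mu_3$—is formal once the Goussarov--Habiro conjecture for $Y_3$ is taken as known.
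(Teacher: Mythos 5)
Your proof is correct and follows essentially the same route as the paper's: both reduce via the Goussarov--Habiro result for $Y_3$ and the group structure of $\I\C/Y_4$ to an element $N\in Y_3\I\C$ on which all degree-$\le 3$ invariants vanish, and both then kill $N$ using the rational isomorphism $\Ztilde^Y_3$ together with the injectivity of $\bar{z}_4$ on $\tor(Y_3\I\C/Y_4)$ from Remark~\ref{rem:z4inj}. Your packaging through the comparison map $\mu_3\colon Y_3\I\C/Y_4\to\F_3\I\C/\F_4\I\C$ and the identity $\Ker(\S|_{\A_3^c})=\Ker\ss$ is only a presentational variant of the paper's direct evaluation of $\Ztilde^Y_{\le 3}$ and $\overline{Z}_4$ on $N$.
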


\begin{proof}
The idea of the proof is the same as \cite[Section~5.1]{MaMe13}.
If $M \sim_{Y_4} M'$, then they are not distinguished by finite-type invariants of degree at most $3$ (see \cite[Lemma~2.3]{MaMe13} for instance).
Conversely, suppose that $f(M)=f(M')$ for every finite-type invariant $f$ of degree at most $3$.
Then we see $M \sim_{Y_3} M'$ by \cite[Corollary~1.5]{MaMe03} and \cite[Theorem~A]{MaMe13}.
Since $\I\C/Y_4$ is a group (\cite[Theorem~3]{Gou99}, \cite[Section~8.5]{Hab00C}), there is $N \in Y_3\I\C$ such that $M \sim_{Y_4} N\circ M'$.
We next use the identities $\Ztilde^Y_{\leq 3}(M)=\Ztilde^Y_{\leq 3}(M')$ and $\overline{Z}_4(M)=\overline{Z}_4(M')$.
It follows from $\Ztilde^Y_n(M) = \sum_{i=0}^{n} \Ztilde^Y_i(N)\star\Ztilde^Y_{n-i}(M')$ that $\Ztilde^Y_{\leq 3}(N)=\emptyset$ and $\overline{Z}_4(N)=0$, and thus $\bar{z}_4(N)=0$.
Since $\bar{z}_4\colon \tor(Y_3\I\C/Y_4) \to \A_4^c\otimes\Q/\Z$ is injective (see Remark~\ref{rem:z4inj}), $N=0 \in Y_3\I\C/Y_4$, and hence $M$ is $Y_{4}$-equivalent to $M'$.
\end{proof}

\appendix
\section{Values of the LMO functor on the Torelli group}
Here, we compute some values of the LMO functor on the Torelli group using a computer.
This appendix is not used in the previous sections, and is just for illustration of Theorem~\ref{thm:main}.

Let $M_1$, $M_2$, and $M_3$ be the bottom top tangles obtained by the surgeries along framed links depicted in Figure~\ref{fig:links}.
\begin{figure}[h]
 \centering
 \includegraphics{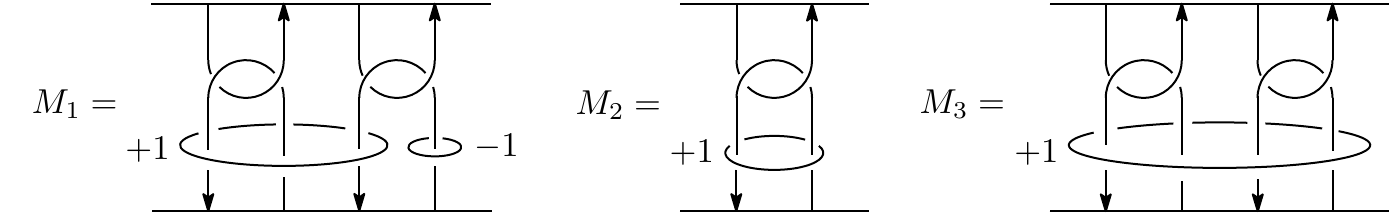}
\caption{Bottom-top tangles $M_1$, $M_2$, and $M_3$.}
 \label{fig:links}
\end{figure}
These three tangles correspond to the images of a genus 1 bounding pair map and the Dehn twists along bounding simple closed curves of genus 1 and 2 under $\cc\colon\I\to\I\C$, respectively.
A computer calculation shows the following.
\begin{prop}
The images of $M_1$, $M_2$ and $M_3$ under
$\log\Ztilde^Y\colon\I\C\to \widehat{\A}^c\otimes\Q$
$\mod \widehat{\A}_{\ge4}^c\otimes\Q$ are
\begin{align*}
\log\Ztilde^Y(M_1)
&=
T(1^+,1^-,2^-)-\frac{1}{2}O(1^+,1^-)\\
&\quad
+\frac{1}{2}\left\{
T(1^+,1^-,1^-,1^+)
+T(1^+,2^-,2^-,1^-)
+T(1^+,1^-,2^-,2^+)
\right\}\\
&\quad
-\frac{3}{4}O(1^+,1^-,2^-)
-T(1^-,1^+,1^+,2^-,1^-)
+\frac{1}{6}T(1^-,2^-,2^-,2^-,1^+)\\
&\quad
+\frac{1}{4}T(1^-,2^-,2^-,2^+,1^+)
+\frac{1}{12}T(1^-,2^-,2^+,2^+,1^+)
-\frac{1}{4}T(1^-,2^+,2^-,2^-,1^+)\\
&\quad
+\frac{1}{12}T(1^-,2^+,2^+,2^-,1^+)
-\frac{1}{6}T(1^-,2^+,2^-,2^+,1^+),\\
\log\Ztilde^Y(M_2)&
=-\frac{1}{2}O(1^+,1^-)+\frac{1}{2}T(1^+,1^-,1^-,1^+),\\
\log\Ztilde^Y(M_3)&
=-\frac{1}{2}O(1^+,1^-)-\frac{1}{2}O(2^+,2^-)\\
&\quad +T(1^+,1^-,2^-,2^+)+\frac{1}{2}T(1^+,1^-,1^-,1^+)+\frac{1}{2}T(2^+,2^-,2^-,2^+).
\end{align*}
\if0
+\frac{1}{2}O(1^+,1^-,2^-,2^+)\\
&\quad+\frac{1}{2}T(2^-,1^-,1^+,1^+,1^-,2^+)
+\frac{1}{2}T(2^-,1^-,1^+,1^-,1^+,2^+)\\
&\quad+\frac{1}{2}T(2^-,1^+,1^-,1^+,1^-,2^+)
+\frac{1}{2}T(2^-,1^+,1^-,1^-,1^+,2^+)\\
&\quad+\frac{1}{2}T(1^-,2^-,2^+,2^+,2^-,1^+)
+\frac{1}{2}T(1^-,2^-,2^+,2^-,2^+,1^+)\\
&\quad+\frac{1}{2}T(1^-,2^+,2^-,2^+,2^-,1^+)
+\frac{1}{2}T(1^-,2^+,2^-,2^-,2^+,1^+).
\fi
\end{prop}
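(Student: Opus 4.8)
The plan is to compute $\log\Ztilde^Y(M_i)$ for $i=1,2,3$ directly from the surgery presentations, following the construction of the LMO functor recalled in Section~\ref{sec:LMO}. First I would read off from Figure~\ref{fig:links} the framed link $L_i$ in $[-1,1]^3$ together with the bottom-top tangle $\gamma$ so that $([-1,1]^3)_{L_i}$ is a homology cube realizing $M_i$; this makes $\gamma\cup L_i\subset[-1,1]^3$ the framed tangle whose Kontsevich invariant feeds into $\Ztilde$. By definition $\Ztilde(M_i)$ is obtained from $\check Z(\gamma\cup L_i)$ by the formal Gaussian integration (surgery) operation that eliminates the $L_i$-colored legs, together with the normalizing factors attached to the $\pm1$-framed unknots.

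Since we only need $\log\Ztilde^Y(M_i)$ modulo $\widehat{\A}_{\ge4}^c\otimes\Q$, the computation is finite. I would compute $\check Z(\gamma\cup L_i)$ by decomposing the tangle into elementary pieces (crossings, cups, caps) and truncating at the total degree needed to control every Jacobi diagram of internal degree at most $3$. By Remark~\ref{rem:Associator}, with an even rational Drinfel'd associator the associator corrections of degree $\le1$ are trivial, so at these low orders only the combinatorial chord data of $\gamma\cup L_i$ (crossings, framings, and the linking numbers recorded in $\Lk$) enter, and the parenthesizings play no role. After performing the Gaussian integration over the $L_i$ components I obtain $\Ztilde(M_i)$ as a series of Jacobi diagrams colored by $\{1^\pm,\dots,g^\pm\}$.

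I would then extract the $Y$-part via the canonical factorization $\Ztilde(M_i)=\exp(\text{struts})\sqcup\exp(y_i)$, set $\log\Ztilde^Y(M_i)=y_i$, and discard everything of internal degree $\ge4$. Each surviving diagram is brought to the normal forms $T(\cdots)$ and $O(\cdots)$ using the AS, IHX, and self-loop relations. To guard against errors I would cross-check the lowest-degree terms against the Johnson homomorphisms: the internal-degree-$1$ part of $\log\Ztilde^Y(M_i)$ must equal $-\tau_1$ of the corresponding mapping class, which for the genus-$1$ bounding pair map $M_1$ pins down the leading term $T(1^+,1^-,2^-)$ and which vanishes for the bounding Dehn twists $M_2,M_3$ (these lie in $\K=J_2\M$); the internal-degree-$2$ tree part must likewise equal $-\tau_2$ and is computable by hand for each generator. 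Theorem~\ref{thm:main} supplies a further consistency check on the mod-$\Z$ reductions of the degree-$2$ and degree-$3$ parts.

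The main obstacle is purely combinatorial bookkeeping. Already at internal degree $3$ the Kontsevich invariant together with the Gaussian integration produces a large number of Jacobi diagrams---particularly for $M_1$, which involves two colors $1^\pm,2^\pm$ and has genuinely non-trivial internal-degree-$3$ tree terms---and reducing this raw output to the displayed basis through repeated AS/IHX moves is exactly where hand computation becomes unreliable. This is why the verification is carried out by computer: the substantive content of the proposition is that the machine-generated series, after full AS/IHX/self-loop reduction and truncation, collapses to the closed forms stated above, with the Johnson-homomorphism identities and Theorem~\ref{thm:main} serving as independent checks on the leading terms.
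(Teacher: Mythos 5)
Your proposal matches the paper's approach: the paper establishes this proposition simply by a computer calculation of the LMO functor from the surgery presentations in Figure~\ref{fig:links}, exactly along the lines you describe (Kontsevich invariant of $\gamma\cup L_i$, Gaussian integration over the $L_i$-colored legs, extraction of the $Y$-part, truncation, and AS/IHX/self-loop reduction), and it offers no further detail. One small caution: Remark~\ref{rem:Associator} guarantees irrelevance of the associator and parenthesizings only for the two leading terms of $\Ztilde^Y$, so your claim that ``only the combinatorial chord data enter'' is too strong for the internal-degree-$3$ part of $\log\Ztilde^Y(M_1)$, which must be computed with the fixed even rational Drinfel'd associator, whose degree-$\geq 2$ terms do enter the machine computation.
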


\begin{remark}
As explained in Remark~\ref{rem:BC-homo}, 
$Y_2$-equivalence classes of homology cylinders are determined by values of $\Ztilde_1^Y$ and $\bar{z}_2$.
Thus, we see from Theorem~\ref{thm:main} that
\[
M_1\sim_{Y_2}\ss(-T(1^+,1^-,2^-)+T(1^+,1^+,1^-)).
\]
By \cite[Theorem~A and Propositions~C.1 and C.2]{MaMe13}, we also have
\begin{align*}
M_2\sim_{Y_3}&\ss(T(\twist,1^+,1^-)),\\
M_3\sim_{Y_3}&\ss(T(\twist,1^+,1^-)+T(\twist,2^+,2^-)-T(1^+,1^-,2^-,2^+)),
\end{align*}
where $\ss(T(\twist,i^+,i^-))\in \I\C/Y_3$ for $i=1,2$ is the $Y_3$-equivalence class obtained by the surgery along $T(\twist,i^+,i^-)$ as explained in right before Lemma~\ref{lem:relationclasper}.
\end{remark}

\section{The Leibniz rule}\label{section:leibniz}
Here, we prove Proposition~\ref{prop:Leibniz},
which states that $\delta\colon\A^Y_n\to\A^Y_{n+1}\otimes\Z/2\Z$ satisfies the Leibniz rule.
\begin{remark}
Since the surgery map
$\S\colon \A^Y\to \bigoplus_{n=1}^\infty \F_n\I\C/\F_{n+1}\I\C$
is an algebra homomorphism as shown in \cite[Theorem~5~(i)]{GaLe05},
and $\overline{Z}_{n+1}$ is defined to be the mod $\Z$ reduction of $\Ztilde_{n+1}^Y$,
we deduce that
\[
(\overline{Z}_{k+l+1}\circ\S)(x\star y)=
(\overline{Z}_{k+1}\circ\S)(x)\star (\Ztilde^Y_{l}\circ\S)(y)
+(\Ztilde^Y_{k}\circ\S)(x)\star (\overline{Z}_{l+1}\circ\S)(y)
\in\A_{k+l+1}^Y\otimes\Q/\Z
\]
for $x\in\A_k^Y$ and $y\in\A_l^Y$.
Since the homomorphism $\Y\colon \A^Y_n\to\A^Y_{n+1}\otimes\Z/2\Z$ also satisfies the Leibniz rule,
Theorem~\ref{thm:main} implies the weaker statement than Proposition~\ref{prop:Leibniz} that $(\id\otimes\frac{1}{2})\circ\delta$ satisfies the Leibniz rule.
Moreover, it implies $\delta$ also satisfies the Leibniz rule if $\id\otimes\frac{1}{2}\colon \A_{n+1}^Y\otimes\Z/2\Z\to \A_{n+1}^Y\otimes\Q/\Z$ is injective,
i.e. $\A_{n+1}^Y$ has no $2$-torsion elements.
\end{remark}

For $a\in\{1^\pm,2^\pm,\ldots,g^\pm\}$ and a Jacobi diagram $J$ without strut components,
let us denote by $U_a(J)$ the set of univalent vertices in $J$ colored by $a$.
If we denote
\[
\delta^a(J)=
\sum_{v\in U_a(J)}\delta_v(J)
+\sum_{\begin{subarray}{c}v,w\in U_a(J)\\v\ne w\end{subarray}}\delta_{vw}(J),
\]
we have $\delta(J)=\sum_{i=1}^g\delta^{i^+}(J)+\sum_{i=1}^g\delta^{i^-}(J)$.
Thus, it suffices to prove
\[
\delta^{i^+}(J\star J')+\delta^{i^-}(J\star J')
=\delta^{i^+} J\star J'+J\star \delta^{i^+} J'
+\delta^{i^-} J\star J'+J\star \delta^{i^-} J'
\]
for $1\le i\le g$ and Jacobi diagrams $J$ and $J'$ without strut components.
We denote
\[
J\star_{i}J'=\sum_{\begin{subarray}{c}V\subset U_{i^+}(J)\\W\subset U_{i^-}(J')\\ \beta\colon V\cong W\end{subarray}}
J\cup_{\beta} J',
\]
where $\beta$ runs through all the bijections between subsets of $U_{i^+}(J)$ and $U_{i^-}(J')$.
This is the sum of Jacobi diagrams obtained by connecting some of univalent vertices in $J$ labeled by $i^+$ with those in $J'$ labeled by $i^-$.
Note that the maps $\delta^{i^+}$ and $\delta^{i^-}$ do not change the univalent vertices with labels in $\{1^\pm,\ldots,g^\pm\}\setminus \{i^{\pm}\}$.
More precisely,
the maps $\delta^{i^+}$ and $\delta^{i^-}$ commute with the operation connecting univalent vertices with labels $\{1^\pm,\ldots,g^\pm\}\setminus \{i^{\pm}\}$.
Thus, it suffices to show that
\[
\delta^{i^+}(J\star_{i}J')+\delta^{i^-}(J\star_{i}J')=(\delta^{i^+}J)\star_iJ'+J\star_i(\delta^{i^+} J')+(\delta^{i^-}J)\star_iJ'+J\star_i(\delta^{i^-} J'),
\]
and Proposition~\ref{prop:Leibniz} follows from the next lemma.
\begin{lemma}\label{lem:property-delta}
The following identities hold.
\begin{align}
\delta^{i^+}J\star_iJ'+J\star_i\delta^{i^-}J'
&=\sum_{\begin{subarray}{c}
V\subset U_{i^+}(J)\\
W\subset U_{i^-}(J')\\
\beta\colon V\cong W
\end{subarray}}
\Biggl(
\sum_{v\in U_{i^+}(J)\setminus V}
(\delta_v J)\cup_{\beta} J'+
\sum_{w\in U_{i^-}(J')\setminus W}
J\cup_{\beta} (\delta_wJ')\notag\\
&+\sum_{
\begin{subarray}{c}
v,v'\in U_{i^+}(J)\setminus V\\
v\ne v'
\end{subarray}}
(\delta_{vv'} J)\cup_{\beta} J'
+\sum_{
\begin{subarray}{c}
w,w'\in U_{i^-}(J')\setminus W\\
w\ne w'
\end{subarray}}
J\cup_{\beta} (\delta_{ww'}J')
\Biggr),\label{eq:B1}\\
\delta^{i^+}(J\star_iJ')+J\star_i\delta^{i^+}J'
&=\sum_{\begin{subarray}{c}
V\subset U_{i^+}(J)\\
W\subset U_{i^-}(J')\\
\beta\colon V\cong W
\end{subarray}}
\Biggl(
\sum_{v\in U_{i^+}(J)\setminus V}
(\delta_v J)\cup_\beta J'
+\sum_{
\begin{subarray}{c}
v,v'\in U_{i^+}(J)\setminus V\\
v\ne v'
\end{subarray}}
(\delta_{vv'} J)\cup_\beta J'
\Biggr),\label{eq:B2}\\
\delta^{i^-}(J\star_iJ')+\delta^{i^-}J\star_iJ'
&=\sum_{\begin{subarray}{c}
V\subset U_{i^+}(J)\\
W\subset U_{i^-}(J')\\
\beta\colon V\cong W
\end{subarray}}
\Biggl(
\sum_{w\in U_{i^-}(J')\setminus W}
J\cup_\beta (\delta_wJ')
+\sum_{
\begin{subarray}{c}
w,w'\in U_{i^-}(J')\setminus W\\
w\ne w'
\end{subarray}}
J\cup_\beta (\delta_{ww'}J')
\Biggr).\label{eq:B3}
\end{align}
\end{lemma}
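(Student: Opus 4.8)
The plan is to prove the three identities of Lemma~\ref{lem:property-delta} by expanding each side as a formal $\Z/2\Z$-linear combination of Jacobi diagrams indexed by explicit combinatorial data (a gluing datum $(V,W,\beta)$ together with a choice of which univalent vertices the operations $\delta_v$, $\delta_{vw}$ act on) and then matching terms. The single structural fact driving everything is the description of the colored univalent-vertex sets of a glued diagram: writing $J\cup_\beta J'$ for the diagram obtained by gluing $V\subseteq U_{i^+}(J)$ to $W\subseteq U_{i^-}(J')$ along $\beta$, one has $U_{i^+}(J\cup_\beta J')=(U_{i^+}(J)\setminus V)\sqcup U_{i^+}(J')$ and $U_{i^-}(J\cup_\beta J')=U_{i^-}(J)\sqcup(U_{i^-}(J')\setminus W)$, since $\star_i$ consumes only the $i^+$-vertices of $J$ and the $i^-$-vertices of $J'$. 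Since the target is a $\Z/2\Z$-module, any sub-sum occurring twice may be discarded.

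First I would establish \eqref{eq:B2}, and then \eqref{eq:B3} by the symmetry exchanging $J\leftrightarrow J'$ and $i^{+}\leftrightarrow i^{-}$ (under which \eqref{eq:B2} becomes \eqref{eq:B3} verbatim). Expanding $\delta^{i^+}(J\star_i J')$ and sorting its summands by whether the vertices acted on lie in the $(U_{i^+}(J)\setminus V)$-part, in $U_{i^+}(J')$, or one in each, the ``$J$-only'' single and double contributions are exactly the right-hand side of \eqref{eq:B2}. The remaining ``$J'$-only'' and ``mixed'' contributions I would identify with $J\star_i\delta^{i^+}J'$ as \emph{formal} sums. The clean point is that a mixed double operation $\delta_{vv'}$ (with $v$ in $J$ and $v'$ in $J'$, both colored $i^+$) produces the very same diagram as the contribution of $J\star_i\delta_{v'}J'$ in which the new $i^-$-leg created by the tripod term of $\delta_{v'}$ is glued to $v$: both are a single trivalent vertex joining the edge into $J$, the edge into $J'$, and a free $i^+$-leg. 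With this identification (together with the analogous ``doubling-unglued/tripod-unglued'' matchings), $\delta^{i^+}(J\star_iJ')$ and $J\star_i\delta^{i^+}J'$ share a common sub-sum which cancels mod $2$, leaving precisely the right-hand side of \eqref{eq:B2}.

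For \eqref{eq:B1} I would run the same bookkeeping on each summand of the left-hand side. Decomposing $\delta^{i^+}J\star_iJ'$ according to whether the $i^+$-vertices \emph{created} by the $\delta$-operation on $J$ (from a doubling, from a tripod, or as the free leg of a $\delta_{vv'}$) are glued to $J'$ or left free, the terms in which all created vertices are free reproduce the first and third sums on the right-hand side of \eqref{eq:B1}; symmetrically $J\star_i\delta^{i^-}J'$ yields the second and fourth sums. This reduces \eqref{eq:B1} to showing that the ``extra'' terms $E^{+}$ (a created $i^+$-vertex of $J$ glued to $J'$) and $E^{-}$ (a created $i^-$-vertex of $J'$ glued to $J$) satisfy $E^{+}+E^{-}=0$ in $\A_{n+1}^Y\otimes\Z/2\Z$.

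I expect this last cancellation to be the main obstacle. Unlike in \eqref{eq:B2}, the extras from the doubling term, the tripod term, and the $\delta_{vv'}$-merge have different local shapes and dangling-leg colors, so they do not match verbatim: a glued tripod contributes a single trivalent vertex with a dangling $i^{-}$-leg, whereas a doubling with one strand glued contributes two trivalent vertices joined by an edge, carrying a dangling $i^{+}$-leg. The plan is therefore to build a fixed-point-free involution on the set of extra terms, pairing them by dangling color—``$E^{+}$ doubling'' with ``$E^{-}$ tripod'' on the $i^{+}$-dangling part, ``$E^{-}$ doubling'' with ``$E^{+}$ tripod'' on the $i^{-}$-dangling part, and the non-dangling (both-glued and merge) contributions among themselves—using the AS and IHX relations, exactly as in the proof that $\delta$ is well defined, to reshape each two-vertex doubling configuration into the corresponding one-vertex tripod configuration. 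Checking that this pairing is well defined in all boundary cases (coincident labels, two vertices in the same connected component, and the degenerate $n=1$ pieces) is the delicate step, but no idea beyond the local AS/IHX calculus should be needed.
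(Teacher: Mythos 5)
Your proposal is correct and follows essentially the same route as the paper's proof: both expand each side over the gluing data $(V,W,\beta)$, use the IHX relation to convert the configurations in which a leg newly created by $\delta_v$ or $\delta_{vw}$ gets glued into single-tripod terms, and cancel the resulting matched pairs mod $2$, with \eqref{eq:B3} deduced from \eqref{eq:B2} by the label-reversal symmetry $\rev$. The pairing of ``extra'' terms you describe for \eqref{eq:B1} (glued doubling of $\delta_v$ against glued tripod of $\delta_w$, and both-glued doublings against the glued dangling legs of $\delta_{vv'}$, $\delta_{ww'}$) is exactly the cancellation carried out in the paper.
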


\begin{proof}
First, we prove (\ref{eq:B1}).
Recall that $\delta_v(J)$ for $v\in U_{i^+}(J)$ is defined to be the sum of Jacobi diagrams obtained by changing a neighborhood of the edge incident to the vertex $v$ as
\[
\delta_v\colon
\begin{tikzpicture}[mydash, baseline=-0.7ex, scale=0.25]
\small
\draw  (1.2,-1)  arc [start angle = -90, end angle = 90, x radius = 1.3, y radius = 1];
\draw (2.5,0) -- (4.5,0);
\node [right] at (4.5,0) {$i^+$};
\end{tikzpicture}
\mapsto
\begin{tikzpicture}[mydash, baseline=-0.7ex, scale=0.25]
\draw  (1.2,-1)  arc [start angle = -90, end angle = 90, x radius = 1.3, y radius = 1];
\draw (2.4,0.4) -- (4.4,0.4);
\draw (2.4,-0.4) -- (4.4,-0.4);
\node [right] at (4.4,0.9) {$i^+$};
\node [right] at (4.4,-0.9) {$i^+$};
\end{tikzpicture}
+
\begin{tikzpicture}[mydash, baseline=-0.7ex, scale=0.25]
\small
\draw  (1.2,-1)  arc [start angle = -90, end angle = 90, x radius = 1.3, y radius = 1];
\draw (2.5,0) -- (3.5,0);
\draw (3.5,0) -- (4.4,0.6);
\draw (3.5,0) -- (4.4,-0.6);
\node [right] at (4.4,0.9) {$i^+$};
\node [right] at (4.4,-0.9) {$i^-$};
\end{tikzpicture}
.
\]
We denote by $v_1$ and $v_2$ the two univalent vertices colored with $i^+$ which arise from $v$ in the first term of $\delta_v(J)$.
In the product $\delta_vJ\star_i J'$,
there are 3 kinds of terms coming from the first term of $\delta_v J$:
terms in which neither $v_1$ nor $v_2$ is connected to a univalent vertex in $J'$,
terms in which only one of $v_1$ and $v_2$ is connected to a univalent vertex $w$ in $J'$,
terms in which $v_1$ and $v_2$ are connected to univalent vertices $w$ and $w'$, respectively, in $J'$.
Thus, we can write $\delta_v J\star_i J'$ as
\begin{align}
\delta_v J\star_i J'
=\sum_{\begin{subarray}{c}
V\subset U_{i^+}(J)\setminus \{v\}\\
W\subset U_{i^-}(J')\\
\beta\colon V\cong W
\end{subarray}}
\Biggl(&
\delta_vJ\cup_\beta J'
+\sum_{w\in U_{i^-}(J')\setminus W}
(J\cup_{\beta}J')\cup_{\begin{subarray}{c}y_1=v\\y_2=w\end{subarray}}
\left(\begin{tikzpicture}[mydash, baseline=1.3ex, scale=0.25]
\small
\draw (0,0) -- (2,0);
\draw (1,0) -- (1,1.3);
\node [left] at (0,0) {$y_1$};
\node [right] at (2,0) {$y_2$};
\node [above] at (1,1.3) {$i^+$};
\end{tikzpicture}\right)\notag\\
&+\sum_{\begin{subarray}{c}
w,w'\in U_{i^-}(J')\setminus W\\
w\ne w'
\end{subarray}}
(J\cup_{\beta}J')\cup_{\begin{subarray}{c}y_1=v\\y_2=w\\y_3=w'\end{subarray}}
\left(\begin{tikzpicture}[mydash, baseline=1.3ex, scale=0.25]
\small
\draw (0,0) -- (2,0);
\draw (1,0) -- (1,1.3);
\node [left] at (0,0) {$y_1$};
\node [right] at (2,0) {$y_3$};
\node [above] at (1,1.3) {$y_2$};
\end{tikzpicture}\right)\notag\\
&+\sum_{w\in U_{i^-}(J')\setminus W}
(J\cup_{\beta}J')\cup_{\begin{subarray}{c}y_1=v\\y_2=w\end{subarray}}
\left(\begin{tikzpicture}[mydash, baseline=1.3ex, scale=0.25]
\small
\draw (0,0) -- (2,0);
\draw (1,0) -- (1,1.3);
\node [left] at (0,0) {$y_1$};
\node [right] at (2,0) {$y_2$};
\node [above] at (1,1.3) {$i^-$};
\end{tikzpicture}\right)
\Biggr).\label{eq:leibnizrule1}
\end{align}
Here, the second term inside the first summation is obtained by applying the IHX relation to the sum of two Jacobi diagrams in which the vertex $v_1$ is connected to $w$ and $v_2$ is connected to $w$, respectively, as
\[
\begin{tikzpicture}[mydash, baseline=-0.7ex, scale=0.25]
\small
\draw (0,-1)  arc [start angle = -90, end angle = 90, x radius = 1.3, y radius = 1];
\draw (1.2,0.4) -- (3.2,0.4);
\draw (1.2,-0.4) -- (3.2,-0.4);
\node [below] at (3.2,-0.4) {$i^+$};
\draw (6.5,1)  arc [start angle = 90, end angle = 270, x radius = 1.3, y radius = 1];
\draw (5.2,0) to [out=160, in=0] (3.2,0.4);
\end{tikzpicture}
\,+\,
\begin{tikzpicture}[mydash, baseline=-0.7ex, scale=0.25]
\small
\draw (0,-1)  arc [start angle = -90, end angle = 90, x radius = 1.3, y radius = 1];
\draw (1.2,0.4) -- (3.2,0.4);
\draw (1.2,-0.4) -- (3.2,-0.4);
\node [above] at (3.2,0.4) {$i^+$};
\draw (6.5,1)  arc [start angle = 90, end angle = 270, x radius = 1.3, y radius = 1];
\draw (5.2,0) to [out=200, in=0] (3.2,-0.4);
\end{tikzpicture}
\,=\,
\begin{tikzpicture}[mydash, baseline=-0.7ex, scale=0.25]
\small
\draw (0,-1)  arc [start angle = -90, end angle = 90, x radius = 1.3, y radius = 1];
\draw (6.5,1)  arc [start angle = 90, end angle = 270, x radius = 1.3, y radius = 1];
\draw (5.2,0) to (1.2,0);
\draw (3.2,0) to (3.2,1);
\node [above] at (3.2,1) {$i^+$};
\end{tikzpicture}\quad.
\]
The third term is obtained by applying the IHX relation to the sum of two Jacobi diagrams where the vertices $v_1$ and $v_2$ are connected to $w$ and $w'$ as
\[
\begin{tikzpicture}[mydash, baseline=-0.7ex, scale=0.25]
\small
\draw (0,-1)  arc [start angle = -90, end angle = 90, x radius = 1.3, y radius = 1];
\draw (1.2,0.4) -- (5.2,1.5);
\draw (1.2,-0.4) -- (5.2,-1.5);
\draw (6.5,2.5) arc [start angle = 90, end angle = 270, x radius = 1.3, y radius = 1];
\draw (6.5,-0.5)  arc [start angle = 90, end angle = 270, x radius = 1.3, y radius = 1];
\end{tikzpicture}
\,+\,
\begin{tikzpicture}[mydash, baseline=-0.7ex, scale=0.25]
\small
\draw (0,-1)  arc [start angle = -90, end angle = 90, x radius = 1.3, y radius = 1];
\draw (1.2,0.4)  to [out=0, in=180] (3.2,0.4) to [out=0, in=140]  (3.7,0.1);
\draw  (4.2,-0.4) to [out=-40, in=140]  (5.2,-1.5);
\draw (1.2,-0.4)  to [out=0, in=180] (3.2,-0.4) to [out=0, in=230] (5.2,1.5);
\draw (6.5,2.5) arc [start angle = 90, end angle = 270, x radius = 1.3, y radius = 1];
\draw (6.5,-0.5)  arc [start angle = 90, end angle = 270, x radius = 1.3, y radius = 1];
\end{tikzpicture}
\,=\,
\begin{tikzpicture}[mydash, baseline=-0.7ex, scale=0.25]
\small
\draw (0,-1)  arc [start angle = -90, end angle = 90, x radius = 1.3, y radius = 1];
\draw (1.2,0) -- (3.2,0);
\draw (3.2,0) -- (5.2,1.5);
\draw (3.2,0) -- (5.2,-1.5);
\draw (6.5,2.5) arc [start angle = 90, end angle = 270, x radius = 1.3, y radius = 1];
\draw (6.5,-0.5)  arc [start angle = 90, end angle = 270, x radius = 1.3, y radius = 1];
\end{tikzpicture}\quad.
\]
For a Jacobi diagram $J_0$,
Let $\rev(J_0)$ denote the Jacobi diagram obtained by changing the signs of the labels of all the univalent vertices in $J_0$.
By the definitions of $\star_i$ and $\delta_v$ for a univalent vertex $v$ in $J$, we have
\[
\rev(J\star_i J')=\rev J'\star_i\rev J, \text{ and }
\rev(\delta_v J)=\delta_v (\rev J).
\]
By applying Equation~(\ref{eq:leibnizrule1}) to the right-hand side of
\[
J\star_i \delta_wJ'
=\rev(\delta_w(\rev J')\star_i \rev J)
\]
for $w\in U_{i^-}(J')$, we obtain
\begin{align*}
J\star_i \delta_wJ'
=\sum_{\begin{subarray}{c}
V\subset U_{i^+}(J)\\
W\subset U_{i^-}(J')\setminus \{w\}\\
\beta\colon V\cong W
\end{subarray}}
\Biggl(&
J\cup_\beta \delta_wJ'
+\sum_{v\in U_{i^+}(J)\setminus V}
(J\cup_{\beta}J')\cup_{\begin{subarray}{c}y_1=v\\y_2=w\end{subarray}}
\left(\begin{tikzpicture}[mydash, baseline=1.3ex, scale=0.25]
\small
\draw (0,0) -- (2,0);
\draw (1,0) -- (1,1.3);
\node [left] at (0,0) {$y_1$};
\node [right] at (2,0) {$y_2$};
\node [above] at (1,1.3) {$i^-$};
\end{tikzpicture}\right)\\
&+\sum_{\begin{subarray}{c}
v,v'\in U_{i^+}(J)\setminus V\\
v\ne v'
\end{subarray}}
(J\cup_{\beta}J')\cup_{\begin{subarray}{c}y_1=v\\y_2=v'\\y_3=w\end{subarray}}
\left(\begin{tikzpicture}[mydash, baseline=1.3ex, scale=0.25]
\small
\draw (0,0) -- (2,0);
\draw (1,0) -- (1,1.3);
\node [left] at (0,0) {$y_1$};
\node [right] at (2,0) {$y_3$};
\node [above] at (1,1.3) {$y_2$};
\end{tikzpicture}\right)\\
&+\sum_{v\in U_{i^+}(J)\setminus V}
(J\cup_{\beta}J')\cup_{\begin{subarray}{c}y_1=v\\y_2=w\end{subarray}}
\left(\begin{tikzpicture}[mydash, baseline=1.3ex, scale=0.25]
\small
\draw (0,0) -- (2,0);
\draw (1,0) -- (1,1.3);
\node [left] at (0,0) {$y_1$};
\node [right] at (2,0) {$y_2$};
\node [above] at (1,1.3) {$i^+$};
\end{tikzpicture}\right)
\Biggr).
\end{align*}
In the same way, for $v,v'\in U_{i^+}(J)$ and $w,w'\in U_{i^-}(J')$,
we have
\begin{align*}
\delta_{vv'}J\star_i J'&=
\sum_{\begin{subarray}{c}
V\subset U_{i^+}(J)\setminus\{v,v'\}\\
W\subset U_{i^-}(J')\\
\beta\colon V\cong W
\end{subarray}}
\Biggl(
\delta_{vv'}J\cup_{\beta} J'
+\sum_{w\in U_{i^-}(J')\setminus W}
(J\cup_{\beta}J')
\cup_{\begin{subarray}{c}y_1=v\\y_2=v'\\y_3=w\end{subarray}}
\left(\begin{tikzpicture}[mydash, baseline=1.3ex, scale=0.25]
\small
\draw (0,0) -- (2,0);
\draw (1,0) -- (1,1.3);
\node [left] at (0,0) {$y_1$};
\node [right] at (2,0) {$y_3$};
\node [above] at (1,1.3) {$y_2$};
\end{tikzpicture}\right)
\Biggr),\\
J\star_i \delta_{ww'}J'&=
\sum_{\begin{subarray}{c}
V\subset U_{i^+}(J)\\
W\subset U_{i^-}(J')\setminus\{w,w'\}\\
\beta\colon V\cong W
\end{subarray}}
\Biggl(
J\cup_{\beta} \delta_{ww'}J'
+\sum_{v\in U_{i^+}(J)\setminus V}
(J\cup_{\beta}J')
\cup_{\begin{subarray}{c}y_1=v\\y_2=w\\y_3=w'\end{subarray}}
\left(\begin{tikzpicture}[mydash, baseline=1.3ex, scale=0.25]
\small
\draw (0,0) -- (2,0);
\draw (1,0) -- (1,1.3);
\node [left] at (0,0) {$y_1$};
\node [right] at (2,0) {$y_3$};
\node [above] at (1,1.3) {$y_2$};
\end{tikzpicture}\right)
\Biggr).
\end{align*}
From the above calculations,
we obtain
\begin{align*}
&\delta^{i^+}J\star_iJ'+J\star_i\delta^{i^-}J'\\
&=\sum_{v\in U_{i^+}(J)}\delta_vJ\star_iJ'
+\sum_{\begin{subarray}{c}v,v'\in U_{i^+}(J)\\v\ne v'\end{subarray}}\delta_{vv'}J\star_iJ'
+\sum_{w\in U_{i^-}(J')}J\star_i\delta_wJ'
+\sum_{\begin{subarray}{c}w,w'\in U_{i^-}(J')\\w\ne w'\end{subarray}}J\star_i\delta_{ww'}J'\\
&=\sum_{\begin{subarray}{c}
V\subset U_{i^+}(J)\\
W\subset U_{i^-}(J')\\
\beta\colon V\cong W
\end{subarray}}
\Biggl(
\sum_{v\in U_{i^+}(J)\setminus V}
(\delta_v J)\cup_{\beta} J'+
\sum_{w\in U_{i^-}(J')\setminus W}
J\cup_{\beta} (\delta_wJ')\\
&\qquad\qquad\quad\qquad
+\sum_{\begin{subarray}{c}
v,v'\in U_{i^+}(J)\setminus V\\
v\ne v'
\end{subarray}}
(\delta_{vv'} J)\cup_{\beta} J'
+\sum_{
\begin{subarray}{c}
w,w'\in U_{i^-}(J')\setminus W\\
w\ne w'
\end{subarray}}
J\cup_{\beta} (\delta_{ww'}J')
\Biggr).
\end{align*}

Next, we prove Equation~(\ref{eq:B2}).
For $w,w'\in U_{i^+}(J')$, we have
\begin{align*}
J\star_i\delta_wJ'
&=\sum_{\begin{subarray}{c}
V\subset U_{i^+}(J)\\
W\subset U_{i^-}(J')\\
\beta\colon V\cong W
\end{subarray}}
\Biggl(
J\cup_{\beta}\delta_wJ'
+\sum_{v\in U_{i^+}(J)\setminus V}
(J\cup_{\beta}J')
\cup_{\begin{subarray}{c}y_1=v\\y_2=w\end{subarray}}
\left(\begin{tikzpicture}[mydash, baseline=1.3ex, scale=0.25]
\small
\draw (0,0) -- (2,0);
\draw (1,0) -- (1,1.3);
\node [left] at (0,0) {$y_1$};
\node [right] at (2,0) {$y_2$};
\node [above] at (1,1.3) {$i^+$};
\end{tikzpicture}\right)
\Biggr),\\
J\star_i\delta_{ww'}J'
&=\sum_{\begin{subarray}{c}
V\subset U_{i^+}(J)\\
W\subset U_{i^-}(J')\\
\beta\colon V\cong W
\end{subarray}}
J\cup_{\beta}\delta_{ww'}J'.
\end{align*}
Thus, we obtain
\begin{align*}
J\star_i\delta^{i^+}J'
&=J\star_i\Biggl(\sum_{w\in U_{i_+}(J')}\delta_wJ'
+\sum_{\begin{subarray}{c}
w,w'\in U_{i^+}(J')\\
w\ne w'
\end{subarray}}
\delta_{ww'}J'\Biggr)\\
&=\sum_{\begin{subarray}{c}
V\subset U_{i^+}(J)\\
W\subset U_{i^-}(J')\\
\beta\colon V\cong W
\end{subarray}}
\Biggl(\sum_{w\in U_{i^+}(J')}
J\cup_{\beta}\delta_wJ'
+\sum_{\begin{subarray}{c}w\in U_{i^+}(J')\\v\in U_{i^+}(J)\setminus V\end{subarray}}
(J\cup_{\beta}J')
\cup_{\begin{subarray}{c}y_1=v\\y_2=w\end{subarray}}
\left(\begin{tikzpicture}[mydash, baseline=1.3ex, scale=0.25]
\small
\draw (0,0) -- (2,0);
\draw (1,0) -- (1,1.3);
\node [left] at (0,0) {$y_1$};
\node [right] at (2,0) {$y_2$};
\node [above] at (1,1.3) {$i^+$};
\end{tikzpicture}\right)
\\
&\qquad\qquad\qquad
+\sum_{\begin{subarray}{c}w,w'\in U_{i^+}(J')\\w\ne w'\end{subarray}}
J\cup_{\beta}\delta_{ww'}J'\Biggr).
\end{align*}
We also have
\begin{align*}
\delta^{i^+}(J\star_i J')
=\sum_{\begin{subarray}{c}
V\subset U_{i^+}(J)\\
W\subset U_{i^-}(J')\\
\beta\colon V\cong W
\end{subarray}}
\Biggl(&
\sum_{v\in U_{i^+}(J)\setminus V}\delta_vJ\cup_{\beta}J'
+\sum_{\begin{subarray}{c}v,v'\in U_{i^+}(J)\setminus V\\v\ne v'\end{subarray}}
\delta_{vv'}J\cup_{\beta}J'\\
&+\sum_{w\in U_{i^+}(J')}
J\cup_{\beta}\delta_wJ'
+\sum_{\begin{subarray}{c}w,w'\in U_{i^+}(J')\\w\ne w'\end{subarray}}
J\cup_{\beta}\delta_{ww'}J'\\
&+\sum_{\begin{subarray}{c}v\in U_{i^+}(J)\setminus V\\w\in U_{i^+}(J')\end{subarray}}
(J\cup_{\beta}J')\cup_{\begin{subarray}{c}y_1=v\\y_2=w\end{subarray}}
\left(\begin{tikzpicture}[mydash, baseline=1.3ex, scale=0.25]
\small
\draw (0,0) -- (2,0);
\draw (1,0) -- (1,1.3);
\node [left] at (0,0) {$y_1$};
\node [right] at (2,0) {$y_2$};
\node [above] at (1,1.3) {$i^+$};
\end{tikzpicture}\right)\Biggr).
\end{align*}
By taking the sum
$J\star_i\delta^{i^+}J'+\delta^{i^+}(J\star_i J')$,
we obtain what we want.

Equation~(\ref{eq:B3}) is obtained by applying Equation~(\ref{eq:B2}) to the right-hand side of 
\[
\delta^{i^-}(J\star_i J')+\delta^{i^-}J\star_i J'
=\rev(\delta^{i^+}(\rev J'\star_i\rev J)+\rev J'\star_{i}\delta^{i^+}(\rev J)).
\]
\end{proof}

\def\cprime{$'$} \def\cprime{$'$} \def\cprime{$'$}

\end{document}